\documentclass[a4paper,reqno,11pt]{amsart}

\usepackage{amsmath, amsfonts, amssymb, amsthm, amscd}
\usepackage{graphicx}
\usepackage{psfrag}
\usepackage{perpage}
\usepackage{url}
\usepackage{color}
\usepackage{mathrsfs}
\usepackage{mathabx}
\usepackage{scalerel}
\usepackage{stmaryrd}

\usepackage{caption,subcaption}
\usepackage{subdepth}

\usepackage{dsfont} 

\usepackage[T1]{fontenc}
\usepackage{microtype}

\usepackage[a4paper,scale={0.72,0.74},marginratio={1:1},footskip=7mm,headsep=10mm]{geometry}

\usepackage{hyperref}


\makeatletter
\def\@secnumfont{\bfseries\scshape}

\def\section{\@startsection{section}{1}
  \z@{.9\linespacing\@plus\linespacing}{.5\linespacing}%
  {\normalfont\large\bfseries\scshape\centering}}

\def\subsection{\@startsection{subsection}{2}%
  \z@{.5\linespacing\@plus.7\linespacing}{-.5em}%
  {\normalfont\bfseries\scshape}}

\def\subsubsection{\@startsection{subsubsection}{3}%
  \z@{.5\linespacing\@plus.7\linespacing}{-.5em}%
  {\normalfont\scshape}}

\def\specialsection{\@startsection{section}{1}%
  \z@{\linespacing\@plus\linespacing}{.5\linespacing}%
  {\normalfont\centering\large\bfseries\scshape}}
\makeatother

%
%
%


\makeatletter

\renewenvironment{proof}[1][\proofname]{\par
\pushQED{\qed}%
\normalfont \topsep4\p@\@plus4\p@\relax
\trivlist
\item[\hskip\labelsep
\bfseries
#1\@addpunct{.}]\ignorespaces
}{%
\popQED\endtrivlist\@endpefalse
}
\makeatother

\setcounter{tocdepth}{1}

\makeatletter
\newcommand \Dotfill {\leavevmode \leaders \hb@xt@ 6pt{\hss .\hss }\hfill \kern \z@}
\makeatother

\makeatletter
\def\@tocline#1#2#3#4#5#6#7{\relax
  \ifnum #1>\c@tocdepth 
  \else
    \par \addpenalty\@secpenalty\addvspace{#2}%
    \begingroup \hyphenpenalty\@M
    \@ifempty{#4}{%
      \@tempdima\csname r@tocindent\number#1\endcsname\relax
    }{%
      \@tempdima#4\relax
    }%
    \parindent\z@ \leftskip#3\relax \advance\leftskip\@tempdima\relax
    \rightskip\@pnumwidth plus4em \parfillskip-\@pnumwidth
    #5\leavevmode\hskip-\@tempdima
      \ifcase #1
       \or\or \hskip 1.65em \or \hskip 3.3em \else \hskip 4.95em \fi%
      #6\nobreak\relax
    \Dotfill
    \hbox to\@pnumwidth{\@tocpagenum{#7}}\par
    \nobreak
    \endgroup
  \fi}
\makeatother

\makeatletter
\def\l@section{\@tocline{1}{0pt}{1pc}{}{\scshape}}
\renewcommand{\tocsection}[3]{%
\indentlabel{\@ifnotempty{#2}{\ignorespaces#1 #2.\hskip 0.7em}}#3}
\def\l@subsection{\@tocline{2}{0pt}{1pc}{5pc}{}}

\def\l@subsubsection{\@tocline{3}{0pt}{1pc}{7pc}{}}

\makeatother

%


\setcounter{secnumdepth}{2}

\frenchspacing

\numberwithin{equation}{section}


\newtheoremstyle{mytheorem}{.7\linespacing\@plus.3\linespacing}{.7\linespacing\@plus.3\linespacing}%
     {\itshape}
     {}
     {\bfseries}
     {. }
     {0.3ex}
     {\thmname{{\bfseries #1}}\thmnumber{ {\bfseries #2}}\thmnote{ (#3)}}  

\theoremstyle{mytheorem}

\newtheorem{theorem}{Theorem}[section]
\newtheorem{lemma}[theorem]{Lemma}
\newtheorem{proposition}[theorem]{Proposition}
\newtheorem{corollary}[theorem]{Corollary}
\newtheorem{remark}[theorem]{Remark}
\newtheorem{definition}[theorem]{Definition}

\newtheorem{metatheorem}[theorem]{Metatheorem}



\newcommand{\bbD}{{\ensuremath{\mathbb D}} }
\newcommand{\bbE}{{\ensuremath{\mathbb E}} }

\newcommand{\bbP}{{\ensuremath{\mathbb P}} }

\newcommand{\bbZ}{{\ensuremath{\mathbb Z}} }



\newcommand{\cA}{{\ensuremath{\mathcal A}} }

\newcommand{\cC}{{\ensuremath{\mathcal C}} }
\newcommand{\cD}{{\ensuremath{\mathcal D}} }

\newcommand{\cF}{{\ensuremath{\mathcal F}} }
\newcommand{\cG}{{\ensuremath{\mathcal G}} }
\newcommand{\cH}{{\ensuremath{\mathcal H}} }

\newcommand{\cK}{{\ensuremath{\mathcal K}} }

\newcommand{\cN}{{\ensuremath{\mathcal N}} }

\newcommand{\cU}{{\ensuremath{\mathcal U}} }

\newcommand{\cW}{{\ensuremath{\mathcal W}} }





\DeclareMathSymbol{\leqslant}{\mathalpha}{AMSa}{"36} 
\DeclareMathSymbol{\geqslant}{\mathalpha}{AMSa}{"3E} 
\DeclareMathSymbol{\eset}{\mathalpha}{AMSb}{"3F}     

\newcommand{\sumtwo}[2]{\sum_{\substack{#1 \\ #2}}} 


\newcommand{\be}{\begin{equation}}
\newcommand{\ee}{\end{equation}}



\newcommand{\R}{\mathbb{R}}

\newcommand{\Z}{\mathbb{Z}}
\newcommand{\N}{\mathbb{N}}

\newcommand{\PEfont}{\mathrm}

\newcommand{\p}{\ensuremath{\PEfont P}}

\newcommand{\E}{\ensuremath{\PEfont  E}}
\renewcommand{\P}{\p}

\DeclareMathOperator{\bbvar}{\ensuremath{\mathbb{V}ar}}
\DeclareMathOperator{\bbcov}{\ensuremath{\mathbb{C}ov}}

\newcommand{\ind}{\mathds{1}}

\newcommand{\eps}{\varepsilon}
\renewcommand{\epsilon}{\varepsilon}
\renewcommand{\theta}{\vartheta}
\renewcommand{\rho}{\varrho}


\newenvironment{myenumerate}{%
\renewcommand{\theenumi}{\arabic{enumi}}%
\renewcommand{\labelenumi}{{\rm(\theenumi)}}%
\begin{list}{\labelenumi}
	{%
	\setlength{\itemsep}{0.4em}%
	\setlength{\topsep}{0.5em}%
	\setlength\leftmargin{2.45em}%
	\setlength\labelwidth{2.05em}%
	\setlength{\labelsep}{0.4em}%
	\usecounter{enumi}%
	}%
	}%
{\end{list}
}

\newenvironment{aenumerate}{%
\renewcommand{\theenumi}{\alph{enumi}}%
\renewcommand{\labelenumi}{{\rm(\theenumi)}}%
\begin{list}{\labelenumi}
	{%
	\setlength{\itemsep}{0.4em}%
	\setlength{\topsep}{0.5em}%
	\setlength\leftmargin{2.45em}%
	\setlength\labelwidth{2.05em}%
	\setlength{\labelsep}{0.4em}%
	\usecounter{enumi}%
	}%
	}%
{\end{list}
}

{\end{list}
}

{\end{myenumerate}}

\newenvironment{myitemize}{%
\begin{list}{$\bullet$}%
 	{%
	\setlength{\itemsep}{0.4em}%
	\setlength{\topsep}{0.5em}%
	\setlength\leftmargin{2.65em}%
	\setlength\labelwidth{2.65em}%
	\setlength{\labelsep}{0.4em}%
	}%
	}%
{\end{list}}

\renewenvironment{itemize}{
\begin{myitemize}}%
{\end{myitemize}}


\MakePerPage[2]{footnote} 



\date{\today}

\newcommand\dd{\mathrm{d}}

\newcommand\bulk{\mathrm{bulk}}

\newcommand\sfQ{\mathsf Q}

\newcommand\sfU{\mathsf U}

\newcommand\sfa{\mathsf a}

\newcommand\sfq{\mathsf q}


\newcommand\even{\mathrm{even}}

\newcommand\diff{\mathrm{diff}}

\newcommand\sfc{\mathsf{c}}

\newcommand{\ev}[1]{[\![ #1 ]\!]}
\newcommand\scrC{\mathscr{C}}

\newcommand\rme{\mathrm{e}}

\newcommand\off{\mathrm{off}}
\newcommand\av{\mathrm{av}}

\newcommand{\assign}{:=}

\newcommand{\cdummy}{\cdot}

\newcommand{\mathe}{\mathrm{e}}
\newcommand{\nobracket}{}


\usepackage{scalerel,stackengine}
\newcommand\pig[1]{\scalerel*[5pt]{\big#1}{%
  \ensurestackMath{\addstackgap[1.5pt]{\big#1}}}}
\newcommand\pigl[1]{\mathopen{\pig{#1}}}
\newcommand\pigr[1]{\mathclose{\pig{#1}}}

\begin{document}

\title[Singularity and regularity of the critical 2D SHF]{Singularity and regularity\\
of the critical 2D Stochastic Heat Flow}

\author[F. Caravenna]{Francesco Caravenna}
\address{Dipartimento di Matematica e Applicazioni\\
 Universit\`a degli Studi di Milano-Bicocca\\
 via Cozzi 55, 20125 Milano, Italy}
\email{francesco.caravenna@unimib.it}

\author[R. Sun]{Rongfeng Sun}
\address{Department of Mathematics\\
National University of Singapore\\
10 Lower Kent Ridge Road, 119076 Singapore
}
\email{matsr@nus.edu.sg}

\author[N. Zygouras]{Nikos Zygouras}
\address{Department of Mathematics\\
University of Warwick\\
Coventry CV4 7AL, UK}
\email{N.Zygouras@warwick.ac.uk}

\begin{abstract}
The Critical 2D Stochastic Heat Flow (SHF) provides a natural
candidate solution to the ill-posed 2D Stochastic Heat Equation with multiplicative
space-time white noise. In this paper, we
initiate the investigation of
the spatial properties of the SHF.
We prove that, as a random measure on $\R^2$, it
is a.s.\ singular w.r.t.\ the Lebesgue measure.
This is obtained by probing a ``{\it quasi-critical''} regime and
showing the asymptotic log-normality of the mass
assigned to vanishing balls, as the disorder strength is sent to zero at a suitable rate,
accompanied by similar results for critical 2D directed polymers.
We also describe the regularity of the SHF, showing that it is a.s.\ H\"older $\cC^{-\epsilon}$
for any $\epsilon>0$, implying the absence of atoms, and we establish local convergence
to zero in the long time limit.
\end{abstract}

\keywords{Directed Polymer in Random Environment, Disordered Systems, KPZ Equation, Singular Stochastic Partial Differential Equation, Stochastic Heat Equation, Stochastic Heat Flow}
\subjclass[2010]{Primary: 82B44;  Secondary: 35R60, 60H15, 82D60}
\maketitle

\vskip -7mm
\begin{figure}[h]
\centering
\hfill \
\begin{minipage}[b]{.4\linewidth}
\centering
\includegraphics[width=\linewidth]{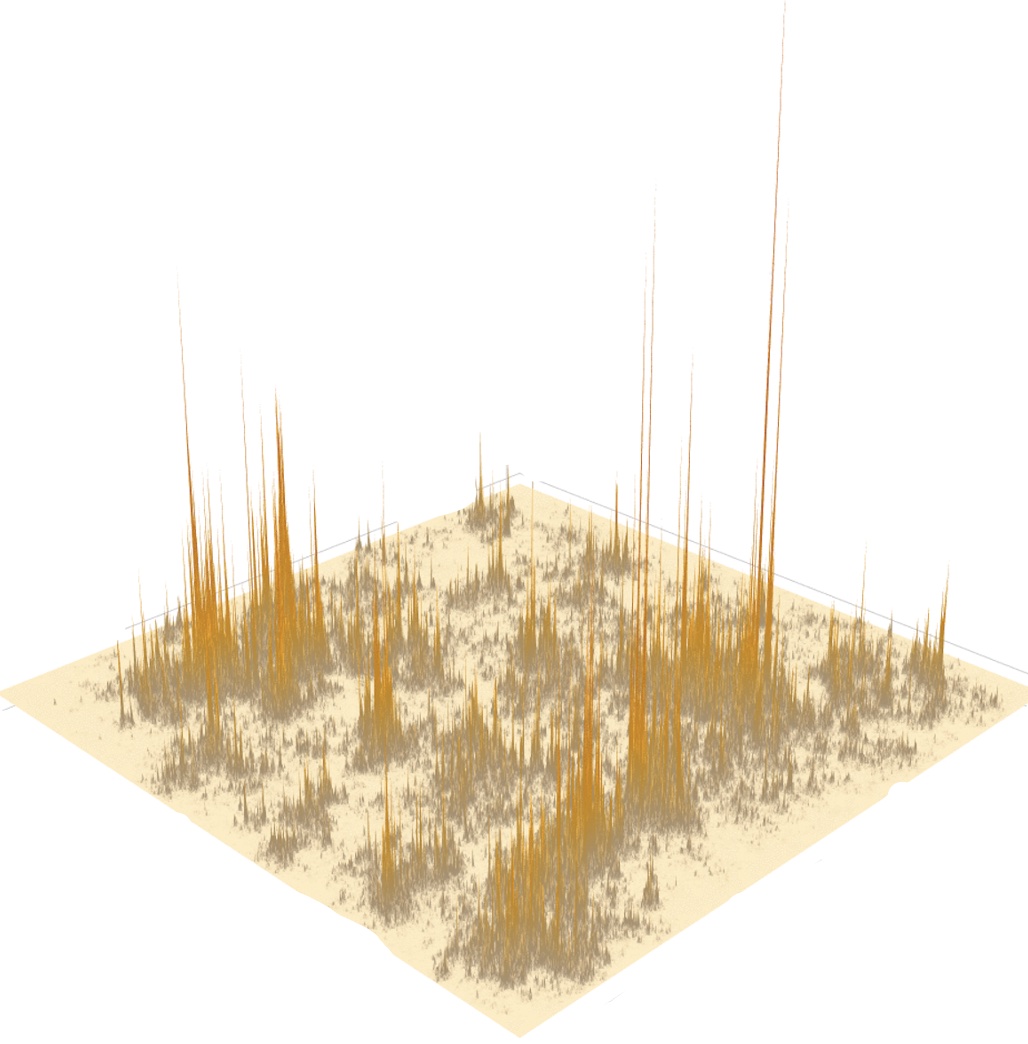}
\end{minipage}
\ \
\begin{minipage}[b]{.37\linewidth}
\centering
\includegraphics[width=\linewidth]{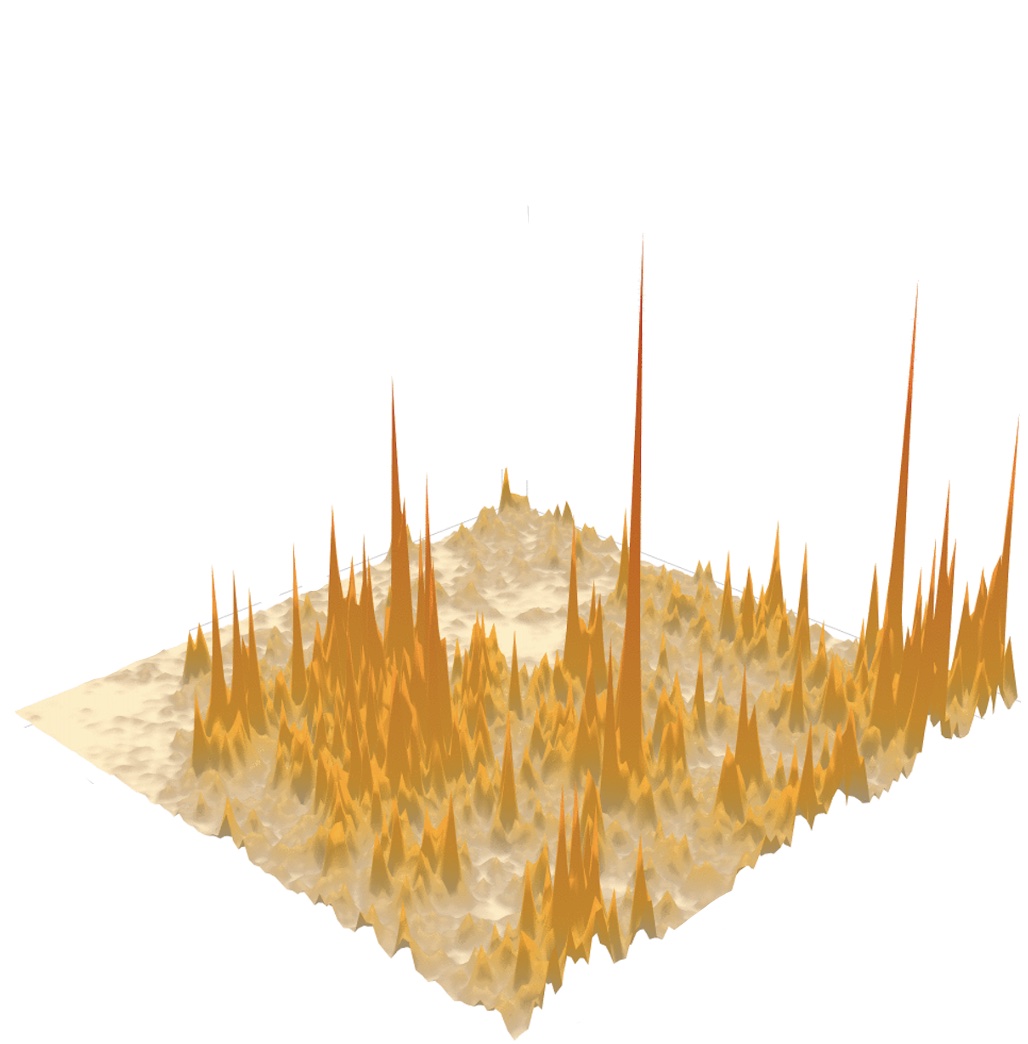}

~
\end{minipage}
\hfill \
\caption*{\it \footnotesize
The picture on the left is a simulation of the Critical 2D SHF and illustrates its singularity.
The picture on the right is a simulation in the quasi-critical regime,
slightly below the critical window, which is
smoother and will be used to approximate the Critical 2D SHF.
}
\end{figure}

\section{Introduction}

The Critical 2D Stochastic Heat Flow (SHF)
was constructed in \cite{CSZ23a} as a family of measure-valued processes $\mathscr{Z}_{t}^\theta(\dd x)$ with
disorder strength parameter $\theta\in \R$, which give non-trivial solutions to the ill-defined
two-dimensional Stochastic Heat Equation (SHE)
\begin{align}\label{eq:SHE} \tag{SHE}
	\partial_t u(t,x) =\frac{1}{2} \Delta u(t,x) +\beta  \, u(t,x)\, \xi(t,x) \,,\qquad t>0, \ x\in\R^2,
\end{align}
where $\xi(t,x)$ denotes space-time white noise (more recently, an axiomatic characterisation
of the SHF as a continuous measure-valued process was given in \cite{T24}). Dimension $2$ is critical for the SHE as
it is the dimension where the singularity of the noise matches the smoothing effect of the
Laplacian and thus cannot be treated perturbatively.  A comprehensive theory of singular
Stochastic PDEs (SPDEs) below their critical dimension (known as ``subcritical SPDEs'') exists
thanks to the breakthrough theories of regularity structure \cite{H14}, paracontrolled distributions
\cite{GIP15}, renormalisation group theory \cite{K16, D22},
energy solutions \cite{GJ14} and the huge volume of work they have inspired.
The endeavour of treating critical singular SPDEs is only now starting to emerge \cite{CSZ24, CT24}, and the
Critical 2D SHF is the first example describing a non-trivial and non-Gaussian solution to a critical equation
at its phase transition point.

The Critical 2D SHF is an interesting object with a rich structure (see the recent review \cite{CSZ24}). However, its
fine properties have not yet been explored.
The purpose of this paper is to initiate the study of its spatial characteristics. Consider the Critical 2D SHF
$\mathscr{Z}_{t}^\theta(\dd x)$ started from the Lebesgue measure $\mathscr{Z}_{0}^\theta(\dd x)=\dd x$. We will
prove the following results: For every $\theta\in \R$ and $t>0$, almost surely,
\begin{itemize}
\item $\mathscr{Z}_{t}^\theta(\dd x)$ is singular with respect to the Lebesgue measure (Theorem~\ref{th:singularity-SHF});
\item $\mathscr{Z}_{t}^\theta(\dd x)$ barely fails to be a function in the sense that it is in the negative H\"older spaces $C^{-\epsilon}$
for every $\epsilon>0$ (Theorem~\ref{th:regularity-SHF}), and hence contains no atoms.
\end{itemize}
Moreover, we show that
\begin{itemize}
\item $\mathscr{Z}_{t}^\theta(\dd x)$ converges in law to the $0$ measure as $t\to\infty$, in the sense that the mass assigned
to any finite ball converges to $0$ in probability (Theorem~\ref{th:long-time-SHF}).
\end{itemize}
The almost surely singularity of $\mathscr{Z}_{t}^\theta(\dd x)$ is a consequence of the result that:
\begin{itemize}
\item The mass density $\frac{1}{|B(x, \delta)|} \mathscr{Z}_{t}^\theta(B(x, \delta))$ on a ball $B(x, \delta)$ of shrinking
radius $\delta\downarrow 0$ converges to a log-normal limit, if the disorder strength parameter $\theta=\theta(\delta)\to -\infty$ at a suitable
rate (Theorem~\ref{th:log-normality-SHF}).
\end{itemize}
This is obtained by proving an analogous result (Theorem~\ref{th:log-normality}) for the averaged partition function of the directed polymer model in the so-called {\em quasi-critical regime}, which was introduced in \cite{CCR23} as an interpolation between the sub-critical and critical regimes of the 2D directed polymer model. We remark that the directed polymer model is a very interesting and important disordered system on its own \cite{C17, Z24}, and the Critical 2D SHF was first constructed in \cite{CSZ23a} as the unique limit of 2D directed polymer partition functions in the critical regime.

The proof of Theorem~\ref{th:log-normality} constitutes the bulk of this paper and is accomplished via
an approximate \emph{multiplicative,
multi-scale decomposition of the polymer partition function}, see \eqref{eq:decomp}.
Similar decompositions have also been applied in the sub-critical
regime \cite{CD24,CNZ25}. The novelty of our contribution is that we push such a decomposition to the quasi-critical regime,
up to the onset of criticality, setting the foundations for
understanding fine properties of the SHF.
Along the way, we derive a \emph{general hypercontractive bound} on the higher moments of the averaged polymer partition
function in terms of its second moment (Theorem~\ref{th:genmombou}), valid in all regimes up to criticality.

\smallskip

For the rest of the introduction, we will first recall the construction and basic properties of the Critical 2D Stochastic Heat Flow. Our main results for the Critical 2D SHF and the directed polymer model will then be stated in Section~\ref{S:newSHF} and~\ref{sec:DPRM}
respectively. In Section~\ref{S:mom}, we will formulate the hypercontractive moment bound mentioned above. Lastly in Section~\ref{S:ext}, we explain how limiting properties of the Critical 2D SHF as the disorder parameter $\theta\downarrow -\infty$ can always be analysed by studying
the directed polymer partition functions in the quasi-critical regime.

\subsection{Briefing on the Critical 2D SHF} \label{S:Briefing}
To make sense of the two-dimensional stochastic heat equation \eqref{eq:SHE}, we need to first perform a regularisation
on small spatial scales (ultraviolet cutoff) and then take a suitable limit. The regularisation can be accomplished in different
ways. One way is to consider the mollified SHE
\begin{equation} \label{eq:mSHE}
\partial_t u^\epsilon = \frac{1}{2}\Delta u^\epsilon + \beta_\epsilon\, \xi^\epsilon(t,x) u^\epsilon,
\end{equation}
where $\eps>0$ is the spatial scale of regularisation, $j(\cdot)$ is a smooth probability kernel on $\R^2$ and $j^\eps(x)=\eps^{-2}j(x/\eps)$ is
its scaled version, while $\xi^\epsilon := j^\epsilon * \xi$ is the spatial mollification of the white noise $\xi$.

Another way is to discretize space and time. Namely, the white noise $\xi$ is replaced by a family of i.i.d.\ random variables $\omega=(\omega(n,x))_{n\in \N, x\in \Z^2}$ with law $\bbP$ and expectation $\bbE$, and
\begin{equation}\label{eq:lambda}
\begin{gathered}
	\bbE[\omega]=0 \,, \qquad \bbE[\omega^2]=1 \,, \qquad
	\exists\, \beta_0>0: \quad 	\lambda(\beta)
	:= \log \bbE[\rme^{\beta\omega}] < \infty \quad \forall \beta \in [0, \beta_0] \,.
\end{gathered}
\end{equation}
Replacing derivatives in \eqref{eq:SHE} by suitable difference operators, the solution can be expressed in terms of the (point-to-point) partition functions of the {\em directed polymer model}:
\begin{align} \label{eq:paf}
	Z_{M,N}^{\beta_N}(y,x) &=
	\E\bigg[ \rme^{\sum_{n=M+1}^{N-1}
	\{\beta_N \omega(n,S_n) - \lambda(\beta_N)\}} \,\, \ind_{\{S_N=x\}}
	\,\bigg|\, S_M = y \bigg] \,,
\end{align}
where $\E$ is the expectation with respect to the 2D simple symmetric random walk $S=(S_n)_{n\geq 0}$. More precisely, the diffusively rescaled plane-to-point partition functions (with $\Z_{\rm even}:=\{2n: n\in\Z\}$ and $\Z^2_{\rm even}:=\{(x, y)\in\Z^2: x+y\in \Z_{\rm even}\}$)
\begin{equation}\label{eq:uN}
u^{(N)}(t, x):= \sum_{y\in \Z^2_{\rm even}} Z_{0,Nt}^{\beta_N}(y,\sqrt{N} x), \qquad (t, x) \in \frac{1}{N}\Z_{\rm even} \times \frac{1}{\sqrt N} \Z^2_{\rm even},
\end{equation}
is the analogue of $u^\eps(t, x)$ and solves a version of \eqref{eq:SHE}, discretised on spatial scale $1/\sqrt{N}$ and time scale $1/N$, with initial condition $u^{(N)}(0, \cdot)\equiv 1$.

It was first shown in \cite{CSZ17b} that on the intermediate disorder scale
$$
\beta_N= \hat\beta \sqrt{\frac{\pi}{\log N}},
$$
the directed polymer partition functions $u^{(N)}(t, x)$ undergo a phase transition (with critical value $\hat\beta_c=1$) in two different senses:
\begin{itemize}
\item For each $(t, x)\in [0, 1)\times\R^2$, $u^{(N)}(t, x)$ converges to a log-normal limit if $\hat\beta<1$ and converges to $0$ if $\hat\beta\geq 1$;

\item The centered and rescaled field $\beta_N^{-1}(u^{(N)}(t, x)-1)$ converges for $\hat\beta < 1$ to a Gaussian limit that solves the additive SHE (or Edwards-Wilkinson equation)
\begin{align*}
\partial_t v = \frac{1}{2}\Delta v + \sqrt{\frac{\hat\beta^2}{1-\hat\beta^2}} \, \xi,
\end{align*}
where the noise strength diverges as $\hat\beta\uparrow 1$.
\end{itemize}
The same results were also proved in \cite{CSZ17b} for the solution $u^\eps$ of the mollified SHE \eqref{eq:mSHE} on the intermediate disorder scale
$\beta_\eps = \hat\beta \sqrt{\frac{2\pi}{\log 1/\eps}}$. Therefore in the subcritical regime $\hat\beta<1$, the 2D SHE is essentially the Edwards-Wilkinson equation.

It is the critical regime $\hat\beta=1$ that leads to a non-Gaussian limit, called the {\em Critical 2D Stochastic Heat Flow (SHF)}. It turns out
there is a whole critical window around $\hat\beta=1$, determined by the relation
\begin{align}\label{choiceb}
e^{\lambda(2\beta_N)-2\lambda(\beta_N)}-1 = \frac{\pi}{\log N} \Big(1+\frac{\theta+o(1)}{\log N} \Big) \quad \mbox{for some}\ \theta\in \R.
\end{align}
For a more explicit expression of $\beta_N$ in terms of $\theta$, see \cite[(3.12)]{CSZ23a}. For the mollifed SHE \eqref{eq:mSHE}, the corresponding
critical window is given by
\begin{align}\label{beta}
\beta^2_\epsilon=\frac{2\pi}{\log\tfrac{1}{\epsilon}}\Big( 1+\frac{\rho+o(1)}{\log\tfrac{1}{\epsilon}} \Big),
\end{align}
where $\rho=\pi \theta+C$ (see \cite[(1.38)]{CSZ19b} for the precise value of $C$).

The main result of \cite{CSZ23a} is that:
\begin{itemize}
\item If $\beta_N$ is chosen to satisfy \eqref{choiceb} for some $\theta\in \R$ and $u^{(N)}(t, \cdot)$ is regarded as a process of random measures on $\R^2$, then $(u^{(N)}(t, \cdot))_{t\geq 0}$ converges in finite dimensional distribution to a unique (in law) measure-valued process $\mathscr{Z}^\theta_t({\rm d}x)$, which was named the {\em Critical 2D Stochastic Heat Flow} in \cite{CSZ23a}.
\end{itemize}
Prior to \cite{CSZ23a}, the tightness of the sequence of random measures $u^{(N)}(t, \cdot)$ follows trivially from first moment bounds, while moment asymptotics of $u^{(N)}(t, \cdot)$ were studied in \cite{BC98, CSZ19b, GQT21}, which determined all positive integer moments of any subsequential weak limit of $u^{(N)}(t, \cdot)$. However, these moments diverge too fast to uniquely determine the limit (a lower bound of order $\exp(c k^2)$ for the $k$-th moment
was given in \cite{CSZ23b}). The uniqueness was finally achieved in \cite{CSZ23a} by showing that the laws of $u^{(N)}(t, \cdot)$ form a Cauchy sequence, and hence must converge to a unique limit. The proof was based on coarse graining, coupled with a Lindeberg replacement principle.

Recently, Tsai \cite{T24} gave an axiomatic characterization of the critical 2D SHF and showed that there is a version that is almost surely continuous in time. This greatly facilitates the proof of convergence to the SHF. In particular, this axiomatic characterization was used in \cite{T24} to show that the solution $u^\epsilon$ of the mollified SHE \eqref{eq:mSHE} in the critical window \eqref{beta} also converges to the SHF. In Tsai's characterisation, the SHF is the unique (in law) continuous measure-valued process that satisfies: (i) Independent ``increments'' property; (ii) An almost sure Chapman-Kolmogorov property (first defined and verified for the SHF by Clark and Mian \cite{CM24}); (iii) matching first four moments with the SHF. The proof was also based on a Lindeberg replacement principle.

We also recall from \cite[Theorem 1.2]{CSZ23a} some basic properties of the Critical 2D SHF (for simplicity, we only consider constant initial configuration $\mathscr{Z}^\theta_0({\rm d}x)={\rm d}x$):
\begin{itemize}
\item (Scaling Covariance) For all $\sfa>0$, we have
\begin{equation}\label{eq:cov}
	(\mathscr{Z}_{\sfa t}^\theta(\dd (\sqrt{\sfa} x)))_{0 \le t <\infty}
	\stackrel{\rm dist}{=}
	(\sfa\, \mathscr{Z}_{t}^{\theta+ \log \sfa}(\dd x))_{0 \le t <\infty} \,.
\end{equation}
Thus zooming out diffusively ($\sfa\uparrow \infty$) increases the disorder strength $\theta$, while zooming in ($\sfa\downarrow 0$)
decreases the disorder strength (cf.\ the pictures on the front page, where the picture on the right is a result of zooming into the picture on the left).

\item (First and Second Moments) We have
\begin{align*}
\E[\mathscr{Z}^\theta_t({\rm d}x)] & = \dd x\,, \\
\E[\mathscr{Z}^\theta_t({\rm d}x) \mathscr{Z}^\theta_t({\rm d}y)] & = K_t^\theta(x, y) \, {\rm d}x{ \, \rm d}y \,,
\end{align*}
where $K^\theta_t(x,y) \sim C \log \frac{1}{|x-y|}$ as $|x-y|\to 0$. It was first computed in \cite{BC98} before the realisation that
this lies in the critical window of a phase transition \cite{CSZ17b}.
\end{itemize}
For more properties of the Critical 2D SHF, see \cite{CSZ23a} and the lecture notes \cite{CSZ24}.

\subsection{New properties of the Critical 2D SHF} \label{S:newSHF}

In this paper we investigate the spatial regularity of the SHF.
We focus on its one-time marginal $\mathscr{Z}_{t}^\theta(\dd x)$,
which is a \emph{locally finite random measure on $\R^2$} with
$\mathscr{Z}_0^\theta(\dd x)={\rm d}x$ and
$\bbE[\mathscr{Z}_{t}^\theta(\dd x)] = \dd x$ for all $t \ge 0$.
Our main result is that, for each $t>0$, $\mathscr{Z}_{t}^\theta(\dd x)$
is almost surely \emph{singular} with respect to the Lebesgue measure, and hence not a function.

Let $\cU_{B(x,\delta)}(\cdot)$ denote the uniform density on the Euclidean ball in $\R^2$:
\begin{equation}\label{eq:uniform}
	\cU_{B(x,\delta)}(\cdot) := \frac{1}{\pi \delta^2} \, \ind_{B(x,\delta)}(\cdot) \qquad
	\text{where } \
	B(x,\delta) := \big\{y \in \R^2 \colon \ |y-x| < \delta \big\} \,.
\end{equation}
We will mostly focus on the SHF $\mathscr{Z}_{t}^\theta(\dd x)$ \emph{averaged over balls}, that is
\begin{equation}\label{eq:averaged-SHF}
	\mathscr{Z}_{t}^\theta(\cU_{B(x,\delta)})
	:= \frac{\mathscr{Z}_{t}^\theta(B(x,\delta))}{\pi \delta ^2} \,.
\end{equation}
We can now state our first main result.

\begin{theorem}[Singularity of SHF] \label{th:singularity-SHF}
Fix any $t > 0$ and $\theta \in \R$.
Almost surely, the SHF
$\mathscr{Z}_{t}^\theta(\dd x)$ is singular with respect to Lebesgue measure on~$\R^2$.
In fact, the following holds:
\begin{equation}\label{eq:singularity-SHF}
	\text{almost surely,} \qquad
	\lim_{\delta \downarrow 0} \; \mathscr{Z}_{t}^\theta\big(\cU_{B(x,\delta)}\big) = 0
	\quad \text{for Lebesgue a.e.\ $x\in\R^2$} \,.
\end{equation}
\end{theorem}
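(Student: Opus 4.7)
The implication \eqref{eq:singularity-SHF} $\Rightarrow$ singularity is immediate from Lebesgue's differentiation theorem: if $\mathscr{Z}_t^\theta$ had a density $f \in L^1_{\mathrm{loc}}$, then $\mathscr{Z}_t^\theta(\cU_{B(x,\delta)}) \to f(x)$ for a.e.\ $x$, yet \eqref{eq:singularity-SHF} would force $f \equiv 0$ Lebesgue-a.e., contradicting $\bbE[\mathscr{Z}_t^\theta(\dd x)] = \dd x$. Thus the real task is \eqref{eq:singularity-SHF}. By Fubini it is enough to fix $x \in \R^2$ and prove a.s.\ convergence $\mathscr{Z}_t^\theta(\cU_{B(x,\delta)}) \to 0$; by translation invariance of the law (inherited from the Lebesgue initial datum), the distribution does not depend on $x$, so I may take $x = 0$. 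Scaling covariance \eqref{eq:cov} with $\sfa = 1/t$ further reduces to $t=1$, at the cost of replacing $\theta$ by $\theta - \log t$. Finally, since $\delta \mapsto \mathscr{Z}_1^\theta(B(0,\delta))$ is monotone non-decreasing, for $\delta \in [2^{-(n+1)}, 2^{-n}]$ one has $\mathscr{Z}_1^\theta(\cU_{B(0,\delta)}) \leq 4\, \mathscr{Z}_1^\theta(\cU_{B(0, 2^{-n})})$, so a.s.\ convergence along the dyadic sequence, extracted from convergence in probability via Borel--Cantelli and a summable tail bound, propagates to a.s.\ convergence through all $\delta \downarrow 0$.

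To obtain the crucial in-probability statement $\mathscr{Z}_1^\theta(\cU_{B(0,\delta)}) \to 0$, I would apply \eqref{eq:cov} a second time with $\sfa = \delta^2$ to rewrite
$$\mathscr{Z}_1^\theta(\cU_{B(0,\delta)}) \stackrel{\rm dist}{=} \mathscr{Z}_{1/\delta^2}^{\theta + 2\log\delta}(\cU_{B(0,1)}),$$
recasting the small-ball, fixed-$\theta$ limit as a long-time, $\theta \to -\infty$ limit on a unit ball, which is the scaling image of the vanishing-ball quasi-critical regime treated by Theorem~\ref{th:log-normality-SHF}. That result identifies the limiting law as log-normal, and in the deep-critical coupling relevant here the log-normal has variance parameter $\sigma^2 \to \infty$; since a mean-one log-normal $\exp(\sigma G - \tfrac12 \sigma^2)$ with $G$ a standard Gaussian satisfies $\bbP\bigl(\exp(\sigma G - \tfrac12 \sigma^2) > \eps\bigr) \to 0$ as $\sigma \to \infty$ for every $\eps > 0$, convergence to zero in probability follows.

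The principal obstacle is not this scaling/Fubini deduction but the quasi-critical log-normality itself (Theorem~\ref{th:log-normality-SHF}), which occupies the bulk of the paper via the multi-scale approximate-multiplicative decomposition \eqref{eq:decomp}, the martingale central limit theorem, and the general hypercontractive moment bound (Theorem~\ref{th:genmombou}). A secondary technical point is matching the rate at which $\theta + 2\log\delta \to -\infty$ in the scaled representation to the ``suitable rate'' hypothesis of Theorem~\ref{th:log-normality-SHF}; this may require interpolating through a family of quasi-critical couplings rather than a single trajectory, and upgrading distributional convergence to the summable-tail control needed for Borel--Cantelli.
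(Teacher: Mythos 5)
Your preliminary reductions (Lebesgue differentiation, Fubini, translation invariance) are fine, and you correctly identify Theorem~\ref{th:log-normality-SHF} as the engine, but the route you propose from there has two genuine gaps. \textbf{First}, the scaling covariance step does not land on Theorem~\ref{th:log-normality-SHF}. Applying \eqref{eq:cov} with $\sfa=\delta^2$ gives $\mathscr{Z}_{1/\delta^2}^{\theta+2\log\delta}\bigl(\cU_{B(0,1)}\bigr)$: a \emph{fixed unit ball} and a \emph{growing time horizon}. Theorem~\ref{th:log-normality-SHF} is about fixed $t$ and a \emph{shrinking} ball $\delta_\theta^\rho$ with $\rho>0$ bounded away from zero — setting $\delta_{\theta'}^\rho = 1$ would force $\rho\to 0$, where the limiting log-normal degenerates to $\delta_1$ and says nothing about vanishing. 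Moreover, any further scaling that restores fixed time simply inverts the first scaling and returns you to $\mathscr{Z}_1^\theta(\cU_{B(0,\delta)})$; scaling covariance is structurally incapable of changing $\theta$ while holding both the time and the ball radius fixed. The ingredient that actually achieves such a comparison is Lemma~\ref{th:monotonicity-SHF}: monotonicity of fractional moments of $\mathscr{Z}_t^\theta(\varphi)$ in $\theta$ (an FKG consequence), which your proposal never invokes. With it, one sets $\theta'_\delta := -\tfrac{1}{\rho}\log\tfrac{1}{\delta^2}\to -\infty$ (so that $\delta = \delta_{\theta'_\delta}^\rho$), compares $\bbE[\mathscr{Z}_t^\theta(\cU_{B(x,\delta)})^\alpha] \le \bbE[\mathscr{Z}_t^{\theta'_\delta}(\cU_{B(x,\delta)})^\alpha]$ at the \emph{same} $t$ and $\delta$, applies Theorem~\ref{th:log-normality-SHF} to get the limit $(1+\rho)^{-\alpha(1-\alpha)/2}$, and lets $\rho\to\infty$.

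\textbf{Second}, the Borel--Cantelli upgrade you invoke requires a summable tail bound you do not supply — and it is in fact unnecessary. The paper never proves a.s.\ convergence in $\delta$ for fixed $x$. Proposition~\ref{th:singularity} is a \emph{deterministic} statement valid for every realisation $\omega$: the limit $\lim_{\delta\downarrow 0}\mathscr{Z}_t^{\theta,\omega}(\cU_{B(x,\delta)})$ already exists for Lebesgue-a.e.\ $x$ and equals the density of the a.c.\ part. Hence one only needs to identify this (already existing) limit, which is controlled through the measurable function $L(\omega,x):=\liminf_n \mathscr{Z}_t^{\theta,\omega}(\cU_{B(x,\delta_n)})$ along a fixed sequence; showing $\bbE[L(\omega,x)^\alpha]=0$ for each fixed $x$ (via Fatou plus the monotonicity-and-log-normality bound above) suffices, with no rate estimate needed. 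Your proposal both misses the monotonicity lemma that makes the comparison work and substitutes a harder, unresolved a.s.\ argument for a step the paper dispatches with deterministic measure theory.
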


The singularity of the SHF with respect to Lebesgue can be deduced
from property \eqref{eq:singularity-SHF}
via general arguments (see Proposition~\ref{th:singularity}).
In order to prove \eqref{eq:singularity-SHF},
we show that \emph{in the ``weak disorder limit'' $\theta \to -\infty$,
the SHF averaged on balls
$\mathscr{Z}_{t}^\theta\big(\cU_{B(x,\delta)}\big)$
is asymptotically log-normal for radius $\delta = \delta_\theta^\rho \downarrow 0$ vanishing
as any power of a suitable scale~$\delta_\theta$}.

\begin{theorem}[Log-normality of SHF in the weak disorder limit] \label{th:log-normality-SHF}
Let us define
\begin{equation}\label{eq:deltatheta}
	\delta_\theta :=
	\rme^{\frac{1}{2} \theta} = \rme^{-\frac{1}{2} |\theta|}
	\longrightarrow 0 \qquad \text{as } \theta \to -\infty \,.
\end{equation}
Given any $t > 0$ and $x\in\R^2$, the following convergence in distribution holds:
\begin{equation}\label{eq:log-normal-SHF}
	\forall \rho \in (0,\infty) \colon \qquad
	\mathscr{Z}_{t}^\theta \big(\cU_{B(x,\delta_\theta^\rho)}\big)
	\ \xrightarrow[\ \theta \to -\infty \ ]{d} \
	\rme^{\cN(0,\sigma^2) - \frac{1}{2}\sigma^2}
	\qquad \text{with } \sigma^2 = \log(1+\rho) \,.
\end{equation}
\end{theorem}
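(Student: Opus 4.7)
The plan is to deduce the theorem from the companion log-normality result Theorem~\ref{th:log-normality} for averaged polymer partition functions in the quasi-critical regime, with the scaling covariance \eqref{eq:cov} of the SHF serving as the crucial bridge.

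The first step is to rescale the ball to unit radius. A direct application of \eqref{eq:cov} with dilation parameter $a=\delta_\theta^{2\rho}$ (so that $\sqrt{a}$ equals the ball radius $\delta_\theta^\rho$) yields the distributional identity
\begin{equation*}
	\mathscr{Z}_t^\theta\bigl(\cU_{B(x,\delta_\theta^\rho)}\bigr)
	\ \stackrel{d}{=}\
	\mathscr{Z}_{T_\theta}^{\theta_*}\bigl(\cU_{B(x_\theta,1)}\bigr),
	\qquad
	\theta_* := (1+\rho)\theta,
	\quad T_\theta := t\,\rme^{-\rho\theta},
\end{equation*}
with $x_\theta = x/\delta_\theta^\rho$. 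As $\theta\to-\infty$, both $\theta_*\to-\infty$ and $T_\theta\to+\infty$, tied by the asymptotic relation $\log T_\theta/|\theta_*|\to \rho/(1+\rho)$; translation invariance in law makes the centre $x_\theta$ irrelevant. The task is thus reduced to proving log-normality of $\mathscr{Z}_{T_\theta}^{\theta_*}(\cU_{B(0,1)})$ in this joint regime of exponentially large time and diverging weak disorder.

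The second step is to realise this SHF mass as a limit of averaged directed polymer partition functions via the polymer-to-SHF convergence of \cite{CSZ23a}, with $\beta_N$ lying in the critical window at parameter $\theta_*$ and $N\to\infty$. Combining this with the simultaneous limit $\theta_*\to-\infty$, the effective disorder strength lands in the quasi-critical regime of \cite{CCR23}. The framework introduced in Section~\ref{S:ext} is designed exactly for this transfer, systematically expressing weak-disorder limits of $\mathscr{Z}_T^{\theta_*}$ in terms of quasi-critical partition functions, so that Theorem~\ref{th:log-normality} can be applied to yield the log-normal limit. The variance is identified as $\sigma^2=\log(1+\rho)$ by a second-moment matching: averaging the two-point kernel $K_t^\theta(x,y)\sim C\log(1/|x-y|)$ over a ball of radius $\delta_\theta^\rho$ gives $\bbE\bigl[\mathscr{Z}_t^\theta(\cU_{B(x,\delta_\theta^\rho)})^2\bigr]\to 1+\rho$, consistent with the log-normal second moment $\rme^{\sigma^2}=1+\rho$; equivalently, in the reduced variables of Step~1 the ratio $\rho/(1+\rho)$ feeds directly into the quasi-critical variance formula.

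The main obstacle lies in the joint passage of Step~2: the polymer-to-SHF convergence of \cite{CSZ23a} is stated at a \emph{fixed} critical-window parameter, whereas here it must be combined with $\theta_*\to-\infty$ (and hence with a non-trivial rescaling of $N$). This is precisely the role of the hypercontractive moment bound Theorem~\ref{th:genmombou}, which uniformly controls higher moments by the second moment throughout the critical and quasi-critical regimes. It allows the martingale CLT argument underlying the proof of Theorem~\ref{th:log-normality} to be transferred from the polymer side to the SHF side via a diagonal approximation, and ensures the tightness needed to identify the log-normal limit across the joint scaling.
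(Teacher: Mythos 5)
Your overall plan---approximate the SHF by quasi-critical polymer partition functions and apply Theorem~\ref{th:log-normality}---is the paper's strategy, and the Step~1 identity you write via \eqref{eq:cov} is algebraically correct ($\theta_* = (1+\rho)\theta$, $T_\theta = t\,\rme^{-\rho\theta}$). But the rescaling is an unnecessary detour: the paper keeps $t$ fixed and approximates $\mathscr{Z}_t^{\theta_k}\big(\cU_{B(x,\delta_{\theta_k}^\rho)}\big)$ directly by $W_{N,k}:=Z_{tN}^{\beta_N^{\mathrm{crit}}(\theta_k)}\big(\cU_{B(x\sqrt N,\,\delta_{\theta_k}^\rho\sqrt N)}\big)$, which is already exactly in the form required by Theorem~\ref{th:log-normality}; your rescaled quantity $\mathscr{Z}_{T_\theta}^{\theta_*}(\cU_{B(0,1)})$ would have to be converted back by a diffusive rescaling of the polymer to recover that form, so you gain nothing.

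The genuine gap is in Step~2, where you invoke ``the framework introduced in Section~\ref{S:ext}'' and Theorem~\ref{th:genmombou} in place of the essential diagonal argument. Metatheorem~\ref{th:meta} is explicitly informal---the paper defers its precise implementation to this very proof---so invoking it here is circular. And Theorem~\ref{th:genmombou} plays no role in the transfer: it is used \emph{inside} the proof of Theorem~\ref{th:log-normality} to control the moments of the martingale increments $\Delta_{N,i}$, not to pass from the SHF to the polymer approximation. What you must actually do, and what you omit, is the following: fix any sequence $\theta_k\to-\infty$, use the polymer-to-SHF convergence \eqref{eq:scaling} at each fixed $\theta_k$ to choose $N_k$ with $N_k\geq \rme^{k|\theta_k|}$ and $|\bbE[\Phi(W_{N_k,k})]-\bbE[\Phi(Y_k)]|\leq \frac{1}{k}$ for a given bounded continuous $\Phi$; the constraint $N_k\geq\rme^{k|\theta_k|}$ forces $1\ll|\theta_k|\ll\log N_k$, which places $\beta_{N_k}^{\mathrm{crit}}(\theta_k)$ in the quasi-critical regime \eqref{eq:quasi-crit}, so Theorem~\ref{th:log-normality} applies along the subsequence $N=N_k$ and the triangle inequality concludes. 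No independent tightness or higher-moment input is needed for this transfer, since all quantities involved have mean one.
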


\begin{remark}
We stress that the log-normality \eqref{eq:log-normal-SHF} emerges as $\theta \to -\infty$.
For fixed $\theta \in \R$,
the SHF averaged on balls $\mathscr{Z}_{t}^\theta\big(\cU_{B(x,\delta)}\big)$
vanishes as $\delta \to 0$, as shown by \eqref{eq:singularity-SHF}.

On a different note, the SHF $\mathscr{Z}_{t}^{\theta}(\dd x)$
is not the exponential of a (generalised) Gaussian field,
i.e.\ it is not a Gaussian Multiplicative Chaos,
see \cite{CSZ23b}.
\end{remark}

In the proof of Theorem~\ref{th:singularity-SHF} we
deduce \eqref{eq:singularity-SHF}
from \eqref{eq:log-normal-SHF} by
exploiting \emph{the monotonicity of fractional moments of $\mathscr{Z}_{t}^\theta(B)$
with respect to~$\theta$}
(see Lemma~\ref{th:monotonicity-SHF}).

Using the scaling covariance property \eqref{eq:cov}, we also show that
the SHF \emph{locally vanishes} as the time horizon tends to infinity.

\begin{theorem}[Long-time behavior of SHF] \label{th:long-time-SHF}
Fix any $\theta \in \R$. Then,
\begin{equation}\label{eq:long-time-SHF}
	\text{for any bounded set $A \subset \R^2$:} \qquad
	\mathscr{Z}_{t}^\theta(A)
	\ \xrightarrow[\ t \to \infty \ ]{d} \
	0 \,.
\end{equation}
\end{theorem}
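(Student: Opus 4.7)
The plan is to reduce to balls centered at the origin, use the scaling covariance \eqref{eq:cov} to recast the long-time problem as a shrinking-ball problem at a diverging disorder parameter, and then exploit monotonicity of fractional moments in $\theta$ together with Theorem~\ref{th:singularity-SHF}.

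First, any bounded $A \subset \R^2$ lies in some ball $B(x, R)$. By translation invariance in law of the SHF started from Lebesgue, together with nonnegativity of the measure, it suffices to show $\mathscr{Z}^\theta_t(B(0, R)) \to 0$ in probability as $t \to \infty$. Applying \eqref{eq:cov} with $\sfa = t$ at the time-$1$ slice yields
\begin{equation*}
\mathscr{Z}^\theta_t(B(0,R)) \stackrel{d}{=} t\cdot\mathscr{Z}^{\theta+\log t}_1\big(B(0,R/\sqrt t)\big) = \pi R^2\cdot\mathscr{Z}^{\theta+\log t}_1\big(\cU_{B(0,R/\sqrt t)}\big),
\end{equation*}
so the problem reduces to proving that the ball-averaged SHF at time $1$, at diverging disorder $\theta+\log t \uparrow \infty$ and shrinking radius $R/\sqrt t \downarrow 0$, tends to $0$ in probability.

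To handle the diverging $\theta$ I fix $p \in (0,1)$ and an arbitrary $\theta_0 \in \R$. For $t$ large enough that $\theta + \log t \ge \theta_0$, Lemma~\ref{th:monotonicity-SHF} (monotonicity of $\theta \mapsto \bbE[\mathscr{Z}^\theta_1(B)^p]$ for $p \in (0,1)$) yields
\begin{equation*}
\bbE\big[\mathscr{Z}^{\theta+\log t}_1(\cU_{B(0,R/\sqrt t)})^p\big] \;\le\; \bbE\big[\mathscr{Z}^{\theta_0}_1(\cU_{B(0,R/\sqrt t)})^p\big].
\end{equation*}
Theorem~\ref{th:singularity-SHF} at the fixed disorder $\theta_0$, combined with translation invariance of the SHF in law to transfer the Lebesgue-a.e.\ conclusion to $x=0$, gives $\mathscr{Z}^{\theta_0}_1(\cU_{B(0,R/\sqrt t)}) \to 0$ in probability as $t \to \infty$. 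Since these nonnegative variables have mean $1$, the H\"older bound $\bbE[X^p \ind_{X>\eps}] \le \bbE[X]^p\,\bbP(X>\eps)^{1-p}$ upgrades convergence in probability to $L^p$-convergence, so the right-hand side above goes to $0$. A Markov inequality then yields $\mathscr{Z}^\theta_t(B(0,R)) \to 0$ in probability, as required.

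The main conceptual obstacle is the orientation of the scaling: the long-time limit at fixed $\theta$ corresponds to $\theta + \log t \to +\infty$, the \emph{opposite} regime of the weak-disorder log-normality of Theorem~\ref{th:log-normality-SHF}. Lemma~\ref{th:monotonicity-SHF} is precisely what lets one exchange this strong-disorder direction for the weak-disorder direction already controlled in Theorem~\ref{th:singularity-SHF}, via a one-sided fractional-moment comparison that is insensitive to how large $\theta + \log t$ becomes.
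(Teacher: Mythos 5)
Your proof is correct and follows essentially the same route as the paper's: reduce to a ball via nonnegativity and translation invariance, use the scaling covariance \eqref{eq:cov} to rewrite the long-time limit as $\mathscr{Z}_1^{\theta+\log t}(\cU_{B(0,R/\sqrt{t})})$, invoke the convex-order monotonicity of fractional moments in $\theta$ (Lemma~\ref{th:monotonicity-SHF}) to drop to a fixed disorder parameter, and then apply the singularity result Theorem~\ref{th:singularity-SHF} at that fixed disorder, upgrading convergence in probability to convergence of fractional moments by an elementary uniform-integrability/H\"older argument. The only cosmetic differences from the paper's proof are that you allow an arbitrary reference $\theta_0$ instead of $\theta$ itself, and that you make explicit the translation-invariance step needed to pass from ``Lebesgue-a.e.\ $x$'' in \eqref{eq:singularity-SHF} to the specific centre $x=0$ --- a point the paper leaves implicit.
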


We finally investigate the regularity of the SHF
$\mathscr{Z}_{t}^\theta(\dd x)$ as a measure on $\R^2$,
showing that it has negative H\"older regularity $\mathcal{C}^{-\epsilon}$
for arbitrary small $\epsilon > 0$ (the definition of negative H\"older spaces
is recalled in Subsection~\ref{sec:regularity-SHF}).
Since positive H\"older spaces $\cC^\epsilon$ consist of functions,
this shows that, in a sense, the SHF $\mathscr{Z}_{t}^\theta(\dd x)$
\emph{barely fails to be a function}.

\begin{theorem}[Regularity of the SHF]\label{th:regularity-SHF}
Fix any $t > 0$ and $\theta\in\R$.
Almost surely, the SHF
$\mathscr{Z}_{t}^\theta(\dd x)$ belongs to $\mathcal{C}^{0-}
:= \bigcap_{\epsilon > 0} \mathcal{C}^{-\epsilon}$
and, hence, contains no atoms.
\end{theorem}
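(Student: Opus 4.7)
The plan is a Kolmogorov-type moment argument for random distributions, in the spirit of Hairer's regularity structures, fed by the hypercontractive bound of Theorem~\ref{th:genmombou}. Recall that $\mathscr{Z}_t^\theta$ belongs locally to $\mathcal{C}^{-\epsilon}$ provided that, for some $p$ large and $r\in\mathbb{N}$,
\begin{equation*}
\mathbb{E}\bigl[|\langle \mathscr{Z}_t^\theta,\,\varphi_x^\lambda\rangle|^p\bigr] \leq C \lambda^{\alpha p} \qquad \forall \lambda \in (0,1],\ x \in D,
\end{equation*}
uniformly over all smooth $\varphi$ with $\|\varphi\|_{\mathcal{C}^r}\leq 1$ and $\mathrm{supp}(\varphi)\subset B(0,1)$, where $\varphi_x^\lambda(y):=\lambda^{-2}\varphi((y-x)/\lambda)$, $D\subset\mathbb{R}^2$ is compact, and $\alpha - 2/p > -\epsilon$. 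Since $\mathbb{E}[\mathscr{Z}_t^\theta(\dd y)] = \dd y$, it suffices to prove such a bound for the centred pairing $\langle \mathscr{Z}_t^\theta - \dd y,\,\varphi_x^\lambda\rangle$.

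The first step is a second-moment computation. Using the logarithmic blow-up $K_t^\theta(u,v)\sim C\log(1/|u-v|)$ as $|u-v|\to 0$ recalled in Section~\ref{S:Briefing}, one obtains, uniformly in $x\in D$, $\lambda\in(0,1]$ and $\varphi$ as above,
\begin{equation*}
\mathbb{E}\bigl[\bigl(\langle \mathscr{Z}_t^\theta - \dd y,\,\varphi_x^\lambda\rangle\bigr)^2\bigr] = \int\!\!\int \varphi_x^\lambda(u)\varphi_x^\lambda(v)\bigl[K_t^\theta(u,v)-1\bigr]\dd u\,\dd v \leq C\bigl(1 + \log(1/\lambda)\bigr).
\end{equation*}
The crucial second step upgrades this $L^2$ bound to $L^p$ via the hypercontractive estimate of Theorem~\ref{th:genmombou}: applied to the approximating polymer partition functions $u^{(N)}(t,\varphi_x^\lambda)$ from \eqref{eq:uN} and propagated to the SHF limit, it gives, for every $p\geq 2$,
\begin{equation*}
\mathbb{E}\bigl[|\langle \mathscr{Z}_t^\theta - \dd y,\,\varphi_x^\lambda\rangle|^p\bigr] \leq C_p\bigl(1 + \log(1/\lambda)\bigr)^{p/2}.
\end{equation*}
Since $(1+\log(1/\lambda))^{p/2} \leq C_{p,\epsilon}\,\lambda^{-\epsilon p}$ for any $\epsilon>0$, the scaling hypothesis above is verified with $\alpha = -\epsilon/2$ for every $\epsilon>0$.

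A standard Kolmogorov-type criterion for random distributions (in the vein of Theorem~10.7 of Hairer's \emph{A theory of regularity structures}) then delivers, almost surely, $\mathscr{Z}_t^\theta - \dd y \in \mathcal{C}^{-\epsilon/2 - 2/p}$; taking $p$ arbitrarily large and intersecting over a countable sequence $\epsilon_n\downarrow 0$ yields $\mathscr{Z}_t^\theta \in \mathcal{C}^{0-}$ almost surely. The absence of atoms is then immediate: a Dirac mass $c\,\delta_{x_0}$ would produce a divergence $c\,\varphi(0)\,\lambda^{-2}$ in $\langle \mathscr{Z}_t^\theta,\,\varphi_{x_0}^\lambda\rangle$ (for any $\varphi$ with $\varphi(0)\neq 0$), incompatible with membership in any $\mathcal{C}^{-\epsilon}$ with $\epsilon<2$.

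The main technical obstacle is the second step: Theorem~\ref{th:genmombou} is formulated for polymer partition functions, and one must transfer it to the SHF limit while keeping constants uniform in $x$, $\lambda$ and over the admissible class of test functions (including signed ones arising in the centred pairings, which rules out purely non-negativity-based hypercontractivity). The fact that the $L^2$ variance is only $O(\log(1/\lambda))$, due to the log singularity of $K_t^\theta$, is what delivers $\mathcal{C}^{0-}$ regularity --- just below genuine function regularity, as announced in Theorem~\ref{th:regularity-SHF}.
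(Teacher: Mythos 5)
Your overall strategy --- a Kolmogorov-type moment criterion for negative H\"older regularity, fed by a moment bound on $\mathscr{Z}_t^\theta(\cU_{B(x,\lambda)})$ that is sub-polynomial in $\lambda^{-1}$ --- is the same as the paper's (Theorem~\ref{th:Kolmogorov} from \cite{FM17} plus Proposition~\ref{th:moments}). But your crucial second step is wrong, and in a way the paper itself flags.

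You invoke Theorem~\ref{th:genmombou} to upgrade the $L^2$ bound $\bbvar[\mathscr{Z}_t^\theta(\cU_{B(x,\lambda)})]= O(\log(1/\lambda))$ to the $L^p$ bound $\bbE[|\cdot|^p]\le C_p(1+\log(1/\lambda))^{p/2}$. This does not work. Theorem~\ref{th:genmombou} requires the \emph{uniformly bounded variance} hypothesis \eqref{eq:varub0}: given a fixed $\sfc_3$, the constant $\mathfrak{C}_h$ depends on $\sfc_3$, and the bound \eqref{eq:genbound} only holds for $\varphi$ with $\bbvar[Z_L^\beta(\varphi)]\le\sfc_3$. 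For fixed $\theta$ and $\lambda\to 0$ (the regime relevant to regularity), the variance diverges like $\log(1/\lambda)$, so no single $\sfc_3$ works uniformly in $\lambda$, and $\mathfrak{C}_h$ cannot be taken independent of $\lambda$. Worse, Remark~\ref{rem:intermittency} shows the hypercontractive inequality is \emph{necessarily false} once the variance diverges: for nonnegative mean-one random variables with diverging second moment, $\bbE[X^h]\gg\bbE[X^2]^{h/2}$ for $h>2$, and one expects the stronger intermittency $\bbE[X^h]\gtrsim\bbE[X^2]^{\binom{h}{2}}$ proved in \cite{CSZ23b, LiuZ24}. So the correct moment growth is of order $(\log(1/\lambda))^{\binom{h}{2}}$, not $(\log(1/\lambda))^{h/2}$. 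The hypercontractive picture $(\log)^{p/2}$ is the log-normal scaling which only emerges as $\theta\to -\infty$ (Theorem~\ref{th:log-normality-SHF}), not at fixed $\theta$.

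The paper circumvents this entirely: Proposition~\ref{th:moments} does not use Theorem~\ref{th:genmombou} at all, but cites equation (6.1) of \cite[Theorem~6.1]{CSZ23a} to get the clean polynomial bound $\bbE[Z_{tN}^{\beta_N^{\mathrm{crit}}}(\cU_{B(x\sqrt N,\delta\sqrt N)})^h]^{1/h}\le\mathtt C\,\delta^{-\epsilon}$ for any $\epsilon>0$, uniformly in $N$, $x$, $\delta\le\delta_0$. That is weaker than your claimed logarithmic bound but, being still $o(\lambda^{-\epsilon})$ for every $\epsilon>0$, it suffices to conclude $\cC^{0-}$ after feeding it into Theorem~\ref{th:Kolmogorov}. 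Your final conclusion is correct, but the ingredient you reach for (hypercontractivity via Theorem~\ref{th:genmombou}) is precisely the tool the paper argues \emph{cannot} apply in this regime; you need the operator-norm bounds of \cite{CSZ23a} or an equivalent device that tolerates diverging variance.
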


The recent work of Nakashima \cite{N25}, Section 7, indicates that the fine regularity of the SHF should be captured by
suitably defined log-H\"older spaces, which captures the logarithmic heights of the peaks. The interesting task of determining the  precise
logarithmic regularity of the SHF would require a detailed understanding of the structure of its peaks and it should be the subject of future works.
\vskip 2mm

The results above are proved in Section~\ref{sec:proof-SHF}.
The proof of Theorems~\ref{th:log-normality-SHF} and~\ref{th:regularity-SHF}
are based on the approximation of the SHF
via \emph{partition functions of directed polymers}, which was used in the
original construction of the SHF in \cite{CSZ23a} and will be recalled next.

\subsection{Results for directed polymers}\label{sec:DPRM}
To define the directed polymer model, let $S = (S_n)_{n\ge 0}$ be the simple symmetric random walk on $\Z^2$ with
law~$\P$ and expectation $\E$. We denote its transition kernel by
\begin{equation} \label{eq:rw}
	q_n (z) \assign \P (S_n = z \,|\, \nobracket S_0 = 0)
	\qquad \text{for } \ n\in\N_0 = \{0\} \cup \N \,, \ z\in\Z^2 \,.
\end{equation}
We define the \emph{expected replica overlap}
$R_N = \E\big[\sum_{n=1}^N \ind_{\{S_n = S'_n\}}\big]$
where $S'$ is an independent copy of~$S$ with $S'_0=S_0=0$.
By the local central limit theorem \eqref{eq:llt}
\begin{equation}\label{eq:RL}
  R_{N} = \sum_{n = 1}^{N}
     \sum_{x \in \mathbb{Z}^2} q_n (x)^2 = \sum_{n = 1}^{N} q_{2 n} (0) =
     \frac{\log N}{\pi} + O(1) \qquad \text{as } N\to\infty
\end{equation}
(see also \cite[Proposition~3.2]{CSZ19a} for a refined asymptotic behavior).

The \emph{environment (disorder)} is given by a family $(\omega(n,z))_{n\in\N, z\in\Z^2}$ of i.i.d.\ random variables satisfying the assumptions
in \eqref{eq:lambda}. Note that $\lambda(\beta) := \log \bbE [\rme^{\beta \omega}] \sim \frac{1}{2} \beta^2$ as $\beta \to 0$.
We introduce the quantity
\begin{equation}\label{eq:sigmabeta}
	\sigma_{\beta}^2 := \bbvar\big[ \rme^{\beta\omega - \lambda(\beta)} \big]
	= \rme^{\lambda(2\beta) - 2\lambda(\beta)}-1
	\, \underset{\beta\downarrow 0}{\sim} \, \beta^2 \,.
\end{equation}

Given $\varphi, \psi : \Z^2 \to \R$, polymer length $N\in\N$, and inverse temperature (or disorder strength) $\beta \ge 0$,
we define the averaged directed polymer partition function as follows:
\begin{equation}\label{eq:Z}
	Z_N^{\beta}(\varphi, \psi) \assign
	\sum_{z\in\bbZ^2} \varphi (z) \,
	Z_N^{\beta}(z,z')  \,\psi(z')
	 \quad \text{with} \quad
	Z_N^{\beta}(z,z')
	:= \E \pig[\rme^{\mathcal{H}_{(0,  N]}^{\beta}(S)}  \,\ind_{S_N=z'}\, \pig| \nobracket S_0 = z \pig] \,,
\end{equation}
where
\begin{equation}\label{eq:H}
	\mathcal{H}^{\beta}_{I}(S) \assign \sum_{n \in I \cap \Z} \big\{ \beta \,
	\omega (n, S_n) - \lambda(\beta) \big\}
	\qquad \text{for } I \subset \R \,.
\end{equation}
When $\psi\equiv 1$, we will simplify notation and write $Z_N^{\beta}(\varphi):= Z_N^{\beta}(\varphi, 1)$.

\begin{remark}
To comply with the periodicity of the simple random walk, we usually consider
$\varphi$ supported on the even sub-lattice
$$
	\mathbb{Z}^2_{\even} := \{(x, y) \in \Z^2 \colon x + y \text{ is even}\} \,.
$$
\end{remark}

\smallskip

As explained in Section~\ref{S:Briefing}, the Critical 2D SHF $\mathscr{Z}_{t}^\theta(\cdot)$ is the
\emph{scaling limit of the diffusively rescaled partition functions} $Z_{tN}^{\beta_N}(\cdot\, \sqrt{N})$
regarded as a random measure on $\R^2$, if the disorder strength $\beta_N$ is chosen to be in the following critical window:
\begin{equation} \label{eq:betacrit}
	\sigma_{\!\beta_N^{\mathrm{crit}}}^2 = \frac{1}{R_N} \left( 1 +
	\frac{\theta}{\log N} \right) \qquad \text{for some } \theta \in \R \,.
\end{equation}
More precisely, denoting by $\ev{x}$ the point in $\Z^2_\even$ closest to $x \in \R^2$,
the following convergence in distribution was proved in \cite[Theorem~1.1]{CSZ23a}:
for any $t > 0$
\begin{equation} \label{eq:weak-conv}
	Z_{tN}^{\beta_N^{\mathrm{crit}}} \big( \ev{x \sqrt{N}} \big) \, \dd x
	\ \xrightarrow[\ N\to\infty \ ]{d} \
	\mathscr{Z}_{t}^\theta(\dd x),
\end{equation}
which are regarded as random variables taking values in the space of locally finite measures on $\R^2$ equipped with the vague topology, i.e.,
the one generated by the integrals $\int \varphi \, \dd \mu$ for continuous and compactly supported test functions
$\varphi: \R^2 \to \R$.

We strengthen this result to a convergence in distribution of random variables taking values in
negative H\"older spaces $\mathcal{C}^{-\epsilon}$
for any $\epsilon > 0$.

\begin{theorem}[Improved convergence to the SHF]\label{th:regularity-DP}
Fix $\theta\in\R$ and consider $\beta_N^{\mathrm{crit}}$
in the critical regime \eqref{eq:betacrit}. For any $t > 0$, the following
convergence in distribution holds:
\begin{equation} \label{eq:strong-conv}
	\forall \epsilon > 0 \colon \qquad
	Z_{tN}^{\beta_N^{\mathrm{crit}}} \big( \ev{x \sqrt{N}} \big) \, \dd x
	\ \xrightarrow[\ N\to\infty \ ]{d} \
	\mathscr{Z}_{t}^\theta(\dd x) \quad \text{ in } \cC^{-\epsilon} \,.
\end{equation}
\end{theorem}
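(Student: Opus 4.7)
The plan is to upgrade the vague convergence \eqref{eq:weak-conv} to convergence in $\cC^{-\epsilon}$ by establishing tightness of the rescaled polymer densities
\[
	\mu_N(\dd x) := Z_{tN}^{\beta_N^{\mathrm{crit}}}\pigl(\ev{x\sqrt{N}}\pigr) \, \dd x
\]
in a strictly stronger topology. Since for any $\epsilon' < \epsilon$ the embedding $\cC^{-\epsilon'} \hookrightarrow \cC^{-\epsilon}$ is locally compact, tightness of $\{\mu_N\}$ in $\cC^{-\epsilon/2}$ combined with the already known vague convergence to $\mathscr{Z}_t^\theta$ forces every subsequential limit in $\cC^{-\epsilon/2}$ to coincide with $\mathscr{Z}_t^\theta$ (as distributions, since both limits agree against arbitrary smooth compactly supported test functions), yielding the desired convergence in law in $\cC^{-\epsilon}$.

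For the tightness I would rely on a Kolmogorov--Besov wavelet criterion for negative H\"older spaces. Fix a non-trivial smooth, compactly supported $\eta : \R^2 \to \R$ and set $\eta^\lambda_x(y) := \lambda^{-2} \eta((y-x)/\lambda)$ for $\lambda \in (0,1]$ and $x \in \R^2$. It then suffices to establish, for some sufficiently large $p \in \N$, a uniform moment bound
\begin{equation*}
	\sup_{N \in \N} \; \bbE\pigl[|\mu_N(\eta^\lambda_x)|^p\pigr]
	\,\le\, C_p \, \pigl(\log(1/\lambda) + 1\pigr)^{p/2}
	\qquad \text{uniformly for } x \in K \text{ compact}, \lambda \in (0,1]\,.
\end{equation*}
Taking $p$ large enough (depending on $\epsilon$ and the dimension $d=2$), this bound---being much smaller than any power $\lambda^{-\epsilon p/2}$---yields tightness in $\cC^{-\epsilon/2}$ via the standard wavelet characterisation of negative H\"older spaces.

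The above moment bound I would establish in two steps. For $p = 2$, the well-known diagram expansion provides an explicit formula for $\bbE\pigl[Z_{tN}^{\beta_N^{\mathrm{crit}}}(z)\,Z_{tN}^{\beta_N^{\mathrm{crit}}}(z')\pigr]$; after diffusive rescaling this is uniformly controlled by, and converges to, the continuum two-point kernel $K_t^\theta(x,y)$, which satisfies $K_t^\theta(x,y) \sim C \log(1/|x-y|)$ as $|x-y|\to 0$. The substitution $y = x + \lambda u$, $z = x + \lambda v$ then yields $\bbE[\mu_N(\eta^\lambda_x)^2] = O(\log(1/\lambda) + 1)$ uniformly in $N$, $x$ in compacts, and $\lambda \in (0,1]$. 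For $p > 2$, the hypercontractive bound of Theorem~\ref{th:genmombou} dominates $\bbE[|\mu_N(\eta^\lambda_x)|^p]$ by a $p$-dependent constant times the $p/2$-th power of the second moment, producing exactly the required polylogarithmic estimate.

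The main obstacle will be propagating these uniform moment bounds down to scales $\lambda$ comparable to (or below) the microscopic lattice spacing $1/\sqrt{N}$: for $\lambda \lesssim 1/\sqrt{N}$ the test function $\eta^\lambda_x$ probes essentially a single cell of the diffusive grid and the continuum heuristic for $K_t^\theta$ breaks down. A convenient workaround exploits the fact that $\mu_N$ is piecewise constant on cells of side $1/\sqrt N$, so that for $\lambda < 1/\sqrt N$ one can replace $\eta^\lambda_x$ by $\eta^{1/\sqrt N}_x$ up to a controlled error, thereby reducing matters to the regime $\lambda \gtrsim 1/\sqrt N$ where the diagrammatic analysis applies. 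Once these uniform bounds are in place, tightness in $\cC^{-\epsilon/2}$ combined with \eqref{eq:weak-conv} concludes the proof.
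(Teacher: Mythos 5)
Your overall architecture — prove tightness in $\cC^{\gamma'}$ via the wavelet moment criterion of \cite[Theorem~2.30]{FM17} (stated in the paper as Theorem~\ref{th:Kolmogorov}), then identify every subsequential limit with $\mathscr{Z}_t^\theta$ through the vague convergence \eqref{eq:weak-conv} — is exactly the paper's. But there is a genuine gap in the key moment estimate, and it is precisely the kind of failure that Remark~\ref{rem:intermittency} warns about.

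You propose to derive the higher moment bound from the second moment via the hypercontractive bound of Theorem~\ref{th:genmombou}, claiming $\bbE\bigl[|\mu_N(\eta^\lambda_x)|^p\bigr] \le C_p\,(\log(1/\lambda)+1)^{p/2}$. This fails for two linked reasons. First, Theorem~\ref{th:genmombou} requires the bounded variance hypothesis \eqref{eq:varub0}, with all constants depending on $\sfc_3$; here the initial condition $\cU_{B(x\sqrt{N},\,R\lambda\sqrt{N})}$ produces a variance of order $\log(1/\lambda)$ (by $K_t^\theta(x,y)\sim C\log\frac{1}{|x-y|}$ and Corollary~\ref{th:sanity}), which diverges as $\lambda\downarrow 0$, so no fixed $\sfc_3$ works uniformly over $\lambda\in(0,1]$ and the hypercontractive constant $\mathfrak C_h$ is not under control. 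Second, and more fundamentally, the claimed bound is simply false in the critical regime: as explained in Remark~\ref{rem:intermittency}, whenever the variance of a mean-one positive random variable $X$ diverges, Jensen applied to the size-biased law forces $\bbE[X^h]\ge \bbE[X^2]^{h-1}\gg \bbE[X^2]^{h/2}$ for $h>2$, and one in fact expects $\bbE[X^h]\gtrsim \bbE[X^2]^{\binom{h}{2}}$ (proved in the SHF setting in \cite{CSZ23b, LiuZ24}). So the $h$-th moment of $\mu_N(\eta^\lambda_x)$ grows at least like $(\log(1/\lambda))^{h-1}$, not $(\log(1/\lambda))^{h/2}$. The hypercontractive structure used elsewhere in the paper (Steps~4--5 of the proof of Theorem~\ref{th:log-normality}) is available there only because the relevant partition functions are arranged to have \emph{bounded} variance, which is not the case on vanishing scales in the critical regime.

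What actually saves the argument is that you do not need a polylogarithmic bound at all — a bound of the form $\bbE[|\mu_N(\eta^\lambda_x)|^p]^{1/p}\le \mathtt C\,\lambda^{-\epsilon}$ for \emph{arbitrarily small} $\epsilon>0$ already yields tightness in $\cC^{\gamma'}$ for all $\gamma'<-\epsilon-2/p$, hence in $\cC^{-\epsilon'}$ for every $\epsilon'>0$ by sending $\epsilon\downarrow 0$ and $p\to\infty$. The paper obtains this as Proposition~\ref{th:moments} directly from the weighted operator-norm moment estimate \cite[Theorem~6.1]{CSZ23a}: that estimate controls $\bbE[(Z_{tN}^{\beta_N^{\mathrm{crit}}}(\varphi)-1)^h]^{1/h}$ by $\|\varphi/w\|_{\ell^p}\,\|w\|_{\ell^q}$ for a weight $w$, and taking $p$ close to $1$ makes the resulting power $\delta^{-2(1-1/p)}$ arbitrarily small. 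This is weaker than polylogarithmic but, crucially, \emph{true} and uniform over $N$ and $\delta\in(0,\delta_0)$. Your concern about scales $\lambda\lesssim 1/\sqrt{N}$ is legitimate but already absorbed in that estimate, since \cite[Theorem~6.1]{CSZ23a} holds for arbitrary initial conditions $\varphi$, including those concentrated on a single lattice site.
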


\noindent
This result directly implies Theorem~\ref{th:regularity-SHF} on the regularity of the SHF.
The proof is given in Section~\ref{sec:proof-SHF} by exploiting
moment bounds from \cite{CSZ23a}
(see Proposition~\ref{th:moments}).

\smallskip

We next look back at the log-normality of the SHF averaged on vanishing balls as $\theta \to -\infty$,
see  Theorem~\ref{th:log-normality-SHF}.
We obtain this result via discrete approximations,
namely we deduce it from an analogue result for directed polymer partition functions,
which we state next.

In order to compare the SHF as $\theta \to -\infty$ with
directed polymers, we need to tune the disorder strength $\beta$
in a \emph{quasi-critical regime}
recently investigated in \cite{CCR23}, where we replace $\theta$ in \eqref{eq:betacrit}
by a sequence \emph{$\theta_N = -|\theta_N| \to -\infty$} at an arbitrarily slow rate:
\begin{equation} \label{eq:quasi-crit}
  \sigma_{\!\beta_N^{\mathrm{quasi\text{-}crit}}}^2
  \assign \frac{1}{R_N} \left( 1 -
  \frac{|\theta_N|}{\log N} \right) \qquad \text{where } \
  1 \ll |\theta_N| \ll \log N \,.
\end{equation}
We call this regime \emph{quasi-critical}
because it interpolates between the \emph{critical regime} \eqref{eq:betacrit},
corresponding to $|\theta_N| = O(1)$,
and the \emph{sub-critical regime}
\cite{CSZ17b,CSZ20, CC22, CD24},
corresponding to $|\theta_N| \approx \log N$, see \eqref{eq:sub-critical} below.

Let us consider directed polymer partition functions $Z_N^{\beta}(\varphi)$ with initial conditions
$\varphi$ that are
uniformly distributed on discrete balls, denoted by $\cU_{B(z,R)}$
(same as their continuum counterparts \eqref{eq:uniform},
with some abuse of notation):
\begin{equation}
  \label{eq:phi0}
  \cU_{B(z,R)} (\cdummy)
  \assign \frac{\ind_{B ( z, R) \cap \mathbb{Z}^2_{\even}}
  (\cdummy)}{\big| B( z, R) \cap
  \mathbb{Z}^2_{\even} \big|} \,.
\end{equation}
We can now state our log-normality result
for directed polymers.

\begin{theorem}[Quasi-critical log-normality]
  \label{th:log-normality}
Consider $\beta_N^{\mathrm{quasi\text{-}crit}}$
in the quasi-critical regime \eqref{eq:quasi-crit}
for a given sequence $1 \ll |\theta_N| \ll \log N$.
Define the scale $\delta_N$ by
\begin{equation}\label{eq:theta}
	\delta_N :=
	\rme^{-\frac{1}{2} \, |\theta_N|} \longrightarrow 0
	\qquad \text{as } N\to\infty \,.
\end{equation}
For any $t > 0$ and $x\in\R^2$, the following convergence in distribution holds:
  \begin{equation}\label{eq:log-normal}
\begin{split}
  \forall \rho \in (0,\infty) \colon \qquad
    Z_{tN}^{\beta_N^{\mathrm{quasi\text{-}crit}}} \pig( \, \cU_{B(x\sqrt{N},\delta_N^\rho\sqrt{N})} \, \pig)
        \,\xrightarrow[\,N\to\infty\,]{\ d \ } \,
    & \ \mathe^{\cN (0, \sigma^2) - \frac{1}{2} \sigma^2} \\
    & \ \text{with }
    \sigma^2 = \log ( 1+\rho )
\end{split}
  \end{equation}
and, furthermore, all positive moments converge.
\end{theorem}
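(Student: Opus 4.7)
The plan is to prove Theorem~\ref{th:log-normality} via an approximate multiplicative, multi-scale decomposition of the partition function, followed by the martingale central limit theorem, with the hypercontractive moment bound of Theorem~\ref{th:genmombou} providing the uniform integrability needed for convergence of all positive moments.

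First, I would partition the time horizon $[0, tN]$ into $K = K_N$ consecutive blocks $(T_{k-1}, T_k]$ whose endpoints are exponentially spaced between the diffusive scale $T_0 \asymp \delta_N^{2\rho} N$ associated with the initial ball and the final horizon $T_K = tN$. Using the Chapman--Kolmogorov property of the polymer partition function and integrating out intermediate positions, one writes
$$Z_{tN}^{\beta_N}(\cU_B) \,=\, \sum_{y_1, \dots, y_{K-1}} \, (\text{initial spread}) \, \prod_{k=1}^K Z_{(T_{k-1}, T_k]}^{\beta_N}(y_{k-1}, y_k),$$
and the aim is to replace this sum by an approximate product of block contributions,
$$Z_{tN}^{\beta_N}(\cU_B) \,\approx\, \prod_{k=1}^K \xi_k^{(N)},$$
where each factor $\xi_k^{(N)}$ is a suitably normalised block partition function that depends only on the disorder $\omega(n, \cdot)$ for $n \in (T_{k-1}, T_k]$, with initial and final averaging dictated by the heat-kernel spreading of the initial ball up to the relevant time scales.

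Second, with such a factorisation in hand, I would apply the martingale central limit theorem to $\log \prod_k \xi_k^{(N)} = \sum_k \log \xi_k^{(N)}$ with respect to the filtration $\cF_k = \sigma(\omega(n, \cdot) : n \le T_k)$. By construction each $\log \xi_k^{(N)}$, after centering, is (at leading order) a martingale increment, and its conditional variance $\bbE\bigl[(\log \xi_k^{(N)})^2 \,\big|\, \cF_{k-1}\bigr]$ reduces to a second moment of a block partition function in the quasi-critical disorder. Summing these contributions, using the defining relation $\sigma_{\beta_N}^2 R_N = 1 - |\theta_N|/\log N$ and the logarithmic behaviour of $R_N$, together with a careful tuning of the block spacing, should yield a total variance of $\log(1+\rho)$; this is consistent with $\bbE[\mathe^{2\cN(0,\sigma^2) - \sigma^2}] = \mathe^{\sigma^2} = 1 + \rho$, matching the second-moment asymptotics of $Z_{tN}^{\beta_N}(\cU_B)$. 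The conditional Lyapunov condition is then verified using the hypercontractive bounds of Theorem~\ref{th:genmombou}, which simultaneously yield convergence of all positive moments via uniform integrability.

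The main obstacle, I expect, will be the first step: making the multiplicative factorisation rigorous with quantitative $L^2$ error bounds that vanish as $N \to \infty$ for each fixed $K$, before sending $K \to \infty$. The polymer partition function is not literally multiplicative, and controlling the error from each intermediate averaging operation is delicate because the quasi-critical regime sits precisely at the edge of integrability of the overlap sum: errors at individual scales are only marginally small, so care is needed to obtain estimates that remain summable over the growing number $K$ of scales. The general hypercontractive moment bound of Theorem~\ref{th:genmombou} is the principal analytic tool that should allow one to tighten these error estimates uniformly across scales and to verify the Lindeberg-type condition needed for the CLT.
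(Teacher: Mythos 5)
Your high-level intuition is aligned with the paper: exponentially spaced time scales, a martingale CLT, and Theorem~\ref{th:genmombou} for Lindeberg/uniform integrability are indeed the right ingredients. But the specific factorisation you propose---a product $\prod_k \xi_k^{(N)}$ of blocks, each depending \emph{only} on the disorder in $(T_{k-1}, T_k]$---is precisely the step you correctly flag as delicate, and it is not how the paper resolves the difficulty. The paper does \emph{not} attempt to build genuinely independent factors. Instead it writes a telescoping product of ratios
\[
Z_N^\diff = \prod_{i=1}^{M} \frac{Z_{N,i}^\diff}{Z_{N,i-1}^\diff}\,,
\]
where each $Z_{N,i}^\diff$ is the full partition function up to scale $N_i$ with some mild path restrictions. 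These ratios are not independent; they are adapted to the filtration $\cF_{N_{i-1}}$ and, once centred and normalised as $\Delta_{N,i}$ in \eqref{eq:Delta}, form an array of \emph{martingale differences}. The logarithm is then handled by the Taylor expansion $\log(1+\Delta_{N,i}) = \Delta_{N,i} - \tfrac12 \Delta_{N,i}^2 + r(\Delta_{N,i})$ in \eqref{eq:logZ}, rather than by applying the CLT to $\log\xi_k$ directly. This sidesteps the need for literal independence altogether.

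The second device you are missing, and arguably the most distinctive one, is the \emph{switching off of the disorder} in the intermediate strips $(\tilde N_i, N_i]$, see \eqref{eq:tildeZ} and \eqref{eq:tildeNi}. Without this, the polymer endpoint distribution $\mu_{N,i-1}$ at scale $N_{i-1}$---which serves as the initial condition for the ratio $Z_{N,i}^\diff/Z_{N,i-1}^\diff$ via \eqref{eq:ratiorepr}---would be an arbitrary random probability mass function and one could not verify the hypotheses \eqref{eq:concsqrtL}, \eqref{eq:locunif1}, \eqref{eq:phi-cond0} needed to apply Theorem~\ref{th:vargen} and Theorem~\ref{th:genmombou}. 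Switching off disorder over a window of relative length $(\log\tfrac{1}{\delta_N^{2\rho}})^{-3}$ forces $\mu_{N,i-1}$ to be comparable to the free random walk kernel, see \eqref{eq:cM}, which is what makes the conditional variance computation of Theorem~\ref{th:2mom} go through. Your plan to ``replace the polymer distribution by the heat-kernel spreading'' implicitly relies on this kind of smoothing but offers no mechanism for it, and in the quasi-critical regime the error from such a replacement would not be summable over $K\to\infty$ scales without the smoothing. In short: the paper's telescoping--martingale structure is what makes the multiplicative heuristic rigorous without ever having to prove independence, and the disorder--switch-off is the device that makes the conditional initial data tractable.
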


Log-normality was first proved in \cite[Theorem~2.8]{CSZ17b}
for $Z_N^\beta(x) := Z_N^\beta(\ind_{\{x\}})$, i.e.\ the partition function
started at a single point~$x$ (also called \emph{point-to-plane} partition function),
when $\beta = \beta_N^{\text{\rm sub-crit}}$ is chosen in the \emph{sub-critical regime}:
\begin{equation}\label{eq:sub-critical}
	\sigma_{\!\beta_N^{\text{\rm \rm sub-crit}}}^2 \sim \frac{\hat\beta^2}{R_N}
	\qquad \text{for some } \ \hat\beta \in (0,1)
	\qquad (\, \text{i.e.} \ \
	\big(\beta_N^{\text{\rm sub-crit}}\big)^2 \sim \tfrac{\pi \, \hat\beta^2 }{\log N} \,) \,.
\end{equation}
Our proof of Theorem~\ref{th:log-normality} also covers
this regime and allows for averaging over balls of \emph{arbitrary
sub-diffusive polynomial radius} $N^{\,\gamma/2 + o(1)}$ as $N\to \infty$, for any $0 \le\gamma < 1$.
The few changes required are described in Remarks~\ref{rem:subcr2} and~\ref{rem:subcr3} (see also Remark~\ref{rem:subcr1}).

\begin{theorem}[Sub-critical log-normality]
  \label{th:sub-critical-log-normality}
Consider $\beta_N^{\text{\rm sub-crit}}$
in the sub-critical regime \eqref{eq:sub-critical}
for some $\hat\beta \in (0,1)$.
For any $t  > 0$, $x\in\R^2$, one has the convergence in distribution
  \begin{equation}\label{eq:log-normal-sub-critical}
\begin{split}
	\forall \gamma \in [0,1) \colon \qquad
    Z_{tN}^{\beta_N^{\text{\rm sub-crit}}} \pig( \,\cU_{B(x\sqrt{N},
    \sqrt{N}^{\,\gamma+ o(1)})} \,\pig)
    \,\xrightarrow[\,N\to\infty\,]{\ d \ }
    &\  \mathe^{\cN (0, \sigma^2) - \frac{1}{2} \sigma^2} \\
    & \ \text{with } \sigma^2 =
	\log \tfrac{1- \gamma\,\hat\beta^2}{1-\hat\beta^2}
\end{split}
  \end{equation}
and, furthermore, all positive moments converge.
\end{theorem}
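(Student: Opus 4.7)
The plan is to mirror the multi-scale martingale strategy used in the proof of Theorem~\ref{th:log-normality} for the quasi-critical regime, with the minor modifications signposted in Remarks~\ref{rem:subcr2} and~\ref{rem:subcr3}. The structural input is the approximate multiplicative decomposition \eqref{eq:decomp}: first partition the time horizon $[0,tN]$ by geometric points $N_k = N^{\gamma_k}$ with $\gamma_0 = \gamma$, $\gamma_K = 1$, and $\gamma_{k+1} - \gamma_k \to 0$ at a slow rate, then isolate in the chaos expansion of $Z_{tN}^{\beta_N^{\text{sub-crit}}}(\cU_{B(x\sqrt{N},\sqrt{N}^{\,\gamma+o(1)})})$ a factor $1+\Xi_k$ on each shell $[N_k,N_{k+1}]$, built from the disorder in a space-time slab of spatial width $\sqrt{N_k}$. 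The complement of these factors is controlled in $L^2$ by the standard sub-critical second-moment bounds.

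The second step is to verify that the centered quantities $\log(1+\Xi_k)$ form a martingale difference array (with respect to the filtration generated by the disorder on times $\le N_k$) whose summed variances produce the claimed $\sigma^2$. Using $\sigma_{\beta_N}^2 R_N \to \hat\beta^2 < 1$ together with the overlap asymptotics \eqref{eq:RL} and the local central limit theorem, the second moment of each increment evaluates to
\[
\bbvar\bigl[\log(1+\Xi_k)\bigr] \;=\; \log \frac{1 - \gamma_k \hat\beta^2}{1 - \gamma_{k+1} \hat\beta^2} + o(1/K) \,,
\]
and telescoping across $k$ yields $\sum_k \bbvar[\log(1+\Xi_k)] \to \log\frac{1 - \gamma \hat\beta^2}{1 - \hat\beta^2}$, as required. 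The Gaussian limit then follows from the martingale CLT; the Lindeberg condition is supplied by the \emph{general hypercontractive moment bound} of Theorem~\ref{th:genmombou}, which dominates all higher moments of $\Xi_k$ by powers of its second moment, so that each increment is uniformly small and uniformly integrable. The same hypercontractivity upgrades convergence in law to convergence of all positive moments, given that the $L^p$ norms of the averaged partition function are uniformly bounded throughout the sub-critical regime.

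The main obstacle, and the point at which the argument genuinely departs from the quasi-critical proof, is the comparison of the multi-scale product with the true partition function: since $\sigma_{\beta_N}^2 R_N$ stays bounded away from $1$, the number of shells $K = K(N)$ must grow polynomially in $\log N$ rather than being tied to a diverging parameter $|\theta_N|$ as in the quasi-critical case. Consequently, the cross-scale remainders have to be estimated with the sub-critical $L^p$-tightness bounds of \cite{CSZ17b,CSZ20,CC22,CD24} in place of their critical counterparts. In addition, one must verify that the initial uniform density on $B(x\sqrt{N},\sqrt{N}^{\,\gamma+o(1)})$ is well approximated by the diffusive heat kernel at time $N^\gamma$ (via the local CLT together with a smoothing argument), in order to start the decomposition cleanly at scale $N^\gamma$ rather than at the quasi-critical starting scale $N\delta_N^{2\rho}$. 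Once these adjustments are made, the proof of Theorem~\ref{th:log-normality} transfers essentially line by line and delivers Theorem~\ref{th:sub-critical-log-normality}.
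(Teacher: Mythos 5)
Your overall strategy is correct and coincides with the paper's: Theorem~\ref{th:sub-critical-log-normality} is obtained by running the multi-scale/martingale proof of Theorem~\ref{th:log-normality} essentially unchanged, with the variance formulas adjusted as in Remark~\ref{rem:subcr1} (equation~\eqref{eq:2nd-momsub}), Remark~\ref{rem:subcr2} (equation~\eqref{eq:variance-subcritical}) and Remark~\ref{rem:subcr3}.

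However, the paragraph on the ``main obstacle'' misidentifies where the quasi-critical and sub-critical proofs differ, and if taken literally it would send you down a wrong path. The paper treats the sub-critical regime as the parametric choice $|\theta_N| \sim (1-\hat\beta^2)\log N$ inside the \emph{same} proof: the scales $N_i = N(\delta_N^{2\rho})^{1-i/M}$, with $\delta_N^{2\rho}=N^{-(1-\gamma)+o(1)}$ under $\rho=\frac{1-\gamma}{1-\hat\beta^2}$, are exactly your $N^{\gamma_k}$, and the starting scale $N\delta_N^{2\rho}$ already equals $N^{\gamma+o(1)}$, so no additional heat-kernel smoothing of the initial condition is required beyond what Step~3 of the quasi-critical proof already does. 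The number of shells $M$ is \emph{not} forced to grow polynomially in $\log N$; as in the quasi-critical case, $M$ is first kept fixed while $N\to\infty$, and only afterwards sent to infinity slowly (so that the error bounds of Steps~1--2 are respected), exactly as in Step~6. Finally, no external sub-critical $L^p$-tightness bounds from \cite{CSZ17b,CSZ20,CC22,CD24} are needed: the paper's Theorem~\ref{th:genmombou} is stated and proved uniformly across the sub-critical, quasi-critical and critical regimes (see Lemma~\ref{th:sigmatheta}), and it supplies the Lindeberg condition and the uniform integrability for moment convergence in all three cases, so the cross-scale remainders are handled by the identical estimates. The only genuine changes, as the paper's Remarks state, are the numerical modification $\rho\mapsto \rho\hat\beta^2$ appearing in the per-scale variances (since $\sigma_{\beta_N}^2 \sim \pi\hat\beta^2/\log N$ rather than $\pi/\log N$) and the corresponding change in the Riemann sum, which telescopes to $\sigma^2=\log(1+\rho\hat\beta^2)=\log\frac{1-\gamma\hat\beta^2}{1-\hat\beta^2}$.
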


Alternative proofs of the log-normality of the point-to-plane partition function in the sub-critical regime
were given in \cite{CC22} and, more recently, in \cite{CD24},
simplifying the original approach in \cite[Theorem~2.8]{CSZ17b}.
A key ingredient in all of these proofs
is the identification of suitable \emph{exponential time scales} which yield an approximate
factorisation of the partition function.

Remarkably, \emph{a similar structure also emerges in the quasi-critical regime \eqref{eq:quasi-crit}}
when the partition function
is averaged on scales $\delta_N^\rho \sqrt{N}$, for any power~$\rho$,
with $\delta_N$ as in \eqref{eq:theta}.
This key fact is at the core of our proof of Theorem~\ref{th:log-normality}
(see Section~\ref{sec:log-normality} for more details).

\begin{remark}[Quasi-critical vs.\ sub-critical regime] \label{rem:analogy-sub-critical0}
Comparing \eqref{eq:log-normal} with \eqref{eq:log-normal-sub-critical}
for $\gamma = 0$, we can draw an analogy between the following two quantities:
\begin{itemize}
\item the quasi-critical partition function
$Z_N^{\mathrm{quasi\text{-}crit}} :=
Z_{N}^{\beta_N^{\mathrm{quasi\text{-}crit}}} \big( \, \cU_{B(0, \delta_N^\rho\sqrt{N})} \, \big)$
of size~$N$, averaged on the ball of radius $\delta_N^\rho \sqrt{N}$
centred at~$0$;

\item the sub-critical point-to-plane partition function $Z_L^{\text{\rm sub-crit}}
:= Z_{L}^{\beta_L^{\text{\rm sub-crit}}} (0)$
of size~$L$, with disorder strength $\hat\beta^2 = \frac{\rho}{1+\rho}$, that is $\frac{1}{1-\hat\beta^2} = 1+\rho$ (to match $\sigma^2$ in
\eqref{eq:log-normal} and \eqref{eq:log-normal-sub-critical}).
\end{itemize}
More precisely, if we divide space into squares of side length $\delta_N^\rho \sqrt{N}$ and
time into intervals of size $(\delta_N^\rho)^2 N$,
we can view the quasi-critical model $Z_N^{\mathrm{quasi\text{-}crit}}$ as effectively
a sub-critical model $Z_L^{\text{\rm sub-crit}}$
with rescaled time horizon $L \approx 1 / (\delta_N^\rho)^2$
and effective disorder strength $\hat\beta^2 = \frac{\rho}{1+\rho}$.

This analogy is made quantitative by our strategy of proof for Theorem~\ref{th:log-normality},
described in Section~\ref{sec:log-normality}. This suggests that, at a conceptual level,
other results that hold in the sub-critical
regime could be transferred to the quasi-critical regime
via this correspondence.

We stress, however, that the quasi-critical regime \eqref{eq:quasi-crit} presents
a fundamental technical challenge: unlike in the sub-critical regime,
the main contribution to the polynomial chaos
expansion of the partition function now comes from \emph{chaos of unbounded order}
(see the proofs of Proposition~\ref{th:2mb} and Theorem~\ref{th:vargen}).
As a consequence, many fundamental tools break down (e.g., hypercontractivity)
and novel arguments are required.
\end{remark}

\subsection{Moment bounds} \label{S:mom}

A key tool in our analysis are
\emph{moment bounds on the partition function} $Z_L^{\beta}(\varphi)$,
see \eqref{eq:Z} (we denote the system size by~$L$ in place of~$N$ for later convenience).
Such bounds, based on a functional operator approach, have been exploited
in several contexts, see \cite{GQT21,CSZ23a,LZ23,CCR23, CZ23, CZ24, CN25}.
We provide here a \emph{universal bound} of independent interest,
which applies to all regimes of $\beta$ mentioned so far (sub-critical, quasi-critical and critical)
and to general initial conditions $\varphi(\cdot)$ supported on sub-diffusive or diffusive scales $O(\sqrt{L})$.

\smallskip

We focus on initial conditions
which are \emph{probability mass functions} on $\Z^2$,\footnote{Since $\varphi \mapsto Z_L^\beta(\varphi)$ is linear,
any $\varphi \ge 0$ with $\sum_{x\in\Z^2} \varphi(x) < \infty$ can be normalised to a probability mass function.}
i.e.\
\begin{equation*}
	\varphi(\cdot) \ge 0 \,, \qquad
	\sum_{x\in\Z^2} \varphi(x) = 1 \,,
\end{equation*}
with finite \emph{mean-squared displacement from its center of mass}:
\begin{equation} \label{eq:bbD}
	\bbD[\varphi] := \sum_{z\in\Z^2} |z-m_\varphi|^2 \, \varphi(z) < \infty
	\qquad \text{with} \qquad
	m_\varphi := \sum_{z\in\Z^2} z \, \varphi(z) \,.
\end{equation}
We require two natural bounds on~$\varphi$.

\begin{itemize}
\item \emph{Exponential localisation on
at most diffusive scale:}
for some $\,\hat{t}\, > 0$, $\sfc_1 < \infty$
\begin{equation} \label{eq:concsqrtL}
	\exists z_0 \in \R^2 \colon \qquad
	\sum_{z\in\Z^2} \varphi(z ) \, \rme^{2\,\hat{t}\, \frac{|z - z_0|}{\sqrt{L}}}
	\le \sfc_1
\end{equation}
(the factor $2$ in the exponent is for later convenience).
This allows $\varphi(\cdot)$ to be localised on a diffusive or sub-diffusive scale, as it implies $\sqrt{\bbD[\varphi]} = O( \sqrt{L})$.

\item \emph{Local uniformity:} for some $\sfc_2 < \infty$
\begin{equation}\label{eq:locunif1}
	\big\| \varphi \big\|_{\ell^2}^2
	= \sum_{z\in\Z^2} \varphi(z)^2 = \E[\varphi(Z)]
	\le \frac{\sfc_2}{\bbD[\varphi]} \, ,
\end{equation}
where $Z$ is a random point in $\Z^2$ with law $\varphi$.
Since $\|\varphi\|_{\ell^2}^2 \le \|\varphi\|_{\ell^\infty} \sum_{z\in\Z^2} \varphi(z)
= \|\varphi\|_{\ell^\infty}$, a sufficient condition is
\begin{equation}\label{eq:locunif1sup}
	\big\| \varphi \big\|_{\ell^\infty}
	\le \frac{\sfc_2}{\bbD[\varphi]} \, ,
\end{equation}
which means that \emph{the peaks of~$\varphi$ are comparable to those of a uniform distribution}
(note that $\varphi$ puts most of its mass in a ball of radius $\sqrt{\bbD[\varphi]}$, by Chebyshev).
\end{itemize}

We do not restrict $\beta$ to any particular regime, but we consider partition functions with \emph{uniformly bounded variance}: for some $\sfc_3 < \infty$
\begin{equation}\label{eq:varub0}
	\bbvar[Z_{L}^\beta(\varphi)]
	\le \sfc_3 \,.
\end{equation}
We will show that, together with \eqref{eq:concsqrtL}, this implies that
$\beta$ lies within or below the critical regime as $L\to\infty$, see Lemma~\ref{th:sigmatheta}.

We are ready to state our general moment bound.

\begin{theorem}[General moment bound]\label{th:genmombou}
Given $h\in\N$ and $\,\hat{t}$, $\sfc_1$, $\sfc_2$,  $\sfc_3 \in (0,\infty)$,
there are constants $L_h, \mathfrak{C}_h  < \infty$
(depending also on $\,\hat{t}, \sfc_1, \sfc_2, \sfc_3$) such that
\begin{equation}\label{eq:genbound}
	\big| \bbE \big[ \big( Z_L^\beta(\varphi) - \bbE[Z_L^\beta(\varphi)] \big)^h \big] \big|
	\,\le\,
	\mathfrak{C}_h
	\bbvar[Z_{L}^\beta(\varphi)]^{\frac{h}{2}}
\end{equation}
uniformly for $\beta \ge 0$, $L\ge L_h$ and
probability mass functions $\varphi$ satisfying \eqref{eq:concsqrtL}, \eqref{eq:locunif1}, \eqref{eq:varub0}.
The bound \eqref{eq:genbound} still holds if, on the LHS, we replace $Z_L^\beta(\varphi)$ with its
restriction to any subset
of random walk paths in its definition \eqref{eq:Z}.
\end{theorem}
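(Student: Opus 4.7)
The plan is to expand $Z_L^\beta(\varphi) - \bbE[Z_L^\beta(\varphi)]$ into a polynomial chaos in the normalised disorder, then to analyse the $h$-th centred moment combinatorially, so that the dominant contribution from pair partitions produces exactly the $\bbvar[Z_L^\beta(\varphi)]^{h/2}$ scaling, while contributions from partitions containing blocks of size $\ge 3$ are absorbed into the multiplicative constant $\mathfrak{C}_h$ by operator-theoretic estimates.

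First I would write
\begin{equation*}
Z_L^\beta(\varphi) - \bbE[Z_L^\beta(\varphi)]
 = \sum_{k\ge 1} \sigma_\beta^k \sumtwo{0<n_1<\cdots<n_k\le L}{x_1,\ldots,x_k\in\Z^2} \psi_k^\varphi\bigl((n_i,x_i)_{i\le k}\bigr)\, \prod_{i=1}^k \zeta(n_i,x_i),
\end{equation*}
where $\zeta(n,x):=(\rme^{\beta\omega(n,x)-\lambda(\beta)}-1)/\sigma_\beta$ are centred i.i.d.\ random variables with unit variance, and $\psi_k^\varphi$ is a product of random-walk transition kernels starting from~$\varphi$. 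Under \eqref{eq:lambda} all positive moments of $\zeta$ are uniformly bounded for $\beta \in [0,\beta_0]$. Raising to the $h$-th power and taking expectation, by independence only set partitions $\pi$ of the union of the space-time indices into blocks of size $\ge 2$ contribute. The pair partitions (all blocks of size~$2$) factorise through pairings of the $h$ copies, each producing one power of $\bbvar[Z_L^\beta(\varphi)]$; summing over the $(h-1)!!$ pairings yields exactly $(h-1)!!\,\bbvar[Z_L^\beta(\varphi)]^{h/2}$ for even $h$, with a Cauchy--Schwarz bound of the same order for odd $h$.

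The main obstacle is controlling the partitions $\pi$ containing at least one block of size $r \ge 3$, uniformly in $\beta$ across the sub-critical, quasi-critical and critical regimes. As emphasised in Remark~\ref{rem:analogy-sub-critical0}, hypercontractivity is unavailable because the dominant chaos order is effectively unbounded in the quasi-critical and critical windows. The key observation is that a block of size $r \ge 3$ forces $r$ chaos replicas to intersect at a single space-time point, which in two dimensions is strictly rarer than the pairwise intersections responsible for the variance; this yields an extra factor of $\sigma_\beta^{r-2}$ that compensates the combinatorial growth in~$h$. Quantitatively I would bound the remaining space-time sums by iterated applications of a two-body renewal operator whose spectral radius is controlled by hypothesis \eqref{eq:varub0}, which via Lemma~\ref{th:sigmatheta} forces $\sigma_\beta^2 R_L \le 1 + o(1)$ and thus keeps $\beta$ at or below the critical window. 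The initial factor involving $\varphi$ is handled by \eqref{eq:concsqrtL}, which confines the spatial sum to a diffusive range, together with \eqref{eq:locunif1}, which bounds $\|\varphi\|_{\ell^2}^2$ and hence the first layer of the chaos expansion. Finally, the statement about restricted partition functions follows at no extra cost because every estimate rests on non-negative walk kernels, which can only decrease under restriction to a subset of paths.
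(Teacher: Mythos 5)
Your proposal correctly identifies the chaos expansion and the role of blocks of size $\ge 3$, but it misses the central combinatorial difficulty, and one of your stated intermediate claims is in fact false.

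The gap is the assertion that \emph{``pair partitions (all blocks of size $2$) factorise through pairings of the $h$ copies, each producing one power of $\bbvar[Z_L^\beta(\varphi)]$''}. Even after restricting to configurations in which each disorder site is visited exactly twice, the two replicas meeting at a given site need not be the same two replicas that meet at the next site: the identity of the pair can change (a ``rewiring'' or ``swap''). Only the non-rewired configurations factorise into $h/2$ independent two-replica intersection chains, giving $(h-1)!!\,\bbvar[Z_L^\beta(\varphi)]^{h/2}$. In the sub-critical regime rewirings are cheap to ignore because each swap costs an extra small factor of order $\sigma_\beta^2$ times an operator norm that stays bounded, and $\bbvar$ is small; but in the quasi-critical and critical regimes covered by the theorem $\bbvar[Z_L^\beta(\varphi)]$ is $O(1)$ and arbitrarily many swaps contribute at the same order $\bbvar^{h/2}$. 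Your ``extra factor of $\sigma_\beta^{r-2}$'' for blocks of size $r \ge 3$ is a genuine saving (this is the content of \cite[Proposition~4.13]{CCR23}), but it says nothing about the sum over pair-block configurations with many rewirings, which is the real obstruction. This is precisely why the paper emphasises, in the remark following the theorem and in Remark~\ref{rem:analogy-sub-critical0}, that ordinary hypercontractivity breaks down: the dominant chaos order is unbounded, so the rewiring sum is not a perturbative correction.

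The paper's actual argument (Theorem~\ref{th:genmombou+} via Theorem~\ref{th:moment-bound-CCR}) organises the moment as a sum over the number $r$ of rewirings and produces a bound of the form $\sum_{r\ge 1}(pq\,\Gamma)^{\max\{r,h/2\}}$, where $\Gamma = \mathtt{K}_h^{(\hat t)}\,\sigma_\beta^2/(1-\sigma_\beta^2 R_L^{(\hat\lambda)})$ involves an exponentially tilted two-replica Green's function $R_L^{(\hat\lambda)}$ and operator norms $\|\widehat\sfQ\|_{\ell^q\to\ell^q}$, $\|\widehat\sfU\|_{\ell^q\to\ell^q}$ in weighted $\ell^q$ spaces. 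Making the series converge and matching the variance requires a delicate interplay of three parameter choices (the tilt $\hat\lambda$, the H\"older exponent $q\sim R_{L/2}(\varphi,\varphi)$, and the lower bound on $\bbE[Z_{L/2}^\beta(0)^2]$ from Lemma~\ref{th:2lb}) that your sketch does not produce; the sentence ``iterated applications of a two-body renewal operator whose spectral radius is controlled by \eqref{eq:varub0}'' gestures at this but is directed at the wrong obstacle (triple intersections rather than swaps) and omits the crucial role of the exponential weights and the $p,q$ exponents. In short, the proposal is not a correct proof: it would need to be restructured around the rewiring combinatorics and the operator norm estimates before it could close the bound.
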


We prove Theorem~\ref{th:genmombou} in Section~\ref{sec:moments} in a strengthened form,
see Theorem~\ref{th:genmombou+}, where we
relax the assumption \eqref{eq:locunif1} and we consider partition functions
$Z_L^\beta(\varphi,\psi)$ averaged
at both endpoints (i.e.\ we allow for a ``final condition'' $\psi$, besides the initial condition~$\varphi$).

\begin{remark}[Beyond diffusive scales]
We prove Theorem~\ref{th:genmombou} under fairly general assumptions: the bounded variance condition \eqref{eq:varub0} is necessary, as we explain in Remark~\ref{rem:intermittency}, and the local uniformity assumption \eqref{eq:locunif1} is mild (and we further relax it in Section~\ref{sec:moments}).

Only the exponential localisation condition on at most diffusive scale \eqref{eq:concsqrtL} imposes some real restriction.
For instance, for initial conditions $\varphi(\cdot)$ localised at scale $\sqrt{N}$, one can consider system sizes $L = \epsilon N$ and prove a moment bound like \eqref{eq:genbound} uniformly in $\epsilon > 0$, see e.g.\ \cite[Proposition~2.3]{CCR23} in the quasi-critical regime, but this goes beyond the scope of Theorem~\ref{th:genmombou} because assumption \eqref{eq:concsqrtL} is not satisfied uniformly in $\epsilon > 0$.

We believe that our proof of Theorem~\ref{th:genmombou} could be extended in order to relax the localisation condition \eqref{eq:concsqrtL}, but we refrain from doing so in the present paper.
\end{remark}

\begin{remark}[Hypercontractivity]
The bound \eqref{eq:genbound} shows that,
under the assumptions of Theorem~\ref{th:genmombou},
a form of \emph{hypercontractivity} holds for the diffusively averaged partition function:
moments of order $h > 2$ are controlled by the $\frac{h}{2}$-power
of the second moment.

We point out that hypercontractivity is a general property of Wiener chaos and polynomial
chaos when the main contribution comes from chaos of \emph{bounded order} \cite{J97, MOO10}.
This is the case for the
directed polymer partition function only in the sub-critical regime \eqref{eq:sub-critical},
because in the quasi-critical and critical regimes \eqref{eq:quasi-crit} and \eqref{eq:betacrit}
\emph{the main contribution comes from chaos of unbounded order}.
The fact that the partition function still satisfies a form of hypercontractivity in these
latter regimes,
by Therorem~\ref{th:genmombou}, is highly non-trivial.
\end{remark}

\begin{remark}[Intermittency]\label{rem:intermittency}
The bounded variance assumption \eqref{eq:varub0} is crucial for Theorem~\ref{th:genmombou}.
In fact it is \emph{necessary} for the moment bound \eqref{eq:genbound} to hold as it is a general
fact that, for a sequence of nonnegative random variables $X_n$ with mean $1$ and diverging variance,
we have
\begin{align*}
\forall h >2 \colon \quad \bbE\big[X_N^h \big] \gg \bbE\big( X_N^2\big)^{h/2}.
\end{align*}
This can be seen by defining the size-biased law
$\dd\tilde\bbP_N : =X_N \, \dd\bbP$ and using Jensen's inequality:
\begin{align*}
\bbE\big[ X_N^h\big]
= \tilde\bbE\big[ X_N^{h-1}\big]
\geq \tilde\bbE\big[ X_N\big]^{h-1}
= \bbE\big[ X_N^{2}\big]^{h-1}
\gg \bbE\big[ X_N^{2}\big]^{\tfrac{h}{2}},
\end{align*}
since $h >2$ and the second moment
$\bbE[ X_N^{2}]$ diverges as $N\to\infty$.

In the case $X_N=Z_{L_N}^{\beta_N}(\varphi_N)$ with the support of $\varphi_N$ shrinking to
$0$ fast enough such that the variance diverges as $N\to\infty$, we actually expect a stronger intermittency
of the form
\begin{align*}
	\forall h \ge 3: \qquad
	\bbE \big[ Z_{L_N}^{\beta_N}(\varphi_N)^h \big]
	\geq \bbE[Z_{L_N}^{\beta_N}(\varphi_N)^2]^{h \choose 2} \,.
\end{align*}
Such results have been proved in the continuum setting of the Critical 2D SHF \cite{CSZ23b, LiuZ24}.
\end{remark}

\subsection{Extensions and related results} \label{S:ext}

Theorem~\ref{th:log-normality-SHF},
which deals with the regime $\theta \to -\infty$,
is proved by approximating the SHF
by the directed polymer partition functions in the
\emph{quasi-critical regime} \eqref{eq:quasi-crit} (see Section~\ref{sec:proof-SHF}).
This strategy is actually very general and leads us to
formulate the following ``meta-theorem''.

\begin{metatheorem} \label{th:meta}
Consider any ``reasonable'' statement about the SHF $\mathscr{Z}_t^\theta(\dd x)$
in the regime $\theta \to -\infty$. Such a statement holds if one can prove
the corresponding statement for the rescaled directed polymer partition functions
$Z_{tN}^\beta(\ev{x\sqrt{N}}) \, \dd x$ with $\beta = \beta_N^{\mathrm{quasi\text{-}crit}}$
in the quasi-critical regime \eqref{eq:quasi-crit}, for any sequence
$|\theta_N| \to \infty$ slowly enough.
\end{metatheorem}

By ``reasonable'' statement we mean some property of locally finite measures on $\R^2$
which is \emph{continuous with respect to the vague topology or the topology of $\cC^{-\epsilon}$}.
Indeed, the basic idea behind Claim~\ref{th:meta} is that the convergence in distribution
\eqref{eq:weak-conv} or \eqref{eq:strong-conv}, which we know to hold \emph{in the critical regime},
can also be applied to $\theta = \theta_N \to -\infty$ slowly enough, allowing us
to effectively transfer the statement from directed polymers to the SHF.
To lighten the exposition, we refrain from formulating a more precise result: we rather refer to
the proof of Theorem~\ref{th:log-normality-SHF} in Section~\ref{sec:proof-SHF}
for a concrete application of this idea.

\bigskip

For example, the quasi-critical regime \eqref{eq:quasi-crit} was recently investigated in \cite{CCR23}
for \emph{diffusive initial conditions}, such as $\cU_{B(0,\delta\sqrt{N})}$ for fixed $\delta > 0$. It was shown that the averaged
partition function concentrates around its mean:
\begin{equation*}
	Z_{tN}^{\beta_N^{\mathrm{quasi\text{-}crit}}}\!\big( \cU_{B(0,\delta\sqrt{N})} \big)
	\ \xrightarrow[\ N\to\infty \ ]{d} \
	1 \,\equiv\,
	\bbE \pig[ Z_{tN}^{\beta_N^{\mathrm{quasi\text{-}crit}}}\!\big( \cU_{B(0,\delta\sqrt{N})} \big) \pig]  \,,
\end{equation*}
its variance vanishes at rate $|\theta_N|^{-1}$ \cite[Proposition~2.1]{CCR23}, and \emph{Gaussian fluctuations} emerge
at the corresponding scale \cite[Theorem~1.1]{CCR23}:
\begin{equation}\label{eq:CCR}
	\sqrt{|\theta_N|} \,
	\pig\{ Z_{tN}^{\beta_N^{\mathrm{quasi\text{-}crit}}}\!\big( \cU_{B(0,\delta\sqrt{N})} \big)
	- 1 \pig\}
	\ \xrightarrow[\ N\to\infty \ ]{d} \
	\cN(0,a_\delta^2) \qquad \text{with} \quad 0 < a_\delta^2 <\infty \,.
\end{equation}
From this one can deduce a corresponding results for the SHF in the weak disorder limit $\theta\to -\infty$, see \cite[Theorem~1.2]{CCR23},
in the spirit of the metatheorem just stated:
\begin{equation}\label{eq:CCR2}
\sqrt{|\theta|} \big(\mathscr{Z}_{t}^\theta(\cU_{B(x,\delta)})-1\big) \ \xrightarrow[\ \theta\to-\infty \ ]{d} \
	\cN(0,a_\delta^2).
\end{equation}
\begin{remark}
We expect $\mathscr{Z}_{t}^\theta(\cU_{B(x,\delta)})$ to have Gaussian fluctuations, as $\theta\to-\infty$,
on all spatial scales $\delta = O(1)$ satisfying $\delta = \rme^{-o(|\theta|)}$, i.e.\ much larger than the scales $\delta_\theta^\rho$ on which log-normality arises in Theorem~\ref{th:log-normality-SHF}. Similarly, we expect the averaged partition function in
\eqref{eq:CCR} to have a Gaussian limit when averaged on spatial scales $\delta \sqrt{N}$ with $\delta_N = O(1)$ satisfying $\delta = \rme^{-o(|\theta_N|)}$, i.e.\ much larger than the scale $\delta_N^\rho$ appearing in Theorem~\ref{th:log-normality}.

We note that in the subcritical regime \eqref{eq:sub-critical}, Gaussian fluctuations for spatial averages on such mesoscopic spatial scales have been
established for the solution of the 2D KPZ equation in \cite{Tao24b} (the subcritical 2D SHE and polymer partition functions are expected to have the same fluctuations on the mesoscopic scale).
\end{remark}

\section{Basic tools}

In this section we collect some basic definitions and
tools that we use in the proof. In particular, we present some key
second moment computations on the partition function
in the quasi-critical regime \eqref{eq:quasi-crit}.
For simplicity, we will abbreviate $\beta_N^{\mathrm{quasi\text{-}crit}}$ by~$\beta_N$.

\subsection{Partition functions}
Given $A < B \in 2\mathbb{Z}$ and
a function $\varphi : \mathbb{Z}^2_{\even}
\rightarrow \mathbb{R}$, we denote by $Z_{(A, B]} (\varphi)$
the directed polymer partition function on the time
interval $(A, B]$ with initial condition $\varphi$ at time~$A$:
\begin{equation} \label{eq:ZAB}
  Z_{(A, B]}^{\beta} (\varphi) \assign \E \Big[ \mathe^{\mathcal{H}_{(A,
  B]}^{\beta}}  \,\Big|\, \nobracket S_A \sim \varphi \Big] \assign \sum_{x \in
  \mathbb{Z}^2_{\even}} \varphi (x) \, \E [\mathe^{\mathcal{H}_{(A,
  B]}^{\beta}}  | \nobracket S_A = x]
\end{equation}
where $\mathcal{H}_I^{\beta}$ is defined in \eqref{eq:H}.
Note that $\bbE[\rme^{\mathcal{H}^{\beta,\omega}_{I}(S)}] = 1$, and hence
\begin{equation*}
	\bbE[ Z_{(A, B]}^{\beta}(\varphi)] = \sum_{x\in\Z^2_\even} \varphi(x) \,.
\end{equation*}
In the special case $(A,B] = (0,L]$ and $\varphi = \ind_{\{x\}}$, we write for short
\begin{equation} \label{eq:ZN}
	Z_L^{\beta} (\varphi) := Z_{(0, L]}^{\beta} (\varphi)
	\qquad \text{and} \qquad
	Z_L^\beta(x) := Z_L^\beta(\ind_{\{x\}}) \,.
\end{equation}

\subsection{Random walk}

Recall the random walk transition kernel $q_n(\cdot)$ from \eqref{eq:rw}.
We give two versions of the local central limit theorem for the simple symmetric
random walk on $\Z^2$, see \cite[Theorems~2.3.5 and~2.3.11]{LL10}:
uniformly for $z\in\Z^2$ and $n\in 2\N$, as $n\to\infty$
\begin{equation} \label{eq:llt}
\begin{split}
	q_n(z) & = \pigl(\, g_{\frac{n}{2}}(z)
	+ O\big(\tfrac{1}{n^2}\big) \, \pigr) \, 2 \cdot \ind_{\Z^2_\even}(z)  \\
	& =  g_{\frac{n}{2}}(z) \;
	\rme^{O(\frac{1}{n}) + O(\frac{|z|^4}{n^3})} \; 2 \cdot \ind_{\Z^2_\even}(z)
	\, \ind_{q_n(z) > 0}
\end{split}
	\qquad \text{where} \quad g_t(x) := \frac{\rme^{-\frac{|x|^2}{2t}}}{2\pi t} \,,
\end{equation}
the factor $2 \cdot \ind_{\Z^2_\even}(z)$ is due to periodicity, while the time argument $\frac{n}{2}$ in the heat kernel
comes from the random walk covariance
$\E[S_n^{(i)} S_n^{(j)}] = \frac{n}{2} \ind_{i=j}$ for $i,j \in \{1,2\}$.
In particular, $q_{2n}(0) \sim \frac{1}{\pi} \cdot \frac{1}{n}$ as $n\to\infty$.
For later use, we fix $0 < \mathfrak{a}_- < \mathfrak{a}_+ < \infty$ such that
\begin{equation} \label{eq:barN}
	\frac{\mathfrak{a}_-}{n} \le q_{2n}(0) \le
	\frac{\mathfrak{a}_+}{n}
	\qquad \forall n \in \N \,.
\end{equation}

We generalize the expected replica overlap
$R_L$ from \eqref{eq:RL} by defining, for $z\in\Z^2_\even$,
\begin{equation}\label{eq:RL2}
	R_L(z) := \sum_{n=1}^L q_{2n}(z) \,,
\end{equation}
which is nothing but the random walk Green's function (on a bounded time interval).
We also introduce the corresponding quadratic form:
\begin{equation} \label{eq:Rfg}
	R_L(\varphi,\varphi) := \sum_{z,w \in \Z^2_\even} \varphi(z) \, R_L(z-w) \, \varphi(w) \,.
\end{equation}

By \eqref{eq:barN}, we can bound
$$
R_{L}(z) - R_{\lfloor \frac{1}{2}L \rfloor}(z) = \sum_{n=\lfloor \frac{1}{2}L \rfloor+1}^L q_{2n}(z)
\le \sum_{n=\lfloor \frac{1}{2}L \rfloor + 1}^L q_{2n}(0) \le \mathfrak{a}_+
$$
uniformly over $z\in\Z^2$. Therefore,
\begin{equation}\label{eq:doubling}
	\text{for any probability mass function $\varphi$:} \qquad
	R_{L}(\varphi,\varphi) - R_{\lfloor \frac{1}{2}L \rfloor}(\varphi,\varphi) \le \mathfrak{a}_+ \,.
\end{equation}

The continuum analogue of $R_L(\cdot)$ is the Green's function
$\cG(x) = \cG(|x|)$ given by
\begin{equation}\label{eq:Green}
	\cG(x) \,:=\, \int_0^1 g_t(x) \, \dd t
	\,=\, \int_0^1 \frac{\rme^{-\frac{|x|^2}{2t}}}{2\pi\, t} \, \dd t
	\,=\, \frac{1}{2\pi} \int_{|x|^2}^\infty \frac{\rme^{-\frac{r}{2}}}{r} \, \dd r \,.
\end{equation}
The following result compares $R_L(\cdot)$ and $\cG(\cdot)$.
The proof
is given in Appendix~\ref{sec:2mb}.

\begin{lemma}[Green's function]\label{th:Green}
Uniformly for $L\in\N$ and $z\in\Z^2$ we can write
\begin{equation} \label{eq:asR}
\begin{split}
	R_L(z)
	&\,=\, 2 \, \cG\Big( \tfrac{|z|+1}{\sqrt{L}} \Big) \, \ind_{z \in \Z^2_\even} \!+ O(1)
	\,=\, \tfrac{1}{\pi}  \log \pigl( 1 + \tfrac{L}{1+|z|^2} \pigr) \, \ind_{z \in \Z^2_\even} \!+ O(1) \,.
\end{split}
\end{equation}
Moreover, for any $t  \in (0,\infty)$, there is $c_t > 0$ such that
\begin{equation}\label{eq:intR}
	\begin{array}{cc}
	\text{uniformly for $L\in\N$, $z\in\Z^2$}\\
	\text{with } |z| \le t \sqrt{L} \wedge  L  \colon
	\end{array} \qquad
	R_L(z) \ge c_t \, \log \pigl( 1 + \tfrac{L}{1+|z|^2} \pigr) \, \ind_{z \in \Z^2_\even} \,.
\end{equation}
(The restriction $|z| \le L$ in \eqref{eq:intR} ensures that
$R_L(z) \ge q_{2L}(z) > 0$ for $z \in \Z^2_\even$.)
\end{lemma}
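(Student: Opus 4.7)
The plan is to derive both \eqref{eq:asR} and \eqref{eq:intR} from the local central limit theorem \eqref{eq:llt} applied termwise to $R_L(z)=\sum_{n=1}^L q_{2n}(z)$, combined with a careful sum-to-integral comparison that handles the logarithmic singularity of the continuum heat kernel at $z=0$.

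\textbf{The asymptotic \eqref{eq:asR}.} First I apply the additive form of \eqref{eq:llt} termwise, yielding for $z\in\Z^2_\even$
\[
R_L(z) \,=\, 2\sum_{n=1}^L g_n(z) + \sum_{n=1}^L O(1/n^2) \,=\, 2\sum_{n=1}^L g_n(z) + O(1),
\]
while $R_L(z)=0$ for $z\notin\Z^2_\even$ by periodicity. A straightforward substitution $u = Lt$ shows $\int_0^L g_t(|z|+1)\,\dd t = \cG\bigl((|z|+1)/\sqrt{L}\bigr)$, which matches the prefactor $2$ in \eqref{eq:asR}; the shift $|z|\mapsto|z|+1$ is designed precisely to mollify the non-integrability of $g_t(0)=1/(2\pi t)$ at $t=0$. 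The comparison $\sum_{n=1}^L g_n(z) = \int_0^L g_t(|z|+1)\,\dd t + O(1)$ then follows from a sum-vs-integral estimate, splitting the range at $n\sim|z|^2$: for $n\lesssim|z|^2$ the Gaussian factor $\rme^{-|z|^2/(2n)}$ makes both the partial sum and the integral $O(1)$, while for $n\gtrsim|z|^2$ the integrand is smooth and the Riemann-sum error is summable. The special case $z=0$ is checked by a direct calculation using the exponential integral. The second equality in \eqref{eq:asR} follows from the explicit formula \eqref{eq:Green}: splitting $\int_{|x|^2}^\infty \rme^{-r/2}/r\,\dd r$ at $r=1$ and using integrability of $(\rme^{-r/2}-1)/r$ near $0$, one obtains $2\cG(x) = \frac{1}{\pi}\log(1/|x|^2) + O(1)$ for $|x|\le 1$ and $2\cG(x)=O(1)$ for $|x|\ge 1$; both match $\frac{1}{\pi}\log(1+L/(1+|z|^2))$ at $x=(|z|+1)/\sqrt{L}$ up to $O(1)$.

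\textbf{The lower bound \eqref{eq:intR}.} The asymptotic from Part~1 already gives $R_L(z)\ge \frac{1}{\pi}\log(1+L/(1+|z|^2)) - K$ for some absolute constant $K$, which is the desired bound up to a constant whenever $\log(1+L/(1+|z|^2))$ dominates $K$. In the complementary regime, where $|z|$ is of order $\sqrt{L}$, I derive a matching absolute lower bound $R_L(z)\ge c_t$ uniformly for $|z|\le t\sqrt{L}\wedge L$. Using the multiplicative form of \eqref{eq:llt} with $n\in[L/2,L]$, one has $1/n = O(1/L)$ and $|z|^4/n^3 \lesssim t^4/L$, so the exponential error factor is bounded and $q_{2n}(z) \ge g_n(z)\cdot\ind_{z\in\Z^2_\even}$ once $L$ is large enough. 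The Gaussian lower bound $g_n(z) \ge \rme^{-t^2}/(2\pi L)$ for $n\in[L/2,L]$ and $|z|\le t\sqrt{L}$, together with summing $\sim L/2$ terms, yields $R_L(z)\ge c_t$; the hypothesis $|z|\le L$ ensures that at least one term $q_{2n}(z)$ with $z\in\Z^2_\even$ is strictly positive in this range. Combining the two lower bounds and adjusting $c_t$ completes the proof of \eqref{eq:intR}.

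\textbf{Main obstacle.} The principal technical point is the uniform sum-to-integral comparison in Part~1: because $g_t(z)$ is non-integrable at $t=0$ when $z=0$ and nearly singular for small $|z|$, the argument must split the range at $n\sim|z|^2$, and the shift $|z|\mapsto|z|+1$ in the continuum approximation $\cG\bigl((|z|+1)/\sqrt{L}\bigr)$ is essential to absorb the boundary effect into an $O(1)$ error uniformly over $z\in\Z^2$. Once this comparison is established, the passage to the explicit log-form and the derivation of the lower bound \eqref{eq:intR} are routine.
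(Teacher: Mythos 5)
Your proof of \eqref{eq:asR} follows essentially the same strategy as the paper: apply the local CLT termwise, replace $|z|$ by $|z|+1$ to tame the $t\to 0$ singularity, and compare the sum to the Green's function integral via a Riemann-sum estimate; the paper organizes the sum-to-integral error by isolating the $n=1$ term and bounding $n\ge 2$ uniformly, whereas you split at $n\sim|z|^2$, but these are equivalent bookkeeping choices and both are sound. For \eqref{eq:intR}, however, your route is genuinely different from the paper's. The paper argues directly: it restricts the defining sum to $n\ge\bar n(z):=\lceil\tfrac12(\tfrac{|z|^2}{1+t^2}+1)\rceil$, applies the multiplicative local CLT with the observation that $|z|^2\le 2(1+t^2)n$ there (so the $\rme^{O(|z|^4/n^3)}$ correction is bounded below by a $t$-dependent constant), obtains $q_{2n}(z)\ge c\,\rme^{-t^4/c}/n$, and sums $1/n$ from $\bar n(z)$ to $L$. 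You instead bootstrap from the asymptotic already established in Part~1 — which gives $R_L(z)\ge\tfrac1\pi\log(1+\tfrac{L}{1+|z|^2})-K$ and hence the desired bound whenever the logarithm dominates $K$ — and supply a complementary absolute lower bound $R_L(z)\ge c_t$ when $|z|\asymp\sqrt L$, using only the tail terms $n$ of order $L$. Both arguments work; the paper's is self-contained and produces the bound for all $|z|$ in one stroke, while yours is perhaps conceptually cleaner given that the asymptotic has already been proved. One small technical point to tidy up in your Part~2: for $n$ near $L/2$ and $|z|$ near $L$ you may have $|z|_1>2n$ and hence $q_{2n}(z)=0$, so the inequality $q_{2n}(z)\ge g_n(z)\ind_{\Z^2_\even}$ can fail at the bottom of your range; restricting to $n\in[\tfrac34 L,L]$ (say) forces $|z|_1\le\sqrt2|z|\le\sqrt2 L<2n$ and gives you the $\gtrsim L$ positive terms you need. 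This does not affect the validity of your approach.
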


\subsection{Polynomial chaos expansion}

Let us introduce random variables
\begin{equation*}
	\xi_{n,x}^\beta := \frac{\rme^{\beta \omega(n,x) - \lambda(\beta)}}{\sigma_\beta},
\end{equation*}
which are i.i.d.\ with zero mean and unit variance, thanks to the definition \eqref{eq:sigmabeta}
of $\sigma_\beta$. We can represent the point-to-plane partition function $Z^\beta_L(x)$
as a \emph{polynomial chaos expansion}:
\begin{equation} \label{eq:polynomial}
	Z_L^\beta(x) = 1 + \sum_{k=1}^L \sigma_{\beta}^k
	\sumtwo{0 = n_0 < n_1 < \ldots < n_k \le L}{x_0 = x, \ x_1, \ldots, x_k \in \Z^2}
	\ \prod_{i=1}^k q_{n_i - n_{i-1}}(x_i - x_{i-1}) \, \xi_{n_i, x_i}^{\beta} \,.
\end{equation}
see e.g.\ \cite[eq.~(2.17)]{CSZ20}.
This follows from the definition \eqref{eq:Z}-\eqref{eq:H} by writing
\begin{equation*}
	\rme^{\cH^\beta_{(0,L]}} = \prod_{n=1}^L \prod_{x \in \Z^2}
	\rme^{\{\beta \omega(n,x) - \lambda(\beta)\} \ind_{\{S_n = x\}}}
	= \prod_{n=1}^L \prod_{x \in \Z^2}
	\pig\{ 1 + \sigma_\beta \, \xi_{n,x}^\beta \,
	\ind_{\{S_n = x\}} \pig\}
\end{equation*}
and then expanding the product.

\begin{remark}[Switching off some disorder] \label{rem:switch-off}
We will later consider partition functions where disorder is ``switched off'' in a time interval
$(A,B] \subseteq (0,L]$, meaning that the Hamiltonian $\cH^\beta_{(0,L]}$ is replaced by
$\cH^\beta_{(0,A]} + \cH^\beta_{(B,L]}$, see \eqref{eq:H}.
This amounts to setting $\xi_{n,x} = 0$ for all $n \in (A,B]$, which is equivalent to
restricting the polynomial chaos \eqref{eq:polynomial} to sequences of times
$n_1, \ldots, n_k$ which avoid the interval $(A,B]$.
\end{remark}

\subsection{Second moment of point-to-plane partition function}
Recalling \eqref{eq:sigmabeta}, we define a weighted renewal function
$U_{\beta}(\cdot)$ by setting
$U_{\beta}(0) := 1$ and for $n \ge 1$
\begin{equation} \label{eq:U}
\begin{split}
	U_{\beta}(n)
	& := \sum_{k \ge 1} (\sigma_{\beta}^2)^k
	\sumtwo{0 =: n_0 < n_1 < \cdots < n_k = n}{x_0 := 0, \
	x_1, \ldots, x_k \in \Z^2} \ \prod_{i = 1}^k q_{n_i - n_{i - 1}}(x_i - x_{i-1})^2 \\
	& = \sum_{k \ge 1} (\sigma_{\beta}^2)^k
	\sum_{0 =: n_0 < n_1 < \cdots < n_k = n} \ \prod_{i = 1}^k q_{2 (n_i - n_{i - 1})}(0) \,,
\end{split}
\end{equation}
where the last equality holds by the fact that $\sum_{z \in \Z^2} q_{n}(z)^2 = \sum_{z\in\Z^2}
	q_n(z) \, q_n(-z) = q_{2n}(0)$.

The quantity $U_{\beta}(\cdot)$ arises in the second moment of
the \emph{point-to-plane partition function}:
\begin{equation} \label{eq:barU}
	\bbE\big[ Z_L^{\beta}(0)^2 \big] = \overline{U}_{\beta}(L) :=
	\sum_{n=0}^L U_{\beta}(n) \,,
\end{equation}
which follows by the polynomial chaos
expansion \eqref{eq:polynomial} noting that terms indexed by distinct
space-time sequences $(n_1, x_1), \ldots, (n_k, x_k)$
are orthogonal in~$L^2$.

The second moment $\bbE\big[ Z_L^{\beta}(0)^2 \big]$ is uniformly bounded in~$L\leq N$ and
$N\in\N$ when $\beta=\beta_N$ lies in the sub-critical
regime \eqref{eq:sub-critical} \cite{CSZ19a}. The next result considers the quasi-critical regime \eqref{eq:quasi-crit}
and identifies how the second moment diverges as a function of $\theta_N$ in \eqref{eq:quasi-crit}.
The proof, based on renewal theory, is deferred to Appendix~\ref{sec:2mb}.

\begin{proposition}[Second moment of point-to-plane partition function]\label{th:2mb}
For $\beta = \beta_N$ in the quasi-critical regime \eqref{eq:quasi-crit},
uniformly over $L \in 2\N$ with $L \le N$, we have
\begin{equation}\label{eq:point-to-plane-2ndmom}
	\bbE\big[ Z_L^{\beta_N}(0)^2 \big] = \overline{U}_{\beta_N}(L)
	\sim \frac{1}{1 - \frac{R_L}{R_N}(1-\frac{|\theta_N|}{\log N})}
	\qquad \text{as } N\to\infty \,.
\end{equation}
\end{proposition}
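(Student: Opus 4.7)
The plan is to recast $\overline{U}_{\beta_N}(L)$ in renewal form and then extract the leading behaviour via a single clean identity. Setting $a := \sigma_{\beta_N}^2$ and changing variables to the increments $d_i := n_i - n_{i-1}$ in \eqref{eq:U}, one rewrites
\begin{equation}\label{eq:plan1}
\overline{U}_{\beta_N}(L) \,=\, \sum_{k\ge 0} a^k \! \sumtwo{d_1,\ldots,d_k \ge 1}{d_1+\cdots+d_k \le L} \prod_{i=1}^k q_{2 d_i}(0)
\,=\, \sum_{k\ge 0} (aR_L)^k \, \bbP(\tau_k \le L) \,,
\end{equation}
where $(T_i)_{i\ge 1}$ are i.i.d.\ with $\bbP(T_i = n) = q_{2n}(0)/R_L$ for $n \in \{1,\ldots,L\}$, and $\tau_k := T_1 + \cdots + T_k$ (with $\tau_0 := 0$). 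A standard Abel summation, using $\bbP(\tau_k \le L) = \bbP(N_L \ge k)$ with $N_L := \sup\{k \ge 0 : \tau_k \le L\}$, then yields
\begin{equation}\label{eq:plan2}
\overline{U}_{\beta_N}(L) \,=\, \frac{1 - aR_L \cdot \bbE\big[(aR_L)^{N_L}\big]}{1 - aR_L} \,,
\end{equation}
and the proposition reduces to the single uniform estimate
\begin{equation}\label{eq:plan3}
\sup_{L \in 2\bbN,\,L \le N} aR_L \cdot \bbE\big[(aR_L)^{N_L}\big] \,\xrightarrow[\,N\to\infty\,]{}\, 0 \,.
\end{equation}

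To prove \eqref{eq:plan3} I would fix an arbitrary $\delta > 0$ and split on $aR_L$. When $aR_L \le \delta$, the bound is trivially at most $\delta$. When $aR_L > \delta$, I would apply the truncation
$\bbE[(aR_L)^{N_L}] \le (aR_L)^{k_0} + \bbP(\tau_{k_0} > L)$
and combine it with $\bbE[T_1] = \frac{1}{R_L}\sum_{n=1}^L n\,q_{2n}(0) \sim \frac{L}{\pi R_L}$ (using $q_{2n}(0) \sim \tfrac{1}{\pi n}$ from \eqref{eq:llt}) together with Markov's inequality, giving $\bbP(\tau_{k_0} > L) \le \frac{k_0}{\pi R_L}(1+o(1))$. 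Writing $\eta := 1 - aR_L$, the balanced choice $k_0 := \lceil \sqrt{R_L/\eta}\,\rceil$ collapses both terms to $\rme^{-\sqrt{\eta R_L}} + O(1/\sqrt{\eta R_L})$, reducing the problem to the quantitative divergence $\eta R_L \to \infty$ uniformly in $\{L \le N : aR_L > \delta\}$.

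This uniform divergence is the sole technical point and is precisely where the quasi-critical scaling \eqref{eq:quasi-crit} enters. Writing $r := R_L/R_N \in [0,1]$ and $\epsilon_N := |\theta_N|/\log N$, a direct manipulation of $\eta = 1 - r(1-\epsilon_N)$ gives
\begin{equation*}
\eta R_L \,=\, r R_N (1-r) + r^2 R_N\, \epsilon_N \,\ge\, \delta^2\, R_N\, \epsilon_N \,,
\end{equation*}
where the inequality uses $r > \delta$ (from $aR_L = r(1-\epsilon_N) > \delta$ and $\epsilon_N < 1$) and the non-negativity of the first summand. Since $R_N \sim \log N/\pi$ and $|\theta_N| \to \infty$, one has $R_N \epsilon_N \sim |\theta_N|/\pi \to \infty$, so $\eta R_L \to \infty$ uniformly in the relevant range. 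This proves \eqref{eq:plan3}, and sending $\delta \downarrow 0$ together with the identity $aR_L = \frac{R_L}{R_N}\bigl(1 - \frac{|\theta_N|}{\log N}\bigr)$ identifies \eqref{eq:plan2} with the stated asymptotic \eqref{eq:point-to-plane-2ndmom}.
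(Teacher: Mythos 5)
Your argument is correct and uses the same two ingredients as the paper's proof (Appendix~\ref{sec:2mb}): the renewal representation of $\overline{U}_{\beta_N}(L)$ with increments distributed $\propto q_{2n}(0)$, and Markov's inequality on $\tau_{k}$ to control the tail. The main stylistic difference is the packaging. The paper proves matching upper and lower bounds for the series, normalizing the increments by $R_N$ for the (almost trivial) upper bound and switching to the $R_L$-normalized walk for the lower bound, where the $R_L$ factor cancels in the end so that the final error is $O(1/|\theta_N|)$ uniformly in $L \le N$. You instead go straight to the $R_L$-normalized walk, use Abel summation to obtain the exact identity $\overline{U}_{\beta_N}(L) = \frac{1 - aR_L\,\bbE[(aR_L)^{N_L}]}{1-aR_L}$, and then prove a single one-sided statement, that the correction $aR_L\,\bbE[(aR_L)^{N_L}]$ vanishes; your $\delta$-dichotomy handles uniformity over $L\le N$ by separating the regime where $aR_L$ is already small from the regime where $R_L \gtrsim \delta R_N$, in which case $\eta R_L \gtrsim \delta^2 R_N\,|\theta_N|/\log N \to \infty$. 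This is a clean equivalent route; the exact identity is a nice bonus, though the paper's version is more quantitative (it tracks the $1-c'/|\theta_N|$ rate). One small cleanup: in the Markov step you should invoke the uniform bound $q_{2n}(0) \le \mathfrak{a}_+/n$ from \eqref{eq:barN} rather than the asymptotic $q_{2n}(0)\sim\tfrac{1}{\pi n}$, since the latter is not uniform over all $1 \le n \le L$ when $L$ is moderate; this is exactly what the paper does, and the rest of your argument goes through unchanged.
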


\subsection{Variance of averaged partition functions}

We finally compute the variance of the \emph{averaged partition function} $Z_L^{\beta}(\varphi)
= \sum_{x\in\Z^2_\even} \varphi(x) \, Z_L^\beta(x)$. We write
\begin{equation*}
	\bbvar\big[ Z_L^{\beta}(\varphi) \big]
	= \sum_{x, x' \in \Z^2_\even} \varphi(x) \, \varphi(x') \,
	\bbcov\big[ Z_L^{\beta}(x), Z_L^{\beta}(x') \big] \,.
\end{equation*}
Plugging in the polynomial chaos expansion \eqref{eq:polynomial} and
renaming $n_1 = m$ and $n_k = n$,
by \eqref{eq:U}, we can write
\begin{equation*}
\begin{split}
	\bbcov\big[ Z_L^{\beta}(x), Z_L^{\beta}(x') \big]
	&= \sum_{0 < m \le n \le L} \,
	\sum_{x_1 \in \Z^2} \, q_{m}(x_1 - x) \, q_{m}(x_1 - x') \,
	\sigma_\beta^2 \, U_{\beta}(n-m) \\
	&= \sum_{0 < m \le L} \,
	q_{2m}(x - x') \,\, \sigma_\beta^2 \, \,
	\overline{U}_{\beta}(L-m)  \\
	& = \sum_{0 < m \le L} \,
	q_{2m}(x - x') \,\, \sigma_\beta^2 \, \,
	\bbE\big[ Z_{L-m}^{\beta}(0)^2 \big] \,,
\end{split}
\end{equation*}
where we applied \eqref{eq:barU}. In summary, introducing the shorthand
\begin{equation*}
	q_{2m}(\varphi, \varphi) :=
	\sum_{x,x' \in \Z^2_\even} \varphi(x) \, \varphi(x') \, q_{2m}(x-x') \,,
\end{equation*}
we have thus obtained the key formula
\begin{equation}\label{eq:varcomp}
	\bbvar\big[ Z_L^{\beta}(\varphi) \big]
	\,=\, \sum_{0 < m \le L} \, q_{2m}(\varphi, \varphi) \,\,
	\sigma_\beta^2 \, \,
	\bbE\big[ Z_{L-m}^{\beta}(0)^2 \big] \,.
\end{equation}
Since $L \mapsto \bbE\big[ Z_{L}^{\beta}(0)^2 \big] = \overline{U}_\beta(L)$ is increasing,
recalling the definition \eqref{eq:Rfg} of $R_L(\varphi,\varphi)$, we obtain the bounds
\begin{equation}\label{eq:boundsvar}
	R_{\lfloor \frac{1}{2}L \rfloor}(\varphi,\varphi) \, \,
	\sigma_\beta^2 \, \,
	\bbE\big[ Z_{\lfloor \frac{1}{2}L \rfloor}^{\beta}(0)^2 \big]
	\,\le\, \bbvar\big[ Z_L^{\beta}(\varphi) \big]
	\,\le\, R_{L}(\varphi,\varphi) \, \,
	\sigma_\beta^2 \, \,
	\bbE\big[ Z_{L}^{\beta}(0)^2 \big] \,.
\end{equation}

\begin{remark}
We can also rewrite \eqref{eq:varcomp} more explicitly as
\begin{equation}\label{eq:varexpl}
	\bbvar\big[ Z_L^{\beta}(\varphi) \big]
	\,=\, \sum_{k \ge 1} (\sigma_{\beta}^2)^k
	\sum_{0< n_1 < \cdots < n_k \le L} \ q_{2n_1}(\varphi, \varphi)
	\prod_{i = 2}^k q_{2 (n_i - n_{i - 1})}(0) \,.
\end{equation}
\end{remark}

We now compute the asymptotic behavior of
$\bbvar\big[ Z_L^{\beta}(\varphi) \big]$ for $\beta = \beta_N$ in the quasi-critical
regime \eqref{eq:quasi-crit}, allowing for general system size $L = L_N$
and initial condition $\varphi = \varphi_N$ (this will be essential for the proof of our results).

In Theorem~\ref{th:log-normality}, we consider the partition function $Z_{L_N}^{\beta_N}(\varphi_N)$
of size $L_N=N$ and initial conditions $\varphi_N$ averaged on balls of radius
$\delta_N^w \sqrt{N}$ for $w \in (0,\infty)$
(recall $\delta_N$ from \eqref{eq:theta}). Our next results computes
the variance of $Z_{L_N}^{\beta_N}(\varphi_N)$, showing that
\emph{it is bounded away from zero and infinity}
for general initial conditions $\varphi_N$ that are ``spread out'' on the scale $\delta_N^w \sqrt{N}$
and for general system sizes $L_N = N
\, (\delta_N^2)^{\ell + o(1)}$ with $\ell < w$.

\begin{theorem}[Variance of averaged partition functions]\label{th:vargen}
Let $\beta_N$ be in the quasi-critical regime \eqref{eq:quasi-crit}
for a given sequence $1 \ll |\theta_N| \ll \log N$.
Recall $\delta_N$ from \eqref{eq:theta}.

Let us fix two exponents $0 \le \ell \le w < \infty$.
For $N\in\N$ we consider:
\begin{itemize}
\item two sequences $L_N \in 2\N$ (system size),
$W_N \ge 0$ (scaling factor) such that, as $N\to\infty$,
\begin{gather} \label{eq:L}
	L_N = N \, (\delta_N^2)^{\ell+o(1)} = N \, \rme^{-\ell \, |\theta_N| + o(|\theta_N|)}
	\qquad \text{with } \ell \ge 0  \,, \\
	\label{eq:W}
	W_N = N \, (\delta_N^2)^{w + o(1)}
	= N \, \rme^{-w \, |\theta_N| + o(|\theta_N|)}
	\qquad \text{with } w \ge \ell \,;
\end{gather}
\item probability mass functions
$\varphi_N : \Z^2_\even \to [0,\infty)$
``spread out'' on scale $\sqrt{W_N}$ in the following sense
(recall \eqref{eq:asR}):
\begin{equation}\label{eq:phi-cond0}
	R_{L_N}(\varphi_N, \varphi_N)
	\,=\, \frac{1}{\pi} \, \log \frac{L_N}{W_N} + o(|\theta_N|)
	\,=\, \frac{w-\ell}{\pi} \, |\theta_N|   + o(|\theta_N|) \,.
\end{equation}
\end{itemize}
Then
\begin{equation}\label{eq:2nd-mom}
	\lim_{N\to\infty} \, \bbvar\big[ Z_{L_N}^{\beta_N}(\varphi_N) \big]
	= \frac{w-\ell}{1 + \ell} \,.
\end{equation}
Moreover, the convergence \eqref{eq:2nd-mom} holds uniformly
over system sizes $(L_N)_{N\in\N}$, scaling factors $(W_N)_{N\in\N}$
and initial conditions $(\varphi_N)_{N\in\N}$
for which \eqref{eq:L}, \eqref{eq:W}, \eqref{eq:phi-cond0} hold uniformly.
\end{theorem}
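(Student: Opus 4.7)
The plan is to combine the sandwich bounds \eqref{eq:boundsvar} with the point-to-plane second moment asymptotics in Proposition~\ref{th:2mb} and the hypothesis \eqref{eq:phi-cond0}. From \eqref{eq:boundsvar} it suffices to show that both
\begin{equation*}
R_{\lfloor L_N/2\rfloor}(\varphi_N,\varphi_N)\,\sigma_{\beta_N}^2\,\bbE\big[Z_{\lfloor L_N/2\rfloor}^{\beta_N}(0)^2\big]
\qquad\text{and}\qquad
R_{L_N}(\varphi_N,\varphi_N)\,\sigma_{\beta_N}^2\,\bbE\big[Z_{L_N}^{\beta_N}(0)^2\big]
\end{equation*}
tend to $(w-\ell)/(1+\ell)$; since these are the same product up to an $L\leftrightarrow L/2$ swap in two of the three factors, the task reduces to pinning down each factor asymptotically and checking that this swap is harmless.

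For $\sigma_{\beta_N}^2$, relation \eqref{eq:quasi-crit} combined with \eqref{eq:RL} gives $\sigma_{\beta_N}^2 = \frac{\pi}{\log N}(1+o(1))$. For the Green-function factor, the hypothesis \eqref{eq:phi-cond0} directly delivers $R_{L_N}(\varphi_N,\varphi_N) = \frac{w-\ell}{\pi}|\theta_N|(1+o(1))$, while the doubling estimate \eqref{eq:doubling} guarantees $R_{L_N}(\varphi_N,\varphi_N)-R_{\lfloor L_N/2\rfloor}(\varphi_N,\varphi_N) = O(1)$, which is negligible against $|\theta_N|\to\infty$. For the point-to-plane second moments, I plug $L=L_N$ (respectively $L=\lfloor L_N/2\rfloor$) into Proposition~\ref{th:2mb}. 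Using \eqref{eq:asR} with $\log L_N = \log N - \ell|\theta_N|$ yields $R_{L_N}/R_N = 1 - \ell|\theta_N|/\log N + O(1/\log N)$, and the extra factor of $1/2$ alters only this $O(1/\log N)$ remainder. Expanding the denominator in Proposition~\ref{th:2mb},
\begin{equation*}
1 - \frac{R_{L_N}}{R_N}\Big(1-\frac{|\theta_N|}{\log N}\Big)
= \frac{(1+\ell)\,|\theta_N|}{\log N}\,(1+o(1)),
\end{equation*}
since the cross term $\ell|\theta_N|^2/\log^2 N$ is $o(|\theta_N|/\log N)$ thanks to $|\theta_N|\ll\log N$. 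Hence $\bbE[Z_{L_N}^{\beta_N}(0)^2]\sim \log N/\bigl((1+\ell)|\theta_N|\bigr)$, and identically for $\lfloor L_N/2\rfloor$.

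Multiplying the three asymptotics gives $\frac{w-\ell}{\pi}|\theta_N|\cdot\frac{\pi}{\log N}\cdot\frac{\log N}{(1+\ell)|\theta_N|} = \frac{w-\ell}{1+\ell}$, matching both ends of the sandwich and yielding \eqref{eq:2nd-mom}. Uniformity is automatic since every estimate above depends on $(L_N,W_N,\varphi_N)$ only through the quantitative data in \eqref{eq:L}, \eqref{eq:W}, \eqref{eq:phi-cond0}. I do not expect a serious obstacle: the one step that requires a little care is the error bookkeeping, namely using $|\theta_N|\to\infty$ to absorb the additive $O(1)$ errors from \eqref{eq:asR} and \eqref{eq:doubling}, and simultaneously using $|\theta_N|\ll\log N$ to absorb the multiplicative $O(|\theta_N|/\log N)$ correction coming from the product $(1-\ell|\theta_N|/\log N)(1-|\theta_N|/\log N)$. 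Beyond Proposition~\ref{th:2mb}, Lemma~\ref{th:Green}, and the variance sandwich \eqref{eq:boundsvar}, no new input is needed.
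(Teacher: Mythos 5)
Your proposal is correct and follows essentially the same route as the paper: sandwich the variance with \eqref{eq:boundsvar}, feed the asymptotics from Proposition~\ref{th:2mb} and the hypothesis \eqref{eq:phi-cond0} into both ends, and absorb the $L\leftrightarrow L/2$ discrepancy via the doubling estimate \eqref{eq:doubling} and the $L^{1/2}$-invariance of the exponent $\ell$. The error bookkeeping you flag (using $|\theta_N|\to\infty$ to kill additive $O(1)$ errors and $|\theta_N|\ll\log N$ to kill the quadratic cross term) is exactly the crux of the paper's proof as well.
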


\begin{remark}[Sub-critical regime]\label{rem:subcr1}
Theorem~\ref{th:vargen} can also be applied to the sub-critical regime \eqref{eq:sub-critical}: it suffices
to take $|\theta_N| \sim (1-\hat\beta^2) \log N$ with $\hat\beta^2 \in (0,1)$ (cf.\ \eqref{eq:quasi-crit}
and \eqref{eq:sub-critical}), but the final result \eqref{eq:2nd-mom} must be updated as follows:
\begin{equation}\label{eq:2nd-momsub}
	\lim_{N\to\infty} \, \bbvar\big[ Z_{L_N}^{\beta_N}(\varphi_N) \big]
	= \frac{(w-\ell)\,\hat\beta^2}{1 + \ell\,\hat\beta^2} \,.
\end{equation}
The minor changes required in the proof are described in Remark~\ref{rem:subcrapp}.
\end{remark}

The proof of Theorem~\ref{th:vargen} is deferred to Appendix~\ref{sec:2mb}.
Condition \eqref{eq:phi-cond0} means intuitively that,
sampling two points $x, y$ independently from $\varphi_N$,
their distance $|x-y|$ is roughly order~$\sqrt{W_N}$.
This is made precise by the next results, also proved in Appendix~\ref{sec:2mb}.

\begin{proposition}[Equivalent condition for \eqref{eq:phi-cond0}]\label{th:initial}
Let $L_N$, $W_N$ be as in \eqref{eq:L}, \eqref{eq:W}.
Condition \eqref{eq:phi-cond0} for the
probability mass functions $\varphi_N$ is equivalent to
\begin{equation} \label{eq:phi-cond}
	\sum_{x, y \in \Z^2_\even} \!\!\!\! \varphi_N(x) \, \varphi_N(y) \,
	\log \Bigl( 1 + \tfrac{L_N}{1+|x-y|^2} \Bigr)
	= \log  \frac{L_N}{W_N} + o(|\theta_N|)
	\quad\ \text{as } N\to\infty \,.
\end{equation}
\end{proposition}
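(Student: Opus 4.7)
The plan is to derive Proposition~\ref{th:initial} as a direct consequence of the asymptotic expansion of $R_L(z)$ provided by Lemma~\ref{th:Green}, specifically formula~\eqref{eq:asR}. By definition~\eqref{eq:Rfg},
\[
R_{L_N}(\varphi_N,\varphi_N) \,=\, \sum_{x,y \in \Z^2_\even} \varphi_N(x)\,\varphi_N(y)\, R_{L_N}(x-y),
\]
and I would first note that since $\varphi_N$ is supported on $\Z^2_\even$ and the even sub-lattice is closed under addition/subtraction, every difference $z = x-y$ appearing in the sum automatically lies in $\Z^2_\even$. Hence the indicator $\ind_{z\in\Z^2_\even}$ in~\eqref{eq:asR} equals~$1$ throughout, and I can invoke the clean expansion
\[
R_{L_N}(x-y) \,=\, \tfrac{1}{\pi}\,\log\bigl(1 + \tfrac{L_N}{1+|x-y|^2}\bigr) + O(1),
\]
with an $O(1)$ error that is uniform in $x,y,L_N$.

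Next, I would sum this identity against the probability measure $\varphi_N(x)\,\varphi_N(y)$. Since the $O(1)$ term is bounded uniformly, its contribution after summation is a constant, and because $|\theta_N|\to\infty$ by hypothesis, any $O(1)$ term is absorbed in $o(|\theta_N|)$. This yields the key identity
\[
\pi\, R_{L_N}(\varphi_N,\varphi_N) \,=\, \sum_{x,y \in \Z^2_\even} \varphi_N(x)\,\varphi_N(y)\,\log\bigl(1 + \tfrac{L_N}{1+|x-y|^2}\bigr) + o(|\theta_N|).
\]

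Finally, dividing by $\pi$ and comparing with the target equalities, I observe that the statement ``$R_{L_N}(\varphi_N,\varphi_N) = \tfrac{1}{\pi}\log\tfrac{L_N}{W_N} + o(|\theta_N|)$'' (which is \eqref{eq:phi-cond0}) is equivalent to the statement ``$\sum_{x,y}\varphi_N(x)\varphi_N(y)\log(1+\tfrac{L_N}{1+|x-y|^2}) = \log\tfrac{L_N}{W_N} + o(|\theta_N|)$'' (which is \eqref{eq:phi-cond}), since their left-hand sides differ by $o(|\theta_N|)$ and their right-hand sides differ exactly by a factor of $\pi$. I do not foresee any real obstacle here: the argument is purely a bookkeeping translation via Lemma~\ref{th:Green}, with the only mild point being the verification that $x-y\in\Z^2_\even$ so that the indicator drops out cleanly, and that $O(1) = o(|\theta_N|)$ under the quasi-critical assumption $|\theta_N|\to\infty$.
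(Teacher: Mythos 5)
Your proof is correct and follows essentially the same route as the paper's: both rewrite \eqref{eq:phi-cond0} via the definition \eqref{eq:Rfg}, then substitute $R_{L_N}(x-y) = \tfrac{1}{\pi}\log(1 + \tfrac{L_N}{1+|x-y|^2}) + O(1)$ from \eqref{eq:asR} and absorb the uniform $O(1)$ into $o(|\theta_N|)$ using $|\theta_N|\to\infty$. The only addition on your side is the explicit (and correct) remark that $x-y \in \Z^2_\even$ whenever $x,y \in \Z^2_\even$, which the paper leaves implicit.
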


\begin{proposition}[Sufficient condition for \eqref{eq:phi-cond0}]\label{th:initial2}
Let $L_N$, $W_N$ be as in \eqref{eq:L}, \eqref{eq:W}.
For probability mass functions $\varphi_N$
to satisfy \eqref{eq:phi-cond0} (or, equivalently, \eqref{eq:phi-cond}) it suffices that they are
``mostly supported on a ball of radius $\sqrt{W_N}$
with atoms of size $O(1/W_N)$'' in the following sense:
there exist $z_N \in \Z^2$ and $0 \le t_N = o(\theta_N)$ such that
\begin{equation} \label{eq:conditions-phi}
	\sum_{|x - z_N| \le \sqrt{W_N \, \rme^{t_N}}}
	\varphi_N(x) = 1 - o(1)
	\qquad \text{and} \qquad
	\sup_{x\in\Z^2} \varphi_N(x) \le \frac{\rme^{t_N}}{W_N} \,,
\end{equation}
\end{proposition}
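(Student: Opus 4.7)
The plan is to invoke Proposition~\ref{th:initial} and verify \eqref{eq:phi-cond} in the form
\[
I_N := \sum_{x,y \in \Z^2_\even} \varphi_N(x)\, \varphi_N(y)\, \log\Bigl(1 + \tfrac{L_N}{1+|x-y|^2}\Bigr) = \log(L_N/W_N) + o(|\theta_N|).
\]
I would rewrite $I_N = \bbE[f_N(|X-Y|^2)]$ for $X, Y$ i.i.d.\ with law $\varphi_N$ and $f_N(a) := \log(1 + L_N/(1+a))$, and use the two parts of condition \eqref{eq:conditions-phi} separately to obtain matching lower and upper bounds.

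For the \emph{lower bound}, the concentration condition gives $\P(X, Y \in B(z_N, \sqrt{W_N \mathe^{t_N}})) \ge 1 - o(1)$; on this event $|X-Y|^2 \le 4 W_N \mathe^{t_N}$, so monotonicity of $f_N$ yields $f_N(|X-Y|^2) \ge \log(L_N/W_N) - t_N - O(1)$ whenever this quantity is positive (and the trivial bound $f_N \ge 0$ covers the remaining case, in which $\log(L_N/W_N) \le t_N + O(1) = o(|\theta_N|)$). This gives $I_N \ge \log(L_N/W_N) - o(|\theta_N|)$.

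For the \emph{upper bound}, I would apply the layer-cake identity
\[
I_N = \int_0^{\log(1+L_N)} \P(|X-Y|^2 < r_N(s))\, \dd s, \qquad r_N(s) := \tfrac{L_N}{\mathe^s - 1} - 1,
\]
together with the density bound coming from $\|\varphi_N\|_\infty \le \mathe^{t_N}/W_N$:
\[
\P(|X-Y|^2 \le R) = \sum_x \varphi_N(x)\, \varphi_N(B(x,\sqrt R)) \le C R \|\varphi_N\|_\infty \le C R\, \mathe^{t_N}/W_N.
\]
I would split the integral at $s^* := \log(L_N/W_N) + t_N + O(1)$, chosen so that $r_N(s^*) \asymp W_N \mathe^{-t_N}$, precisely the scale at which the density bound first becomes non-trivial. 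The part $s < s^*$ contributes at most $s^* = \log(L_N/W_N) + o(|\theta_N|)$ via $\P \le 1$; the part $s > s^*$, using $r_N(s) \le 2 L_N \mathe^{-s}$, contributes
\[
\int_{s^*}^{\log(1+L_N)} C\, r_N(s)\, \mathe^{t_N}/W_N\, \dd s \,\le\, O\bigl(L_N \mathe^{t_N - s^*}/W_N\bigr) = O(1).
\]
Summing, $I_N \le \log(L_N/W_N) + o(|\theta_N|)$, which closes the argument.

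The main subtlety will be the choice of $s^*$. A more naive split at the scale $s \asymp \log(L_N/W_N) - t_N$ where the concentration hypothesis first kicks in (i.e.\ $r_N(s) \asymp W_N \mathe^{t_N}$) would leave a tail integral of order $\mathe^{2 t_N}$, which need not be $o(|\theta_N|)$ under the weak assumption $t_N = o(|\theta_N|)$. Splitting instead at the crossover where the density bound becomes less than~$1$ forces the tail to telescope to $O(1)$ through the explicit form of $r_N(s)$, avoiding this issue.
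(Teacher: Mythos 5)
Your proof is correct. The lower bound is essentially the paper's argument; for the upper bound you take a genuinely different route. The paper introduces an auxiliary sequence $\xi_N \to \infty$ with $\xi_N = o(|\theta_N|)$ and splits at the scale $\sqrt{V_N}$ with $V_N := W_N \rme^{-t_N-\xi_N}$, a factor $\rme^{-\xi_N}$ \emph{below} the scale where the density bound becomes nontrivial, then controls the close-pair contribution $|x-y| < \sqrt{V_N}$ by an explicit Riemann-sum estimate: the prefactor $V_N/W_N^- = \rme^{-\xi_N}$ is $o(1)$, while $\int_{|z|\le1}\log\bigl(1 + \tfrac{L_N}{V_N|z|^2}\bigr)\,\dd z$ is only $O(|\theta_N|)$. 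You instead apply the layer-cake identity $I_N = \int_0^{\log(1+L_N)} \P(|X-Y|^2 < r_N(s))\,\dd s$ and split precisely at the crossover $s^*$ where the density bound $\P(|X-Y|^2 \le R) \le C(1+R)\rme^{t_N}/W_N$ first drops below one; the tail then telescopes to $O(1)$ through the exact form $1 + r_N(s) = L_N/(\rme^s-1)$, and no auxiliary sequence is needed. Your closing remark correctly identifies why a naive split at the concentration scale (where $r_N \asymp W_N\rme^{t_N}$) would fail: it leaves a tail of order $\rme^{2t_N}$, which the weak hypothesis $t_N = o(|\theta_N|)$ does not control.

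One loose end to tie up in a careful write-up: when $w = \ell$, the quantity $\log(L_N/W_N)$ may tend to $-\infty$ (while remaining $o(|\theta_N|)$ in absolute value), so your $s^*$ can be negative and the approximation $r_N(s) \le 2L_N\rme^{-s}$ fails near $s=0$. The fix is immediate — split at $\max(s^*,1)$; the main part is still $\le \max(s^*,1) = \log(L_N/W_N) + o(|\theta_N|)$ because $\log(L_N/W_N) \ge -o(|\theta_N|)$, and when $s^* \le 1$ one has $L_N\rme^{t_N}/W_N = O(\rme^{s^*}) = O(1)$ so the tail stays $O(1)$ — but it should be recorded, since the paper's own proof explicitly treats the analogous degenerate case $L_N/V_N \le 1$.
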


In particular, by
\eqref{eq:conditions-phi}, condition \eqref{eq:phi-cond0} is satisfied when $\varphi_N$
is the uniform distribution on a ball or when $\varphi_N$ is the
random walk transition kernel, see \eqref{eq:phi0} and \eqref{eq:rw}:
\begin{equation} \label{eq:ball-rw}
	\varphi_N = \cU_{B\big(0,\sqrt{W_N \, \rme^{t_N}}\,\big)} \quad\ \text{and} \quad\
	\varphi_N = q_{W_N \, \rme^{t_N}}
	\quad \text{with $t_N = o(|\theta_N|)$ satisfy (\ref{eq:phi-cond0}) } \,.
\end{equation}

We finally compute the variance of the partition function $Z_{Nt}^{\beta_N}(\varphi_N)$
when $\varphi_N$ is the uniform distribution
in the ball $B(0,\delta_N^\rho \sqrt{N})$, as in
Theorem~\ref{th:log-normality}.

\begin{corollary}\label{th:sanity}
Let $\beta_N$ be in the quasi-critical regime \eqref{eq:quasi-crit}
and recall $\delta_N$ from \eqref{eq:theta}.
For any $t > 0$, $x \in \R^2$ we have
\begin{equation}\label{eq:2nd-mom0}
	\forall \rho \in (0,\infty) \colon \qquad
	\lim_{N\to\infty} \, \bbvar\big[ Z_{Nt}^{\beta_N}(\cU_{B(0,\delta_N^\rho \sqrt{N})}) \big]
	= \rho \,.
\end{equation}
\end{corollary}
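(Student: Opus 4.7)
The plan is to apply Theorem~\ref{th:vargen} directly with the parameters matched to the setup of the corollary. First, I take the system size $L_N := Nt$; writing this in the form $L_N = N\,\rme^{-\ell |\theta_N| + o(|\theta_N|)}$ required by \eqref{eq:L}, I set $\ell := 0$, since $\log(L_N/N) = \log t$ is constant and hence $o(|\theta_N|)$ as $|\theta_N|\to\infty$. Next, I set $W_N := \delta_N^{2\rho}N = N\rme^{-\rho|\theta_N|}$, so the radius of the ball $B(0,\delta_N^\rho\sqrt{N})$ is exactly $\sqrt{W_N}$; this gives $w := \rho$ in \eqref{eq:W}, and clearly $w \ge \ell$. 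Finally, the initial condition is $\varphi_N := \cU_{B(0,\sqrt{W_N})}$.

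The only hypothesis of Theorem~\ref{th:vargen} that requires verification is the ``spread out'' condition \eqref{eq:phi-cond0}. For this I invoke Proposition~\ref{th:initial2} with $z_N := 0$ and $t_N$ equal to a sufficiently large constant: the first condition in \eqref{eq:conditions-phi} holds trivially because $\varphi_N$ is supported in $B(0,\sqrt{W_N})$, while the second, $\|\varphi_N\|_\infty \le \rme^{t_N}/W_N$, holds because $\varphi_N$ is the uniform measure on $B(0,\sqrt{W_N}) \cap \Z^2_\even$, whose cardinality is a constant multiple of $W_N$. Since the chosen $t_N$ is bounded, it is trivially $o(|\theta_N|)$. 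Plugging everything into conclusion \eqref{eq:2nd-mom} of Theorem~\ref{th:vargen} then yields
\begin{equation*}
\lim_{N\to\infty} \bbvar\pigl[Z_{Nt}^{\beta_N}(\cU_{B(0,\delta_N^\rho\sqrt{N})})\pigr] = \frac{w-\ell}{1+\ell} = \frac{\rho - 0}{1 + 0} = \rho,
\end{equation*}
as claimed. Note that the parameter $x$ in the statement plays no role: it does not appear on the right-hand side of the formula, and in any case the variance is translation invariant in the disorder.

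There is essentially no deep obstacle here: the corollary is a clean instance of Theorem~\ref{th:vargen} corresponding to the simplest choice of initial condition (the indicator of a ball) and the simplest time parameter ($L_N = Nt$, giving $\ell = 0$). The only point worth emphasising, as above, is that all finite $t$- and $\rho$-dependent prefactors — such as $\log t$ or the normalisation constant of $\cU_{B(0,\sqrt{W_N})}$ — are asymptotically absorbed into the $o(|\theta_N|)$ slack because $|\theta_N|\to\infty$.
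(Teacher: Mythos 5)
Your proof is correct and follows essentially the same route as the paper: identifying $L_N = Nt$ with $\ell = 0$, $W_N = N\delta_N^{2\rho}$ with $w = \rho$, verifying the spread-out condition \eqref{eq:phi-cond0} via Proposition~\ref{th:initial2} (equivalently, \eqref{eq:ball-rw}), and reading off $\frac{w-\ell}{1+\ell} = \rho$ from Theorem~\ref{th:vargen}. The paper's own proof is just a more terse version of the same check.
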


\begin{proof}
The initial condition $\varphi_N =  \cU_{B(0,\delta_N^\rho \sqrt{N})}$
fulfills \eqref{eq:conditions-phi} with $W_N := N \delta_N^{2\rho}$ (recall \eqref{eq:phi0}).
Since $W_N$ satisfies \eqref{eq:W} with $w=\rho$,
while $L_N := Nt$ satisfies \eqref{eq:L} with $\ell=0$,
the assumptions of Theorem~\ref{th:vargen} are verified and
we obtain \eqref{eq:2nd-mom0} from \eqref{eq:2nd-mom}.
\end{proof}

\begin{remark}
Since $\bbvar[\mathe^{\cN (0, \sigma^2) - \frac{1}{2} \sigma^2}] = \rme^{\sigma^2}-1$,
relation \eqref{eq:2nd-mom0} is consistent with \eqref{eq:log-normal}.
\end{remark}

\section{Proofs for the SHF}

\label{sec:proof-SHF}

In this section, we prove our main results for the SHF. More precisely, we prove
\begin{itemize}
\item Theorem~\ref{th:singularity-SHF} (singularity) in Subsection~\ref{sec:singularity-SHF};
\item Theorem~\ref{th:log-normality-SHF} (log-normality) in Subsection~\ref{sec:log-normality-SHF};
\item Theorem~\ref{th:long-time-SHF} (long-time behavior) in Subsection~\ref{sec:long-time-SHF};
\item Theorem~\ref{th:regularity-SHF} (regularity) in Subsection~\ref{sec:regularity-SHF},
alongside the corresponding
Theorem~\ref{th:regularity-DP} for directed polymers.
\end{itemize}
We first state a basic monotonicity result. For integrable $\varphi: \R^2 \to \R$, we will write
\begin{equation*}
	\mathscr{Z}_{t}^{\theta}(\varphi) := \int_{\R^2} \varphi(x) \,
	\mathscr{Z}_{t}^{\theta}(\dd x),
\end{equation*}
which is well-defined and has mean $\int_{\R^2} \varphi(x) \, \dd x$.

\begin{lemma}[Convex monotonicity for the SHF]\label{th:monotonicity-SHF}
Fix $t > 0$ and a positive integrable function $\varphi: \R^2 \to \R^+$.
The law of $\mathscr{Z}_{t}^\theta(\varphi)$
is \emph{increasing in $\theta$ w.r.t.\ the convex order}, i.e., for any
convex function $\Psi: \R \to \R$, we have
\begin{equation}\label{eq:convex-order}
	\bbE \big[ \Psi\big( \mathscr{Z}_{t}^{\theta'}\!(\varphi) \big) \big]
	\le \bbE \big[ \Psi\big( \mathscr{Z}_{t}^{\theta}(\varphi) \big) \big]
	\qquad \text{for } \theta' < \theta \,,
\end{equation}
and the reverse inequality holds for concave~$\Psi$.

In particular, fractional moments of the SHF are decreasing in $\theta$:
\begin{equation}\label{eq:fractional-moments}
	\forall \alpha \in (0,1) \colon \qquad
	\bbE \big[ \mathscr{Z}_{t}^{\theta'}\!(\varphi)^\alpha \big]
	\ge \bbE \big[ \mathscr{Z}_{t}^{\theta}(\varphi)^\alpha \big]
	\qquad \text{for } \theta' < \theta \,.
\end{equation}
\end{lemma}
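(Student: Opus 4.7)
The plan is to establish the convex ordering at the level of directed polymer partition functions and then transfer it to the SHF via the convergence \eqref{eq:weak-conv}. Since the SHF of \cite{CSZ23a} is universal in the disorder law satisfying \eqref{eq:lambda}, we may specialise to \emph{standard Gaussian} disorder $\omega$, so that $\lambda(\beta) = \tfrac{1}{2}\beta^2$. Given $0 < \beta_1 < \beta_2$, let $\tilde\omega$ be an independent copy of~$\omega$ and define the coupled disorder
\[
	\omega^{(2)}(n,x) \,:=\, \frac{\beta_1}{\beta_2}\, \omega(n,x) + \frac{\sqrt{\beta_2^2 - \beta_1^2}}{\beta_2}\,\tilde\omega(n,x) \,,
\]
which is again standard Gaussian. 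Denoting by $\widetilde Z_L^{\beta_2}(\varphi)$ the partition function built from $\omega^{(2)}$, we have $\widetilde Z_L^{\beta_2}(\varphi) \stackrel{d}{=} Z_L^{\beta_2}(\varphi)$. Integrating out $\tilde\omega$ conditionally on $\omega$ in the Hamiltonian \eqref{eq:H} and using the Gaussian identity $\lambda(\beta_1) + \lambda(\sqrt{\beta_2^2 - \beta_1^2}) = \lambda(\beta_2)$ yields $\bbE[\widetilde Z_L^{\beta_2}(\varphi)\mid\omega] = Z_L^{\beta_1}(\varphi)$. Conditional Jensen applied to any convex $\Psi$ then produces
\[
	\bbE[\Psi(Z_L^{\beta_2}(\varphi))] \,=\, \bbE[\Psi(\widetilde Z_L^{\beta_2}(\varphi))] \,\geq\, \bbE[\Psi(Z_L^{\beta_1}(\varphi))] \,,
\]
the sought convex ordering of polymer partition functions in the disorder strength.

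To transfer this to the SHF, for $\theta' < \theta$ and large $N$ one has $\beta_N^{\mathrm{crit}}(\theta') < \beta_N^{\mathrm{crit}}(\theta)$ in the critical window \eqref{eq:betacrit}. Applying the previous inequality to a natural rescaled discretisation $\varphi_N$ of $\varphi$ (chosen so that $Z_{tN}^{\beta_N}(\varphi_N)$ approximates $\int \varphi(x)\,Z_{tN}^{\beta_N}(\ev{x\sqrt{N}})\,\dd x$) gives
\[
	\bbE\bigl[\Psi\bigl(Z_{tN}^{\beta_N^{\mathrm{crit}}(\theta')}(\varphi_N)\bigr)\bigr] \,\leq\, \bbE\bigl[\Psi\bigl(Z_{tN}^{\beta_N^{\mathrm{crit}}(\theta)}(\varphi_N)\bigr)\bigr] \,.
\]
Tested against the integrable function $\varphi$, \eqref{eq:weak-conv} produces convergence in distribution of both sides to $\mathscr{Z}_t^{\theta'}(\varphi)$ and $\mathscr{Z}_t^{\theta}(\varphi)$ respectively. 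Using the characterisation of the convex order through the family $\{(\cdummy-a)^+\}_{a\in\R}$, together with uniform $L^p$ moment bounds on critical partition functions from Proposition~\ref{th:moments} (providing the necessary uniform integrability), the inequality passes to the limit and yields \eqref{eq:convex-order}. The fractional moment statement \eqref{eq:fractional-moments} is then immediate, since $x\mapsto x^\alpha$ with $\alpha\in(0,1)$ is concave on $[0,\infty)$, so that \eqref{eq:convex-order} applied to $-x^\alpha$ gives the reverse inequality.

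The coupling step itself is elementary and hinges on the Gaussian identity for~$\lambda$, which is why working in the Gaussian setting is convenient. The main technical subtlety lies in the passage to the limit: one must verify that the specific discretisation $\varphi_N$ does yield $Z_{tN}^{\beta_N}(\varphi_N) \Rightarrow \mathscr{Z}_t^\theta(\varphi)$ and that uniform integrability of $(\cdummy - a)^+$ is available from the moment bounds. Both ingredients are standard given the SHF convergence result of \cite{CSZ23a} and the $L^p$ estimates on critical partition functions recalled in the paper.
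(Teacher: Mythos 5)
Your proposal is correct and takes a genuinely different route at the directed polymer level. The paper deduces the convex ordering \eqref{eq:convex-order-DP} of the polymer partition functions in $\beta$ by citing the FKG-inequality arguments of \cite[Proposition~3.1]{C17} and \cite[Theorem~2.8]{CSZ17b}, which hold for arbitrary disorder satisfying \eqref{eq:lambda}. You instead specialise to standard Gaussian disorder and obtain the ordering via the coupling $\omega^{(2)} = \frac{\beta_1}{\beta_2}\omega + \frac{\sqrt{\beta_2^2-\beta_1^2}}{\beta_2}\tilde\omega$, the exact Gaussian identity $\lambda(\beta_1)+\lambda(\sqrt{\beta_2^2-\beta_1^2})=\lambda(\beta_2)$, and conditional Jensen after observing $\bbE[\widetilde Z_L^{\beta_2}(\varphi)\mid\omega]=Z_L^{\beta_1}(\varphi)$ (the inner conditioning on the walk path makes each factor in $\tilde\omega$ have conditional mean one, so the step is sound). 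This coupling argument is elementary and self-contained, avoiding any correlation inequality, but it requires invoking the disorder universality of the SHF from \cite{CSZ23a} to reduce to Gaussian noise — a legitimate but nontrivial additional input. By contrast, the paper's FKG route works for the general disorder underlying the convergence \eqref{eq:weak-conv} directly, at the cost of relying on external references. The transfer to the SHF limit — weak convergence \eqref{eq:weak-conv}, a density/approximation step for $\varphi$, reduction to convex functions of controlled growth (you use the family $(\cdummy-a)^+$, the paper reduces to $\Psi(x)=O(x)$), and uniform integrability from $L^p$ moment bounds — is essentially identical in both arguments. The fractional-moment corollary follows identically by applying the inequality to the concave map $x\mapsto x^\alpha$.
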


\begin{proof}
It is enough to prove \eqref{eq:convex-order} when $\Psi(x) = O(x)$ grows at most linearly
as $x \to \infty$, since the general case follows by monotone convergence.
We may also assume that $\varphi$ is continuous and compactly supported,
because such functions are dense in~$L^1$.

By the weak convergence \eqref{eq:weak-conv},
it is enough to prove \eqref{eq:convex-order}
when the SHF $\mathscr{Z}_{t}^{\theta}$
is replaced by the rescaled directed polymer partition function
$Z_{tN}^{\beta_N^{\mathrm{crit}}}$,
because the limit $N\to\infty$ is justified by
uniform integrability (via boundedness in $L^2$).
More generally, we claim that
\begin{equation} \label{eq:convex-order-DP}
	\bbE \big[ \Psi\big( Z_{N}^{\beta'}(\varphi) \big) \big]
	\le \bbE \big[ \Psi\big( Z_{N}^{\beta}(\varphi) \big) \big]
	\qquad \text{for } \beta' < \beta \,,
\end{equation}
for any $N \in \N$ and for any positive integrable
$\varphi: \Z^2_\even \to \R^+$.

Relation \eqref{eq:convex-order-DP} is known to hold by the FKG inequality, see e.g.\ the proofs of
\cite[Proposition~3.1]{C17} or
\cite[Theorem~2.8]{CSZ17b}
where the arguments are carried out for fractional moments, but they hold in general.
\end{proof}

\subsection{Singularity of the SHF}
\label{sec:singularity-SHF}

We now prove Theorem~\ref{th:singularity-SHF} on the singularity of the SHF.
Let us first recall some general facts about
measures on the Euclidean space.

By the Lebesgue
Decomposition Theorem \cite[Theorem~3.8]{F99},
any $\sigma$-finite measure $\nu$ on $\R^2$
is the sum $\nu = \nu^{\mathrm{ac}} + \nu^{\mathrm{sing}}$
of an \emph{absolutely continuous part} $\nu^{\mathrm{ac}}(\dd x) = f(x) \, \dd x$
and a \emph{singular part} $\nu^{\mathrm{sing}}(\dd x) \perp \dd x$ which assigns all its mass
to a set of zero Lebesgue measure.

If furthermore $\nu$ is \emph{locally finite}, then
the density $f(x)$ of $\nu^{\mathrm{ac}}$
can be computed as follows:
denoting by $B(x,\delta) := \{ y \in \R^2 \colon |y-x| < \delta\}$ the Euclidean ball in $\R^2$,
we have
\begin{equation}\label{eq:density}
	f(x) = \lim_{\delta \downarrow 0} \frac{\nu(B(x,\delta))}{\pi \, \delta^2}
	\qquad \text{for Lebesgue a.e.\ $x\in\R^2$} \,,
\end{equation}
see \cite[Theorem~3.22]{F99}
(any locally finite measure is regular by \cite[Theorem~7.8]{F99}).

In particular, we summarise the following general result.

\begin{proposition}[Singularity of measures]\label{th:singularity}
Given a locally finite measure $\nu$ on $\R^2$, the limit in \eqref{eq:density}
exists for Lebesgue a.e.\ $x\in\R^2$ and recovers the density $f(x)$ of the absolutely
continuous part of~$\nu$. In particular, $\nu$
is singular with respect to the Lebesgue measure
if and only if the limit in \eqref{eq:density} vanishes
for Lebesgue a.e.\ $x\in\R^2$.
\end{proposition}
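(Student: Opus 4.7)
The plan is to deduce the proposition directly from the Lebesgue decomposition together with the Lebesgue--Besicovitch differentiation theorem for Radon measures on $\R^2$, which is the content of \cite[Theorem~3.22]{F99} already invoked in the preamble. Since $\nu$ is locally finite, both $\nu^{\mathrm{ac}}$ and $\nu^{\mathrm{sing}}$ are locally finite (hence regular) measures, so the differentiation theorem applies to each separately.

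First, I would apply the Lebesgue decomposition theorem to write $\nu = \nu^{\mathrm{ac}} + \nu^{\mathrm{sing}}$, where $\nu^{\mathrm{ac}}(\dd x) = f(x)\, \dd x$ with $f \in L^1_{\mathrm{loc}}(\R^2)$ and $\nu^{\mathrm{sing}} \perp \dd x$. By linearity, for every $x\in\R^2$ and $\delta>0$,
\begin{equation*}
	\frac{\nu(B(x,\delta))}{\pi\delta^2}
	\,=\, \frac{\nu^{\mathrm{ac}}(B(x,\delta))}{\pi\delta^2}
	\,+\, \frac{\nu^{\mathrm{sing}}(B(x,\delta))}{\pi\delta^2} \,.
\end{equation*}
The classical Lebesgue differentiation theorem on $\R^2$ gives that the first term converges to $f(x)$ for Lebesgue a.e.\ $x$. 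For the second term, the key fact I would invoke (this is exactly the content of \cite[Theorem~3.22]{F99}) is that, since $\nu^{\mathrm{sing}} \perp \dd x$, the symmetric derivative
\begin{equation*}
	\lim_{\delta\downarrow 0}\,\frac{\nu^{\mathrm{sing}}(B(x,\delta))}{\pi\delta^2} = 0
	\qquad \text{for Lebesgue a.e.\ } x \in \R^2 \,.
\end{equation*}
Combining the two displays establishes \eqref{eq:density} and identifies the limit with the density of $\nu^{\mathrm{ac}}$.

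For the ``in particular'' assertion, I would argue by equivalence: if $\nu$ is singular w.r.t.\ Lebesgue measure, then $\nu^{\mathrm{ac}} \equiv 0$ and hence $f \equiv 0$ almost everywhere, so the limit in \eqref{eq:density} vanishes a.e. Conversely, if this limit vanishes for Lebesgue a.e.\ $x$, then by the first part $f(x) = 0$ for Lebesgue a.e.\ $x$, so $\nu^{\mathrm{ac}}$ is the zero measure, which means $\nu = \nu^{\mathrm{sing}}$ is singular.

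There is no real obstacle here since the result is essentially a reformulation of the Lebesgue--Besicovitch differentiation theorem; the only point that requires care is ensuring that balls $B(x,\delta)$ on $\R^2$ form a family that differentiates Radon measures (they do, either by direct Vitali covering arguments or because they satisfy the Besicovitch condition), and that local finiteness of $\nu$ suffices to guarantee regularity so that the theorem applies as stated.
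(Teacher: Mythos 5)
Your proposal is correct and follows essentially the same route as the paper, which likewise reduces the statement to the Lebesgue decomposition theorem and the Lebesgue--Besicovitch differentiation theorem, citing \cite[Theorems~3.8, 3.22, 7.8]{F99}. The paper does not spell out the argument beyond these references, so your write-up is in fact a more explicit version of the intended proof.
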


\begin{proof}[Proof of Theorem~\ref{th:singularity-SHF}]

Applying Proposition~\ref{th:singularity} to the SHF
$\nu(\dd x) = \mathscr{Z}_{t}^\theta(\dd x)$, we see that
if \eqref{eq:singularity-SHF} holds, then almost surely
the SHF is singular with respect to Lebesgue.

\smallskip

It remains to prove \eqref{eq:singularity-SHF},
which we deduce from \eqref{eq:log-normal-SHF}.
We denote by $(\Omega, \cA, \bbP)$ the probability
space on which the SHF is defined and we indicate explicitly
the dependence on $\omega \in \Omega$ by
$\mathscr{Z}_t^{\theta,\omega}(\dd x)$. Recalling \eqref{eq:averaged-SHF},
we rephrase \eqref{eq:singularity-SHF} as
\begin{equation}\label{eq:singularity-SHF2}
	\text{for a.e.\ $\omega \in \Omega$:} \quad\ \
	L(\omega,x) :=
	\liminf_{n \to \infty} \ \mathscr{Z}_{t}^{\theta,\omega} \big(
	\cU_{B(x,\delta_n)} \big) = 0
	\quad \text{for Lebesgue a.e.\ $x\in\R^2$} \,,
\end{equation}
where we have fixed (arbitrarily) $\delta_n := \frac{1}{n}$
and we have replaced $\lim$ by $\liminf$, in order to obtain a
measurable function $L(\omega,x) \in [0,\infty]$ defined for all $\omega \in \Omega$ and $x \in \R^2$.
We stress that the limit in \eqref{eq:singularity-SHF} \emph{exists
as $\delta \downarrow 0$ for Lebesgue a.e.\ $x\in\R^2$},
see Proposition~\ref{th:singularity}, hence it must coincide
with $L(\omega,x)$ for a.e.\ $\omega \in \Omega$ and for Lebesgue a.e.\ $x\in\R^2$.

To complete the proof,
we need to show that, for a.e.\ $\omega \in \Omega$, we have $L(\omega,x) = 0$
for Lebesgue a.e.\ $x\in\R^2$, or equivalently
$\bbE[ \int_{\R^2} L(\omega,x)^\alpha \, \dd x ] = 0$ for
any fixed $\alpha \in (0,1)$ (recall that $L(\omega,x) \ge 0$).
By Fubini's theorem, it is enough to show that for all $x\in\R^2$
we have $\bbE[L(\omega,x)^\alpha] = 0$. By Fatou's Lemma
\begin{equation} \label{eq:boLalpha}
	\bbE[ L(\omega,x)^\alpha ] \le
	\liminf_{n \to \infty} \;
	\bbE\big[\mathscr{Z}_{t}^{\theta,\omega} \big(
	\cU_{B(x,\delta_n)} \big)^\alpha \big]
\end{equation}
and it remains to estimate the RHS.
To this end, we exploit the \emph{monotonicity of fractional moments}
\eqref{eq:fractional-moments} by replacing $\theta$ with $\theta'_\delta \downarrow -\infty$
and applying the \emph{log-normality} \eqref{eq:log-normal-SHF}.
Let us fix a parameter $\rho \in (0,\infty)$.
\begin{itemize}
\item We rewrite the log-normality \eqref{eq:log-normal-SHF} by
renaming $\delta_\theta^\rho = \rme^{\frac{1}{2} \rho \, \theta}$ as $\delta$,
i.e.\ expressing  $\theta = -\frac{1}{\rho} \log \frac{1}{\delta^{2}}$ as a function of~$\delta$, so that
\eqref{eq:log-normal-SHF} becomes
\begin{equation}\label{eq:log-normal-SHF2}
	\mathscr{Z}_{t}^{-\frac{1}{\rho} \log \frac{1}{\delta^{2}}}\big(
	\cU_{B(x,\delta)} \big)
	\ \xrightarrow[\ \delta \downarrow 0 \ ]{d} \
	\rme^{\cN(0,\sigma^2) - \frac{1}{2}\sigma^2}
	\qquad \text{with} \quad \sigma^2 = \log(1+\rho) \,.
\end{equation}
We note that this weak convergence
also implies convergence of fractional moments,
because the LHS of \eqref{eq:log-normal-SHF2} is bounded in $L^1$
(recall that $\bbE[\mathscr{Z}_{t}^\theta(\dd x)] = \dd x$).

\item If we set $\theta'_\delta := -\frac{1}{\rho} \log \frac{1}{\delta^{2}} \to -\infty$
as $\delta \downarrow 0$,
then we can apply the monotonicity
of fractional moments \eqref{eq:fractional-moments} with $\varphi = \cU_{B(x,\delta)}$
to estimate
the RHS of \eqref{eq:boLalpha}.
\end{itemize}
Overall, we obtain for any fixed $\alpha \in (0,1)$
\begin{equation*}
	\forall \rho \in (0,\infty): \qquad
	\bbE[ L(\omega,x)^\alpha ] \le
	\bbE \Big[ \pig( \rme^{\cN(0,\sigma^2) - \frac{1}{2}\sigma^2} \pig)^\alpha \Big]
	= \rme^{\frac{1}{2} (\alpha^2 - \alpha) \sigma^2 }
	= \frac{1}{(1+\rho)^{\frac{\alpha(1-\alpha)}{2}}} \,,
\end{equation*}
where in the last equality we plugged in the value of $\sigma = \log (1+\rho)$
from \eqref{eq:log-normal-SHF2}. Since $\alpha(1-\alpha) > 0$
for $\alpha \in (0,1)$, letting $\rho \to \infty$ we finally obtain
$\bbE[ L(\omega,x)^\alpha ] = 0$.
\end{proof}

\subsection{Log-normality of the SHF}
\label{sec:log-normality-SHF}

We prove Theorem~\ref{th:log-normality-SHF} on the log-normality of the SHF.
More precisely
we deduce it from the corresponding result for directed polymers, see Theorem~\ref{th:log-normality}
(which we prove in Section~\ref{sec:log-normality}).

To this end, recalling the uniform distribution on continuum and discrete balls, see
\eqref{eq:uniform} and \eqref{eq:phi0},
from the convergence in distribution \eqref{eq:weak-conv}, we obtain
\begin{equation}\label{eq:scaling}
	\forall x \in \R^2, \ \delta > 0 \colon \qquad
	Z_{tN}^{\beta_N^{\mathrm{crit}}}\!\pig(  \, \cU_{B(x\sqrt{N},\delta\sqrt{N})} \, \pig)
	\ \xrightarrow[\ N\to\infty \ ]{d} \
	\mathscr{Z}_{t}^\theta \big(\cU_{B(x,\delta)}\big) \,.
\end{equation}
Strictly speaking we cannot plug $\cU_{B(x,\delta)}(\cdot)$ as a
test function into \eqref{eq:weak-conv}, because it is not continuous. However, for any $\epsilon > 0$,
we can approximate $\varphi_\epsilon(\cdot) \le \cU_{B(x,\delta)}(\cdot) \le \psi_\epsilon(\cdot)$ with
 continuos functions  $\varphi_\epsilon, \psi_\epsilon$ such that
$0 \le \psi_\epsilon(\cdot) - \varphi_\epsilon(\cdot) \le \ind_{A_\epsilon}(\cdot)$, where
we define the annulus
$A_\epsilon := B(x,\delta+\epsilon) \setminus B(x,\delta-\epsilon)$.
Replacing $\cU_{B(x,\delta)}(\cdot)$ by $\varphi_\epsilon$ or $\psi_\epsilon$ in \eqref{eq:weak-conv},
we commit an error in $L^1$ which is $O(\epsilon)$, i.e.\
the Lebesgue measure of $A_\epsilon$. This justifies \eqref{eq:scaling}.

\begin{proof}[Proof of Theorem~\ref{th:log-normality-SHF}]
We fix $t > 0$, $x\in\R^2$,
$\rho \in (0,\infty)$ and define $\delta_\theta := \rme^{\frac{1}{2} \theta}$
as in \eqref{eq:deltatheta}. It suffices to prove the convergence in distribution \eqref{eq:log-normal-SHF}
when $\theta$ ranges in an arbitrary negative sequence
$\theta_k = -|\theta_k| \to -\infty$, which we fix henceforth.
Introducing the shorthands
\begin{equation*}
	Y_k := \mathscr{Z}_{t}^{\theta_k}\big( \cU_{B(x,\delta^\rho_{\theta_k})}
	\big) \,, \qquad
	Y := \rme^{\cN(0,\sigma^2) - \frac{1}{2}\sigma^2}
	\quad \text{with} \quad \sigma^2 = \log(1+\rho) \,,
\end{equation*}
we need to show that $Y_k \to Y$ in distribution.
It suffices to fix any bounded and continuous function $\Phi: \R \to \R$ and to prove that
\begin{equation} \label{eq:to-prove-log-norm}
	\lim_{k\to\infty} \bbE[\Phi(Y_k)] = \bbE[\Phi(Y)] \,.
\end{equation}

The idea is to approximate the SHF with the directed polymer partition function.
Recalling the convergence in distribution \eqref{eq:scaling}
in the critical regime \eqref{eq:betacrit}, we abbreviate
\begin{equation} \label{eq:Wdef}
	W_{N,k} := Z_{tN}^{\beta_N^{\mathrm{crit}}(k)}\!\pig(  \, \cU_{B(x\sqrt{N},
	\delta^\rho_{\theta_k}\sqrt{N})} \, \pig)
	\qquad \text{where} \quad
	\sigma_{\!\beta_N^{\mathrm{crit}}(k)}^2 = \frac{1}{R_N} \left( 1 +
	\frac{\theta_k}{\log N} \right) \,.
\end{equation}
For fixed $k\in\N$, we have $W_{N,k} \to Y_k$ in distribution as $N\to\infty$. Therefore
we can choose $N = N_k$ large enough so that $\bbE[\Phi(W_{N_k,k})]$ is close to
$\bbE[\Phi(Y_k)]$. Let us define
\begin{equation*}
\begin{split}
	N_1 &:= \min\big\{N \in \N \colon \ |\bbE[\Phi(W_{N,1})] - \bbE[\Phi(Y_1)]| \le 1 \big\} \,, \\
	N_k &:= \min\big\{N > N_{k-1} \colon \
	N \ge \rme^{k |\theta_k|} \ \text{ and } \ |\bbE[\Phi(W_{N,k})] - \bbE[\Phi(Y_k)]|
	\le \tfrac{1}{k} \big\} \,,
\end{split}
\end{equation*}
so that  we have by construction $N_1 < N_2 < \ldots$ and, for every $k\in\N$,
\begin{equation} \label{eq:Nk}
	N_k \ge \rme^{k |\theta_k|} \,, \qquad
	|\bbE[\Phi(W_{N_k,k})] - \bbE[\Phi(Y_k)]| \le \tfrac{1}{k} \,.
\end{equation}
By the triangle inequality, our goal \eqref{eq:to-prove-log-norm} holds if we show that
\begin{equation} \label{eq:to-prove-log-norm2}
	\lim_{k\to\infty} \bbE[\Phi(W_{N_k,k})] = \bbE[\Phi(Y)] \,.
\end{equation}

We claim that this holds by \eqref{eq:log-normal}, because \emph{the directed polymer
partition functions $W_{N_k,k}$ from \eqref{eq:Wdef}
satisfy the assumptions of Theorem~\ref{th:log-normality}}.
Note indeed that:
\begin{itemize}
\item the sequence $\beta_N^{\mathrm{crit}}(k)$
for $N = N_k$
is in the \emph{quasi-critical regime} \eqref{eq:quasi-crit}, since by \eqref{eq:Wdef}
\begin{equation*}
	\sigma_{\!\beta_{N_k}^{\mathrm{crit}}(k)}^2 = \frac{1}{R_{N_k}} \left( 1 +
	\frac{\theta_k}{\log N_k} \right) = \frac{1}{R_{N_k}} \left( 1 -
	\frac{|\theta_k|}{\log N_k} \right) \qquad \text{with} \quad
	1 \ll |\theta_k| \ll \log N_k \,,
\end{equation*}
where $|\theta_k| \ll \log N_k$ holds by the first inequality in \eqref{eq:Nk};

\item the initial condition $\cU_{B(x\sqrt{N}, \delta^\rho_{\theta_k}\sqrt{N})}$
for $N = N_k$ satisfies the requirement that $\delta_{\theta_k} \downarrow 0$
at rate \eqref{eq:theta}, because by definition
$\delta_{\theta_k} := \rme^{\frac{1}{2} \rho \, \theta_k} = \rme^{-\frac{1}{2} \rho \, |\theta_k|}$
as in \eqref{eq:deltatheta}.
\end{itemize}
We can thus apply Theorem~\ref{th:log-normality} to $(W_{N_k,k})_{k\in\N}$:
relation \eqref{eq:log-normal}
along the subsequence $N=N_k$ yields directly \eqref{eq:to-prove-log-norm2}
and completes the proof.
\end{proof}

\begin{remark}
The strategy in the proof of Theorem~\ref{th:log-normality-SHF} is very general and it
shows that the convergence in distribution \eqref{eq:weak-conv} or \eqref{eq:strong-conv},
which are proved for each $\theta$ in the critical regime \eqref{eq:betacrit},
can effectively be transferred to the quasi critical regime \eqref{eq:quasi-crit},
provided we take $\theta_N \to -\infty$ slow enough.
This naturally leads to Metatheorem~\ref{th:meta}.
\end{remark}

\subsection{Long-time behaviour of the SHF}
\label{sec:long-time-SHF}

We prove Theorem~\ref{th:long-time-SHF} on the long-time behaviour of the SHF.
This is a corollary of Theorem~\ref{th:singularity-SHF} on
the singularity of the SHF, together with the following scale-covariance property,
proved in \cite[Theorem~1.2]{CSZ23a}:
\begin{equation}\label{eq:scale-covariance}
	\forall \sfa > 0 \colon \qquad
	\mathscr{Z}_{\sfa t}^{\theta}\big( \cU_{B(0, \sqrt{\sfa} \, R)} \big)
	\overset{d}{=} \mathscr{Z}_{t}^{\theta + \log \sfa}\big( \cU_{B(0, R)} \big) \,,
\end{equation}
which holds for any $t,R \in (0, \infty)$ and $\theta \in \R$.

\begin{proof}[Proof of Theorem~\ref{th:long-time-SHF}]
To prove \eqref{eq:long-time-SHF}, it suffices to show that
\begin{equation}\label{eq:long-time-SHF2}
	\forall R < \infty \colon \qquad
	\mathscr{Z}_{t}^\theta\big( \cU_{B(0,R)} \big)
	\ \xrightarrow[\ t \to \infty \ ]{d} \
	0 \,,
\end{equation}
which follows if we show that for some fixed $\alpha \in (0,1)$ the fractional moment vanishes:
\begin{equation}\label{eq:long-time-SHF2}
	\forall R < \infty \colon \qquad
	\lim_{t\to\infty} \ \bbE \big[ \mathscr{Z}_{t}^\theta\big(  \cU_{B(0,R)} \big)^\alpha \big]
	\,=\, 0 \,.
\end{equation}

Exploiting first the scaling relation \eqref{eq:scale-covariance} with $\theta$ replaced
by $\theta - \log \sfa$ and $\sfa = t^{-1}$,
and then the monotonicity of fractional moments \eqref{eq:fractional-moments}, we obtain
for all $t \ge 1$
\begin{equation*}
	\bbE \big[ \mathscr{Z}_{t}^\theta\big(  \cU_{B(0,R)} \big)^\alpha \big]
	= \bbE \pigl[
	\mathscr{Z}_{1}^{\theta + \log t}\pigl( \cU_{B(0,R/\sqrt{t})} \pigr)^\alpha \pigr]
	\le \bbE \pigl[ \mathscr{Z}_{1}^{\theta}\pigl( \cU_{B(0,R/\sqrt{t})} \pigr)^\alpha \pigr] \,.
\end{equation*}
Applying \eqref{eq:singularity-SHF} with $\delta = \frac{R}{\sqrt{t}}$,
we see that $A_t := \mathscr{Z}_{1}^{\theta}\pigl( \cU_{B(0,R/\sqrt{t})} \pigr) \to 0$
in distribution as $t \to \infty$.
The random variables $(A_t)_{t \ge 1}$ are bounded in $L^1$,
because $\bbE[A_t] = 1$, hence by uniform integrability
we obtain $\bbE[A_t^\alpha] \to 0$ for any $\alpha \in (0,1)$, which completes the proof.
\end{proof}

\subsection{Improved convergence and regularity of the SHF}
\label{sec:regularity-SHF}

 Theorem~\ref{th:regularity-DP} entails that, almost surely,
$\mathscr{Z}_{t}^\theta \in \mathcal{C}^{-\epsilon}$ for every $\epsilon > 0$,
hence Theorem~\ref{th:regularity-SHF} follows
(delta measures $\delta_x$ in $\R^d$ are in $\cC^{\gamma}$
only for $\gamma \le -d$, hence $\mathscr{Z}_{t}^\theta$ is non-atomic).

\smallskip

The rest of this subsection is devoted to the proof of Theorem~\ref{th:regularity-DP}.
We first recall the definition of negative H\"older spaces
(see \cite[Section~2]{FM17} or \cite[Section~12]{CZ20} for more details).
Let us introduce some notation in any dimension $d\in\N$.
\begin{itemize}
\item Let $C^\infty_c$ denote the family of
smooth and compactly supported functions $\varphi: \R^d \to \R$.

\item For $r \in \N_0 = \{0,1,2,\ldots\}$,
let $C^r_c$ denote the family of
compactly supported functions
of class $C^r$, for which we define
\begin{equation*}
	\| \varphi \|_{C^r} := \max_{r_1, \ldots, r_d \in \N_0 \colon r_1 + \ldots + r_d \le r}
	\| \partial_1^{r_1} \cdots \partial_d^{r_d} \varphi \|_\infty \,.
\end{equation*}

\item Let $\mathscr{B}^r$ denote the family of functions
$\varphi \in C^\infty_c$ supported on $\overline{B(0,1)}$
with $\| \varphi \|_{C^r} \le 1$.

\item Given a function $\varphi: \R^d \to \R$, we denote by $\varphi_x^\lambda$
its $\lambda$-scaled version centred at~$x$:
\begin{equation*}
	\varphi_x^\lambda(\cdot) := \lambda^{-d} \, \varphi\big(\lambda^{-1}(\,\cdot - x)\big)
	\qquad \text{for } x \in \R^d, \ \lambda > 0 \,.
\end{equation*}
\end{itemize}

\begin{definition}[Negative H\"older spaces]\label{def:Holder}
Given $\gamma < 0$, the negative H\"older space $\mathcal{C}^{\gamma}$
is the family of  linear functionals $T: C^\infty_c \to \R$ with the following
property: for any $K \in \N$, there is a constant $c_K < \infty$ such that
\begin{equation} \label{eq:neg-property}
	|T(\varphi_x^\lambda)| \le c_K \, \lambda^\gamma \qquad
	\forall x \in B(0,K), \ \lambda \in (0,1], \ \varphi \in \mathscr{B}^r ,
\end{equation}
where $r = r(\gamma) := \lfloor -\gamma + 1\rfloor$
(any integer $r > -\gamma$ would yield an equivalent definition).
\end{definition}

Any distribution $T \in \cC^\gamma$
can be canonically extended (by continuity) from $C^\infty_c$ to $C^r_c$,
for any integer $r > -\gamma$,
hence we can consider $T(\varphi)$ for $\varphi \in C^r_c$.
In order to prove that $T \in \cC^\gamma$,
it turns out that it is enough to check property \eqref{eq:neg-property}
for a \emph{finite family of $2^d$ well-chosen
test functions $\varphi \in C^r_c$}:
a so-called wavelet basis \cite[Section~2]{FM17}, which we denote by
\begin{equation} \label{eq:wavelets}
	\phi \quad \text{and} \quad \big\{\psi^{(i)}\big\}_{1 \le i < 2^d} \quad
	\text{(which satisfy $\textstyle\int_{\R^d} \psi^{(i)}(x) \, \dd x = 0$)} \,.
\end{equation}
(The details of such functions are immaterial for our goals.)

This yields a convenient criterion for a sequence $(\mathscr{T}_N^\omega)_{N\in\N}$
of random distributions
to be tight in
the H\"older space $\mathcal{C}^{\gamma}$ with $\gamma < 0$
(in the spirit of the classical Kolmogorov moment criterion for $\gamma > 0$).
The following is a special case of \cite[Theorem~2.30]{FM17}.

\begin{theorem}[Tightness criterion for negative H\"older spaces]\label{th:Kolmogorov}
Fix $\gamma < 0$ and  an integer $r > -\gamma$.
Let $\{\phi, \ \psi^{(i)} \colon 1\le i < 2^d\}$ be
a $C^r_c$ wavelet basis in~$\R^d$, see \eqref{eq:wavelets}.

Let $(\mathscr{T}_N^\omega)_{N\in\N}$ be a sequence of random linear forms
on $C^r_c$, that is, for every $\omega$ in a probability space $(\Omega,\cA, \bbP)$
and every $N\in\N$,
we have a linear functional $\mathscr{T}_N^\omega : C^r_c \to \R$,
such that $\omega \mapsto \mathscr{T}_N^\omega(\varphi)$ is a random variable
for every $\varphi \in C^r_c$.

Assume that for some $p \in [1,\infty)$ and $C < \infty$ the following bounds hold:
\begin{equation}\label{eq:bounds-Holder}
	\forall N \in \N, \  x \in \R^d \colon \qquad
	\begin{cases}
	\rule{0pt}{1.2em}\displaystyle \bbE \pigl[
	\big|\mathscr{T}_N^\omega(\phi(\,\cdot - x)) \big|^p \pigr]^{1/p} \le C \,, & \\
	\rule[-.3em]{0pt}{1.8em}\displaystyle\bbE \pigl[ \pigl|\mathscr{T}_N^\omega
	\big( (\psi^{(i)})_x^\lambda \big) \pigr|^p \pigr]^{1/p} \le C
	\, \lambda^\gamma
	&  \ \ \forall \lambda \in (0,1], \ 1 \le i < 2^d \,.
	\end{cases}
\end{equation}
Then $(\mathscr{T}_N^\omega)_{N\in\N}$
is a tight sequence of random variables taking values in the space $\cC^{\gamma'}$
for any $\gamma' < \gamma - \frac{d}{p}$.
\end{theorem}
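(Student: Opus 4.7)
The plan is to reduce the statement to a purely deterministic wavelet characterisation of the negative H\"older spaces, and then control the random wavelet coefficients using the hypothesised moment bounds together with a Borel--Cantelli / compact embedding argument. The usual equivalent norm on $\cC^\gamma(B(0,K))$ (after Meyer, Triebel, and as codified in \cite{FM17}) reads, for any $\gamma<0$ and $r>-\gamma$,
\[
\|T\|_{\cC^\gamma(B(0,K))} \;\asymp\; \sup_{y\in\Z^d \cap B(0,K+2)} \bigl|T\bigl(\phi(\cdot-y)\bigr)\bigr|
\;+\; \sup_{\substack{j\ge 0,\ 1\le i<2^d\\ y\in 2^{-j}\Z^d\cap B(0,K+2)}} 2^{-j\gamma}\,\bigl|T\bigl((\psi^{(i)})_{y}^{2^{-j}}\bigr)\bigr|,
\]
so it suffices to estimate only the countable family of wavelet coefficients appearing on the right-hand side. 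This is what allows us to pass from the uncountable condition \eqref{eq:neg-property} to moment bounds that we can actually sum.

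Next, I would bound moments of an $\ell^p$-version of this norm. Define, for $\gamma'<0$ to be chosen,
\[
\mathscr{N}_{K,p}^{\gamma'}(T)^p \;:=\; \sum_{y\in\Z^d\cap B(0,K+2)} \bigl|T\bigl(\phi(\cdot-y)\bigr)\bigr|^p
\;+\; \sum_{j\ge 0}\sum_{i=1}^{2^d-1}\sum_{y\in 2^{-j}\Z^d\cap B(0,K+2)} 2^{-jp\gamma'}\,\bigl|T\bigl((\psi^{(i)})_{y}^{2^{-j}}\bigr)\bigr|^p,
\]
so that $\mathscr{N}_{K,p}^{\gamma'}(T)<\infty$ implies $T\in\cC^{\gamma'}(B(0,K))$ by the wavelet characterisation above (the finite $\ell^p$-norm controls the sup-norm of the same sequence). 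Applying the hypotheses \eqref{eq:bounds-Holder} to the random form $\mathscr{T}_N^\omega$, term by term, and using that $\#\{y\in 2^{-j}\Z^d\cap B(0,K+2)\} \lesssim_K 2^{jd}$, I get
\[
\bbE\bigl[\mathscr{N}_{K,p}^{\gamma'}(\mathscr{T}_N^\omega)^p\bigr] \;\lesssim_{K,C}\; 1 \;+\; \sum_{j\ge 0} 2^{jd}\cdot 2^{-jp\gamma'}\cdot (C\,2^{-j\gamma})^{p} \;=\; 1+C^{p}\sum_{j\ge 0} 2^{j(d + p(\gamma-\gamma'))}.
\]
This geometric sum converges uniformly in $N$ as soon as $d+p(\gamma-\gamma')<0$, i.e.\ for any $\gamma'<\gamma-\tfrac{d}{p}$, which is exactly the range allowed in the statement.

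From there, tightness is immediate: by Markov's inequality, the random variables $\mathscr{N}_{K,p}^{\gamma'}(\mathscr{T}_N^\omega)$ are bounded in probability uniformly in $N$, for every $K\in\N$; and the inclusion $\{T:\mathscr{N}_{K,p}^{\gamma'}(T)\le R\}\subset\{T\in\cC^{\gamma'}(B(0,K)):\|T\|_{\cC^{\gamma'}(B(0,K))}\le C_{K}R\}$ is a relatively compact set in any strictly coarser space $\cC^{\gamma''}(B(0,K))$ with $\gamma''<\gamma'$, by the standard compact embedding between negative H\"older spaces on bounded domains (which itself follows from the same wavelet decomposition, since enlarging $-\gamma$ multiplies each block by a summable factor). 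A diagonal argument over $K$ then yields tightness in the global space $\cC^{\gamma'}$ (equipped with the topology of local convergence) for every $\gamma'<\gamma-\tfrac{d}{p}$.

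The main technical obstacle, and the reason for citing \cite[Thm.~2.30]{FM17} rather than reproving from scratch, is the wavelet characterisation of $\cC^\gamma$ itself: one needs that testing against a smooth wavelet basis is equivalent to testing against the full family $\mathscr{B}^r$ appearing in Definition~\ref{def:Holder}. The upper bound (wavelet norm dominates the sup over $\mathscr{B}^r$) relies on decomposing an arbitrary $\varphi_x^\lambda\in\mathscr{B}^r$ into its wavelet components and using the cancellation $\int\psi^{(i)}=0$ to gain the correct power of $\lambda$ from each scale; the lower bound is essentially definitional. Once this equivalence is granted, everything else in the argument is elementary probability --- moment sums plus compactness --- and the exponent loss $d/p$ is forced by the $2^{jd}$ dyadic points at scale $j$, exactly as in the classical Kolmogorov criterion.
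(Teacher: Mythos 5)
The paper does not prove this statement at all: it is cited verbatim as a special case of \cite[Theorem~2.30]{FM17}. Your proposal fills that gap by sketching the proof from scratch, and the route you take --- wavelet characterisation of $\cC^\gamma$, an $\ell^p$-version of the resulting norm, a scale-by-scale moment sum with $\sim 2^{jd}$ dyadic translates at scale $j$, then Markov plus compact embedding and a diagonal over $K$ --- is exactly the structure of the FM17 argument and of the classical Kolmogorov criterion. So the approach is sound and genuinely more self-contained than the paper's.

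There is, however, a pair of compensating sign slips in the wavelet weights that should be fixed. Since Definition~\ref{def:Holder} says $|T(\varphi_x^\lambda)| \le c_K \lambda^{\gamma}$ with $\gamma<0$ (so the coefficient is allowed to \emph{grow} like $\lambda^{\gamma}=2^{-j\gamma}$ for $\lambda=2^{-j}$), the equivalent wavelet norm must normalise by $\lambda^{-\gamma}$, i.e.\ the weight should be $2^{j\gamma}$, not $2^{-j\gamma}$; and correspondingly the $\ell^p$-weight in $\mathscr{N}^{\gamma'}_{K,p}$ should be $2^{jp\gamma'}$, not $2^{-jp\gamma'}$. With your stated weight the per-scale contribution is $2^{jd}\cdot 2^{-jp\gamma'}\cdot C^p 2^{-jp\gamma} = C^p\,2^{j(d-p(\gamma+\gamma'))}$, which is not the exponent $2^{j(d+p(\gamma-\gamma'))}$ you write, and moreover the condition ``$d+p(\gamma-\gamma')<0$'' is equivalent to $\gamma'>\gamma+\tfrac{d}{p}$, the opposite of the claimed range. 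With the corrected weight $2^{jp\gamma'}$ the computation gives $2^{jd}\cdot 2^{jp\gamma'}\cdot C^p 2^{-jp\gamma}=C^p\,2^{j(d+p(\gamma'-\gamma))}$, and $d+p(\gamma'-\gamma)<0 \iff \gamma'<\gamma-\tfrac{d}{p}$, which is exactly what the theorem asserts. In short, two sign errors cancel to give the correct final range; fixing the weight (and the one-line algebra that follows) makes the argument clean. The remaining steps --- passing from the $\ell^p$-norm to the sup-norm, compactness of the inclusion $\cC^{\gamma'}(B(0,K))\hookrightarrow\cC^{\gamma''}(B(0,K))$ for $\gamma''<\gamma'$, and the diagonalisation over $K$ --- are all standard and correct as sketched.
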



\begin{remark}[Topology of H\"older spaces]
Given $\gamma < 0$ and any distribution $T \in \cC^\gamma$, let us denote
$\ev{T}_{K,\gamma} = c_K$ the best constant
in the inequality \eqref{eq:neg-property}. Defining the distance
\begin{equation*}
	d_{\cC^\gamma}(T,T') := \sum_{K \in \N} 2^{-K} \,
	\frac{\ev{T-T'}_{K,\gamma}}{1+\ev{T-T'}_{K,\gamma}} \,,
\end{equation*}
we have that $\cC^\gamma$ is a complete metric space, but it is \emph{not separable},
see \cite{FM17}.

To ensure separability, one can define $\cC_0^\gamma$ as the
closure of smooth compactly supported functions
$C^\infty_c$ under the distance $d_{\cC^\gamma}$.
One has the strict inclusion $\cC_0^\gamma \subset \cC^\gamma$,
however for any $\tilde\gamma > \gamma$ one can sandwich
$\cC^{\tilde\gamma} \subset \cC_0^\gamma \subset \cC^\gamma$
(so the difference is ``small'' in a sense).

The results in \cite{FM17} are formulated for the separable spaces $\cC_0^\gamma$
(called $\cC^\gamma$ in that paper). However, the tightness criterion in Theorem~\ref{th:Kolmogorov}
applies also to the usual spaces $\cC^\gamma$,
because $\cC_0^\gamma$ is a closed subset of $\cC^\gamma$, hence
compact sets in $\cC_0^\gamma$ are also compact in $\cC^\gamma$.
\end{remark}

\smallskip

We now turn to the proof of Theorem~\ref{th:regularity-DP}, which is based on
the following moment bounds.
Recall the initial conditions $\cU_{B(z,R)}(\cdot)$ from \eqref{eq:phi0}
and the convergence in distribution \eqref{eq:scaling}.

\begin{proposition}[Moment bounds]\label{th:moments}
Fix $t >0$ and $\theta\in\R$. Let $\beta_N^{\mathrm{crit}}$ be
in the critical regime \eqref{eq:betacrit}.
For any $h \in 2\N$, $\epsilon > 0$, $\delta_0 < \infty$,
there is a constant $\mathtt{C} = \mathtt{C}_{p,\epsilon,\delta_0}^{t,\theta} < \infty$ such that
\begin{equation}\label{eq:moments-DP}
	\sup_{N\in\N} \;
	\bbE \pigl[ Z_{tN}^{\beta_N^{\mathrm{crit}}}\!\pig(  \, \cU_{B(x\sqrt{N},\delta\sqrt{N})} \,
	\pigr)^h \pigr]^{1/h}
	\le \mathtt{C} \, \delta^{-\epsilon}
	\qquad \forall x\in\R^2, \ \delta \in (0,\delta_0) \,,
\end{equation}
and hence
\begin{equation}\label{eq:moments-SHF}
	\bbE \pigl[ \mathscr{Z}_{t}^\theta\big( \cU_{B(x,\delta)}\big)^h \pigr]^{1/h}
	\le \mathtt{C} \, \delta^{-\epsilon}
	\qquad \forall x\in\R^2, \ \delta \in (0,\delta_0) \,.
\end{equation}
\end{proposition}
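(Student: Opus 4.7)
The plan is to prove \eqref{eq:moments-DP} first, by applying the general hypercontractive moment bound of Theorem~\ref{th:genmombou} to $\varphi_N := \cU_{B(x\sqrt{N},\delta\sqrt{N})}$ with system size $L := tN$ and $\beta = \beta_N^{\mathrm{crit}}$, and then to deduce \eqref{eq:moments-SHF} from \eqref{eq:moments-DP} via the convergence in distribution \eqref{eq:scaling} and Fatou's lemma.

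For the discrete bound, the hypotheses of Theorem~\ref{th:genmombou} are straightforward to check uniformly in $N$ large and $\delta \in (0,\delta_0)$: the exponential localisation \eqref{eq:concsqrtL} holds since $\varphi_N$ is supported in a ball of radius $\delta\sqrt{N} \leq \delta_0\sqrt{N}$, well within the diffusive scale, and the local uniformity \eqref{eq:locunif1sup} is automatic for uniform densities on balls. The crucial input is the uniform variance bound \eqref{eq:varub0}, which I plan to establish using formula \eqref{eq:boundsvar} combined with two ingredients: the estimate $R_{tN}(\varphi_N,\varphi_N) \lesssim \log(1/\delta) + O(1)$ (from Lemma~\ref{th:Green} applied to the uniform density on the ball, or equivalently via Proposition~\ref{th:initial}), and the fact that $\sigma_{\beta_N^{\mathrm{crit}}}^2 \cdot \bbE[Z_{tN}^{\beta_N^{\mathrm{crit}}}(0)^2]$ stays uniformly bounded in $N$ (a critical-regime analogue of the renewal-theoretic computation behind Proposition~\ref{th:2mb}). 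This will yield $\bbvar[Z_{tN}^{\beta_N^{\mathrm{crit}}}(\varphi_N)] \leq \sfc_3(\delta) \lesssim \log(1/\delta) + 1$ uniformly in $N$.

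With the variance bound in hand, Theorem~\ref{th:genmombou} gives $\bbE[(Z_{tN}^{\beta_N^{\mathrm{crit}}}(\varphi_N) - 1)^h] \leq \mathfrak{C}_h(\sfc_3(\delta)) \cdot \sfc_3(\delta)^{h/2}$, and the triangle inequality together with $\bbE[Z_{tN}^{\beta_N^{\mathrm{crit}}}(\varphi_N)] = 1$ will deliver the desired control on $\bbE[Z_{tN}^{\beta_N^{\mathrm{crit}}}(\varphi_N)^h]^{1/h}$. The main technical obstacle is tracking the dependence of the constant $\mathfrak{C}_h$ on $\sfc_3 \sim \log(1/\delta)$: one must verify, either by examining the constants in the proof of Theorem~\ref{th:genmombou} or by directly invoking the moment bounds for critical directed polymers from \cite{CSZ23a}, that the resulting estimate grows at most polynomially in $\log(1/\delta)$. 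Once such a polylogarithmic bound is in place, the elementary inequality $(\log(1/\delta))^k \leq C_{k,\epsilon}\,\delta^{-\epsilon}$ for every $\epsilon > 0$ immediately converts it into \eqref{eq:moments-DP}. Finally, \eqref{eq:moments-SHF} follows from \eqref{eq:moments-DP} by Skorohod representation of the convergence \eqref{eq:scaling} and Fatou's lemma for the nonnegative continuous function $y \mapsto y^h$, which together give $\bbE[\mathscr{Z}_t^\theta(\cU_{B(x,\delta)})^h] \leq \liminf_N \bbE[Z_{tN}^{\beta_N^{\mathrm{crit}}}(\cU_{B(x\sqrt{N},\delta\sqrt{N})})^h]$, and the discrete bound transfers directly.
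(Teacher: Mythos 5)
Your final step (deducing \eqref{eq:moments-SHF} from \eqref{eq:moments-DP} via the weak convergence \eqref{eq:scaling} and Fatou's lemma) matches the paper exactly, and your sanity check on the variance $\bbvar[Z_{tN}^{\beta_N^{\mathrm{crit}}}(\cU_{B(x\sqrt{N},\delta\sqrt{N})})]\sim\log(1/\delta)$ is correct. However, the route you propose for \eqref{eq:moments-DP}---applying Theorem~\ref{th:genmombou} and then converting the $\sfc_3$-dependence---has a genuine gap. The constant $\mathfrak{C}_h$ in \eqref{eq:genbound} depends on $\sfc_3$ in a way that is far worse than polynomial: tracing the proof of Theorem~\ref{th:genmombou+}, the variance bound $\sfc_3$ enters $\bar\theta$ in Lemma~\ref{th:sigmatheta} as $\bar\theta\sim\log\sfc_3$, which forces the choice $\hat\lambda\sim\exp(\text{const}\cdot\bar\theta)\sim\sfc_3^{\,c}$ for some explicit $c>1$ in \eqref{eq:hatlambdapick}; and the constant $\cC_h^{(\hat t,\hat\lambda)}$ in Theorem~\ref{th:moment-bound-CCR} carries an explicit factor $\rme^{\hat\lambda}$ (see \eqref{eq:moment-bound}). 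So $\mathfrak{C}_h$ grows at least like $\exp(\sfc_3^{\,c})\sim\exp((\log(1/\delta))^c)$ with $c>1$, which is \emph{not} $O(\delta^{-\epsilon})$ for any $\epsilon$. Your hope that $\mathfrak{C}_h$ might be polynomial in $\log(1/\delta)$ is therefore not warranted by the paper's proof of the hypercontractive bound, and the whole strategy breaks.

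Your fallback option---``directly invoking the moment bounds for critical directed polymers from \cite{CSZ23a}''---is in fact the paper's actual proof, and it sidesteps the variance machinery entirely. The bound extracted from \cite[Theorem~6.1]{CSZ23a} is of the form
\begin{equation*}
	\bbE \pigl[ \pigl( Z_{tN}^{\beta_N^{\mathrm{crit}}}(\varphi) - 1 \pigr)^h \pigr]^{1/h}
	\le \Bigl( \tfrac{\mathsf{C}'}{\log(1+1/t)}\Bigr)^{1/h} \,
	\bigl\| \varphi / w \bigr\|_p \, \| w \|_q \,,
\end{equation*}
with $\mathsf{C}'=\mathsf{C}'(\theta,h,p)$ independent of $\delta$ and $w(\cdot)=\rme^{-|\cdot|}$. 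The $\delta$-dependence then enters only through $\|\varphi/w\|_p \sim \delta^{-2(1-1/p)}$ for $\varphi=\cU_{B(0,\delta)}$ (translation invariance reduces to $x=0$), and choosing $p>1$ close to~$1$ makes the exponent $2(1-1/p)\le\epsilon$. This is structurally quite different from the hypercontractive route: it trades the $L^2$-optimal variance for an $\ell^p$-norm of the test function, and the tunable exponent $p$ is what delivers the arbitrarily small power of $\delta$. If you want to complete your proposal, you should abandon the application of Theorem~\ref{th:genmombou} here and work out the $\ell^p$--$\ell^q$ computation explicitly.
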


\begin{proof}
In view of \eqref{eq:scaling},
the bound \eqref{eq:moments-SHF} follows by \eqref{eq:moments-DP}
and Fatou's Lemma.

To prove the bound \eqref{eq:moments-DP}, we
apply equation~ (6.1) in \cite[Theorem~6.1]{CSZ23a} for $\psi \equiv 1$
(we just exchange $N, \tilde N$):
given  $h\in\N$ and $1 < p < \infty$,
there is a constant $\mathsf{C}' = \mathsf{C}'(\theta,h,p) < \infty$ such that,
uniformly in large $N, \tilde N\in\N$ and integrable $\varphi: \R^2 \to \R$, we have
\begin{equation*}
	\bbE \pigl[ \pigl( Z_{\tilde N}^{\beta_N^{\mathrm{crit}}}\!\big(  \varphi \big)
	- 1 \pigr)^h \pigr]^{\frac{1}{h}} \le \bigg(\frac{\mathsf{C}'}{\log (1+
	N/\tilde N)}\bigg)^{\frac{1}{h}} \,
	\bigg\| \frac{\varphi}{w}\bigg\|_p \, \big\| w \big\|_q \,,
\end{equation*}
where $q$ is the dual of~$p$ (i.e. $\frac{1}{p}+\frac{1}{q} = 1$)
and $w(\cdot) := \rme^{-|\cdot|}$ is a weight function.
In particular, for $\tilde{N} = tN$ and
$\varphi(\cdot) := \frac{1}{\pi \delta^2} \ind_{B(x,\delta)}(\cdot)$, we obtain
\begin{equation*}
	\bbE \pigl[ \pigl( Z_{tN}^{\beta_N^{\mathrm{crit}}}\!\pig(  \, \cU_{B(x\sqrt{N},\delta\sqrt{N})} \, \pig)
	- 1 \pigr)^h \pigr]^{\frac{1}{h}}
	\le \bigg(\frac{\mathsf{C}'}{\log (1+\frac{1}{t})}\bigg)^{\frac{1}{h}} \,
	\bigg(  \int\limits_{B(x,\delta)} \frac{\rme^{p|y|}}{(\pi \delta^2)^p} \, \dd y \bigg)^{\frac{1}{p}} \,
	\bigg(  \int\limits_{\R^2} \rme^{-q|y|} \, \dd y \bigg)^{\frac{1}{q}} .
\end{equation*}
The first integral is bounded by $\rme^{\delta_0} \, (\pi \delta^2)^{\frac{1}{p}-1}$
while the second integral equals $(2\pi \, q^{-2})^{1/q}$ which is uniformly bounded
for $1 < q < \infty$. For a suitable constant $\mathsf{C}''$, we then obtain
\begin{equation*}
	\bbE \pigl[ \pigl( Z_{tN}^{\beta_N^{\mathrm{crit}}}\!\pig(  \, \cU_{B(x\sqrt{N},\delta\sqrt{N})} \, \pig)
	- 1 \pigr)^h \pigr]^{\frac{1}{h}}
	\le \mathsf{C}'' \, \delta^{-2(1-\frac{1}{p})} \,,
\end{equation*}
and taking $p>1$ sufficiently close to~$1$ we have $2(1-\frac{1}{p}) \le \epsilon$.
The bound \eqref{eq:moments-DP} then follows because
$\|Z\|_h \le 1 + \|Z-1\|_h$ by the triangle inequality.
\end{proof}

\smallskip

\begin{proof}[Proof of Theorem~\ref{th:regularity-DP}]

Fix $t > 0$ and $\theta \in \R$. Our goal is to prove \eqref{eq:strong-conv}.
We also fix $\epsilon > 0$ and some integer $r > \epsilon$.
For $N\in\N$, we define the random linear form
\begin{equation*}
	\mathscr{T}_N^\omega(\varphi)  := \int_{\R^2} \varphi(y) \,
	Z_{tN}^{\beta_N^{\mathrm{crit}},\omega} \big( \ev{\sqrt{N} y} \big) \, \dd y
	= \int_{\R^2} \tfrac{1}{N}\, \varphi\big(\tfrac{z}{\sqrt{N}} \big) \,
	Z_{tN}^{\beta_N^{\mathrm{crit}},\omega} \big( \ev{z} \big) \, \dd z \,,
\end{equation*}
where  $\ev{x}$ is the point in $\Z^2_\even$ closest to $x \in \R^2$.
If $\varphi$ is supported on the ball $B(0,R)$, then
$\frac{1}{N} \,
\varphi_x^\lambda (\frac{\cdot}{\sqrt{N}}) = \frac{1}{\lambda^2 N}
\, \varphi(\frac{\cdot \, - x \sqrt{N}}{\lambda \sqrt{N}})$
is supported in $B(x\sqrt{N},  R\lambda\sqrt{N})$.
Recalling \eqref{eq:phi0}, we can then bound
\begin{equation*}
	|\mathscr{T}_N^\omega(\varphi_x^\lambda)|
	\le \|\varphi\|_\infty \, \frac{\mathscr{T}_N^\omega(B(x\sqrt{N},
	R\lambda\sqrt{N}))}{\lambda^2 N}
	\le c \, \|\varphi\|_\infty \, R^2 \,
	Z_{tN}^{\beta_N^{\mathrm{crit}},\omega} \big( \, \cU_{B(x\sqrt{N},
	R\lambda\sqrt{N})}  \big) \,,
\end{equation*}
where the constant $c$ accounts for the discrepancy between the cardinality
$\big| B( z, R) \cap  \mathbb{Z}^2_{\even} \big|$ and the area $\pi R^2$.
Applying the bound \eqref{eq:moments-DP}
for $\delta = R \lambda$ and $h = p \in 2\N$, we obtain
\begin{equation*}
	\forall x \in \R^2, \ \lambda \in (0,1] \colon \qquad
	\sup_{N\in\N} \, \bbE \pig[ \big|\mathscr{T}_N^\omega(\varphi_x^\lambda) \big|^p \pig]^{1/p}
	\le C \, \lambda^{-\epsilon} \qquad \text{with} \ \
	C := \mathtt{C} \, c \, \|\varphi\|_\infty \, R^{2-\epsilon} \,.
\end{equation*}
In particular, choosing $\varphi = \phi$ or $\varphi = \psi^{(i)}$, $1 \le i < 2^d$,
we see that both bounds in \eqref{eq:bounds-Holder} are satisfied
for $\gamma = -\epsilon$, hence $(\mathscr{T}_N^\omega)_{N\in\N}$
is tight in $\cC^{\gamma'}$ for all $\gamma' < -\epsilon - \frac{2}{p}$.
Since $\epsilon > 0$ and $p \in 2\N$ are arbitrary, we conclude
that $(\mathscr{T}_N^\omega)_{N\in\N}$ is tight in $\cC^{-\epsilon}$
for any $\epsilon > 0$.

By the direct half of Prohorov's theorem \cite[Theorem~5.1]{B99}
(which holds for metric spaces), tightness implies relative compactness.
It remains to show that, for any weakly converging subsequence $\mathscr{T}_{N_k}^\omega \to
\mathscr{T}^\omega$, the limit $\mathscr{T}^\omega$ has the same law as the
SHF $\mathscr{Z}_{t}^{\theta,\omega}$.

The law of any random element $\mathscr{T}^\omega$ of $\cC^\gamma$
is determined by
the laws of the random vectors $(\mathscr{T}^\omega(\varphi_1), \ldots, \mathscr{T}^\omega(\varphi_k))$
for $k\in\N$ and $\varphi_1, \ldots, \varphi_k \in C^\infty_c$.
By the linearity of $\varphi \mapsto \mathscr{T}^\omega(\varphi)$ and
the Cramer-Wold device, it is enough to focus on the law
of $\mathscr{T}^\omega(\varphi)$ for a given $\varphi \in C^\infty_c$.
It only remains to show that $\mathscr{T}^\omega(\varphi)$ has the same
distribution as $\mathscr{Z}_{t}^{\theta,\omega}(\varphi)$:
but this follows from the convergence in distribution
\eqref{eq:weak-conv} in the vague topology, which yields
$\mathscr{T}_{N_k}^\omega(\varphi) \to \mathscr{Z}_{t}^{\theta,\omega}(\varphi)$ in distribution
for any $\varphi \in C^\infty_c \subseteq C^0_c$.
The proof is completed.
\end{proof}

\section{Proof of Theorem~\ref{th:log-normality}}
\label{sec:log-normality}

In this section we prove \eqref{eq:log-normal}. By translation invariance, we only consider the case $x=0$.
We also set for simplicity $t=1$ (the proof extends to any $t>0$ with almost no change).
Throughout this section, we fix $\rho \in (0,\infty)$ and,
to lighten notation, we abbreviate
\begin{equation} \label{eq:abbrev}
	Z_N^\av := Z_{N}^{\beta_N} \pig( \, \cU_{\delta_N^\rho\sqrt{N}} \, \pig)
	\qquad \text{where} \quad
	\cU_{\delta_N^\rho\sqrt{N}} := \cU_{B(0,\delta_N^\rho\sqrt{N})} \,.
\end{equation}
where we recall that $\beta_N = \beta_N^{\mathrm{quasi\text{-}crit}}$
is in the quasi-critical regime \eqref{eq:quasi-crit}
for a given sequence $1 \ll |\theta_N| \ll \log N$, and
$\delta_N$ is defined in \eqref{eq:theta}.
We then rephrase our goal \eqref{eq:log-normal} as follows:
\begin{equation}\label{eq:log-normal2}
    Z_N^\av \,\xrightarrow[\,N\to\infty\,]{\ d \ } \,
    \ \mathe^{\cN (0, \sigma^2) - \frac{1}{2} \sigma^2} \qquad \text{with }
    \sigma^2 = \log ( 1+\rho ) \,.
\end{equation}

Once \eqref{eq:log-normal2} is proved, we have the convergence of positive moments: for all $p\ge 0$
\begin{equation*}
	\lim_{N\to\infty} \bbE\big[\big( Z_N^\av \big)^p] = \bbE\big[ \big(\mathe^{\cN (0, \sigma^2) - \frac{1}{2} \sigma^2}\big)^p \big]
	= \rme^{\frac{p(p-1)}{2} \, \sigma^2}
	= (1+\rho)^{\frac{p(p-1)}{2}} \,.
\end{equation*}
This follows by the weak convergence \eqref{eq:log-normal2} once we show that positive moments are uniformly bounded, say $\sup_{N\in\N} \bbE[(Z_N^\av)^{2h}] < \infty$ for all $h\in\N$ (so that $(Z_N^\av)^p$ is uniformly integrable). We already know from Corollary~\ref{th:sanity} that $\bbvar[Z_N^\av]$ is bounded, hence any moment $\bbE[(Z_N^\av)^{2h}]$ is bounded too by Theorem~\ref{th:genmombou} (the assumptions \eqref{eq:concsqrtL}, \eqref{eq:locunif1sup} and \eqref{eq:varub0} of Theorem~\ref{th:genmombou} are satisfied by $\varphi = \cU_{(0,\delta_N^\rho\sqrt{N})}$ with
$\bbD[\varphi] \sim (\delta_N^\rho)^2 N$).

\begin{remark}
The log-normality \eqref{eq:log-normal2} is reminiscent of the corresponding result for the point-to-plane partition function $Z_L^{\beta}(0)$
in the sub-critical regime $\beta = \beta_L^{\text{\rm sub-crit}}$, see \eqref{eq:sub-critical}
with $\hat\beta \in (0,1)$,
that we proved in \cite[Theorem~2.8]{CSZ17b}.
As we described in Remark~\ref{rem:analogy-sub-critical0}, the heuristic behind \eqref{eq:log-normal2}
is that, after coarse-graining space on the scale $\delta_N^\rho \sqrt{N}$ and time on
the scale $\delta_N^{2\rho} N$, the averaged partition function $Z_N^\av$ in the quasi-critical regime
becomes comparable to a sub-critical point-to-plane partition function
with disorder parameter $\hat\beta^2 = \frac{\rho}{1+\rho}$ and effective
time horizon $L = 1 / \delta_N^{2\rho}$.

The key to the analysis of $Z_L^{\beta_L^{\text{\rm sub-crit}}}(0)$
in the sub-critical regime in \cite{CSZ17b}
was a \emph{multiscale structure} with time scales
$L^\alpha$ for $\alpha \in (0, 1)$.  We will see that
the same multiscale structure emerges in our analysis of $Z_N^\av$ with corresponding time scales
$(\delta_N^{2\rho} N) \, L^\alpha$ with $L = 1 / \delta_N^{2\rho}$ and $\alpha \in (0,1)$, see \eqref{eq:scales}.
This justifies the heuristic comparison just described.
\end{remark}

\smallskip

We divide the proof of \eqref{eq:log-normal2} into several steps.

\smallskip
\subsection*{Overall strategy}

Let us fix a (large) integer $M\in\N$, which will be the number of time scales.
For technical reasons, we will first approximate
the original partition function $Z_N^\av$
by switching off the disorder in suitable time strips,
which defines $Z_N^\off$ in \eqref{eq:tildeZ} and provides some smoothing between
consecutive time scales; we will then introduce almost diffusive restrictions on the polymer paths,
which defines $Z_N^\diff$ in \eqref{eq:hatZ}.
We will show that $Z_N^\off$ and $Z_N^\diff$ (which also depend on~$M$)
are good approximations in the following sense:
\begin{equation}\label{eq:appro1}
	\forall M \in \N: \qquad
	Z_N^\av -Z_N^\off
	\, \xrightarrow[\, N\to\infty \, ]{L^2} \, 0 \,, \qquad
	Z_N^\off - Z_N^\diff \, \xrightarrow[\, N\to\infty \, ]{L^1} \, 0 \,.
\end{equation}
Therefore to prove our goal \eqref{eq:log-normal2}, we can just replace $Z_N^\av$ with $Z_N^\diff$.

\begin{remark}
A similar idea of switching-off the noise in suitably chosen strips to obtain a
smoothing approximation was also used by Dunlap-Gu \cite{DG22} in their treatment of
nonlinear SHE in a subcritical regime.
\end{remark}

We introduce explicit time scales
\begin{equation} \label{eq:separation}
	0 =: N_0 < \tilde{N}_1 \ll N_1 \ll \tilde{N}_2 \ll N_2 \ll \ldots \ll
	\tilde{N}_{M} \ll N_{M} = N \,.
\end{equation}
We define $Z_N^\off$ by switching off the noise in the time strips $(\tilde{N}_i, N_i]$,
see \eqref{eq:tildeZ}, then we define $Z_N^\diff$ by restricting polymer paths
at times $\tilde{N}_i$ and $N_i$ to an almost diffusive ball, see \eqref{eq:AL} and \eqref{eq:hatZ}.
The scales $N_i$ are defined as follows:
\begin{equation}\label{eq:scales}
	N_0 := 0\,, \qquad
	N_{i} := \pig\llbracket (N \delta_N^{2\rho}) \, (\tfrac{1}{\delta_N^{2\rho}})^{\frac{i}{M}}
	\pig\rrbracket = \pig\llbracket N \, (\delta_N^{2\rho})^{1-\frac{i}{M}}
	\pig\rrbracket \quad \text{for } \ i=1,\ldots, M \,,
\end{equation}
where $\llbracket a \rrbracket := 2 \, \lfloor a/2 \rfloor$
is an even proxy for $a \in \R$.
Note that $N_{i+1} \approx N_{i} \, (\tfrac{1}{\delta_N^{2\rho}})^{\frac{1}{M}}$
for $i \ge 1$.
The intermediate scales $\tilde N_i$ are then defined by
  \begin{equation} \label{eq:tildeNi}
   \tilde N_i := \bigg\llbracket \frac{N_i}{( \log \frac{1}{\delta_N^{2\rho}} )^3} \bigg\rrbracket
   \qquad \text{for } i=1, 2, \ldots, M \,.
  \end{equation}

By restricting random walk paths in the definition
of $Z_{N}^\diff$ to the time interval $[0,N_i]$, we obtain a sequence
of quantities
$Z_{N,i}^\diff$ for $i=1, \ldots, M$ with $Z_{N,M}^\diff = Z_{N}^\diff$,
see \eqref{eq:hatZ}.
Then, by a telescopic product, we can write
\begin{equation} \label{eq:decomp}
	\qquad\qquad\qquad Z_N^\diff
	= \prod_{i=1}^{M} \frac{Z_{N,i}^\diff}{Z_{N,i-1}^\diff}
	\qquad \qquad \text{(with $Z_{N,0}^\diff := 1$)} \,.
\end{equation}
The choice of the scales $N_i$ is made so that the \emph{the ratios in the RHS have variance of the same order $\frac{1}{M}$, albeit with a varying prefactor} (see Theorem~\ref{th:2mom}).
The fact that these scales are well separated, see \eqref{eq:separation},
ensures that \emph{the ratios in the RHS are approximately independent}.

\begin{remark}[Exponential time scales]
The choice of the scales \eqref{eq:scales}, which leads to the decomposition \eqref{eq:decomp}
into approximately independent factors,
resembles what is observed in the sub-critical regime \eqref{eq:sub-critical} in \cite{CSZ17b} and
in the more recent papers \cite{CC22,CD24}.
\end{remark}

\smallskip

Denoting by $\cF_L := \sigma\{\omega(n,z) \colon n \le L, z \in \Z^2\}$ the $\sigma$-algebra
generated by disorder variables up to time~$L$,
we introduce the conditional expectation
\begin{equation}\label{eq:mui}
	m_{N,i} := \bbE \bigg[ \frac{Z_{N,i}^\diff}{Z_{N,i-1}^\diff}
	\,\bigg|\, \cF_{N_{i-1}} \bigg]
	\qquad \text{for } i=1, \ldots, M \,,
\end{equation}
which turns out to be close to~$1$ with high probability (see \eqref{eq:bound-cond-exp}).
We define $\Delta_{N,i}$ as the
normalised and centred version of the ratios in the RHS of \eqref{eq:decomp}:
\begin{equation}\label{eq:Delta}
	\Delta_{N,i} := \frac{1}{m_{N,i}}
	\, \frac{Z_{N,i}^\diff}{Z_{N,i-1}^\diff} - 1  \,.
\end{equation}
This leads to the identity
\begin{equation} \label{eq:logZ}
\begin{split}
	\log Z_N^\diff
	&= \sum_{i=1}^{M} \log \frac{Z_{N,i}^\diff}{Z_{N,i-1}^\diff}
	= \sum_{i=1}^{M} \big\{ \log\big(1 + \Delta_{N,i} \big)
	+ \log m_{N,i}  \big\}  \\
	& = \sum_{i=1}^{M}
	\big\{ \Delta_{N,i} - \tfrac{1}{2} \Delta_{N,i}^2
	+ r(\Delta_{N,i})
	+ \log m_{N,i} \big\} \,,
\end{split}
\end{equation}
where $r(\cdot)$ is the remainder in the second order Taylor expansion of the logarithm:
\begin{equation}\label{eq:remainder}
	r(x) := \log (1+x) - \big( x - \tfrac{x^2}{2} \big) \,.
\end{equation}

To complete the proof  of our goal \eqref{eq:log-normal2},
we are going to show that, for $M = M_N \to \infty$
slowly enough, the following three convergences in distribution hold:
\begin{align}
	\label{eq:CLT}
	\sum_{i=1}^{M_N} \Delta_{N,i} & \, \xrightarrow[\, N\to\infty \, ]{d} \, \cN(0,\sigma^2) \,, \\
	\label{eq:LLN}
	\sum_{i=1}^{M_N} \Delta_{N,i}^2 & \, \xrightarrow[\, N\to\infty \, ]{d} \, \sigma^2 \,, \\
	\label{eq:remainder}
	\sum_{i=1}^{M_N} \Big(r(\Delta_{N,i}) + \log m_{N,i}\Big) & \, \xrightarrow[\, N\to\infty \, ]{d} \, 0 \,,
\end{align}
with $\sigma^2 $ as in \eqref{eq:log-normal2}. Intuitively,
these relations hold because \emph{the random variables $\Delta_{N,i}$
are approximately independent},
due to the separation of time scales \eqref{eq:scales}.
The proof of \eqref{eq:log-normal2} then follows
by first choosing $M = M_N\to\infty$ slowly enough such that \eqref{eq:appro1} still holds,
and then applying the identity \eqref{eq:logZ}.

\smallskip

The rest of the proof is divided into the following steps:
\begin{itemize}
\item in Steps~1 and~2 we define $Z_N^\off$ and $Z_N^\diff$ and
prove the two limits in \eqref{eq:appro1};
\item in Step~3 we give a convenient representation
for the ratio $Z_{N,i}^\diff/Z_{N,i-1}^\diff$ as a partition function
on the time interval $(N_{i-1},N_i]$
with initial condition given by a \emph{polymer distribution at time $N_{i-1}$},
and we show that the latter is close to the free random walk thanks to the fact
that noise has been turned off in the time interval $(\tilde N_i, N_i]$.

\item in Steps~4 and~5 we compute the variance of $\Delta_{N,i}$ and bound
its higher moments;

\item in Step~6 we prove \eqref{eq:CLT} through the \emph{martingale CLT};

\item in Step~7 we prove
relations \eqref{eq:LLN} and \eqref{eq:remainder} by variance bounds.
\end{itemize}

\smallskip
\subsection{Step 1: switching off the noise}

The first approximation $Z_N^\off$
of the partition function
$Z_N^\av$, recall \eqref{eq:abbrev},  is obtained
by ``switching off the noise'' in the time strips $(\tilde N_i, N_i]$ for $1 \le i \le M$,
see \eqref{eq:scales} and \eqref{eq:tildeNi}:
this will ensure that the endpoint distribution of the polymer at time $N_i$ is comparable to the random walk
transition kernel. Recalling \eqref{eq:ZAB} and \eqref{eq:H}, we thus define
\begin{equation}\label{eq:tildeZ}
	Z_N^\off
	\assign \E \Big[
	\rme^{\cH^{\beta_N}_{(0,N] \setminus \bigcup_{j=1}^M (\tilde N_j, N_j]}}
	\,\Big|\, \nobracket S_0 \sim \cU_{\delta_N^\rho\sqrt{N}} \Big]  \,.
\end{equation}
In this step, we prove the first relation in \eqref{eq:appro1}:
  \begin{equation} \label{eq:ZtildeZ}
	\forall M \in \N: \qquad
    \lim_{N \rightarrow \infty} \mathbb{E} \big[ \big( Z_N^\av
    - Z_N^\off   \big)^2 \big] = 0 \, .
  \end{equation}

Let $\cG$ be the $\sigma$-algebra generated by the disorder variables that
have \emph{not} been swithced off, namely
 $\cG := \sigma\big( \omega(n,z) \colon
n \in \bigcup_{i=1}^{M} (N_{i-1}, \tilde N_i] , \, z \in \Z^2 \big)$, then we can write
\begin{equation*}
	Z_N^\off = \bbE \pig[ Z_N^\av  \,\pig|\, \cG \pig] \,,
\end{equation*}
that is $Z_N^\off$ is the orthogonal projection of $Z_N^\av$
onto the linear subspace of $L^2$ generated by $\cG$-measurable random variables.
It follows that
\begin{equation*}
	\bbE \big[ \big( Z_N^\av - Z_N^\off \big)^2 \big]
	= \bbE \big[ (Z_N^\av) ^2 \big]
	- \bbE \big[( Z_N^\off)^2 \big] \,,
\end{equation*}
hence to prove \eqref{eq:ZtildeZ} we need to show that
\begin{equation}\label{eq:appro1a}
	\forall M \in \N: \qquad
	\lim_{N\to\infty} \Big\{ \bbE \big[ (Z_N^\av)^2 \big]
	- \bbE \big[ ( Z_N^\off)^2 \big] \Big\} = 0 \,.
\end{equation}

\smallskip
\begin{proof}[Proof of \eqref{eq:appro1a}]

Let us define a variant $\tilde Z_{N,j}^\off$ of \eqref{eq:tildeZ} where we only switch off
disorder in a given interval $(\tilde N_j, N_j]$, namely
\begin{equation}\label{eq:tildeZj}
	\text{for } \ j=1,\ldots, M \colon \qquad
	\tilde Z_{N,j}^\off
	\assign \E \Big[
	\rme^{\cH^{\beta_N}_{(0,N] \setminus (\tilde N_j, N_j]}}
	\,\Big|\, \nobracket S_0 \sim \cU_{\delta_N^\rho \sqrt{N}} \Big]  \,.
\end{equation}
We claim that we can bound the difference in \eqref{eq:appro1a} by
\begin{equation}\label{eq:appro1a2}
	\bbE \big[ (Z_N^\av)^2 \big]
	- \bbE \big[ ( Z_N^\off)^2 \big]
	= \bbvar \big[ Z_N^\av \big]
	- \bbvar \big[ Z_N^\off \big]
	\le \sum_{j=1}^M
	\pig\{ \bbvar \big[ Z_N^\av \big]
	- \bbvar \big[ \tilde Z_{N,j}^\off \big]  \pig\} \,.
\end{equation}
It then suffices to estimate separately each term in this sum.

\smallskip

In order to prove \eqref{eq:appro1a2},
recall the polynomial chaos expansion \eqref{eq:polynomial} for the point-to-plane partition
function $Z_N^\beta(x)$, which yields a corresponding polynomial chaos expansion for the averaged
partition function $Z_N^\beta(\varphi) = \sum_{x\in\Z^2_\even} \varphi(x) \, Z_N^\beta(x)$.
The polynomial chaos expansion for $Z_N^\off$ from
\eqref{eq:tildeZ} is a \emph{subset} of the  polynomial chaos expansion
for $Z_N^\av$: it is obtained by restricting the sum
to times $n_1, \ldots, n_k$ which \emph{avoid all intervals $(\tilde N_j, N_j]$} for $j=1,\ldots, M$
(see Remark~\ref{rem:switch-off}),
hence its variance admits a formula like \eqref{eq:varexpl}
with the same restriction on the sum.
Then the difference $\bbvar \big[ Z_N^\av \big]
	- \bbvar \big[ Z_N^\off \big]$
is given again by formula \eqref{eq:varexpl}
where the sequence of times $n_1, \ldots, n_k$ is now required
to \emph{intersect at least one of the intervals $(\tilde N_j, N_j]$ for $j=1,\ldots, M$}.
By a union bound, we obtain precisely \eqref{eq:appro1a2}.

\smallskip

Let us finally focus on a given term in the RHS of \eqref{eq:appro1a2}.
Since $\bbvar \big[ Z_N^\av \big] \to \rho$
as $N\to\infty$, see \eqref{eq:2nd-mom0}, it is enough to show that
\begin{equation}\label{eq:ourgoal}
	\forall j \in \{1,\ldots, M\} \colon \qquad
	\lim_{N\to\infty} \bbvar \big[ \tilde Z_{N,j}^\off \big] = \rho \,.
\end{equation}
We recall that $\tilde Z_{N,j}^\off$ corresponds to
switching off disorder between
$A_N = \tilde N_j$ and $B_N = N_j$.
It is instructive (and more transparent) to fix $0 < a \le b < 1$ and
consider general times
\begin{equation*}
\begin{split}
	A_N &= N\, (\delta_N^{2\rho})^{1-a + o(1)}
	= N \, \rme^{-(1-a) \, \rho \, |\theta_N| + o(|\theta_N|)} \,, \\
	B_N &= N\, (\delta_N^{2\rho})^{1-b + o(1)}
	= N \, \rme^{-(1-b) \, \rho \, |\theta_N| + o(|\theta_N|)} \,.
\end{split}
\end{equation*}
Later we will specialize to $b=a= \frac{j-1}{M}$ due to the choice of $N_j$ and $\tilde N_j$
in \eqref{eq:scales} and \eqref{eq:tildeNi}.

We compute $\bbvar \big[ \tilde Z_{N,j}^\off \big]$ by formula
\eqref{eq:varexpl} where the sum is restricted to times $n_1, \ldots, n_k$
that \emph{do not intersect the interval $(A_N, B_N]$}.
We split $\bbvar \big[ \tilde Z_{N,j}^\off \big] = I_1 + I_2 + I_3$ as follows:
\begin{itemize}
\item part $I_1$: all times $n_i$ take place before $A_N$;
\item part $I_2$: all times $n_i$ take place after $B_N$;
\item part $I_3$: the first time $n_1$ takes place before $A_N$,
the last time $n_k$ takes place after $B_N$.
\end{itemize}

The first contribution $I_1$ is nothing but the variance of
the averaged partition function of polymer length~$A_N$, that is (recall \eqref{eq:abbrev})
\begin{equation}\label{eq:I1}
	I_1 = \bbvar \big[ Z_{A_N}^{\beta_N}(\cU_{\delta_N^\rho\sqrt{N}}) \big]
	\sim \frac{a \rho}{1+(1-a)\rho} \,,
\end{equation}
where we applied \eqref{eq:2nd-mom} with
$L_N = A_N$ for which $\ell=(1-a)\rho$, see \eqref{eq:L},
and $W_N = N \delta_N^{2\rho}$ for which $w=\rho$,
see \eqref{eq:W} (and recall \eqref{eq:ball-rw}).

The second contribution $I_2$, when all times $n_i$ are after $B_N$,
corresponds to switching off the noise in the whole interval $[0, B_N]$, hence
we have a partition function on the interval $(B_N, N]$ (whose length is $L_N = N - B_N \sim N$
since $b < 1$, that is \eqref{eq:L} holds with $\ell = 0$)
with initial condition at time~$B_N$ given by $\cU_{\delta_N^\rho\sqrt{N}} * q_{B_N}$,
i.e., the distribution of the random walk at time $B_N$ with initial condition $\cU_{\delta_N^\rho\sqrt{N}}$
(which satisfies assumption \eqref{eq:conditions-phi} with $W_N = B_N$,
hence \eqref{eq:W} holds with $w=(1-b)\rho$).
We thus obtain by \eqref{eq:2nd-mom}
\begin{equation*}
	I_2 \sim (1-b)\rho \,.
\end{equation*}

We finally consider the third contribution $I_3$, when there are times $n_i$
both before $A_N$ and after $B_N$:
recalling \eqref{eq:varexpl}, we can write
\begin{equation*}
	I_3 = \sumtwo{0 < m \le g \le A_N}{B_N < d\le n \le N}
	q_{2m}\pig(\cU_{\delta_N^\rho\sqrt{N}}, \, \cU_{\delta_N^\rho\sqrt{N}}\pig)
	\, \sigma_{\beta_N}^2 \, U_{\beta_N}(g-m)
	\, \sigma_{\beta_N}^2 \, q_{2(d-g)}(0) \,
	U_{\beta_N}(n-d) \,,
\end{equation*}
where we recall that $U_{\beta_N}(\cdot)$ was defined in \eqref{eq:U}.
Summing over $n$ we obtain $\overline{U}_{\beta_N}(N - d)$,
see \eqref{eq:barU}, and
restricting the sum to $d \le \frac{1}{2} N$ we can bound
$\overline{U}_{\beta_N}(N-d) \ge \overline{U}_{\beta_N}(\tfrac{1}{2}N)$.
Summing $q_{2(d-g)}(0)$ over $B_N < d\le \frac{1}{2} N$ then gives,
recalling \eqref{eq:RL},
\begin{equation*}
	\sum_{B_N < d \le \frac{1}{2}N} q_{2(d-g)}(0)
	\,=\, R_{\frac{1}{2}N-g} - R_{B_N - g}
	\,\sim\, \frac{1}{\pi} \log \frac{\frac{1}{2}N-g}{B_N-g}
	\,\ge\, \frac{1}{\pi} \log \frac{\frac{1}{2}N}{B_N}
\end{equation*}
because the minimum is attained at $g = 0$.
Recalling that $\sigma_{\beta_N}^2
\sim \frac{\pi}{\log N}$, we then obtain
\begin{equation*}
\begin{split}
	I_3
	& \ge \Bigg\{
	\sum_{0 < m \le g \le A_N} q_{2m}\pig(\cU_{\delta_N^\rho\sqrt{N}}, \, \cU_{\delta_N^\rho\sqrt{N}}\pig)
	\, \sigma_{\beta_N}^2 \, U_{\beta_N}(g-m) \Bigg\}
	\, \frac{\log \frac{\frac{1}{2}N}{B_N}}{\log N} \,
	\overline{U}_{\beta_N}(\tfrac{1}{2}N) \\
	& = \bbE \big[ Z_{A_N}^{\beta_N}(\cU_{\delta_N^\rho\sqrt{N}}) ^2 \big] \,
	\frac{\log \frac{\frac{1}{2}N}{B_N}}{\log N} \,
	\overline{U}_{\beta_N}(\tfrac{1}{2}N) \,,
\end{split}
\end{equation*}
where the  equality follows by \eqref{eq:barU} and \eqref{eq:varcomp}.
Applying \eqref{eq:point-to-plane-2ndmom} we see that
$\overline{U}_{\beta_N}(\tfrac{1}{2}N) \sim \frac{\log N}{|\theta_N|}$,
while $\log \frac{\frac{1}{2}N}{B_N} \sim (1-b) \log \frac{1}{\delta_N^{2\rho}} \sim
(1-b)\rho\, |\theta_N|$, see \eqref{eq:theta}.
Recalling \eqref{eq:I1} we then obtain
\begin{equation*}
	I_3 \ge (1+o(1)) \, I_1 \cdot \frac{(1-b) \rho\, |\theta_N|}{\log N} \cdot \frac{\log N}{|\theta_N|}
	\sim I_1 \, (1-b) \rho \,.
\end{equation*}

Overall, summing the three parts $I_1$, $I_2$ and $I_3$,
we have shown that
\begin{equation*}
\begin{split}
	\bbvar \big[ \tilde Z_{N,j}^\off \big]
	&= I_1 + I_2 + I_3
	\ge I_1 \, (1+ (1-b)\rho) + I_2 + o(1)
	\sim \frac{1+(1-b)\rho}{1+(1-a)\rho} \, a\rho + (1-b)\rho \\
	& = \rho \,-\, (b-a)\frac{\rho\,(1+\rho)}{1+(1-a)\rho}
	= \rho \,-\, O(b-a) \,.
\end{split}
\end{equation*}
This last expression vanishes for $a=b$, which completes the proof of
\eqref{eq:ourgoal}.
\end{proof}

\smallskip
\subsection{Step 2: almost diffusive approximation}
In this step, we prove the second relation in \eqref{eq:appro1}:
  \begin{equation} \label{eq:ZhatZ}
	\forall M \in \N: \qquad
    \lim_{N \rightarrow \infty} \mathbb{E} \big[ | Z_{N}^\off
    - Z_{N}^\diff   |\big] = 0 \, ,
  \end{equation}
Let  $\mathcal{D}_m$ be the event that the random walk is ``almost diffusive'' at
  time $m$, in the following sense:
  \begin{equation}  \label{eq:AL}
    \mathcal{D}_m \assign \left\{ | S_m | \le \sqrt{m \log
    \tfrac{1}{\delta_N^2}} \right\} \,.
  \end{equation}
We define $Z_{N}^\diff $ by restricting  $Z_{N}^\off $ in \eqref{eq:tildeZ}
 to the event $\bigcap_{j=0}^M \mathcal{D}_{\tilde N_j} \cap \mathcal{D}_{N_j}$.
It is actually useful to define $Z_{N,i}^\diff$
for each scale $N_i$, see \eqref{eq:scales}
(note that $N_i = N$ for $i=M$):
  \begin{equation}\label{eq:hatZ}
\begin{split}
  \text{for } i=1,\ldots, M: \qquad
   & Z_{N,i}^\diff \assign
    \E \bigg[ \prod_{j=1}^i \mathe^{\mathcal{H}_{(N_{j-1}, \tilde N_j]}^{\beta_N}}
    \, \ind_{\mathcal{D}_{\tilde N_j} \cap \mathcal{D}_{N_j}}
	\,\bigg|\, \nobracket S_0 \sim \cU_{\delta_N^\rho\sqrt{N}} \bigg] \\
	& \text{and we set} \quad Z_{N}^\diff := Z_{N,M}^\diff \,.
\end{split}
  \end{equation}

Let us prove \eqref{eq:ZhatZ}.
Since $|Z_{N}^\off
    - Z_{N}^\diff | = Z_{N}^\off
    - Z_{N}^\diff$ and $\mathbb{E} [ \mathe^{\mathcal{H}_{(a,b]}} ] = 1$, we have
  \begin{equation*}
\begin{split}
    \mathbb{E} \big[ | Z_{N}^\off
    - Z_{N}^\diff |\big]
    & =1 - \P
    \bigg( \bigcap_{j=1}^{M} \mathcal{D}_{\tilde N_j} \cap \mathcal{D}_{N_j}
   \, \bigg| \,  S_0 \sim \cU_{\delta_N\sqrt{N}} \bigg) \\
    & \le \sum_{m \in \{\tilde N_1, \, N_1, \, \ldots, \, \tilde N_{M}, \, N_{M}\}}
    \P \Big( | S_m | > \sqrt{m \log \tfrac{1}{\delta_N^2}}
    \,\Big|\, \nobracket S_0 \sim \cU_{\delta_N^\rho \sqrt{N}} \Big) \, .
\end{split}
  \end{equation*}
  We recall that under $\P (\, \cdummy \,|\, \nobracket S_0 \sim \cU_{\delta_N^\rho \sqrt{N}})$
  we have $| S_0 | \le \delta_N^\rho  \sqrt{N}$. Since
  $\tilde{N}_1\gg N \delta_N^{2\rho}$,
  see \eqref{eq:scales} and \eqref{eq:tildeNi}, for $m \ge \tilde{N}_1$ we can
  bound $\delta_N^{\rho} \sqrt{N} \le \sqrt{m}$, hence
  \[ \sqrt{m \log \tfrac{1}{\delta_N^2}} - \delta_N^\rho  \sqrt{N}
     \ge \sqrt{m} \Big( \sqrt{\log \tfrac{1}{\delta_N^2}} - 1 \Big)
     \ge \frac{1}{2}\, \sqrt{m}\, \sqrt{\log
     \tfrac{1}{\delta_N^2}} \quad \text{for large } N \,. \]
Then \eqref{eq:ZhatZ} holds because
for any $m \in \{\tilde N_1, \, N_1, \, \ldots, \, \tilde N_{M}, \, N_{M}\}$, we can
use Gaussian tail estimates for the simple symmetric random walk to bound
\begin{equation}\label{eq:Gaussian-tail}
\begin{split}
	\P \Big( | S_m | > \sqrt{m \log \tfrac{1}{\delta_N^2}}  \,\Big|\,
	\nobracket S_0 \sim \cU_{\delta_N^\rho \sqrt{N}} \Big)
	\le \P \Big( | S_m - S_0 | > \tfrac{1}{2} \sqrt{m}
	\, \sqrt{ \log \tfrac{1}{\delta_N^2}}  \Big) \  & \\
        \le C\, \exp \big( - \frac{1}{C} \, \big| \sqrt{\log \tfrac{1}{\delta_N^2}} \big|^2 \big)
     = C \, (\delta_N^2)^{\frac{1}{C}}  \xrightarrow{N \rightarrow \infty}
     0 \, .  &
\end{split}
\end{equation}
This concludes the proof of  \eqref{eq:ZhatZ}.

\smallskip
\subsection{Step 3: polymer distribution}

In this step we give a convenient representation
for the ratio $Z_{N,i}^\diff/Z_{N, i-1}^\diff$
in terms of a directed polymer. This
will be exploited to estimate the moments of $\Delta_{N,i}$, see \eqref{eq:Delta}.

Let us introduce the polymer endpoint distribution $\mu_{N,i}(\cdot)$ at time $N_i$,
corresponding to the partition function $Z_{N,i}^\diff$ in \eqref{eq:hatZ}:
\begin{equation} \label{eq:phiNi}
	\text{for } i=1,\ldots, M \text{ and } x \in \Z^2_\even: \qquad
	\mu_{N,i}(x) := \frac{Z_{N,i}^\diff\big(\cU_{\delta_N^\rho\sqrt{N}},x\big)}{Z_{N,i}^\diff} \,.
\end{equation}
where we define $Z_{N,i}^\diff \big(\cU_{\delta_N^\rho\sqrt{N}}, x\big)$ by restricting
paths in the definition of $Z_{N,i}^\diff$ to $S_{N_i} = x$, that is
\begin{equation}\label{eq:hatZx}
    Z_{N,i}^\diff \big(\cU_{\delta_N^\rho\sqrt{N}}, x\big) \assign
    \E \bigg[ \bigg( \prod_{j=1}^i \mathe^{\mathcal{H}_{(N_{j-1}, \tilde N_j]}^{\beta_N}}
    \, \ind_{\mathcal{D}_{\tilde N_j} \cap \mathcal{D}_{N_j}}  \bigg) \, \ind_{\{S_{N_i} = x\}}
	\,\bigg|\, \nobracket S_0 \sim \cU_{\delta_N^\rho\sqrt{N}} \bigg] \, .
\end{equation}
By the Markov property, the following representation holds
for $i=2,\ldots, M$:
  \begin{equation} \label{eq:ratiorepr}
\begin{split}
    \frac{Z_{N,i}^\diff}{Z_{N,i-1}^\diff}
    & = \E \Big[
    \mathe^{\mathcal{H}^{\beta_N}_{(N_{i-1}, \tilde N_i]}} \, \ind_{\mathcal{D}_{\tilde N_i}
    \cap \mathcal{D}_{N_i}}  \Big|\,
    S_{N_{i-1}} \sim \mu_{N,i-1} \Big] \\
    & = \sum_{x \in
    \mathbb{Z}^2_{\even}} \mu_{N,i-1} (x)\, \E \Big[
    \mathe^{\mathcal{H}^{\beta_N}_{(N_{i-1}, \tilde N_i]}} \, \ind_{\mathcal{D}_{\tilde N_i}
    \cap \mathcal{D}_{N_i}}  \Big|\,
    S_{N_{i-1}} = x \Big] \,.
\end{split}
  \end{equation}
The same formula holds also for $i=1$ provided
we define
\begin{equation*}
	\mu_{N,0}(x) := \cU_{\delta_N^\rho\sqrt{N}}(x) \,.
\end{equation*}

\begin{remark}
The switching off of the noise ensures that the ``initial distribution'' $\mu_{N,i-1}$
in \eqref{eq:ratiorepr} is sufficiently smooth, as we show below.
This will be needed in the next steps
to compute the variance and to estimate the moments of $\Delta_{N,i}$
from \eqref{eq:Delta}.
\end{remark}

Representation \eqref{eq:ratiorepr} is very useful. For instance, recalling \eqref{eq:mui},
we can compute
\begin{equation} \label{eq:mucomp}
	m_{N, i-1} = \P \big(\mathcal{D}_{\tilde N_i} \cap \mathcal{D}_{N_i}  \,\big|\,
	S_{N_{i-1}} \sim \mu_{N,i-1} \big) \,.
\end{equation}
Note that for $S_{N_{i-1}} \sim \mu_{N,i-1}$, we have
$|S_{N_{i-1}}| \le \sqrt{N_{i-1} \log \frac{1}{\delta_N^2}}$
due to the restriction to the event $\cD_{N_{i-1}}$,
see \eqref{eq:hatZx} and \eqref{eq:AL}. Therefore for any $i=1,\ldots, M$, we can bound
\begin{equation*}
\begin{split}
	m_{N,i-1}
	\ge 1 &- \P\Big( |S_{\tilde N_i} - S_{N_{i-1}}| > (\sqrt{\tilde N_i} - \sqrt{N_{i-1}})
	\sqrt{\log \tfrac{1}{\delta_N^2}} \,\Big) \\
	& - \P\Big( |S_{N_i} - S_{N_{i-1}}| > (\sqrt{N_i} - \sqrt{N_{i-1}})
	\sqrt{\log \tfrac{1}{\delta_N^2}} \,\Big) \,.
\end{split}
\end{equation*}
Since $N_{i-1} \ll \tilde N_i \ll N_i$, see \eqref{eq:scales} and \eqref{eq:tildeNi},
arguing as in \eqref{eq:Gaussian-tail}, we have: for some $C < \infty$,
\begin{equation}\label{eq:bound-cond-exp}
	\text{for } i=1,\ldots, M: \qquad
	1 - C \, (\delta_N^2)^{\frac{1}{C}} \le m_{N, i-1} \le 1 \,.
\end{equation}

We conclude this step by showing that the polymer distribution
\emph{$\mu_{N,i}(x)$ is close to the
random walk transition kernel $q_{N_i - \tilde N_i}(x)$}, see \eqref{eq:rw}.
Intuitively, this holds because:
\begin{itemize}
\item we switched off disorder between times
$\tilde N_i$ and $N_i$, see \eqref{eq:hatZx}, therefore the polymer
evolves between these times as simple random walk;
\item we know that $|S_{\tilde N_i}| \le \sqrt{\tilde N_i \log \frac{1}{\delta_N^2}}$
(due to the event $\cD_{\tilde N_i}$), therefore $|S_{\tilde N_i}| \ll \sqrt{N_i}$
by the choice of $\tilde N_i$ in \eqref{eq:tildeNi}. To compute $S_{N_i}$,
we can therefore pretend that $S_{\tilde N_i} \simeq 0$,
namely we can approximate $S_{N_i} - S_{\tilde N_i} \simeq S_{N_i-\tilde{N}_i}$,
which is distributed as $q_{N_i - \tilde N_i}(\cdot)$.
\end{itemize}

Let us now be precise: we set for short
  \begin{equation}\label{eq:epsN}
	\epsilon_N := \big(\log \tfrac{1}{\delta_N^2}\big)^{-1/3} \xrightarrow[\, N\to\infty\, ]{} 0 \,,
\end{equation}
and we prove that for any $M\in\N$ and $i= 1, \ldots, M$, we have
\begin{equation}\label{eq:cM}
\begin{split}
    \mu_{N,i} \in \mathcal{M}_{N,i} \assign \bigg\{
    &   \varphi (\cdot) \ge 0
    \ \text{ supported in }   \Big\{ | \cdot | \le \sqrt{N_i \log
    \tfrac{1}{\delta_N^2}} \, \Big\}     \\
    & \ \text{ with } \ \sum_x \varphi (x) = 1 \ \text{ and } \ \varphi (\cdummy) \le (1+\epsilon_N)^2\,
    q_{N_i - \tilde N_{i}} (\cdummy) \, \bigg\} \,.
\end{split}
  \end{equation}
(This relation requires $i \ge 1$, but we will not need it for $i=0$.)

\smallskip

\begin{proof}[Proof of \eqref{eq:cM}]
We only need to prove that $\mu_{N,i} (\cdot) \le (1+\epsilon_N)^2\,
q_{N_i - \tilde N_{i}} (\cdot)$.
We express $\mu_{N,i}(x)$ by summing over the polymer position at
time $\tilde N_i < N_i$: denoting by $\phi_{\tilde N_i}(z)$ the corresponding
distribution (defined as in \eqref{eq:hatZx}-\eqref{eq:phiNi} with $\{S_{N_i}=x\}$
replaced by $\{S_{\tilde N_i}=z\}$),
and recalling the random walk transition kernel from \eqref{eq:rw},
for $x\in\Z^2_\even$ with $|x| \le \sqrt{N_i \log \tfrac{1}{\delta_N^2}}$, we get
  \begin{equation} \label{eq:phiratio}
  	\mu_{N,i}(x) =
	\frac{\displaystyle\sum_{	| z | \le \sqrt{\tilde N_i \log \frac{1}{\delta_N^2}}} \phi_{\tilde N_i} (z)
	\, q_{N_i - \tilde N_i} (x-z) }{\displaystyle \sum_{| z | \le
	\sqrt{\tilde N_i \log \frac{1}{\delta_N^2}}}  \; \phi_{\tilde N_i} (z) \;
	\bigg\{\sum_{| x' | \le \sqrt{N_i \log \frac{1}{\delta_N^2}}}
	\, q_{N_i - \tilde N_i} (x'-z) \bigg\}} \,.
  \end{equation}
To obtain $\mu_{N,i} (\cdot) \le (1+\epsilon_N)^2\,
q_{N_i - \tilde N_{i}} (\cdot)$, it suffices to prove
the following bounds on the numerator and denominator in \eqref{eq:phiratio}:
for any $M\in\N$ and $i=1,\ldots, M$, we have, for large~$N$,
\begin{align}
	\label{eq:claimratio1}
	q_{N_i - \tilde N_i} (x-z) & \le (1+\epsilon_N) \, q_{N_i - \tilde N_i} (x) \,, \\
	\label{eq:claimratio2}
	\sum_{| x' | \le \sqrt{N_i \log \frac{1}{\delta_N^2}}}
	\, q_{N_i - \tilde N_i} (x'-z) & \ge (1+\epsilon_N)^{-1} \,,
\end{align}
uniformly over $z,x \in \Z^2_\even$ that satisfy
(recall \eqref{eq:scales} and \eqref{eq:tildeNi})
\begin{equation}\label{eq:boundsxz}
	| z |  \le \sqrt{\tilde N_i \log \tfrac{1}{\delta_N^2}}
	= \sqrt{N_i  \big( \log \tfrac{1}{\delta_N^2} \big)^{- 2}} \,,
	\qquad | x | \le \sqrt{N_i \log \tfrac{1}{\delta_N^2}} \,.
\end{equation}

We first prove \eqref{eq:claimratio1}.
We fix $M\in\N$ and $i \in \{1,\ldots, M\}$.
By the local limit theorem \eqref{eq:llt}, uniformly over
for $x,z \in \Z^2_\even$, we can write
  \begin{equation*}
    \frac{q_{N_i - \tilde N_i} (x - z)}{q_{N_i - \tilde N_i} (x)} = \mathe^{\frac{| x |^2
    - | x - z |^2}{N_i - \tilde N_i} + O \left( \frac{| x |^4 + | x - z |^4}{(N_i - \tilde N_i)^3} \right)
    + o(1)}  \,.
  \end{equation*}
For large $N$, we have $N_i - \tilde N_i \ge \frac{1}{2} N_i$
and $N_i \ge (\delta_N^2)^{-\frac{1}{M}} \ge (\log \frac{1}{\delta_N^2})^3$,
see \eqref{eq:scales} and \eqref{eq:tildeNi}. Therefore by \eqref{eq:boundsxz}, we can bound
\begin{gather*}
	\frac{| x |^4 + | x - z |^4}{(N_i - \tilde N_i)^3}
	\le C \, \frac{(\log \frac{1}{\delta_N^2})^2}{N_i}
	\le \frac{C}{\log \frac{1}{\delta_N^2}} \,, \\
	\bigg| \frac{| x |^2 - | x - z |^2}{N_i - \tilde N_i} \bigg| =
	\bigg|\frac{2 \langle z, x \rangle - | z |^2}{N_i - \tilde N_i} \bigg|
	\le 2 \, \frac{2 | z | (| x | + | z |)}{N_i}
	\le \frac{C}{\sqrt{\log \frac{1}{\delta_N^2}}} \,.
\end{gather*}
Both estimates are $o(\epsilon_N)$ as $N\to\infty$, see \eqref{eq:epsN}, hence
\eqref{eq:claimratio1} follows. Finally, to prove \eqref{eq:claimratio2},
we note that the LHS equals
\begin{equation*}
\begin{split}
	\P\Big(|z + S_{N_i - \tilde N_i}|  \le \sqrt{N_i \log \tfrac{1}{\delta_N^2}} \,\Big)
	& \ge \P\Big(|S_{N_i}| \le \tfrac{1}{2} \sqrt{N_i \log \tfrac{1}{\delta_N^2}} \,\Big) \\
	& \ge 1 - C \, \exp \big( - \tfrac{1}{C} \big| \sqrt{ \log \tfrac{1}{\delta_N^2}} \big|^2 \big) \\
	& = 1 - C \, (\delta_N^{2})^C \le (1+\epsilon_N)^{-1} \,,
\end{split}
\end{equation*}
where the last inequality holds for $N$ large enough, see \eqref{eq:epsN}.
\end{proof}

\smallskip
\subsection{Step 4: variance computation}
In this step, we compute the asymptotic variance
of $\Delta_{N_i}$ from \eqref{eq:Delta}, which is needed to prove \eqref{eq:CLT}
and \eqref{eq:LLN}.
We work under the conditional probability
$\bbP(\,\cdot\,|\cF_{N_{i-1}})$ and note from \eqref{eq:mui} and \eqref{eq:Delta} that
\begin{equation} \label{eq:meanzero}
	\bbE[\Delta_{N_i}|\cF_{N_{i-1}}] = 0 \,.
\end{equation}
Therefore we focus on the second moment.
In the next result, we exploit the control on the polymer distribution $\mu_{N,i-1}$
that we obtained in \eqref{eq:cM} in the previous step.

\begin{theorem}[Second moment asymptotics] \label{th:2mom}
For any $M\in\N$ and $i=1,\ldots, M$, we have
the a.s.\ convergence (uniformly over $\cF_{N_{i-1}}$)
\begin{equation}\label{eq:variance}
	\lim_{N \to \infty} \bbE[(\Delta_{N,i})^2\,|\,\cF_{N_{i-1}}]
	= \frac{1}{M} \, \frac{\rho}{1 + (1-\tfrac{i}{M}) \, \rho} \,.
\end{equation}
\end{theorem}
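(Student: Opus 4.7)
The plan is to reduce $\bbE[\Delta_{N,i}^2 \mid \cF_{N_{i-1}}]$ to the variance of a partition function with deterministic initial condition $\mu_{N,i-1} \in \cM_{N,i-1}$, strip off the almost diffusive path restrictions $\cD_{\tilde N_i} \cap \cD_{N_i}$ at negligible cost, and then invoke Theorem~\ref{th:vargen}. Since $\bbE[\Delta_{N,i}\mid \cF_{N_{i-1}}] = 0$ by \eqref{eq:meanzero} and $m_{N,i}^{-2} = 1+o(1)$ uniformly by \eqref{eq:bound-cond-exp}, we have $\bbE[\Delta_{N,i}^2 \mid \cF_{N_{i-1}}] = (1+o(1))\,\bbvar[Z_{N,i}^\diff/Z_{N,i-1}^\diff \mid \cF_{N_{i-1}}]$. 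By \eqref{eq:ratiorepr} and the independence of the noise after $N_{i-1}$ from $\cF_{N_{i-1}}$ (whereas $\mu_{N,i-1}$ is $\cF_{N_{i-1}}$-measurable), this conditional variance equals, after shifting time by $N_{i-1}$,
\begin{equation*}
V_N(\mu_{N,i-1}) \,:=\, \bbvar\bigl[Z_L^{\beta_N}(\mu_{N,i-1}; \cD)\bigr], \qquad L := \tilde N_i - N_{i-1},
\end{equation*}
where $\mu_{N,i-1}$ is now viewed as a deterministic element of $\cM_{N,i-1}$ and the superscript $\cD$ denotes restriction of the random walk paths to $\cD_{\tilde N_i} \cap \cD_{N_i}$. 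Our task reduces to showing $V_N(\mu_{N,i-1}) \to \frac{1}{M}\cdot\frac{\rho}{1+(1-i/M)\rho}$ uniformly over $\mu_{N,i-1} \in \cM_{N,i-1}$.

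The main technical step, and the principal obstacle, is to remove the restriction $\cD$ at negligible cost. Writing $\mu := \mu_{N,i-1}$ and using the second-moment identity
\begin{equation*}
\bbvar\bigl[Z_L^{\beta_N}(\mu)\bigr] - V_N(\mu) \,=\, \E_{\mu \otimes \mu}\bigl[(1 - \ind_{\cD(S)\cap\cD(S')})\bigl((1+\sigma_{\beta_N}^2)^{|S\cap S'|_L} - 1\bigr)\bigr]
\end{equation*}
with $|S\cap S'|_L := \sum_{n=1}^L \ind_{S_n = S'_n}$, a union bound reduces the task to bounding $\E_\mu[\ind_{\cD^c(S)}\,f(S)]$, where $f(S) := \E_\mu[(1+\sigma_{\beta_N}^2)^{|S\cap S'|_L}]$. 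Expanding $(1+\sigma_{\beta_N}^2)^{|S\cap S'|_L} = \prod_{n=1}^L(1+\sigma_{\beta_N}^2 \ind_{S_n=S'_n})$, averaging over $S'$, and using the uniform kernel bounds $(\mu * q_{n_1})(y) \le q_{n_1}(0)$ and $q_m(z) \le q_m(0)$ yields $f(S) \le C\,\overline U_{\beta_N}(L) = O(\log N / |\theta_N|)$ \emph{uniformly in $S$} (the last identity by \eqref{eq:point-to-plane-2ndmom}, applied for $L \sim \tilde N_i$). Combined with the Gaussian tail estimate $\P_\mu(\cD^c) = O(\delta_N^c)$ already established in the proof of \eqref{eq:ZhatZ}, this gives $|V_N(\mu) - \bbvar[Z_L^{\beta_N}(\mu)]| = o(1)$: the exponentially small probability of $\cD^c$ defeats the sub-polynomial divergence of the point-to-plane second moment, thanks to the scale separation $1 \ll |\theta_N| \ll \log N$.

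It remains to compute $V_N^*(\mu) := \bbvar[Z_L^{\beta_N}(\mu)]$ via Theorem~\ref{th:vargen}. We verify its hypotheses uniformly in $\mu = \mu_{N,i-1} \in \cM_{N,i-1}$: $L = \tilde N_i - N_{i-1} \sim \tilde N_i = N(\delta_N^2)^{\rho(1-i/M) + o(1)}$, hence \eqref{eq:L} holds with $\ell = \rho(1 - i/M)$; for $i \ge 2$ the bound \eqref{eq:cM} puts $\mu$ under Proposition~\ref{th:initial2} with $W_N \asymp N_{i-1}$, while for $i = 1$ the choice $\mu_{N,0} = \cU_{\delta_N^\rho\sqrt{N}}$ gives $W_N = N\delta_N^{2\rho}$ (see \eqref{eq:ball-rw}); in both cases \eqref{eq:W} and \eqref{eq:phi-cond0} hold with $w = \rho(1 - (i-1)/M)$. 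The uniformity clause of Theorem~\ref{th:vargen} then gives
\begin{equation*}
V_N^*(\mu_{N,i-1}) \,\longrightarrow\, \frac{w-\ell}{1+\ell} = \frac{\rho/M}{1 + \rho(1 - i/M)} = \frac{1}{M}\cdot\frac{\rho}{1+(1-i/M)\rho}
\end{equation*}
uniformly, which combined with the previous paragraph yields \eqref{eq:variance}.
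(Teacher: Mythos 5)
The overall architecture of your proof matches the paper's: represent $\Delta_{N,i}$ as a normalized partition function with deterministic initial condition $\mu_{N,i-1}\in\cM_{N,i-1}$, strip off the almost-diffusive restriction, and invoke Theorem~\ref{th:vargen}. Your identity for $\bbvar[Z_L^{\beta_N}(\mu)]-V_N(\mu)$ and the reduction to $\E_\mu[\ind_{\cD^c(S)}f(S)]$ are fine. However, the pivotal claim that
\[
	f(S)\ :=\ \E_\mu\bigl[(1+\sigma_{\beta_N}^2)^{|S\cap S'|_L}\bigr]\ \le\ C\,\overline U_{\beta_N}(L)\qquad\text{\emph{uniformly in }}S
\]
is false. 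Expanding $f(S)=\sum_{k\ge 0}\sigma_{\beta_N}^{2k}\sum_{0<n_1<\cdots<n_k\le L}(\mu*q_{n_1})(S_{n_1})\prod_{j\ge 2}q_{n_j-n_{j-1}}(S_{n_j}-S_{n_{j-1}})$ and bounding each kernel by its supremum, you obtain
\[
	f(S)\ \le\ \sum_{k\ge 0}\sigma_{\beta_N}^{2k}\sum_{0<n_1<\cdots<n_k\le L}\ \prod_{j=1}^{k}\ \sup_{z} q_{\,n_j-n_{j-1}}(z)\, .
\]
But $\sup_z q_m(z)\sim \tfrac{2}{\pi m}$ while $q_{2m}(0)\sim\tfrac{1}{\pi m}$, so $\sup_z q_m(z)\sim 2\,q_{2m}(0)$. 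The sup bound therefore does not reproduce $\overline U_{\beta_N}(L)=\sum_k\sigma_{\beta_N}^{2k}\sum\prod q_{2(n_j-n_{j-1})}(0)$, but rather a version of it with an extra factor $2^k$, i.e.\ with $\sigma_{\beta_N}^2$ effectively replaced by $2\sigma_{\beta_N}^2$. In the quasi-critical regime $\sigma_{\beta_N}^2 R_L\approx 1$, so $2\sigma_{\beta_N}^2 R_L\approx 2$ and the resulting series is \emph{supercritical}: it diverges super-polynomially in $L$ (roughly like the second moment with $\hat\beta^2\approx 2$), far too fast for the stretched-exponential Gaussian-tail factor $\P_\mu(\cD^c)=O\bigl((\delta_N^2)^{1/C}\bigr)=O(\rme^{-|\theta_N|/C})$ to kill, since $L\le N$ and $|\theta_N|\ll\log N$. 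Concretely, the worst-case $S$ is the one forcing $S_{n_j}-S_{n_{j-1}}$ near the kernel maximum at every step, and for such $S$ the sup bound on $f(S)$ is essentially attained; the average $\E_\mu[f(S)]=\bbE[Z_L^{\beta_N}(\mu)^2]$ is $O(\log N/|\theta_N|)$, but the \emph{supremum} over $S$ is astronomically larger, which is exactly why a uniform-in-$S$ bound cannot close the argument.

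This is precisely the difficulty the paper's proof is designed to sidestep. Rather than bounding the conditional overlap function uniformly, the paper bounds $\bbE[(\Delta_{N,i}-\tilde\Delta_{N,i})^2\mid\cF_{N_{i-1}}]$ directly from the chaos expansion, keeping track of the \emph{pinned} renewal endpoint $z'$ (via $U_{\beta_N}(n,z')$ in \eqref{eq:Ub}) and splitting into the two cases $|z'|\le\tfrac12(\tilde N_i\log\delta_N^{-2})^{1/2}$ and $|z'|>\tfrac12(\tilde N_i\log\delta_N^{-2})^{1/2}$: in the first case the free random-walk kernels $q_{\tilde N_i-b}(y-z')$ are exponentially small, and in the second case the tilted renewal moment bound $\sum_x U_{\beta_N}(n,x)\rme^{\lambda|x|}\le c\,\rme^{c\lambda^2 n}\,U_{\beta_N}(n)$ (the analogue of \cite[Lemma~3.5]{CSZ23a}) supplies the needed decay. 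Your argument could possibly be salvaged by replacing the uniform bound with a Hölder/Cauchy--Schwarz step, $\E_\mu[\ind_{\cD^c}f(S)]\le\P_\mu(\cD^c)^{1/2}\,\E_\mu[f(S)^2]^{1/2}$, and controlling $\E_\mu[f(S)^2]$ by a bounded three-replica quantity; but that requires the higher moment bound of Theorem~\ref{th:genmombou} and is a different route from the one you wrote.
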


\begin{proof}
Recalling the definition \eqref{eq:Delta} of $\Delta_{N,i}$
and the representation \eqref{eq:ratiorepr}, we can write
\begin{equation} \label{eq:Deltarepr}
	\Delta_{N,i}
	=    \frac{1}{m_{N,i}} \,   \sum_{x \in
    \mathbb{Z}^2_{\even}} \mu_{N,i-1} (x)\,
\E \Big[
   \Big( \mathe^{\mathcal{H}^{\beta_N}_{(N_{i-1}, \tilde N_i]}} \,-\, 1 \Big)
    \, \ind_{\mathcal{D}_{\tilde N_i}
    \cap \mathcal{D}_{N_i}}  \, \Big|\,
    S_{N_{i-1}} =x \Big] \,.
\end{equation}

Removing the constraint $\ind_{\mathcal{D}_{\tilde N_i}\cap \mathcal{D}_{N_i}}$,
the RHS of \eqref{eq:Deltarepr} would simply become
\begin{equation} \label{eq:Deltatilderepr}
	\tilde{\Delta}_{N,i} :=
	\frac{1}{m_{N,i}} \, \pigl( Z_{L_N}^{\beta_N}(\varphi_N) - 1 \pigr) \qquad
	\text{with} \qquad L_N = \tilde{N}_i - N_{i-1} \,, \quad
	\varphi_N = \mu_{N,i-1} \,.
\end{equation}
We can now apply Theorem~\ref{th:vargen}, because
$L_N = \tilde{N}_i - N_{i-1}$ satisfies \eqref{eq:L} with $\ell = (1-\frac{i}{M}) \, \rho$,
(cf.~\eqref{eq:scales} and \eqref{eq:tildeNi}), while $\varphi_N =  \mu_{N,i-1}$
satisfies \eqref{eq:phi-cond0} with $W_N = N_{i-1}- \tilde{N}_{i-1}$
thanks to \eqref{eq:cM}, \eqref{eq:conditions-phi}, and \eqref{eq:ball-rw}, and \eqref{eq:W}
holds with $w = (1-\frac{i-1}{M}) \, \rho$.
Recalling from \eqref{eq:bound-cond-exp} that $m_{N,i} = 1 - o(1)$,
relation \eqref{eq:2nd-mom} then yields
\begin{equation} \label{eq:varDeltatilde}
	\lim_{N\to\infty} \bbE\big[ (\tilde{\Delta}_{N,i})^2 \big| \cF_{N_{i-1}} \big]
	= \lim_{N\to\infty} \bbvar\big[ Z_{L_N}^{\beta_N}(\varphi_N) \big]
	= \frac{w-\ell}{1 + \ell} = \frac{\rho/M}{1 + (1-\frac{i}{M}) \, \rho} \,,
\end{equation}
which matches our goal \eqref{eq:variance}.
 It remains to show that removing
the constraint $\ind_{\mathcal{D}_{\tilde N_i}\cap \mathcal{D}_{N_i}}$ from \eqref{eq:Deltarepr}
is immaterial, that is, almost surely we have
$\bbE\big[ (\Delta_{N,i} - \tilde{\Delta}_{N,i})^2 \, \big| \, \cF_{N_{i-1}}\big] \to 0$.
To this end, we note that
\begin{align*}
\tilde{\Delta}_{N,i}-\Delta_{N,i}
&=  \frac{1}{m_{N,i}} \,   \sum_{x \in
    \mathbb{Z}^2_{\even}} \mu_{N,i-1} (x)\,
\E \Big[
   \Big( \mathe^{\mathcal{H}^{\beta_N}_{(N_{i-1}, \tilde N_i]}} \,-\, 1 \Big)
    \, \ind_{\mathcal{D}^c_{\tilde N_i}
    \cup \mathcal{D}^c_{N_i}}  \, \Big|\,  S_{N_{i-1}} =x \Big].
\end{align*}

Since in the expectation above, disorder is restricted to the time interval $(N_{i-1},\tilde N_i]$ with $N_{i-1}\ll \tilde N_i \ll N_i$, we can show that the contribution from the event $\mathcal{D}_{\tilde N_i}\cap \mathcal{D}^c_{N_i}$, which implies $|S_{N_i}| > \sqrt{ N_i \log \frac{1}{\delta_N^2}}$,
is negligible via an analysis similar to that performed in the proof of \eqref{eq:cM}, which uses the fact that the simple symmetric random walk has a
negligible probability of having super-diffusive displacement on the time interval $[\tilde N_i, N_i]$.

So we will focus on showing that, conditional on $\cF_{N,i}$, the $L^2$ norm of
\begin{align}\label{toestim}
\frac{1}{m_{N,i}} \,   \sum_{x \in
    \mathbb{Z}^2_{\even}} \mu_{N, i-1}(x)\,
\E \Big[
   \Big( \mathe^{\mathcal{H}^{\beta_N}_{(N_{i-1}, \tilde N_i]}} \,-\, 1 \Big)
    \, \ind_{\mathcal{D}^c_{\tilde N_i}}  \, \Big|\,  S_{N_{i-1}} =x \Big]
\end{align}
is negligible.
First, recalling  \eqref{eq:bound-cond-exp} we have that $m_{N, i-1} \ge 1 - C \, (\delta_N^2)^{\frac{1}{C}}$
and so we can neglect this term. Secondly, by \eqref{eq:cM}, we can bound $\mu_{N, i-1}(x)\leq C q_{N_{i-1}-\tilde N_{i-1}} (x)$.
Using the chaos expansion \eqref{eq:polynomial}, the $L^2$ norm of the sum in \eqref{toestim} can be bounded by a multiple of
\begin{align}\label{toestim2}
&\sumtwo{x,x',z,z' \in \mathbb{Z}^2_{\even}}{|y|,|y'| > ( \tilde N_i \log \delta_N^{-2} )^{1/2} }
\sum_{N_{i-1}\leq a<b \leq \tilde N_i}
 q_{N_{i-1}-\tilde N_{i-1}} (x) q_{N_{i-1}-\tilde N_{i-1}} (x') \,
    q_{a-N_{i-1}} (z-x) q_{a-N_{i-i}} (z-x') \notag\\
&   \hskip 4cm \times \sigma_{\beta_N}^2 \,
U_{\beta_N}(b-a,z'-z) \,\, q_{\tilde N_{i}-b} (y-z') q_{\tilde N_{i}-b} (y'-z')  \notag\\
=&\!\!\! \!\!\!\!\!\! \sumtwo{z,z' \in \mathbb{Z}^2_{\even}}{|y|,|y'| > ( \tilde N_i \log \delta_N^{-2} )^{1/2} }
\sum_{N_{i-1}\leq a<b \leq \tilde N_i}
 q_{a-\tilde N_{i-1}} (z)^2 \,\sigma_{\beta_N}^2U_{\beta_N}(b-a,z'-z) \,\, q_{\tilde N_{i}-b} (y-z') q_{\tilde N_{i}-b} (y'-z'),
\end{align}
where we used the Chapman-Kolmogorov equation to go from the first line to the second and we used the notation
\begin{equation} \label{eq:Ub}
\begin{split}
	U_{\beta}(n, z)
	& := \sum_{k \ge 1} (\sigma_{\beta}^2)^k
	\sumtwo{0 =: n_0 < n_1 < \cdots < n_k = n}{x_0 := 0, \
	x_1, \ldots, x_{k-1}\in \Z^2, x_k=z } \ \prod_{i = 1}^k q_{n_i - n_{i - 1}}(x_i - x_{i-1})^2 ,
\end{split}
\end{equation}
that is, the renewal function from \eqref{eq:U} with the end point pinned at $z$.
To bound \eqref{toestim2}, we distinguish between two cases: either
$ |z'| \leq \frac{1}{2} ( \tilde N_i \log \delta_N^{-2} )^{1/2}$, or $z'$ satisfies
the opposite inequality. In the first case, the decay from the random walk kernels will make the contribution
to \eqref{toestim2} negligible since $|y-z|, |y'-z|>\frac{1}{2}( \tilde N_i \log \delta_N^{-2} )^{1/2}$.
In the second case, we can drop the constraints on $y,y'$ in the sum to obtain that, the corresponding contribution to
\eqref{toestim2} is bounded (up to constants) by
\begin{align}\label{toestim3}
&\!\!\! \!\!\!\!\!\! \sumtwo{ z\in \mathbb{Z}^2_{\even}}{|z'| >\frac{1}{2} ( \tilde N_i \log \delta_N^{-2} )^{1/2} }
\sum_{N_{i-1}\leq a<b \leq \tilde N_i}
 q_{a-\tilde N_{i-1}} (z)^2 \,\sigma_{\beta_N}^2 U_{\beta_N}(b-a,z'-z).
\end{align}
In the sum above, we can identify $(a, z)$ with $(n_1, x_1)$ in \eqref{eq:Ub} and use \eqref{eq:Ub} to rewrite
\eqref{toestim3} and bound it by
\begin{align}\label{toestim4}
\sumtwo{0<b' \leq \tilde N_i-{\tilde N_{i-1}}}{ |z'| >\frac{1}{2} ( \tilde N_i \log \delta_N^{-2} )^{1/2} } U_{\beta_N}(b', z') \,\,\,\,\,
\leq \!\!\!\!
\sumtwo{0<b' \leq \tilde N_i}{|z'| >\frac{1}{2} ( \tilde N_i \log \delta_N^{-2} )^{1/2} } U_{\beta_N}(b',z'),
\end{align}
where in the inequality, we enlarged the range of summation for $b'$.
We can then use the next bound, which can be proved by following the same steps
as in \cite[Lemma 3.5]{CSZ23a}:
\begin{align*}
\sum_{x\in \bbZ^2} U_{\beta_N}(n,x) e^{\lambda |x|} \leq c e^{c\lambda^2 n} U_{\beta_N}(n).
\end{align*}
From this bound, the negligibility of \eqref{toestim4} as $N\to\infty$ follows easily by a Markov type inequality with
appropriate choices $\lambda$.
\end{proof}

\begin{remark}[Sub-critical regime]\label{rem:subcr2}
In the sub-critical regime \eqref{eq:sub-critical}, we need to modify \eqref{eq:variance} to:
\begin{equation}\label{eq:variance-subcritical}
	\lim_{N \to \infty} \bbE[(\Delta_{N,i})^2\,|\,\cF_{N_{i-1}}]
	= \frac{1}{M} \, \frac{\rho\, \hat\beta^2}{1 + (1-\tfrac{i}{M}) \, \rho\,\hat\beta^2} \,.
\end{equation}
The proof is the same, except that in \eqref{eq:varDeltatilde} we need to apply \eqref{eq:2nd-momsub} in place of \eqref{eq:2nd-mom}, see Remark~\ref{rem:subcr1}.
\end{remark}

\smallskip
\subsection*{Step 5. (Higher moment bounds)}

In this step, we control higher moments
of $\Delta_{N, i}$ defined in \eqref{eq:Delta},
proving the following bound
(recall the second moment computation \eqref{eq:variance}).

\begin{proposition}[High moment bounds] \label{prop:higher-moments}
For any $h\in\N$, there is a constant $\mathfrak{C}_h < \infty$ such that,
for any $M\in\N$ and $i=1,\ldots, M$,
we have the a.s.\ bound (uniformly over $\cF_{N_{i-1}}$):
\begin{equation}\label{eq:moments}
	\limsup_{N \to \infty} \, \big| \,\bbE[(\Delta_{N, i})^h\,|\,\cF_{N_{i-1}}] \,\big|
	\le \mathfrak{C}_h \,
	\bigg(\frac{1}{M} \, \frac{\rho}{1+
	(1-\frac{i}{M})\rho}\bigg)^{\frac{h}{2}}  \,.
\end{equation}
\end{proposition}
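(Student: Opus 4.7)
The plan is to apply the hypercontractive moment bound of Theorem~\ref{th:genmombou} directly to the conditional distribution of the restricted partition function $Z_{N,i}^\diff/Z_{N,i-1}^\diff$, bypassing any further decomposition. Indeed, representation \eqref{eq:ratiorepr} exhibits this ratio as the averaged partition function $Z_{L_N}^{\beta_N}(\varphi_N)$ with $L_N := \tilde N_i - N_{i-1}$ and $\varphi_N := \mu_{N,i-1}$, restricted to the random walk paths lying in $\cD_{\tilde N_i} \cap \cD_{N_i}$. Crucially, the last clause of Theorem~\ref{th:genmombou} explicitly permits such a path restriction on the left-hand side of \eqref{eq:genbound}, while leaving the right-hand side expressed in terms of the variance of the \emph{unrestricted} partition function. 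Since $\bbE[Z_{N,i}^\diff/Z_{N,i-1}^\diff \mid \cF_{N_{i-1}}] = m_{N,i}$ by \eqref{eq:mui} and $\Delta_{N,i} = m_{N,i}^{-1}(Z_{N,i}^\diff/Z_{N,i-1}^\diff - m_{N,i})$, applying this form of Theorem~\ref{th:genmombou} conditionally on $\cF_{N_{i-1}}$ will yield
\[
\bbE\big[\,(\Delta_{N,i})^h \,\big|\, \cF_{N_{i-1}}\,\big] \,\le\, m_{N,i}^{-h}\,\mathfrak{C}_h \, \bbvar\big[\,Z_{L_N}^{\beta_N}(\varphi_N) \,\big|\, \cF_{N_{i-1}}\,\big]^{h/2},
\]
and \eqref{eq:moments} will then follow by combining $m_{N,i} \to 1$ (guaranteed by \eqref{eq:bound-cond-exp}) with the variance limit \eqref{eq:varDeltatilde} already established in the proof of Theorem~\ref{th:2mom}.

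The only point that needs checking is that the three hypotheses of Theorem~\ref{th:genmombou} are satisfied by $\varphi_N = \mu_{N,i-1}$, \emph{uniformly} over the realization of $\cF_{N_{i-1}}$. This is where the pointwise domination $\varphi_N(\cdot) \le (1+\epsilon_N)^2\,q_{N_{i-1}-\tilde N_{i-1}}(\cdot)$ from \eqref{eq:cM} does all the work. The exponential localisation \eqref{eq:concsqrtL} on scale $\sqrt{L_N}$ reduces by completing the square to the bound $\sum_z q_n(z)\,\rme^{2\hat t|z|/\sqrt{L_N}} = O(\rme^{\hat t^{\,2} n/L_N})$, which is uniformly bounded thanks to the scale separation $n := N_{i-1}-\tilde N_{i-1} \ll L_N$ ensured by \eqref{eq:tildeNi}. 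The local uniformity \eqref{eq:locunif1sup} follows from $\|\varphi_N\|_{\ell^\infty} = O(1/n)$ together with the elementary bound $\bbD[\varphi_N] \le \sum_x |x|^2 \varphi_N(x) \le c\sum_x |x|^2 q_n(x) = O(n)$, which give $\|\varphi_N\|_{\ell^\infty}\,\bbD[\varphi_N] = O(1)$. Finally, the bounded variance \eqref{eq:varub0} is precisely the content of the second-moment computation \eqref{eq:varDeltatilde}.

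I do not anticipate any conceptual obstacle in carrying out this plan: the statement of Proposition~\ref{prop:higher-moments} is essentially designed as a direct consequence of Theorem~\ref{th:genmombou} once Theorem~\ref{th:2mom} is in place, and the only nontrivial ingredient specific to this step is the observation that the diffusive path restriction $\cD_{\tilde N_i}\cap\cD_{N_i}$ does not affect the right-hand side of the hypercontractive bound, exactly as licensed by the last sentence of Theorem~\ref{th:genmombou}. The real work of bounding higher moments of $\Delta_{N,i}$ in terms of its second moment is entirely encapsulated in that general-purpose theorem; what remains here is the routine uniform verification above.
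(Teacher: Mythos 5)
Your proposal is correct and follows essentially the same route as the paper's own proof: express the conditional law of $\Delta_{N,i}$ as a restricted averaged partition function via \eqref{eq:ratiorepr}, invoke the path-restriction clause of Theorem~\ref{th:genmombou}, absorb $m_{N,i}=1-o(1)$ via \eqref{eq:bound-cond-exp}, feed in the variance limit established in the proof of Theorem~\ref{th:2mom}, and verify assumptions \eqref{eq:concsqrtL}, \eqref{eq:locunif1}, \eqref{eq:varub0} by exploiting the domination $\mu_{N,i-1}\le(1+\epsilon_N)^2 q_{N_{i-1}-\tilde N_{i-1}}$ from \eqref{eq:cM}. The only cosmetic difference is that you spell out the Gaussian tail computation for \eqref{eq:concsqrtL} where the paper simply cites the local limit theorem.
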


Note that the case $h=1$ is trivial by \eqref{eq:meanzero},
while the case $h=2$ holds by \eqref{eq:variance}, and hence we focus on $h \ge 3$,
in which case the bound is a direct consequence of Theorem~\ref{th:genmombou}.

\begin{proof}
As in the proof Theorem~\ref{th:2mom}, see \eqref{eq:Deltarepr}, conditioned on $\cF_{N_{i-1}}$,
$\Delta_{N, i}$ can be written as a modified partition function where the random walk is restricted to
$\mathcal{D}_{\tilde N_i}\cap \mathcal{D}_{N_i}$. In \eqref{eq:Deltarepr}, we can ignore the mean
$m_{N,i} = 1 - o(1)$ (see \eqref{eq:bound-cond-exp}). Thanks to Theorem~\ref{th:genmombou} (whose
assumptions we check in a moment), we can obtain a moment upper bound by removing the random walk
restriction and applying \eqref{eq:genbound} to get
\begin{equation*}
	\big| \,\bbE[(\Delta_{N, i})^h\,|\,\cF_{N_{i-1}}] \,\big|
	\le \mathfrak{C}_h  \bbvar\big[Z_{L_N}^{\beta_N}(\varphi_N) \,\big|\, \cF_{N_{i-1}}
	\big]^{\frac{h}{2}} \,.
\end{equation*}
Our goal \eqref{eq:moments} then follows from \eqref{eq:variance},
since, as we showed in the proof of Theorem~\ref{th:2mom},
\begin{equation} \label{eq:varZLphi}
	\lim_{N \to \infty}
	\bbvar\big[Z_{L_N}^{\beta_N}(\varphi_N) \,\big|\, \cF_{N_{i-1}} \big]
	= \lim_{N \to \infty} \bbE[(\Delta_{N,i})^2\,|\,\cF_{N_{i-1}}]
	= \frac{1}{M} \, \frac{\rho}{1+
	(1-\frac{i}{M})\rho}\,.
\end{equation}

It only remains to check the assumptions of Theorem~\ref{th:genmombou}, namely
that $\beta_N$ from \eqref{eq:quasi-crit} and
$L_N = \tilde{N}_i - N_{i-1}$,
$\varphi_N = \mu_{N,i-1}$ from \eqref{eq:Deltatilderepr} fullfill conditions
\eqref{eq:concsqrtL}, \eqref{eq:locunif1} and \eqref{eq:varub0}.
\begin{itemize}
\item The bounded variance condition \eqref{eq:varub0} clearly holds by \eqref{eq:varZLphi}.

\item By \eqref{eq:cM}, we have $\varphi_N =  \mu_{N,i-1}  \le (1+\epsilon_N)^2
\, q_{N_{i-1} - \tilde{N}_{i-1}} \le 2\, q_{N_{i-1} - \tilde{N}_{i-1}}$, so
$\varphi_N$ is exponentially
concentrated on the scale $\sqrt{N_{i-1} - \tilde{N}_{i-1}} \sim \sqrt{N_{i-1}} $ much smaller than
$\sqrt{\tilde{N}_i} \sim \sqrt{L_N}$,
thus condition \eqref{eq:concsqrtL} holds by the local limit theorem \eqref{eq:llt}.

\item Again, $\varphi_N \le 2\, q_{N_{i-1} - \tilde{N}_{i-1}}$ yields
$\|\varphi_N\|_{\ell^\infty} \sim \frac{C}{N_{i-1} - \tilde{N}_{i-1}} \sim
\frac{C'}{\bbD[\varphi_N]}$, see \eqref{eq:bbD} and \eqref{eq:llt},
hence condition \eqref{eq:locunif1} is fullfilled.
\end{itemize}
The proof is complete.
\end{proof}

\begin{remark}[Sub-critical regime]\label{rem:subcr3}
	In the sub-critical regime \eqref{eq:sub-critical}, we need to modify \eqref{eq:moments} to:
	\begin{equation}\label{eq:moments-subcritical}
		\limsup_{N \to \infty} \, \big| \,\bbE[(\Delta_{N, i})^h\,|\,\cF_{N_{i-1}}] \,\big|
		\le \mathfrak{C}_h \,
		\bigg(\frac{1}{M} \, \frac{\rho \, \hat\beta^2}{1+
		(1-\frac{i}{M})\rho\,\hat\beta^2}\bigg)^{\frac{h}{2}} \, ,
	\end{equation}
	in agreement with the variance \eqref{eq:variance-subcritical}.
\end{remark}

\medskip
\subsection*{Step 6. Proof of  \eqref{eq:LLN} and \eqref{eq:CLT}}
We will apply the Central Limit Theorem for arrays of martingale differences.
In particular, we will make use of the following special version of \cite[Theorem 3.5]{HH80}.
\begin{theorem}
For each $n\geq 1$, let $(S_{n, i})_{1\leq i\leq M_n}$ be a mean zero square integrable martingale adapted to the filtration
$(\cF_{n, i})_{1\leq i\leq M_n}$. Let $\Delta_{n,i}:=S_{n,i}-S_{n, i-1}$ be the associated martingale differences.
Denote
\begin{align*}
V_{n,i}^2:=\sum_{j=1}^i \bbE\big[\Delta_{n,j}^2 \,|\, \cF_{n,j-1}\big]
\quad \text{and} \quad
U_{n,i}^2:=\sum_{j=1}^i \Delta_{n,j}^2 .
\end{align*}
Assume that
\begin{align}
&\qquad  \bbE\big[ \big| V_{n,M_n}^2 -\sigma^2 \big|\big]\xrightarrow[n\to\infty]{} 0, \quad \text{and} \quad
\max_{i\leq M_n}  \bbE\big[ \Delta_{n,i}^2 \,\big| \, \cF_{n,i-1} \big] \xrightarrow[n\to\infty]{\bbP} 0. \label{MG-CLT1}
\end{align}
Then the following three statements are equivalent:
\begin{align}
\text{(i)}&\quad \sum_{i\leq M_n} \bbE\big[ \Delta_{n,i}^2 \, \ind_{|\Delta_{n,i}|>\epsilon}\big] \xrightarrow[n\to\infty]{} 0,
\label{HH1}\\
\text{(ii)}&  \quad \qquad\bbE\big[ \big| U_{n,M_n}^2-\sigma^2\big| \big]\xrightarrow[n\to\infty]{} 0,
\label{HH2}\\
\text{(iii)} & \quad S_n:=\sum_{i=1}^{M_n} \Delta_{n,i} \xrightarrow[n\to\infty]{d} \cN(0,\sigma^2).
\label{HH3}
\end{align}
\end{theorem}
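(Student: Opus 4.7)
My plan is to establish the cyclic chain (i) $\Rightarrow$ (ii) $\Rightarrow$ (iii) $\Rightarrow$ (i), relying on the two standing hypotheses in \eqref{MG-CLT1} at every step. The argument is the classical martingale CLT going back to Brown and McLeish, with the equivalence of (i) and (ii) being a variance-stabilisation statement while (iii) is the distributional conclusion.

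For (i) $\Rightarrow$ (ii), the key observation is that $U_{n,i}^2 - V_{n,i}^2 = \sum_{j \le i}(\Delta_{n,j}^2 - \bbE[\Delta_{n,j}^2 \mid \cF_{n,j-1}])$ is itself a martingale in $i$, so it suffices to show its terminal value tends to zero in $L^1$. I would introduce a truncation parameter $\epsilon > 0$ and split each $\Delta_{n,j}^2$ according to whether $|\Delta_{n,j}|$ exceeds $\epsilon$. The large-jump contribution vanishes in $L^1$ by the Lindeberg hypothesis (i), while for the truncated part, orthogonality of martingale differences yields the $L^2$ bound $\sum_j \bbE[\Delta_{n,j}^4 \ind_{|\Delta_{n,j}| \le \epsilon}] \le \epsilon^2 \, \bbE[V_{n,M_n}^2] \le C\epsilon^2$, using the first hypothesis in \eqref{MG-CLT1}. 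Sending $n \to \infty$ and then $\epsilon \downarrow 0$, and combining with $V_{n,M_n}^2 \to \sigma^2$ in $L^1$, delivers (ii).

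For (ii) $\Rightarrow$ (iii), I would use the standard conditional characteristic-function approach. Starting from the factorisation $\exp(itS_n) = \prod_j \exp(it\Delta_{n,j})$ and the Taylor expansion $e^{ix} - 1 - ix + \tfrac{1}{2}x^2 = r(x)$ with $|r(x)| \le C(|x|^3 \wedge x^2)$, a telescoping argument comparing $\exp(itS_n)$ to $\exp(-\tfrac{1}{2}t^2 U_{n,M_n}^2)$ reduces matters to controlling cubic-type remainders. These are handled by invoking the max condition in \eqref{MG-CLT1} together with (i), which holds by the already-established equivalence $(\text{i})\Leftrightarrow(\text{ii})$. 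Combined with $U_{n,M_n}^2 \to \sigma^2$ in $L^1$ from (ii) and bounded convergence, one obtains $\bbE[\exp(itS_n)] \to \exp(-\tfrac{1}{2}t^2\sigma^2)$, and (iii) follows from L\'evy's continuity theorem.

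For (iii) $\Rightarrow$ (i), which is the subtlest direction, the strategy is to run the characteristic-function argument in reverse. The max hypothesis in \eqref{MG-CLT1} ensures that individual $\Delta_{n,j}$ are asymptotically small, so their characteristic functions admit the Taylor expansion used above. The convergence $\bbE[\exp(itS_n)] \to \exp(-\tfrac{1}{2}t^2\sigma^2)$ combined with $V_{n,M_n}^2 \to \sigma^2$ in $L^1$ forces the aggregated cubic-type remainders to vanish, from which the Lindeberg condition (i) follows by a dyadic truncation argument. I expect Step 2 (and its reversal in Step 3) to be the main obstacle: the Taylor-expansion bookkeeping must track conditional versus unconditional error bounds, and the interplay with both hypotheses in \eqref{MG-CLT1} is essential to convert between them.
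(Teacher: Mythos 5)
The paper does not prove this result; it states it as a special case of Theorem~3.5 in Hall--Heyde \cite{HH80} and appeals to it as a black box. So there is no ``paper's proof'' to compare against, and the question is whether your proposed argument is internally sound.

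It is not quite: there is a circularity in your chain. You plan to prove (i) $\Rightarrow$ (ii) $\Rightarrow$ (iii) $\Rightarrow$ (i), but inside the step (ii) $\Rightarrow$ (iii) you write that the cubic remainders ``are handled by invoking the max condition in \eqref{MG-CLT1} together with (i), which holds by the already-established equivalence (i) $\Leftrightarrow$ (ii).'' At that point in the cycle you have only proved (i) $\Rightarrow$ (ii); the converse (ii) $\Rightarrow$ (i) has not yet been established and would only become available once the whole cycle closes. So you cannot invoke (i) while proving (ii) $\Rightarrow$ (iii) unless you first supply a direct proof of (ii) $\Rightarrow$ (i), and this implication is not a triviality: $U_{n,M_n}^2 \to \sigma^2$ in $L^1$ together with the two standing hypotheses gives uniform integrability of $\sum_j\Delta_{n,j}^2\ind_{|\Delta_{n,j}|>\epsilon}$, but turning this into convergence to zero requires an additional asymptotic-negligibility argument on the increments $\Delta_{n,j}$ themselves, not just on their conditional variances. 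Either restructure the chain as (i) $\Leftrightarrow$ (ii), then (i) $\Rightarrow$ (iii) $\Rightarrow$ (i), proving (ii) $\Rightarrow$ (i) separately, or run the characteristic-function step using only (ii) and \eqref{MG-CLT1}.

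Two further points. Your step (i) $\Rightarrow$ (ii) is essentially correct: $U^2_{n,\cdot} - V^2_{n,\cdot}$ is a martingale, the truncation splits cleanly, and the $L^2$-orthogonality bound $\sum_j\bbE[\Delta_{n,j}^4\ind_{|\Delta_{n,j}|\le\epsilon}]\le\epsilon^2\bbE[V_{n,M_n}^2]$ is the right estimate. The step (iii) $\Rightarrow$ (i), however, is only a gesture: ``forces the aggregated cubic-type remainders to vanish, from which (i) follows by a dyadic truncation argument'' does not identify the mechanism by which the distributional convergence of $S_n$ transmits information about the size distribution of the individual increments. This is genuinely the hardest direction and in Hall--Heyde it rests on a delicate stopping-time and tightness argument that your sketch does not reproduce. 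Since the result is being used as a citation in the paper, the most economical course is simply to cite \cite[Theorem~3.5]{HH80} rather than reprove it.
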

In our setting,
the first condition in \eqref{MG-CLT1} follows from Theorem~\ref{th:2mom} (note the uniformity over
$\cF_{N_i}$ in the convergence therein) by choosing $M_N\to\infty$ slowly enough and then applying
a Riemann sum approximation that shows $\sigma^2 = \int_0^\rho \frac{\dd t}{1+ t}= \log (1+\rho)$.

The second condition in \eqref{MG-CLT1} follows from the higher moment estimate
\eqref{eq:moments} and a union bound as follows:
\begin{align*}
\bbP\Big( \max_{i\leq M_N}  \bbE\big[ \Delta_{N,i}^2 \,\big| \, \cF_{N,i-1} \big] >\epsilon \Big)
&\leq
\sum_{i=1}^{M_N} \bbP\Big(  \bbE\big[ \Delta_{N,i}^2 \,\big| \, \cF_{N,i-1} \big] >\epsilon \Big) \\
& \leq \frac{1}{\epsilon^2} \sum_{i=1}^{M_N} \bbE\big[ \Delta_{N,i}^4 \big]
= O\Big( \frac{1}{M_N}\Big),
\end{align*}
where the last equality is justified by choosing $M_N \to \infty$ slowly enough.
Indeed, by \eqref{eq:moments}, for any fixed $M < \infty$, there is $N = \cN_M$ large enough such that
\begin{equation*}
	\forall N \ge \cN_M,  \ \forall 1\leq i\leq M \colon \qquad
	\bbE[(\Delta_{N,i})^4\,|\,\cF_{N_{i-1}}] \le \mathfrak{C}_4 \, \bigg(\frac{2\rho}{M}\bigg)^2 \,.
\end{equation*}
We can assume that $\lim_{M\to\infty} \cN_M = \infty$. Hence
we can choose a sequence $(M_N)_{N\to\infty}$ with $M_N \to \infty$ slowly enough
such that $N \ge \cN_{M_N}$, and hence
\begin{equation*}
	\sum_{i=1}^{M_N} \bbE[(\Delta_{N,i})^4\,|\,\cF_{N_{i-1}}] \le \mathfrak{C}_4 \, M_N\, \bigg(\frac{2\rho}{M_N}\bigg)^2 = O\Big( \frac{1}{M_N}\Big)\,.
\end{equation*}

Condition \eqref{HH1} follows in the same way via Chebyshev's inequality and the higher moment
estimate \eqref{eq:moments}. In turn, this implies \eqref{HH2} and \eqref{HH3}, which are respectively
our desired relation \eqref{eq:LLN} and \eqref{eq:CLT}.

\medskip

\subsection*{Step 7. Proof of  \eqref{eq:remainder}}
To show the negligibility of the second term in \eqref{eq:remainder}, i.e.,
$\sum_{i=1}^{M_N}\log m_{N,i}$, we apply \eqref{eq:bound-cond-exp} to obtain
\begin{align*}
0\leq - \sum_{i=1}^{M_N}\log m_{N,i} \leq - \sum_{i=1}^{M_N} \log (1-C (\delta_N^2)^{\frac{1}{C}})
\le C' M_N (\delta_N)^{\frac{1}{C}},
\end{align*}
which goes to $0$ if $M_N\to\infty$ slowly enough such that $M_N \ll (\delta_N)^{-\frac{1}{C}}$.

For the first term in \eqref{eq:remainder},
 recall that $r(x)=\log(1+x)-\big(x-\frac{x^2}{2} \big)$. Using the elementary estimate
$|r(x)|\leq C |x|^3 \wedge x^2$, we obtain
\begin{align*}
\Big| \sum_{i=1}^{M_N} \bbE \big[r(\Delta_{N,i})\big] \Big|
&\leq C \sum_{i=1}^{M_N} \Big(\bbE \big[\, |\Delta_{N,i}|^2 \, \ind_{|\Delta_{N,i}| > \epsilon } \, \big]+\bbE \big[ \,|\Delta_{N,i}|^3 \, \ind_{|\Delta_{N,i}| \leq \epsilon } \,\big]\Big) \\
& \leq C \sum_{i=1}^{M_N} \Big( \bbE \big[\, \Delta_{N,i}^2 \, \ind_{|\Delta_{N,i}| > \epsilon } \, \big]+\epsilon \bbE \big[ \,\Delta_{N,i}^2 \,\big]\Big),
\end{align*}
which converges to $0$ by \eqref{HH1} and \eqref{HH2} and by letting $\epsilon$ be arbitrarily small.

The proof of \eqref{eq:log-normal2} is now complete.\qed

\smallskip
\section{Higher moment bounds}
\label{sec:moments}

We prove a strengthened version of the general moment bound in Theorem~\ref{th:genmombou}.
We consider the averaged \emph{point-to-point} partition function
$Z_L(\varphi, \psi)$ defined in \eqref{eq:Z}: for $\varphi, \psi: \Z^2 \to \R$ and $L \in \N$,
\begin{equation}\label{eq:Z2}
	Z_L^\beta(\varphi,\psi) := \!\!
	\sum_{z,w \in \Z^2} \! \varphi(z) \, Z_L^\beta(z,w) \, \psi(w)
	\qquad \text{with} \quad
		Z_L^{\beta}(z,w)
	:= \E \pig[\rme^{\mathcal{H}_{(0,  L)}^{\beta,\omega}(S)}
	\, \ind_{\{S_L=w\}}  \pig| \nobracket S_0 = z \pig] \,.
\end{equation}

Let $\varphi$ be a probability mass function on $\Z^2$ satisfying the localization
condition \eqref{eq:concsqrtL}, which we recall here for convenience: for some $\,\hat{t}\, > 0$, $\sfc_1 < \infty$
\begin{equation} \label{eq:concsqrtL+}
	\exists z_0 \in \R^2 \colon \qquad
	\sum_{z\in\Z^2} \varphi(z) \, \rme^{2\,\hat{t}\, \frac{|z - z_0|}{\sqrt{L}}}
	\le \sfc_1  \,.
\end{equation}
Instead of \eqref{eq:locunif1}, we impose the weaker condition that,
for some $\sfc_2' < \infty$,
\begin{equation}\label{eq:locunif}
	\frac{\log\bigl( L\,
	\| \varphi\|_{\ell^2}^2
	\bigr)}{R_L(\varphi,\varphi)} \le \sfc_2' \,,
\end{equation}
where $R_L(\varphi,\varphi)$ is defined in \eqref{eq:Rfg}.
The fact that \eqref{eq:locunif} is indeed implied by \eqref{eq:locunif1} (when \eqref{eq:concsqrtL+} holds)
is shown in the next result, proved in Appendix~\ref{sec:locunif}.

\begin{lemma}\label{th:locunif}
If a probability mass function $\varphi$ fullfills \eqref{eq:concsqrtL+}, there exists
$\mathfrak{c} = \mathfrak{c}(\,\hat{t}\,, \sfc_1) > 0$ such that
\begin{gather}
	\label{eq:rphibound0}
	R_L(\varphi,\varphi) \ge
	R_{L/2}(\varphi,\varphi) \ge \mathfrak{c} \, \log\pig(1 + \tfrac{L/2}{1+4 \, \bbD[\varphi]}\pig) \,,
\end{gather}
and hence  condition \eqref{eq:locunif1} implies condition \eqref{eq:locunif}.
\end{lemma}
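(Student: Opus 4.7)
My plan is to prove the two inequalities in \eqref{eq:rphibound0} via a probabilistic argument based on the Green's function bound \eqref{eq:intR}, and then deduce the implication from \eqref{eq:locunif1} to \eqref{eq:locunif} through a short case analysis.

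The first inequality $R_L(\varphi,\varphi) \ge R_{L/2}(\varphi,\varphi)$ is immediate from the definition $R_L(z) = \sum_{n=1}^L q_{2n}(z)$, which is monotone in~$L$. For the main inequality, I would write $R_{L/2}(\varphi,\varphi) = \bbE[R_{L/2}(X-Y)]$ with $X, Y$ independent samples from $\varphi$, and combine three ingredients: (i) the Green's function lower bound \eqref{eq:intR}, which states that $R_{L/2}(z) \ge c_t \log(1 + (L/2)/(1+|z|^2))$ on the diffusive window $|z| \le t\sqrt{L/2}$ and is a \emph{decreasing} function of $|z|$; (ii) the identity $\bbE[|X-Y|^2] = 2\bbD[\varphi]$, which via Chebyshev's inequality yields $\bbP\bigl(|X-Y| \le 2\sqrt{\bbD[\varphi]}\bigr) \ge \tfrac{1}{2}$; (iii) the bound $\bbD[\varphi] \le C_1 L$ with $C_1 = C_1(\hat{t}, \sfc_1)$, extracted from the exponential localization \eqref{eq:concsqrtL+} via $\rme^x \ge x^2/2$. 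Ingredient (iii) guarantees that the Chebyshev radius $K := 2\sqrt{\bbD[\varphi]}$ fits into the diffusive window for the choice $t = t_0 := 2\sqrt{2C_1}$ (and $L$ large enough so that also $K \le L/2$). On the event $\{|X - Y| \le K\}$, which has probability $\ge \tfrac{1}{2}$, monotonicity and (i) give $R_{L/2}(X-Y) \ge c_{t_0} \log(1 + (L/2)/(1+4\bbD[\varphi]))$, so that $\mathfrak{c} := c_{t_0}/2$ does the job.

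For the implication \eqref{eq:locunif1} $\Rightarrow$ \eqref{eq:locunif}, I would fix $\sfc_2'$ large enough and show $\log(L\|\varphi\|_{\ell^2}^2) \le \sfc_2' \, R_L(\varphi,\varphi)$, noting that the bound is trivial when the LHS is non-positive. From \eqref{eq:locunif1}, I get the two bounds $\log(L\|\varphi\|_{\ell^2}^2) \le \log L$ (using $\|\varphi\|_{\ell^2}^2 \le 1$) and $\log(L\|\varphi\|_{\ell^2}^2) \le \log \sfc_2 + \log(L/\bbD[\varphi])$. I would then split according to $\bbD[\varphi]$. If $\bbD[\varphi] < 1$, the bound \eqref{eq:rphibound0} gives $R_L \ge \mathfrak{c} \log(1 + L/10)$, which dominates $\log L$. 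If $\bbD[\varphi] \ge 1$, setting $A := L/\bbD[\varphi]$ (which satisfies $A \ge 1/C_1$ by exponential localization) the bound \eqref{eq:rphibound0} reads $R_L \ge \mathfrak{c} \log(1 + A/10)$; a sub-case analysis distinguishing large versus bounded $A$ shows that $\log(\sfc_2 A)$ is controlled by a constant multiple of $\log(1 + A/10)$ uniformly.

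The main technical point is Part~1: the Chebyshev-based conversion of the pointwise Green's function bound into an averaged bound with explicit $\bbD[\varphi]$-dependence. The delicate step is ensuring that the typical scale $K = 2\sqrt{\bbD[\varphi]}$ on which the mass of $|X-Y|$ concentrates remains within the diffusive window $|z| \le t\sqrt{L/2}$ of \eqref{eq:intR}; the exponential localization \eqref{eq:concsqrtL+} enters precisely here, allowing $t$ to be chosen as a universal constant depending only on $\hat{t}$ and $\sfc_1$. The case analysis for Part~2 is routine once \eqref{eq:rphibound0} is in hand.
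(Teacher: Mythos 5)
Your proposal is correct and follows the same high-level strategy as the paper — restrict to a region where the pointwise Green's function bound \eqref{eq:intR} applies, then aggregate — but the mechanics are genuinely different in a way worth noting. The paper truncates $\varphi$ \emph{itself} to the ball $\{|z - z_0| \le s\sqrt{L/2}\}$, which carries at least $3/4$ of its mass by the exponential tail bound from \eqref{eq:concsqrtL+}, forms the conditional law $\tilde\varphi$, and then applies Jensen's inequality (convexity of $x \mapsto \log(1+L/x)$) together with the doubling $\bbD[\tilde\varphi] \le p_\le^{-2}\bbD[\varphi] \le 2\bbD[\varphi]$. You instead truncate the \emph{pair} $(X,Y)$ via Chebyshev on $|X-Y|$, using $\bbE|X-Y|^2 = 2\bbD[\varphi]$ to get probability $\ge \tfrac12$, and then use plain monotonicity of $|z| \mapsto \log(1 + (L/2)/(1+|z|^2))$ on that event. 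This avoids both Jensen and the auxiliary measure $\tilde\varphi$, at the modest cost of having to extract $\bbD[\varphi] \le C_1 L$ from \eqref{eq:concsqrtL+} first so that the Chebyshev radius $2\sqrt{\bbD[\varphi]}$ fits inside the diffusive window of \eqref{eq:intR}; the paper needs the same comparison but obtains it implicitly through the support restriction on $\tilde\varphi$. Both arguments share the same mild small-$L$ caveat coming from the constraint $|z| \le L$ in \eqref{eq:intR}, which is benign since Theorem~\ref{th:genmombou+} only invokes the lemma for $L \ge L_h$. The case analysis for \eqref{eq:locunif1} $\Rightarrow$ \eqref{eq:locunif} is essentially identical in both.
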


We also require condition \eqref{eq:varub0}, that is boundedness of the variance
of the \emph{point-to-plane partition function}
$Z_L^\beta(\varphi) = Z_L^\beta(\varphi,1)$, which we recall here for convenience:
\begin{equation}\label{eq:varub0+}
	\bbvar[Z_{L}^\beta(\varphi)]
	\le \sfc_3 \,.
\end{equation}
We stress that we impose no assumption on $\psi$ in $Z_L^\beta(\varphi,\psi)$.

We can now state our strengthened moment bound, which
generalises Theorem~\ref{th:genmombou}.

\begin{theorem}[Strengthened general moment bound]\label{th:genmombou+}
Given $h\in\N$ and $\,\hat{t}\,$, $\sfc_1$, $\sfc_2'$, $\sfc_3 \in (0,\infty)$,
there exist constants $L_h, \mathfrak{C}_h  < \infty$
(depending also on $\,\hat{t}\,, \sfc_1, \sfc_2', \sfc_3$) such that
\begin{equation}\label{eq:genbound2}
	\big| \bbE \big[ \big( Z_L^\beta(\varphi,\psi) - \bbE[Z_L^\beta(\varphi,\psi)] \big)^h \big] \big|
	\,\le\, \mathfrak{C}_h
	\bbvar[Z_{L}^\beta(\varphi)]^{\frac{h}{2}}
	\, \bigg\| \psi(\cdot)\, \rme^{-\frac{\,\hat{t}\, }{2}\frac{|\,\cdot\, - z_0| }{\sqrt{L}}}
	\bigg\|_{\ell^\infty}^h
\end{equation}
uniformly for $\beta \in [0,\beta_0]$, $L\ge L_h$, for probability mass functions $\varphi$ and $z_0\in \Z^2$ satisfying
\eqref{eq:concsqrtL+}, \eqref{eq:locunif} and \eqref{eq:varub0+}, and for arbitrary function~$\psi$. Furthermore:
\begin{itemize}
\item $z_0$ in \eqref{eq:genbound2} (from \eqref{eq:concsqrtL+}) can be replaced by the mean
$m_\varphi$ of $\varphi$ (see \eqref{eq:bbD});

\item the bound \eqref{eq:genbound2} still holds if, on the LHS, we replace $Z_L^\beta(\varphi, \psi)$ by its
restriction to any subset of random walk paths in its definition \eqref{eq:Z2}.
\end{itemize}
\end{theorem}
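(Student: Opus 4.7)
The plan is to deduce \eqref{eq:genbound2} from a combinatorial/operator analysis of the polynomial chaos expansion \eqref{eq:polynomial}, in the spirit of, but substantially strengthening, the moment bounds established in the earlier works cited in the paper, with the variance bound \eqref{eq:varub0+} as the universal second-moment input that replaces classical hypercontractivity.

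First, I would reduce to the case of a canonical weight $\psi_\star(z) := \rme^{\frac{\hat t}{2}|z-z_0|/\sqrt{L}}$ at the final endpoint. Splitting $\psi = \psi^+ - \psi^-$ and using the pointwise domination
\begin{equation*}
|\psi(z)| \,\le\, \big\|\psi(\cdot)\,\rme^{-\frac{\hat t}{2}|\cdot - z_0|/\sqrt{L}}\big\|_{\ell^\infty}\cdot \psi_\star(z),
\end{equation*}
the multilinearity of the chaos expansion in $\psi$, together with the triangle inequality for $L^h$, reduces the problem to bounding $\bbE\big[\big(Z_L^\beta(\varphi,\psi_\star)-\bbE[Z_L^\beta(\varphi,\psi_\star)]\big)^h\big]$ by $\mathfrak{C}_h\,\bbvar[Z_L^\beta(\varphi)]^{h/2}$. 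The same reduction, carried out on the path-level representation \eqref{eq:Z2}, will then yield the last claim about restricting to arbitrary subsets of random-walk paths. I would also note that $m_\varphi$ can replace $z_0$ in \eqref{eq:genbound2} because \eqref{eq:concsqrtL+} at $z_0$ and \eqref{eq:bbD} imply $|z_0 - m_\varphi| \lesssim \sqrt{L}$, so the two weight functions are comparable up to a constant.

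Next, using the polynomial chaos expansion \eqref{eq:polynomial} one writes
\begin{equation*}
Z_L^\beta(\varphi,\psi_\star)-\bbE[Z_L^\beta(\varphi,\psi_\star)] \,=\, \sum_{k\ge1}\sigma_\beta^k \!\!\sum_{\substack{0<n_1<\cdots<n_k\le L\\ x_1,\ldots,x_k\in\Z^2}}\!\! K_k^{\varphi,\psi_\star}\big((n_i,x_i)_{i=1}^k\big)\prod_{i=1}^k \xi^\beta_{n_i,x_i},
\end{equation*}
where $K_k^{\varphi,\psi_\star}$ is built out of $\varphi$ at the left, random-walk transitions between the $(n_i,x_i)$ and $\psi_\star$ convolved with $q_{L-n_k}$ at the right. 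Expanding the $h$-th moment and applying Wick-type combinatorics, since each $\xi^\beta$ has mean zero and unit variance, one obtains a sum of diagrams indexed by partitions of the labelled disorder vertices used in the $h$ replicas, where every block has cardinality $\ge 2$. Each diagram is a multiple sum in replica space-time with blocks joined by the random-walk kernels.

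The core of the proof is to bound each diagram by the $h/2$-power of $\bbvar[Z_L^\beta(\varphi)]$. I would do this via an operator approach: define a Hilbert--Schmidt-type kernel $\bK_\beta$ acting on weighted $\ell^2$-spaces whose square is precisely the two-point variance kernel appearing in \eqref{eq:varcomp}, reduce each diagram to the product of an appropriate number of $\bK_\beta$'s applied to $\varphi$ and $\psi_\star$, and use \eqref{eq:varub0+} to bound its operator norm. The exponential weight in \eqref{eq:concsqrtL+} and the weight $\psi_\star$ are compatible with Gaussian tilting of the random-walk kernels (with constant cost depending only on $\hat t$), which is where the local CLT \eqref{eq:llt} and the doubling estimate \eqref{eq:doubling} enter. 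The local uniformity \eqref{eq:locunif}, together with Lemma~\ref{th:locunif} (so that $\log(L\|\varphi\|_{\ell^2}^2)$ is dominated by $R_L(\varphi,\varphi)$), is used to control the logarithmic short-time divergence of diagrams where consecutive disorder vertices merge into a single block.

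The main obstacle will be the control of diagrams coming from chaos of \emph{unbounded order}: in the quasi-critical and critical regimes one cannot simply truncate the expansion nor invoke hypercontractivity, so the dependence on $k$ of the operator norms must be tracked exactly through renewal-type asymptotics (as in Lemma~\ref{th:Green} and Proposition~\ref{th:2mb}) and shown to be summable uniformly in $\beta$ thanks to \eqref{eq:varub0+}. I would organize the induction on $h$ so that the gain from having at least one ``triple or higher'' block in non-pairing diagrams is paid by an extra factor of the variance, ensuring that every diagram ultimately contributes $O(\bbvar[Z_L^\beta(\varphi)]^{h/2})$ with combinatorial constants depending only on~$h$.
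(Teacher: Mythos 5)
Your overall route — expand in polynomial chaos, organize the $h$-th moment as a sum over diagrams with blocks of size $\ge 2$, bound each diagram by an operator argument, and use the variance hypothesis \eqref{eq:varub0+} as the universal input replacing hypercontractivity — is indeed the functional operator philosophy of \cite{CSZ23a,CCR23} that the paper relies on. The reduction to the canonical weight $\psi_\star$ and the replacement of $z_0$ by $m_\varphi$ are also fine (though note the paper actually carries $\psi$ through as an $\ell^q$-norm, see \eqref{eq:interpsi}, rather than eliminating it at the outset). The problem is that the proposal leaves the three technically decisive steps unaddressed, and these are precisely where the theorem's uniformity in $\beta$ (sub-critical through critical) is earned.

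First, you never introduce the exponentially damped replica overlap $R_L^{(\hat\lambda)}$ of \eqref{eq:Rlambda} and the free parameter $\hat\lambda$. Without it, the effective ``noise strength'' $\Gamma \propto \sigma_\beta^2/(1-\sigma_\beta^2 R_L^{(\hat\lambda)})$ in \eqref{eq:Gammapar} is not under control; at criticality, $\sigma_\beta^2 R_L$ is close to $1$ and the undamped geometric series diverges. Second, the hypothesis \eqref{eq:varub0+} on the variance has to be converted into the a priori statement that $\beta$ is at most critical (Lemma~\ref{th:sigmatheta}, giving $\sigma_\beta^2 \le (R_L - \bar\theta)^{-1}$), which is then combined with Lemma~\ref{th:choice} to make $\Gamma$ small; your proposal asserts that ``\eqref{eq:varub0+} bounds the operator norm'' but does not say how one gets from a variance bound to an operator-norm bound, and it is exactly this deduction that requires Lemmas~\ref{th:sigmatheta}, \ref{th:2lb} and \ref{th:choice}. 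Third, and most importantly, the local uniformity condition \eqref{eq:locunif} enters through the factor $(L\|\varphi\|_{\ell^2}^2)^{(h-1)/q}$ produced by interpolation, and making this factor $O(1)$ forces the \emph{$\varphi$-dependent} choice $q \asymp R_{L/2}(\varphi,\varphi)$, which must simultaneously satisfy $q\Gamma \lesssim 1$; reconciling these two constraints (via \eqref{eq:qfix}, \eqref{eq:RGamma}, \eqref{eq:last-to-show}) is the crux, and your proposal, by saying only that local uniformity ``controls the logarithmic short-time divergence'', misses that the entire argument hinges on this tension. Finally, your proposed ``induction on $h$'' over diagram types is not how the unbounded-chaos contribution is actually tamed: the paper summarizes it as the single elementary bound on $|\bbE[\xi_\beta^{I_i}]|$ by $\sigma_\beta^{2\max\{r,h/2\}}$ in \eqref{eq:Xibulk} and the geometric series estimate \eqref{eq:geosu}, which only converges after $\hat\lambda$ and $q$ have been tuned as above. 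So while the architecture you sketch is conceptually compatible with the paper's, the proof-carrying content is missing, and it is missing exactly at the steps that are hardest.
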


The rest of this section is devoted to proving Theorem~\ref{th:genmombou+}.

\subsection{Preliminary lemmas}
We collect here some technical lemmas that will be useful in the proof.
We first show that, for any probability mass function $\varphi$ satisfying
condition \eqref{eq:concsqrtL+}, we have a uniform
lower bound on $R_{L/2}(\varphi, \varphi)$.
The proof is given in Appendix~\ref{sec:boundsphi}.

\begin{lemma}\label{th:boundsphi}
Given $\hat{t}, \sfc_1 \in (0,\infty)$, there exists $\eta > 0$
(depending on $\hat{t}, \sfc_1$) such that, for any probability mass function $\varphi$
satisfying \eqref{eq:concsqrtL+} with constants $\hat{t}$ and $\sfc_1$, we have
\begin{equation}\label{eq:Rub}
	R_{L/2}(\varphi, \varphi) \ge \eta \,.
\end{equation}
\end{lemma}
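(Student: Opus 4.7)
The plan is to combine the concentration of $\varphi$ enforced by the exponential moment condition \eqref{eq:concsqrtL+} with the explicit lower bound \eqref{eq:intR} on the Green's function. First, choose $t_0 = t_0(\hat{t}, \sfc_1) > 0$ large enough that $\sfc_1 \, \rme^{-2\hat{t}\, t_0} \le 1/2$. Applying Markov's inequality to \eqref{eq:concsqrtL+} gives
\[
	\sum_{|z - z_0| > t_0 \sqrt{L}} \varphi(z)
	\,\le\, \rme^{-2\hat{t}\, t_0} \sum_{z\in\Z^2} \varphi(z) \, \rme^{2\hat{t} |z - z_0|/\sqrt{L}}
	\,\le\, \sfc_1 \, \rme^{-2\hat{t}\, t_0} \,\le\, \tfrac{1}{2},
\]
so the set $A := B(z_0, t_0 \sqrt{L}) \cap \Z^2_\even$ carries at least half the mass of $\varphi$, i.e.\ $\varphi(A) \ge 1/2$.

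Next, for any $z, w \in A$ one has $|z - w| \le 2 t_0 \sqrt{L}$ and $z - w \in \Z^2_\even$ (the parity being automatic from the paper's convention that $\varphi$ is supported on $\Z^2_\even$). Provided $L$ is large enough that $2 t_0 \sqrt{L} \le L/2$, i.e.\ $L \ge 16 t_0^2$, the range constraint $|z-w| \le (L/2) \wedge t \sqrt{L/2}$ of \eqref{eq:intR} is met for the choice $t := 2\sqrt{2}\, t_0$, and therefore
\[
	R_{L/2}(z - w) \,\ge\, c_t \, \log\Bigl(1 + \tfrac{L/2}{1 + |z - w|^2}\Bigr)
	\,\ge\, c_t \, \log\Bigl(1 + \tfrac{L/2}{1 + 4 t_0^2 L}\Bigr)
	\,\ge\, c_t \, \log\Bigl(1 + \tfrac{1}{1 + 8 t_0^2}\Bigr) \,=:\, c_\star,
\]
a strictly positive constant depending only on $\hat{t}$ and $\sfc_1$. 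Restricting the defining sum \eqref{eq:Rfg} of $R_{L/2}(\varphi, \varphi)$ to $z, w \in A$ then yields
\[
	R_{L/2}(\varphi, \varphi) \,\ge\, \sum_{z, w \in A} \varphi(z) \varphi(w) \, R_{L/2}(z - w)
	\,\ge\, c_\star \, \varphi(A)^2 \,\ge\, c_\star / 4 \,=:\, \eta,
\]
which is the desired uniform lower bound for $L \ge 16 t_0^2$.

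The main obstacle is really just the bookkeeping required to fit into the range hypothesis of \eqref{eq:intR}; the argument is otherwise a clean two-step combination (concentration via Markov, then quantitative Green's function lower bound). Any bounded range of smaller $L$ can be absorbed into $\eta$ by a finite-case argument: the exponential condition \eqref{eq:concsqrtL+} forces $\varphi$ to be strongly concentrated near $z_0$ on a uniformly bounded set of lattice points, on which $R_{L/2}(z-w)$ is manifestly positive (e.g.\ via $R_{L/2}(z-w) \ge q_{L}(z-w) > 0$ for $z-w \in \Z^2_\even$ with $|z-w|$ uniformly bounded), so taking $\eta$ smaller if necessary closes the gap.
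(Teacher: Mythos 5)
Your proof is correct and yields the desired uniform lower bound. It takes a somewhat different route from the paper's, which proves the lemma as a quick corollary of the auxiliary estimate \eqref{eq:rphibound0} from Lemma~\ref{th:locunif}: there, one first observes that \eqref{eq:concsqrtL+} together with $x^2\le\rme^x$ forces $\bbD[\varphi]=O(L)$ with a constant depending only on $\hat t,\sfc_1$, and then substitutes into $R_{L/2}(\varphi,\varphi)\ge\mathfrak{c}\log\bigl(1+\tfrac{L/2}{1+4\,\bbD[\varphi]}\bigr)$. You instead argue directly from \eqref{eq:intR}: Markov's inequality puts at least half the mass of $\varphi$ in a diffusive ball $B(z_0,t_0\sqrt L)$, and on the resulting set $\{|z-w|\le 2t_0\sqrt L\}$ the kernel $R_{L/2}(z-w)$ is bounded below by a constant, so restricting the quadratic form gives the result. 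In fact the paper's own proof of \eqref{eq:rphibound0} in Appendix~\ref{sec:locunif} uses exactly this Markov-restriction idea supplemented by a Jensen/convexity step, so your argument is essentially a self-contained, specialised rederivation that sidesteps the detour through $\bbD[\varphi]$ and Jensen. The trade-off: the paper's version is shorter at this step because it reuses \eqref{eq:rphibound0}, which is needed anyway to show that \eqref{eq:locunif1} implies \eqref{eq:locunif}, while yours is more elementary and self-contained. One small remark on your fallback for small $L$: as stated, \emph{positivity} of $R_{L/2}(z-w)$ for $|z-w|$ in a bounded set is not automatic when $L$ itself is tiny (the kernel $q_{2\lfloor L/2\rfloor}(z-w)$ vanishes when $|z-w|_1>2\lfloor L/2\rfloor$); the clean fallback is the diagonal contribution $R_{L/2}(\varphi,\varphi)\ge R_{L/2}(0)\sum_z\varphi(z)^2\ge q_2(0)\,\max_z\varphi(z)^2$ for $L\ge 2$, combined with the fact that a definite fraction of the mass sits on $O(1)$ lattice sites. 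In any case this regime is irrelevant for the only application (Theorem~\ref{th:genmombou+} with $L\ge L_h$), and the paper's own proof is equally silent on it.
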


We next show that the assumptions of Theorem~\ref{th:genmombou+}
force $\beta$ to be \emph{at most critical}. To this purpose, recalling \eqref{eq:RL} and \eqref{eq:betacrit}, we have the equivalence for any $\theta\in\R$, as $L\to\infty$,
\begin{equation} \label{eq:equivalence}
	\sigma_\beta^2 = \frac{1}{R_L - \theta + o(1)}
	\qquad \iff \qquad
	\sigma_\beta^2 = \frac{1}{R_L} \bigg(1+\frac{\pi\, \theta + o(1)}{\log L}\bigg)\,.
\end{equation}
The next result is proved in Appendix~\ref{sec:sigmatheta}.

\begin{lemma}\label{th:sigmatheta}
Given $\hat{t}, \sfc_1, \sfc_3 \in (0,\infty)$, there exists $\bar\theta  \in [0,\infty)$
(depending on $\hat{t}, \sfc_1, \sfc_3$) such that, for any $\beta \ge 0$,
$L \in \N$ and any probability mass function $\varphi$ satisfying \eqref{eq:concsqrtL+} and \eqref{eq:varub0+}
with constants $\hat{t}, \sfc_1$ and $\sfc_3$, we have (recall $\sigma_\beta^2$ from \eqref{eq:sigmabeta})
\begin{equation} \label{eq:sigmatheta}
	\sigma_\beta^2 \le \frac{1}{R_L - \bar\theta} \,.
\end{equation}
\end{lemma}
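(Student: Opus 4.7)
Plan: I proceed by contradiction, setting $\theta_\beta := R_L - 1/\sigma_\beta^2$ and showing that $\theta_\beta \le \bar\theta$ for some constant $\bar\theta = \bar\theta(\hat t, \sfc_1, \sfc_3) < \infty$. In the regime $R_L \le \bar\theta$ the claim is vacuous (interpreting $1/(R_L - \bar\theta)$ as $+\infty$), so we focus on $R_L > \bar\theta$.

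The easy half is an upper bound: combining the variance lower bound \eqref{eq:boundsvar} with $R_{\lfloor L/2\rfloor}(\varphi,\varphi) \ge \eta > 0$ from Lemma~\ref{th:boundsphi} gives
\begin{equation}\label{eq:sigmathetaup}
	\sigma_\beta^2 \, \overline{U}_\beta(\lfloor L/2 \rfloor) \,\le\, \sfc_3/\eta \,.
\end{equation}
The core of the proof, and its main obstacle, is to establish a matching \emph{lower bound} that diverges as $\theta_\beta \to \infty$, \emph{uniformly in $L$}:
\begin{equation}\label{eq:sigmathetalo}
	\sigma_\beta^2 \, \overline{U}_\beta(\lfloor L/2 \rfloor) \,\ge\, \mathfrak{c}_1 \, \exp\pig(\mathfrak{c}_2 \, \mathe^{\pi \theta_\beta}\pig) \qquad \text{for $\theta_\beta > 0$ and $L$ large},
\end{equation}
for absolute constants $\mathfrak{c}_1, \mathfrak{c}_2 > 0$. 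Granting \eqref{eq:sigmathetalo}, comparison with \eqref{eq:sigmathetaup} forces $\theta_\beta \le \bar\theta := \tfrac{1}{\pi}\log\!\pig(\tfrac{1}{\mathfrak{c}_2}\log(\sfc_3/(\eta\,\mathfrak{c}_1))\pig)$, proving the lemma.

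The natural tool for \eqref{eq:sigmathetalo} is singularity analysis of the generating function $\sum_{n\ge 0} s^n U_\beta(n) = (1 - \sigma_\beta^2 \hat Q(s))^{-1}$, where $\hat Q(s) := \sum_{m\ge 1} s^m q_{2m}(0)$. The 2D local limit theorem gives $\hat Q(s) = -\log(1-s)/\pi + O(1)$ as $s \to 1^-$, so the dominant singularity $s^* \in (0,1)$, defined by $\sigma_\beta^2 \hat Q(s^*) = 1$, satisfies $1 - s^* \asymp \mathe^{\pi \theta_\beta}/L$ uniformly for $\theta_\beta$ in any bounded range. A residue computation then yields $U_\beta(n) \asymp (\pi/\sigma_\beta^2)(1-s^*)\,\mathe^{n(1-s^*)}$ for $n \gtrsim 1/(1-s^*)$. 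Summing over $n$ up to $\lfloor L/2 \rfloor$ (which is $\gg 1/(1-s^*)$ since $\theta_\beta > 0$) gives $\overline{U}_\beta(\lfloor L/2\rfloor) \asymp (\pi/\sigma_\beta^2)\,\mathe^{L(1-s^*)/2}$, so that multiplying by $\sigma_\beta^2$ precisely cancels the $1/\sigma_\beta^2$ factor from the residue and produces the $L$-independent lower bound \eqref{eq:sigmathetalo}. Making this singularity analysis rigorous and uniform in the relevant parameters is the technical core of the argument, parallel in spirit to the proof of Proposition~\ref{th:2mb} deferred to Appendix~\ref{sec:2mb}.
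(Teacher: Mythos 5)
Your strategic decomposition is sound and matches the paper's: you correctly obtain the upper bound $\sigma_\beta^2\,\overline{U}_\beta(\lfloor L/2\rfloor)\le\sfc_3/\eta$ by combining the first inequality in \eqref{eq:boundsvar} with Lemma~\ref{th:boundsphi} and assumption \eqref{eq:varub0+}, introduce $\theta_\beta:=R_L-1/\sigma_\beta^2$, and aim to produce an $L$-uniform lower bound on $\sigma_\beta^2\,\overline{U}_\beta(\lfloor L/2\rfloor)$ diverging as $\theta_\beta\to\infty$. However, your key lower bound $\mathfrak{c}_1\exp(\mathfrak{c}_2\,e^{\pi\theta_\beta})$ is never proven: you only sketch a singularity analysis of the generating function $\sum_n s^n U_\beta(n)$ and explicitly defer the work of making it rigorous and uniform. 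That step is the whole substance of the lemma, so this is a genuine gap. Moreover, the uniformity you would need is delicate: the heuristic $1-s^*\asymp e^{\pi\theta_\beta}/L$ relies on the $s\to1^-$ asymptotics of $\hat Q(s)$, which degrades precisely as $\theta_\beta$ grows (the regime you must rule out), and you only assert the bound for ``$L$ large'' whereas the lemma covers all $L\in\N$.

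The paper's proof is far more elementary and avoids all of this. Writing $\sigma_\beta^2 R_L=\tfrac{1}{1-\theta/R_L}$ with $0\le\theta<R_L$ (the case $\sigma_\beta^2 R_L<1$ being trivial with $\bar\theta=0$), the renewal representation gives $\bbE[Z_{L/2}^\beta(0)^2]=\sum_{k\ge0}(\sigma_\beta^2 R_L)^k\,\P(\tau_k^{(L)}\le L/2)$; Markov's inequality shows $\P(\tau_k^{(L)}\le L/2)\ge\tfrac12$ for $k\le K:=\lceil R_L/(4\mathfrak{a}_+)\rceil$, and since $\sigma_\beta^2 R_L\ge e^{\theta/R_L}$, truncating the sum at $K$ and applying Jensen yields the \emph{single-exponential} lower bound $\bbE[Z_{L/2}^\beta(0)^2]\ge\tfrac{1}{\sigma_\beta^2}\tfrac{1}{8\mathfrak{a}_+}e^{\theta/(8\mathfrak{a}_+)}$, valid for all $L$. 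Comparing with the upper bound $\sfc_3/\eta$ then gives $\theta\le\bar\theta:=8\mathfrak{a}_+\log^+\tfrac{8\mathfrak{a}_+\sfc_3}{\eta}$ directly. The moral is that you do not need the full resonance at $s^*$ (which is what produces your double exponential); summing just $O(R_L)$ terms of the geometric series already gives a $\theta$-diverging lower bound. If you want to complete your proposal, replace the generating-function sketch by this renewal-plus-Jensen argument.
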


We finally define an exponentially dampened version of $R_L$ from \eqref{eq:RL2}:
\begin{equation} \label{eq:Rlambda}
	R_L^{(\hat\lambda)} := \sum_{n=1}^L \rme^{-\hat\lambda \frac{n}{L}} \, q_{2n}(0)
	\qquad \text{for } \hat\lambda \ge 0 \,,
\end{equation}
We will use this quantity to give a proxy for the second moment $\bbE\big[Z_{L/2}^{\beta}(0)^2\big]$, as shown in the next result, proved in Appendix~\ref{sec:2lb}.

\begin{lemma} \label{th:2lb}
Recall the constant  $\mathfrak{a}_+$ from \eqref{eq:barN}.
If $\beta \ge 0$, $L\in\N$ and $\hat\lambda \ge 0$ satisfy
\begin{equation} \label{eq:cond-for-LB}
	\sigma_\beta^2 \le \frac{1}{R_{L}^{(\hat\lambda)} + 4 \, \mathfrak{a}_+} \,,
\end{equation}
then
\begin{equation} \label{eq:LBvar}
	\bbE\big[Z_{L/2}^{\beta}(0)^2\big] \ge
	\frac{1}{2} \, \frac{1}{1-\sigma_\beta^2 \, R_{L}^{(\hat\lambda)}} \,.
\end{equation}
\end{lemma}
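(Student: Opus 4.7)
The plan is to exploit the (defective) renewal-type expansion of the second moment and to trade the awkward truncation constraint $\sum t_i \le L/2$ for a first-moment tail bound on an unconstrained random walk via an exponential tilt. Writing $u(t) := q_{2t}(0)$, formulas \eqref{eq:barU}--\eqref{eq:U} yield
\begin{equation*}
    \bbE\big[Z_{L/2}^\beta(0)^2\big] \,=\, \overline U_\beta(L/2) \,=\, \sum_{k\ge 0}(\sigma_\beta^2)^k \, A_k,
    \qquad A_k := \sumtwo{t_1,\ldots,t_k\ge 1}{\sum_i t_i\le L/2} \prod_{i=1}^k u(t_i),
\end{equation*}
with the convention $A_0:=1$. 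I introduce the tilted kernel $\hat u(t) := \rme^{-\hat\lambda t/L} u(t)$ on $\{1,\ldots,L\}$, whose total mass is exactly $R_L^{(\hat\lambda)}$ by \eqref{eq:Rlambda}, and let $T_1,T_2,\ldots$ be i.i.d.\ with law $\hat u / R_L^{(\hat\lambda)}$ and $S_k := T_1+\cdots+T_k$. Since $\rme^{\hat\lambda (\sum t_i)/L}\ge 1$ for $\hat\lambda\ge 0$ and the constraint $t_i\le L$ is automatic when $\sum t_i\le L/2$, the bound $A_k \ge (R_L^{(\hat\lambda)})^k \, \P(S_k\le L/2)$ is immediate, giving
\begin{equation*}
    \overline U_\beta(L/2) \,\ge\, \sum_{k\ge 0}\bigl(\sigma_\beta^2 R_L^{(\hat\lambda)}\bigr)^{\!k}\, \P(S_k\le L/2).
\end{equation*}

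Under the hypothesis \eqref{eq:cond-for-LB} one has $x := \sigma_\beta^2 R_L^{(\hat\lambda)} < 1$, so the geometric series $\sum_{k\ge 0} x^k = (1-x)^{-1}$ converges; the target \eqref{eq:LBvar} therefore reduces to showing
\begin{equation*}
    \sum_{k\ge 0} x^k \, \P(S_k > L/2) \,\le\, \tfrac{1}{2}\sum_{k\ge 0} x^k \,=\, \tfrac{1}{2(1-x)}.
\end{equation*}
The key input for the tail estimate is the uniform bound $q_{2t}(0)\le \mathfrak{a}_+/t$ from \eqref{eq:barN}, which yields $\bbE[T_1]=\sum_{t=1}^L t\,\hat u(t)/R_L^{(\hat\lambda)} \le \sum_{t=1}^L \mathfrak{a}_+/R_L^{(\hat\lambda)} = L\mathfrak{a}_+/R_L^{(\hat\lambda)}$, and Markov's inequality then gives $\P(S_k>L/2) \le 2k\,\bbE[T_1]/L \le 2k\mathfrak{a}_+/R_L^{(\hat\lambda)}$.

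Plugging this estimate in and summing the standard series $\sum_{k\ge 1} k x^k = x/(1-x)^2$, the left-hand side is bounded by $\frac{2\mathfrak{a}_+}{R_L^{(\hat\lambda)}}\cdot\frac{x}{(1-x)^2}=\frac{2\mathfrak{a}_+\sigma_\beta^2}{(1-x)^2}$, so the required inequality becomes $4\mathfrak{a}_+\sigma_\beta^2 \le 1-x = 1-\sigma_\beta^2 R_L^{(\hat\lambda)}$, which is precisely the hypothesis \eqref{eq:cond-for-LB}. There is no serious obstacle—the argument is essentially a one-shot computation—but the conceptual point worth stressing is that the exponential tilt $\rme^{-\hat\lambda t/L}$ converts the cumbersome truncated-renewal quantity $A_k$ into a tail probability that Markov's inequality handles cleanly, with the constant $4\mathfrak{a}_+$ in \eqref{eq:cond-for-LB} being exactly what is needed to close the argument.
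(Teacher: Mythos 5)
Your proposal is correct, and it follows essentially the same route as the paper's proof: both introduce the exponentially tilted step law $\hat u(t)/R_L^{(\hat\lambda)}$ (the paper calls the resulting renewal process $\tau_k^{(L,\hat\lambda)}$), both use the renewal representation $\overline U_\beta(L/2) \ge \sum_{k\ge 0}(\sigma_\beta^2 R_L^{(\hat\lambda)})^k\,\P(S_k\le L/2)$, both control the tail $\P(S_k>L/2)$ by Markov's inequality via the bound $\E[T_1]\le L\mathfrak{a}_+/R_L^{(\hat\lambda)}$ from \eqref{eq:barN}, and both close with $\sum k x^k = x/(1-x)^2$ and hypothesis \eqref{eq:cond-for-LB}. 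The only difference is cosmetic bookkeeping (you split off $\sum_k x^k\,\P(S_k>L/2)$ at the start, whereas the paper writes $\P(S_k\le L/2)\ge 1-\E[\tau_k]/(L/2)$ and expands afterwards).
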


In order to achieve condition \eqref{eq:cond-for-LB}, starting from \eqref{eq:sigmatheta}, it is enough to take $\hat\lambda \ge 0$ large enough, as we show in the next elementary result, proved in Appendix~\ref{sec:choice}.

\begin{lemma} \label{th:choice}
Recall the constant $\mathfrak{a}_-$ from \eqref{eq:barN}.
For any $0\leq \hat\lambda \leq L$, we have
\begin{equation} \label{eq:choice}
	R_L^{(\hat\lambda)} \le
	R_L - \mathfrak{a}_- \, \log \tfrac{\hat\lambda}{2} \,.
\end{equation}
\end{lemma}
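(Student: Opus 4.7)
The plan is to estimate the difference
\begin{equation*}
R_L - R_L^{(\hat\lambda)} \,=\, \sum_{n=1}^L \bigl(1 - \rme^{-\hat\lambda n/L}\bigr) \, q_{2n}(0)
\end{equation*}
factor by factor using elementary scalar inequalities. First I would invoke the lower bound $q_{2n}(0) \ge \mathfrak{a}_-/n$ from \eqref{eq:barN}, which reduces the claim to the purely analytic estimate
\begin{equation*}
\sum_{n=1}^L \frac{1 - \rme^{-\hat\lambda n/L}}{n} \,\ge\, \log \tfrac{\hat\lambda}{2} \qquad \text{for } 0 \le \hat\lambda \le L.
\end{equation*}

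The key ingredient is the pointwise inequality $1-\rme^{-x} \ge x/(1+x)$ for all $x \ge 0$, equivalent to $(1+x)\rme^{-x} \le 1$. This is clear because the function $x \mapsto (1+x)\rme^{-x}$ has derivative $-x\rme^{-x} \le 0$ and equals $1$ at the origin. Applying it with $x = \hat\lambda n/L$ collapses the $1/n$ factor and yields
\begin{equation*}
\sum_{n=1}^L \frac{1 - \rme^{-\hat\lambda n/L}}{n} \,\ge\, \sum_{n=1}^L \frac{\hat\lambda}{L + \hat\lambda n}.
\end{equation*}

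Since $n \mapsto \hat\lambda/(L + \hat\lambda n)$ is decreasing, an integral comparison gives
\begin{equation*}
\sum_{n=1}^L \frac{\hat\lambda}{L + \hat\lambda n} \,\ge\, \int_1^{L+1} \frac{\hat\lambda}{L + \hat\lambda x} \, \dd x \,=\, \log \frac{L(1+\hat\lambda) + \hat\lambda}{L + \hat\lambda},
\end{equation*}
and the hypothesis $\hat\lambda \le L$ then provides $L + \hat\lambda \le 2L$ in the denominator and $L(1+\hat\lambda) + \hat\lambda \ge L \hat\lambda$ in the numerator. The ratio is therefore at least $\hat\lambda/2$, and \eqref{eq:choice} follows (for $\hat\lambda \in [0,2]$ the bound is trivial since the right-hand side is nonpositive and the left-hand side is nonnegative).

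There is no real obstacle here: the whole argument is a short deterministic calculation. The only subtle point is that the sharp coefficient $\mathfrak{a}_-$ (rather than a strictly smaller multiple) requires the full inequality $1-\rme^{-x} \ge x/(1+x)$, since cruder alternatives such as $1-\rme^{-x} \ge (1-\rme^{-1}) \, \ind_{x \ge 1}$ would lose a multiplicative constant in front of $\log(\hat\lambda/2)$, which would then cascade into worse constants in the applications of Lemma~\ref{th:2lb}.
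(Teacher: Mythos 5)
Your proof is correct, and it takes a genuinely different route from the paper's. Both start by applying the lower bound $q_{2n}(0) \ge \mathfrak{a}_-/n$ from \eqref{eq:barN} to reduce the claim to a scalar estimate on $\sum_{n=1}^L (1-\rme^{-\hat\lambda n/L})/n$. The paper then uses the crude threshold bound $1-\rme^{-x}\ge \tfrac{1}{2}$ for $x\ge 1$, discards the terms with $n < \lceil \hat\lambda^{-1} L\rceil$, and applies $\sum_{n=a}^b 1/n \ge \log(b/a)$; you instead use the pointwise inequality $1-\rme^{-x}\ge x/(1+x)$ valid for all $x\ge 0$, which keeps every term and, after an integral comparison, gives $\log\frac{L(1+\hat\lambda)+\hat\lambda}{L+\hat\lambda}\ge\log(\hat\lambda/2)$ directly. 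The concrete gain is the constant: your argument really does produce $R_L^{(\hat\lambda)}\le R_L - \mathfrak{a}_-\log(\hat\lambda/2)$ as written in \eqref{eq:choice}, whereas the paper's threshold bound only yields $R_L^{(\hat\lambda)}\le R_L - \tfrac{\mathfrak{a}_-}{2}\log(\hat\lambda/2)$, a factor of two worse than what the lemma asserts. As you already anticipate in your closing remark, this matters when the lemma is fed into the choice of $\hat\lambda$ in \eqref{eq:hatlambdapick} to verify \eqref{eq:condone} and \eqref{eq:cond-for-LB}: with the weaker $\tfrac{\mathfrak{a}_-}{2}$, the exponent $2\pi$ there would need to be doubled. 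So your proof is not merely an alternative but is the one that actually delivers the stated constant without downstream adjustments, and the edge cases ($\hat\lambda\le 2$, where the right side exceeds $R_L$, and $\hat\lambda=0$) are handled correctly.
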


We are now ready to describe the strategy of the proof of Theorem~\ref{th:genmombou+}.

\subsection{A general estimate}
We bound the moments of the partition function exploiting the \emph{functional operator approach} developed in \cite{CSZ23a,LZ23,CCR23}.
The following general estimate is extracted from \cite[Section~4]{CCR23} (see Appendix~\ref{sec:addpr} for the details). A comparison with the original bound from \cite{CSZ23a} is discussed in Remark~\ref{rem:comparisonCSZ}.

\begin{theorem}[\cite{CCR23}] \label{th:moment-bound-CCR}
Fix any exponent $h\in\N$, $h \ge 3$, system size $L\in\N$ and
coupling constant $\beta > 0$ small enough,
say $\beta \le \beta_0$ for a suitable $\beta_0 = \beta_0(h) > 0$.
Given $\hat t > 0$ and $\hat\lambda \ge 0$,
there are constants $\mathtt{K}_h^{(\hat t)}, \,
\cC_h^{(\hat t,\hat \lambda)} < \infty$ such that,
assuming $\sigma_\beta^2 \, R_L^{(\hat\lambda)} < 1$ and
defining
\begin{equation} \label{eq:Gammapar}
	\Gamma = \Gamma_{h, \beta ,L}^{(\hat t,\hat\lambda)} :=
	\mathtt{K}_{h}^{(\hat t)} \,
	\frac{\sigma_\beta^2}{1-\sigma_{\beta}^2 \, R_{L}^{(\hat\lambda)}} \,,
\end{equation}
the following bound holds for any $1 < p,q < \infty$ with $\frac{1}{p}+\frac{1}{q} = 1$ and any functions $\varphi, \psi$ on $\Z^2$:
\begin{gather} \label{eq:genbo}
	\begin{split}
		\big| \bbE \big[ \big( Z_{L}^{\beta}(\varphi,\psi)- \bbE[Z_{L}^{\beta}(\varphi,\psi)] \big)^h \big] \big|
		\le & \ \cC_h^{(\hat t,\hat \lambda)}
		 \sum_{r=1}^\infty
		\bigl( p\, q \, \Gamma
		\bigr)^{\max\{r, \frac{h}{2}\}}
		\\
		&  \times \pigl\| \varphi(\cdot) \, \rme^{\frac{\hat t}{\sqrt{L}}|\cdot|} \pigr\|_{\ell^1}
		\, \pigl\| \varphi(\cdot) \, \rme^{\frac{\hat t}{\sqrt{L}}|\cdot|} \pigr\|_{\ell^p}^{h-1} \\
		&  \times  \pigl\|  \psi(\cdot) \, \rme^{-\frac{\hat t}{\sqrt{L}}|\cdot|} \pigr\|_{\ell^\infty}
		\, \pigl\|  \psi(\cdot) \, \rme^{-\frac{\hat t}{\sqrt{L}}|\cdot|} \pigr\|_{\ell^q}^{h-1} \,.
	\end{split}
	\end{gather}
The series converges iff $p\, q  \,\Gamma < 1$, in which case we can bound it by
\begin{equation} \label{eq:geosu}
		\sum_{r=1}^\infty \bigl( p\, q \, \,
		\Gamma \bigr)^{\max\{r, \frac{h}{2}\}}
		\le \rme^h \, \frac{\bigl(p\,q\,\Gamma\bigr)^{\frac{h}{2}}}{1-\,p\,q\,\Gamma}
		\qquad \text{if}
		\quad  0 < p\, q  \,\Gamma < 1 \,.
\end{equation}
	\end{theorem}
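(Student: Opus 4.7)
The plan is to follow the functional-operator approach of \cite{CSZ23a}, in the refined form of \cite{CCR23}, of which this theorem is the culminating technical output. I would start from the polynomial chaos expansion \eqref{eq:polynomial}, which writes the centred partition function $Z_L^\beta(\varphi,\psi) - \bbE[Z_L^\beta(\varphi,\psi)]$ as a sum over ordered space-time sequences $(n_i,x_i)_{1\le i\le k}$ with $k\ge 1$, weighted by products of the orthonormal chaos variables $\xi_{n,x}^\beta$. Multiplying $h$ such expansions together and taking $\bbE$ forces every site $(n,x)$ appearing in the resulting product to be visited by at least two of the $h$ replicas, so the surviving sum runs over configurations of $h$ walks interacting through pairwise coincidences.

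Next I would group the coincidences into $r\ge 1$ \emph{temporal collision epochs}, an epoch being a maximal time window in which a nontrivial subset of the replicas merges. Between consecutive epochs, each replica either evolves alone or as part of a two-body chain; summing such chains produces the kernel $\sigma_\beta^2\, q_{2n}(z-z')$, whose integrated operator norm on the weighted $\ell^2$ space is controlled by $\sigma_\beta^2\, R_L^{(\hat\lambda)}$. The damping $\rme^{-\hat\lambda n/L}$, inserted artificially to decouple distant epochs, is precisely what gives rise to $R_L^{(\hat\lambda)}$ of \eqref{eq:Rlambda}. Under the hypothesis $\sigma_\beta^2 R_L^{(\hat\lambda)}<1$, geometric resummation of the intra-epoch chains supplies the denominator $1-\sigma_\beta^2 R_L^{(\hat\lambda)}$, while each new epoch costs an extra factor $\sigma_\beta^2$; the combinatorics of which among the $h$ replicas meets at each epoch is absorbed into $\mathtt{K}_h^{(\hat t)}$, giving exactly one power of $\Gamma$ from \eqref{eq:Gammapar} per epoch.

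To involve the endpoint data, I would then apply H\"older's inequality with dual exponents $p,q$: the initial leg is paired against one copy of $\varphi$ in $\ell^1$ and $h-1$ copies in $\ell^p$, and symmetrically the final leg against one copy of $\psi$ in $\ell^\infty$ and $h-1$ copies in $\ell^q$. This reproduces exactly the product of norms on the RHS of \eqref{eq:genbo}, together with an extra factor $pq$ per epoch coming from the operator norm of the two-body kernel on the weighted $\ell^p\times \ell^q$ space, so diagrams with $r$ epochs contribute $(pq\Gamma)^r$. The exponent $\max\{r,h/2\}$ appears because Wick-type pairing requires at least $h/2$ pair-coincidences among the $h$ replicas: when $r<h/2$, the deficit is made up by extra factors of $\Gamma$ coming from nontrivial two-body chains carried inside a single epoch. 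The exponential tilts $\rme^{\pm \hat t|z|/\sqrt L}$ on $\varphi,\psi$ are compatible with the diffusive scaling of a walk of length~$L$ and only contribute a multiplicative constant depending on $\hat t$, which is absorbed into $\mathtt K_h^{(\hat t)}$.

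The hard part will be the uniform operator-norm bookkeeping ensuring that $\mathtt K_h^{(\hat t)}$ and $\cC_h^{(\hat t,\hat\lambda)}$ depend only on $h,\hat t,\hat\lambda$ and not on $\beta$ or $L$: this requires sharp bounds on the multi-body return kernels arising in a single collision epoch, for every pattern of replica merges, and these are exactly the estimates carried out in \cite[Section~4]{CCR23}, ultimately reducing to careful H\"older/Cauchy--Schwarz controls on the damped Green's function $R_L^{(\hat\lambda)}$. Once these operator bounds are in place, the series estimate \eqref{eq:geosu} follows from the elementary inequality
\begin{equation*}
	\sum_{r\ge 1} x^{\max\{r,h/2\}}
	\,\le\, \lceil h/2\rceil\, x^{h/2} + \frac{x^{\lceil h/2\rceil+1}}{1-x}
	\,\le\, \rme^{h}\,\frac{x^{h/2}}{1-x}
	\qquad \forall x\in(0,1) \,.
\end{equation*}
Finally, the statement that \eqref{eq:genbo} still holds if $Z_L^\beta(\varphi,\psi)$ is replaced by its restriction to any subset of walk paths follows from the same scheme, since such a restriction only replaces the random-walk transition kernel $q_n(\cdot)$ by a sub-stochastic kernel and every operator bound above extends monotonically to sub-stochastic kernels.
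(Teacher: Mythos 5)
Your outline follows the same route as the paper: the paper's own proof of this theorem is not a from-scratch argument but an explicit roadmap assembling specific estimates from \cite{CCR23}, Section~4 (Theorems~4.8/4.11 for the master inequality, Propositions~4.19/4.21 for the boundary terms producing the $\varphi$- and $\psi$-norms, and Propositions~4.13/4.23/4.24 for the per-epoch factors giving $(p\,q\,\Gamma)^{\max\{r,h/2\}}$), and your collision-epoch/operator-norm narrative is a description of exactly that machinery, with the hard uniform bounds likewise deferred to \cite{CCR23}. The one self-contained ingredient, the elementary series estimate \eqref{eq:geosu}, you prove correctly and essentially identically to the paper.
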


We will deduce our goal \eqref{eq:genbound2} from \eqref{eq:genbo}. We first need to have a suitable control on the second and third lines of \eqref{eq:genbo} in order to fit our assumption on $\varphi$, see in particular \eqref{eq:concsqrtL+} and \eqref{eq:locunif}. Since we will be interested in taking $q$ large, we may assume
\begin{equation} \label{eq:pq}
	q \in [4,\infty) \,.
\end{equation}
We will need the following basic interpolation result.

\begin{lemma}
Fix $p \in (1,2)$ and $q \in (2,\infty)$
with $\frac{1}{p} + \frac{1}{q} = 1$.
For any $f,g$ on $\Z^2$,
\begin{equation} \label{eq:interpola}
	\| f \, g \|_{\ell^p} \le \pigl\|f \, g^{\frac{q}{q-2}} \pigr\|_{\ell^1}^{1-\frac{2}{q}}
	\, \| f \|_{\ell^2}^{\frac{2}{q}} \,.
\end{equation}
\end{lemma}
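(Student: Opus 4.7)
The plan is a direct application of Hölder's inequality with carefully chosen exponents. The key observation is that the RHS of \eqref{eq:interpola} is a product of two norms, and interpolation inequalities of this type follow from writing $|f|^p|g|^p$ as a product of two factors and applying Hölder. I first choose the factorization to match the two target norms, then read off the Hölder exponents.

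Concretely, I will seek to write
\begin{equation*}
	|f_x|^p \, |g_x|^p = \pig( |f_x| \, |g_x|^{q/(q-2)} \pig)^{\alpha} \cdot |f_x|^{\beta}
\end{equation*}
for suitable $\alpha, \beta > 0$. Matching the exponents of $|g_x|$ gives $\alpha \cdot \frac{q}{q-2} = p$, hence $\alpha = \frac{p(q-2)}{q}$, and then matching $|f_x|$ gives $\beta = p - \alpha = \frac{2p}{q}$. Now apply Hölder's inequality with exponents $a,b > 1$ satisfying $\frac{1}{a}+\frac{1}{b}=1$, chosen so that the two factors combine cleanly into $\ell^1$ and $\ell^2$ norms: taking $a := 1/\alpha = \frac{q}{p(q-2)}$ (so that $a\alpha=1$) and $b := 2/\beta = \frac{q}{p}$ (so that $b\beta = 2$), I should verify $\frac{1}{a}+\frac{1}{b} = \alpha + \frac{\beta}{2} = \frac{p(q-2)}{q}+\frac{p}{q} = \frac{p(q-1)}{q}$, which equals $1$ precisely because $\frac{1}{p}+\frac{1}{q}=1$ implies $p = \frac{q}{q-1}$. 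This conjugacy check is the only nontrivial verification, and there is no obstacle here.

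Applying Hölder with these exponents to the sum $\sum_x (|f_x||g_x|^{q/(q-2)})^{\alpha} \cdot |f_x|^{\beta}$ then yields
\begin{equation*}
	\sum_{x} |f_x|^p \, |g_x|^p
	\,\le\, \pigl\| f \, g^{q/(q-2)} \pigr\|_{\ell^1}^{p(q-2)/q}
	\, \| f \|_{\ell^2}^{\,2p/q} \,.
\end{equation*}
Taking the $p$-th root and noting that $\frac{q-2}{q} = 1 - \frac{2}{q}$, I obtain \eqref{eq:interpola} exactly. Since the argument only relies on nonnegativity of $|f|, |g|$ and on the conjugacy $\frac{1}{p}+\frac{1}{q}=1$, no further hypotheses (such as $p<2<q$) are used beyond ensuring $\alpha, \beta > 0$, which holds because $q>2$. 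The whole proof should fit in a few lines.
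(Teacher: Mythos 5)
Your proof is correct and is essentially the same as the paper's: both write $|f|^p|g|^p$ as the product of $|f|^{2-p}|g|^p$ and $|f|^{2(p-1)}$ and apply a single Hölder inequality with conjugate exponents $\tfrac{1}{2-p}$ and $\tfrac{1}{p-1}$ (you parametrize these via $q$ as $\tfrac{q}{p(q-2)}$ and $\tfrac{q}{p}$, the paper via $\alpha = p-1 = p/q$, but the decomposition is identical). The verification that the exponents are Hölder-conjugate is the same computation in both cases.
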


\begin{proof}
We write $f^p = f^{1-\alpha} \, f^{2 \alpha}$, with
$\alpha = p-1 = \frac{p}{q} \in (0, \frac{1}{2})$, and apply Hölder to get
\begin{equation*}
	\bigl\| f \, g \bigr\|_{\ell^p}^p
	= \sum_{z\in\Z^2} |f(z)|^p \, |g(z)|^p
	\le \bigg( \sum_{z\in\Z^2} |f(z)| \, |g(z)|^{\frac{p}{1-\alpha}} \bigg)^{1-\alpha}
	\bigg( \sum_{z\in\Z^2} f(z)^2 \bigg)^\alpha
\end{equation*}
which coincides with \eqref{eq:interpo} since $\frac{1-\alpha}{p} = 1 - \frac{2}{q}$
and $\frac{\alpha}{p} = \frac{1}{q}$.
\end{proof}

Let us look back at the second and third lines of \eqref{eq:genbo}.
We apply \eqref{eq:interpola} with $f=\varphi$
and $g(\cdot) = \rme^{\frac{\hat t}{\sqrt{L}}|\cdot|}$: for $q \ge 4$ we have $\frac{q}{q-2} \le 2$, hence
\begin{equation} \label{eq:interpo}
	\pigl\| \varphi(\cdot) \, \rme^{\frac{\hat t}{\sqrt{L}}|\cdot|} \pigr\|_{\ell^p}
	\le \pigl\| \varphi(\cdot) \, \rme^{\frac{2 \hat t}{\sqrt{L}}|\cdot|} \pigr\|_{\ell^1}^{1 - \frac{2}{q}}
	\, \big\| \varphi \big\|_{\ell^2}^{\frac{2}{q}}
	\le \pigl\| \varphi(\cdot) \, \rme^{\frac{2 \hat t}{\sqrt{L}}|\cdot|} \pigr\|_{\ell^1}
	\, \big\| \varphi \big\|_{\ell^2}^{\frac{2}{q}} \,.
\end{equation}
We next make the simple estimate that, for some $c < \infty$,
\begin{equation}\label{eq:interpsi}
	\pigl\|  \psi(\cdot) \, \rme^{-\frac{\hat t}{\sqrt{L}}|\cdot|} \pigr\|_{\ell^q}
	\le \pigl\|  \psi(\cdot) \, \rme^{-\frac{\hat t}{2\sqrt{L}}|\cdot|} \pigr\|_{\ell^\infty}
	\Biggl(\sum_{z\in\Z^2} \rme^{-\frac{\hat t}{2\sqrt{L}} q|z|} \Biggr)^{\frac{1}{q}}
	\le \pigl\|  \psi(\cdot) \, \rme^{-\frac{\hat t}{2\sqrt{L}}|\cdot|} \pigr\|_{\ell^\infty}
	\, \bigg( \frac{c L}{\hat t^2} \bigg)^{\frac{1}{q}}
\end{equation}
because
$\sum_{z\in\Z^2} \rme^{-\frac{s}{2} q|z|} \le \sum_{z\in\Z^2} \rme^{- \frac{s}{2} |z|}	\le \frac{c}{s^2}$.
Plugging these estimates into \eqref{eq:genbo}, as well as the bound \eqref{eq:geosu} with $p \le 2$, we obtain the following corollary of Theorem~\ref{th:moment-bound-CCR}.

\begin{proposition} \label{th:mb}
	For any $h\in\N$, $h \ge 3$, there is $\beta_0 = \beta_0(h) > 0$ such that for any $\beta \in [0,\beta_0]$ and $L\in\N$ the following holds.
	Given $\hat t > 0$ and $\hat\lambda \ge 0$,
	there are constants $\mathtt{K}_h^{(\hat t)} ,\,
	\cC_h^{(\hat t,\hat \lambda)} < \infty$ such that,
	recalling $\Gamma$ from \eqref{eq:Gammapar}, for any
	\begin{equation} \label{eq:2qGamma}
		q \ge 4 \quad \text{such that} \quad 0 < 2q \, \Gamma < 1 \,,
	\end{equation}
we can bound, for any functions $\varphi, \psi$ on $\Z^2$,	
	\begin{gather}
		\label{eq:moment-bound-impr}
	\begin{split}
		\big| \bbE \big[ \big( Z_{L}^{\beta}(\varphi,\psi)- \bbE[Z_{L}^{\beta}(\varphi,\psi)] \big)^h \big] \big|
		\le \, & \cC_h^{(\hat t,\hat\lambda)} \,
		\frac{(2q\, \Gamma)^{\frac{h}{2}}}{1- 2q\, \Gamma}
		\, \big( L \, \| \varphi \|_{\ell^2}^2 \big)^{\frac{h-1}{q}} \\
		& \times  \pigl\| \varphi(\cdot) \, \rme^{\frac{2\hat t}{\sqrt{L}}|\,\cdot\, - z_0|} \pigr\|_{\ell^1}^h
		\,  \pigl\|  \psi(\cdot) \, \rme^{-\frac{\hat t}{2\sqrt{L}}|\,\cdot\, - z_0|} \pigr\|_{\ell^\infty}^h  \,.
	\end{split}
	\end{gather}
	\end{proposition}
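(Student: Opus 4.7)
The plan is to specialize the general estimate \eqref{eq:genbo} of Theorem~\ref{th:moment-bound-CCR} by taking $p = q/(q-1)$, then absorb the weighted $\ell^p$ and $\ell^q$ norms of $\varphi, \psi$ via the interpolation inequalities \eqref{eq:interpo} and \eqref{eq:interpsi} (which the paper has just established), and finally control the series in $r$ via \eqref{eq:geosu}. The restriction $q \ge 4$ in \eqref{eq:2qGamma} is precisely the combined threshold required to make both steps work: on the one hand it forces $p \in (1, 4/3] \subseteq (1,2)$, so that $pq = q^2/(q-1) \le 2q$ and consequently $pq\,\Gamma \le 2q\,\Gamma < 1$, which is the convergence condition needed for \eqref{eq:geosu}; on the other hand it produces $q/(q-2) \le 2$, which is the exact hypothesis used in the interpolation \eqref{eq:interpo}.

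Concretely, I would first apply \eqref{eq:geosu} to the sum over $r$ in \eqref{eq:genbo}, yielding a factor $\rme^h\, (pq\,\Gamma)^{h/2}/(1-pq\,\Gamma) \le \rme^h\, (2q\,\Gamma)^{h/2}/(1-2q\,\Gamma)$, with $\rme^h$ absorbed into $\cC_h^{(\hat t,\hat\lambda)}$. Next, \eqref{eq:interpo} turns $\|\varphi\, \rme^{\hat t|\cdot|/\sqrt{L}}\|_{\ell^p}^{h-1}$ into a product of $\|\varphi\, \rme^{2\hat t|\cdot|/\sqrt{L}}\|_{\ell^1}^{h-1}$ and $\|\varphi\|_{\ell^2}^{2(h-1)/q}$, while \eqref{eq:interpsi} turns $\|\psi\, \rme^{-\hat t|\cdot|/\sqrt{L}}\|_{\ell^q}^{h-1}$ into $\|\psi\, \rme^{-\hat t|\cdot|/(2\sqrt{L})}\|_{\ell^\infty}^{h-1}$ times $(cL/\hat t^2)^{(h-1)/q}$. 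The isolated $\|\varphi\, \rme^{\hat t|\cdot|/\sqrt{L}}\|_{\ell^1}$ and $\|\psi\, \rme^{-\hat t|\cdot|/\sqrt{L}}\|_{\ell^\infty}$ factors in \eqref{eq:genbo} combine (via the trivial pointwise bounds $\rme^{\hat t|z|/\sqrt{L}} \le \rme^{2\hat t|z|/\sqrt{L}}$ and $\rme^{-\hat t|z|/\sqrt{L}} \le \rme^{-\hat t|z|/(2\sqrt{L})}$) with the $(h-1)$-th powers to yield $h$-th powers. The factors $\|\varphi\|_{\ell^2}^{2(h-1)/q}$ and $L^{(h-1)/q}$ merge into $(L\,\|\varphi\|_{\ell^2}^2)^{(h-1)/q}$, and all remaining $\hat t$- and $c$-dependent constants are absorbed into $\cC_h^{(\hat t,\hat\lambda)}$. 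This produces exactly the bound \eqref{eq:moment-bound-impr}, but with exponential weights centred at the origin instead of $z_0$.

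To install the shift $z_0$, I invoke translation invariance of the i.i.d.\ disorder: the law of $Z_L^{\beta}(\varphi,\psi)$ equals the law of $Z_L^{\beta}(\tilde\varphi,\tilde\psi)$ where $\tilde\varphi := \varphi(\cdot + z_0)$, $\tilde\psi := \psi(\cdot + z_0)$, so the centred $h$-th moment on the LHS is unchanged. The translation-invariant quantities $\|\varphi\|_{\ell^2}$, $L$, and $\Gamma$ on the RHS are unaffected, while a change of variables converts $\rme^{2\hat t|z|/\sqrt{L}}\tilde\varphi(z)$ back into $\rme^{2\hat t|w - z_0|/\sqrt{L}}\varphi(w)$ (and similarly for $\psi$). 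This gives the final form in \eqref{eq:moment-bound-impr}.

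I do not anticipate any real obstacle: the argument is bookkeeping that glues together ingredients already set up in the paper. The one point that deserves a brief check is the second inequality in \eqref{eq:interpo}, namely $a^{1-2/q} \le a$ applied to $a = \|\varphi\, \rme^{2\hat t|\cdot|/\sqrt{L}}\|_{\ell^1}$, which is valid precisely when $a \ge 1$; this will hold in our intended applications where $\varphi$ is a probability mass function, in which case $a \ge \|\varphi\|_{\ell^1} = 1$.
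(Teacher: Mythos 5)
Your proof is correct and coincides with the paper's: both plug the interpolation bounds \eqref{eq:interpo}--\eqref{eq:interpsi} and the geometric-series estimate \eqref{eq:geosu} (using $p = q/(q-1) \le 4/3 \le 2$, hence $pq \le 2q$) into \eqref{eq:genbo}. Your explicit handling of the translation to the centre $z_0$ and your remark that the second inequality in \eqref{eq:interpo} requires $\bigl\| \varphi(\cdot)\,\rme^{\frac{2\hat t}{\sqrt{L}}|\cdot|} \bigr\|_{\ell^1} \ge 1$ are both sound observations that the paper leaves implicit (and which are harmless since $\varphi$ is a probability mass function in the intended application, Theorem~\ref{th:genmombou+}).
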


In the next subsection we will show that our goal \eqref{eq:genbound2} follows by \eqref{eq:moment-bound-impr}.

\begin{remark}[Comparison with \cite{CSZ23a}]\label{rem:comparisonCSZ}
From the proof of \cite[Theorem~6.1]{CSZ23a} one can extract a bound similar to \eqref{eq:genbo}, but with a different dependence on the boundary conditions $\varphi, \psi$, namely the second and third lines of \eqref{eq:genbo} are replaced by\footnote{The factors $L^{\frac{1}{q}}$, $L^{\frac{1}{p}}$ in \eqref{eq:comparisonCSZ} arise from operator norms, see \cite[Proposition~6.6]{CSZ23a}, while $\frac{1}{L}$ is due to an averaging on the system size from $L$ to~$2L$ which is performed in the proof.}
\begin{equation} \label{eq:comparisonCSZ}
\begin{split}
	L^{\frac{1}{q}}
	\, \pigl\| \varphi(\cdot) \, \rme^{\frac{\hat t}{\sqrt{L}}|\cdot|} \pigr\|_{\ell^p}^{h}
	\times  \frac{L^{\frac{1}{p}}}{L}
	\, \|\psi\|_\infty^h \, \pigl\| \rme^{-\frac{\hat t}{\sqrt{L}}|\cdot|} \pigr\|_{\ell^q}^{h} =
	\, \pigl\| \varphi(\cdot) \, \rme^{\frac{\hat t}{\sqrt{L}}|\cdot|} \pigr\|_{\ell^p}^{h}
	\, \|\psi\|_\infty^h \, \pigl\| \rme^{-\frac{\hat t}{\sqrt{L}}|\cdot|} \pigr\|_{\ell^q}^{h} \,.
\end{split}
\end{equation}
The bound \eqref{eq:genbo} is better for two reasons:
\begin{itemize}
	\item the quantity $\pigl\|  \psi(\cdot) \, \rme^{-\frac{\hat t}{\sqrt{L}}|\cdot|} \pigr\|_{\ell^q}$ is smaller than $\|\psi\|_\infty \, \pigl\| \rme^{-\frac{\hat t}{\sqrt{L}}|\cdot|} \pigr\|_{\ell^q}$ and it allows for unbounded functions~$\psi$;
	\item the power $h-1$ in \eqref{eq:genbo} is better than $h$ in \eqref{eq:comparisonCSZ} in case $D = \bbD[\varphi] \ll L$, see \eqref{eq:bbD}: for instance, if $\varphi$ is a probability mass function supported on a ball of radius $\sqrt{D}$ with $\|\varphi\|_\infty = O(\frac{1}{D})$ and $\psi \equiv 1$ is constant (for simplicity), we have for some $c>0$
	\begin{equation*}
		\pigl\| \varphi(\cdot) \, \rme^{\frac{\hat t}{\sqrt{L}}|\cdot|} \pigr\|_{\ell^p} \ge c \,
		\frac{D^{\frac{1}{p}}}{D}
		= c \, D^{-\frac{1}{q}} \,, \qquad
		\pigl\|  \psi(\cdot) \, \rme^{-\frac{\hat t}{\sqrt{L}}|\cdot|} \pigr\|_{\ell^q} \ge c \,
		L^{\frac{1}{q}} \,,
	\end{equation*}
	hence \eqref{eq:comparisonCSZ} is larger than \eqref{eq:genbo} by a factor $\ge c^2 \, (\frac{L}{D})^{\frac{1}{q}} \gg 1$.
\end{itemize}
\end{remark}

\subsection{Proof of Theorem~\ref{th:genmombou+}}

Given $h\in\N$ and constants $\hat{t}$, $\sfc_1$, $\sfc_2'$, $\sfc_3 \in (0,\infty)$, we need to prove the bound \eqref{eq:genbound2} for all $\beta\geq 0$ and $L$ large enough, uniformly over probability mass functions $\varphi$ satisfying \eqref{eq:concsqrtL+}, \eqref{eq:locunif}, \eqref{eq:varub0+} and over arbitrary functions $\psi$ on $\Z^2$.
For simplicity, we assume $z_0 = 0$ (it suffices to replace $\varphi$ by $\varphi(\,\cdot + z_0)$ and likewise for $\psi$).

We will deduce \eqref{eq:genbound2} from \eqref{eq:moment-bound-impr}.
To apply Proposition~\ref{th:mb}, we note that $\beta \le \beta_0(h)$ is guaranteed by \eqref{eq:sigmatheta} if we take $L \ge L_h$ for $L_h$ large enough (depending on $h$ and $\bar\theta$ in \eqref{eq:sigmatheta}, hence on $\hat{t}, \sfc_1, \sfc_3$).
We show below that condition \eqref{eq:2qGamma} can be satisfied as follows:
\begin{aenumerate}
	\item\label{it:a} we first fix $\hat\lambda \ge 0$ such that $\Gamma \in (0, \frac{1}{16}]$;
	\item\label{it:b} then we pick $q \ge 4$ with
	$2q \, \Gamma \le \frac{1}{2}$ (to discard the denominator $1 - 2q \, \Gamma \ge \frac{1}{2}$ in \eqref{eq:moment-bound-impr}).
\end{aenumerate}
We can now apply the bound \eqref{eq:genbo}. We show below that
\begin{aenumerate}\setcounter{enumi}{2}
	\item\label{it:c} for suitable constants $C, C' < \infty$
	(depending on $h$, $\hat{t}$, $\sfc_1$, $\sfc_2'$, $\sfc_3$),
	\begin{equation}\label{eq:2bounds}
		2q \, \Gamma \le C \, \bbvar[Z_{L}^\beta(\varphi)] \,, \qquad
		\big( L \, \| \varphi \|_{\ell^2}^2 \big)^{\frac{1}{q}} \le C' \,.
	\end{equation}
	\end{aenumerate}
Plugging these bounds into \eqref{eq:moment-bound-impr}, in view of \eqref{eq:concsqrtL+}, we see that the RHS of \eqref{eq:genbo} gives precisely \eqref{eq:genbound2} with the constant
\begin{equation} \label{eq:costante}
	\mathfrak{C}_h = 2 \, \cC_h^{(\hat t,\hat\lambda)} \, C^{\frac{h}{2}} \, (C')^{h-1} \, \sfc_1^h
\end{equation}

We complete the proof of Theorem~\ref{th:genmombou+} proving the steps \eqref{it:a}, \eqref{it:b} and \eqref{it:c}.

\medskip

\noindent
\emph{Step \eqref{it:a}.}
To find the desired $\hat\lambda \ge 0$, we note that by \eqref{eq:Gammapar} we have the equivalence
\begin{equation} \label{eq:condone}
	0 < \Gamma \le \tfrac{1}{16} \qquad \iff \qquad
	\sigma_\beta^2 \le \frac{1}{R_L^{(\hat\lambda)} + 16 \,\mathtt{K}_{h}^{(\hat t)}} \,.
\end{equation}
We will later need the similar condition \eqref{eq:cond-for-LB}.
Both conditions hold if we take $\hat\lambda$ large enough,
thanks to Lemmas~\ref{th:sigmatheta} and~\ref{th:choice}: more explicitly, by \eqref{eq:sigmatheta} and \eqref{eq:choice},
we can fix
\begin{equation}\label{eq:hatlambdapick}
	\hat\lambda := 2 \, \rme^{2\pi \left(\bar\theta + \max\left\{ 16 \, \mathtt{K}_{h}^{(\hat t)}, 4 \, \mathfrak{a}_+\right\}\right)} \,.
\end{equation}

\medskip

\noindent
\emph{Step \eqref{it:b}.}
We will see below that it would be convenient to take $q \approx R_{L/2}(\varphi, \varphi)$.
To ensure that $q \ge 4$ and also $2q \, \Gamma \le \frac{1}{2}$
i.e.\ $q \le \frac{1}{4\,\Gamma}$, since $R_{L/2}(\varphi,\varphi) \ge \eta$ by \eqref{eq:Rub}, we define
\begin{equation} \label{eq:qfix}
	q := \min\bigg\{ 4\, \frac{R_{L/2}(\varphi,\varphi)}{\eta} \,, \,
	\frac{1}{4 \, \Gamma} \bigg\} \,.
\end{equation}
In particular, we have by definition
\begin{equation} \label{eq:qGamma}
	q \, \Gamma \le \frac{4}{\eta}
	\, R_{L/2}(\varphi,\varphi) \,
	\Gamma \,.
\end{equation}

\medskip

\noindent
\emph{Step \eqref{it:c}}. Let us prove the
first relation in \eqref{eq:2bounds}.
By definition of $\Gamma$, see \eqref{eq:Gammapar}, and by
Lemma~\ref{th:2lb} (note that condition \eqref{eq:cond-for-LB}
is ensured by our choice of $\hat\lambda$), we can bound
\begin{equation*}
	R_{L/2}(\varphi,\varphi) \, \Gamma
	\le \mathtt{K}_{h}^{(\hat t)}
	\, R_{L/2}(\varphi,\varphi) \,
	\frac{\sigma_\beta^2}{1-\sigma_{\beta}^2 \, R_{L}^{(\hat\lambda)}}
	\le 2\, \mathtt{K}_{h}^{(\hat t)}
	\, R_{L/2}(\varphi,\varphi) \,
	\sigma_\beta^2 \,\,
	\bbE\big[Z_{L/2}^{\beta}(0)^2\big] \,.
\end{equation*}
Applying the first bound in \eqref{eq:boundsvar} we then obtain
\begin{equation} \label{eq:RGamma}
	R_{L/2}(\varphi,\varphi) \, \Gamma \le
	2\, \mathtt{K}_{h}^{(\hat t)} \, \bbvar[Z_{L}^\beta(\varphi)] \,,
\end{equation}
hence, recalling \eqref{eq:qGamma},
the first relation in \eqref{eq:2bounds} holds
with $C := \frac{16 \, \mathtt{K}_{h}^{(\hat t)}}{\eta}$.

\smallskip

Let us finally prove the second relation in \eqref{eq:2bounds}.
By assumption \eqref{eq:locunif} we can bound
\begin{equation*}
	\big( L \, \| \varphi \|_{\ell^2}^2 \big)^{\frac{1}{q}}
	= \rme^{\frac{1}{q} \log (L \, \| \varphi \|_{\ell^2}^2)}
	\le \rme^{\frac{1}{q} \, \sfc_2' \, R_L(\varphi,\varphi)} \,.
\end{equation*}
It remains to show that, for some constant $c > 0$
(depending on $h$, $\hat{t}$, $\sfc_1$, $\sfc_2'$, $\sfc_3$),
\begin{equation}\label{eq:last-to-show}
	q \ge c \, R_{L}(\varphi,\varphi) \,,
\end{equation}
so that the second relation in \eqref{eq:2bounds} holds
with $C' = \rme^{\frac{1}{c}\,\sfc_2'}$.

Let us finally prove \eqref{eq:last-to-show}.
It follows by \eqref{eq:RGamma} and assumption \eqref{eq:varub0+} that
\begin{equation*}
	\frac{1}{\Gamma} \ge \frac{1}{2 \, \mathtt{K}_{h}^{(\hat t)} \, \sfc_3} \, R_{L/2}(\varphi,\varphi) \,,
\end{equation*}
hence, recalling \eqref{eq:qfix}, we can bound
\begin{equation}\label{eq:lbq}
	q \ge a \, R_{L/2}(\varphi,\varphi) \qquad
	\text{with} \quad a := \tfrac{4}{\eta} \wedge  \tfrac{1}{8 \, \mathtt{K}_{h}^{(\hat t)} \, \sfc_3} \,.
\end{equation}
At last, by \eqref{eq:doubling}
and \eqref{eq:Rub}, for any probability mass function $\varphi$ we can write
\begin{equation} \label{eq:Rratio}
	\frac{R_{L}(\varphi,\varphi)}{R_{L/2}(\varphi,\varphi)}
	\le 1 + \frac{R_{L}(\varphi,\varphi)-R_{L/2}(\varphi,\varphi)}{R_{L/2}(\varphi,\varphi)}
	\le 1 + \frac{\mathfrak{a}_+}{\eta} \, .
\end{equation}
Combining \eqref{eq:Rratio} with \eqref{eq:lbq} then gives
\eqref{eq:last-to-show} with $c := a/(1+\mathfrak{a}_+/\eta)$.\qed

\appendix

\section{Second moment computations}
\label{sec:2mb}

We first prove Lemma~\ref{th:Green} about Green's functions.
Then we prove Proposition~\ref{th:2mb} on the second moment of the point-to-plane partition function.
Finally, we prove Theorem~\ref{th:vargen} on the variance of the averaged partition function,
together with Propositions~\ref{th:initial} and~\ref{th:initial2}.

\smallskip

\begin{proof}[Proof of Lemma~\ref{th:Green}]
From \eqref{eq:Green}, we see that $\cG(a) = \frac{1}{2\pi} \log (1+a^{-2}) + O(1)$ uniformly over $a>0$.
This already proves the second equality in \eqref{eq:asR}.
To prove the first equality, we plug the first line of \eqref{eq:llt} into \eqref{eq:RL2} to get
\begin{equation}\label{eq:RL2bis}
	R_L(z) = \bigg\{ \sum_{n=1}^L g_{n}(z) \bigg\} \, 2 \cdot \ind_{\Z^2_\even}(z)
	\,+\, O(1)  \,.
\end{equation}
We can replace $g_{n}(z) = g_{n}(|z|)$
by $g_{n}(|z|+1)$,
because their difference is $O(n^{-3/2})$ and can be absorbed in the $O(1)$ term.
We next write
\begin{equation*}
	\sum_{n=1}^L g_{n}(|z|+1)
	= \frac{1}{L} \sum_{n=1}^L g_{\frac{n}{L}}\Big(\tfrac{|z|+1}{\sqrt{L}} \Big)
	= \cG\Big( \tfrac{|z|+1}{\sqrt{L}} \Big) + \Delta_L\Big( \tfrac{|z|+1}{\sqrt{L}} \Big)
\end{equation*}
where we set
\begin{equation*}
	\Delta_L(x) := \sum_{n=1}^L \int_{\frac{n-1}{L}}^{\frac{n}{L}}
	\big\{ g_{\frac{n}{L}}(x) - g_t(x) \big\} \, \dd t \,.
\end{equation*}
It remains to show that $\Delta_L(x) = O(1)$ is uniformly bounded
for $|x| = \frac{|z|+1}{\sqrt{L}} \ge \frac{1}{\sqrt{L}}$.

By direct computation we see
that $|\partial_u g_u(x)| \le \frac{c}{u^2}\, \rme^{-\frac{|x|^2}{c u}}$ for some $c< \infty$.
Then, uniformly for $|x| \ge \frac{1}{\sqrt{L}}$, we can bound the term $n=1$ in $\Delta_L(x)$ by
an absolute constant $C < \infty$:
\begin{equation*}
	\int_0^{\frac{1}{L}} \bigg( \int_t^{\frac{1}{L}} \frac{c}{u^2} \,
	\rme^{-\frac{1}{c L u}}  \, \dd u \bigg) \dd t
	= \int_0^{\frac{1}{L}} \frac{c}{u} \,
	\rme^{-\frac{1}{c L u}} \, \dd u
	= \int_0^1 \frac{c}{v} \, \rme^{-\frac{1}{cv}} \, \dd v =: C  < \infty \,.
\end{equation*}
For the terms $n \ge 2$ in $\Delta_L(x)$, we simply use $|\partial_u g_u(x)| \le \frac{c}{u^2}$ to bound
$|g_{\frac{n}{L}}(x) - g_t(x)| \le \frac{c}{t^2} \frac{1}{L}$ (because $\frac{n}{L}-t \le \frac{1}{L}$)
and we obtain, uniformly over $x\in\R^2$,
\begin{equation*}
	\sum_{n=2}^L \, \int_{\frac{n-1}{L}}^{\frac{n}{L}} \,
	\big| g_{\frac{n}{L}}(x) - g_t(x) \big| \, \dd t
	\le \frac{c}{L}\, \sum_{n=2}^L \, \int_{\frac{n-1}{L}}^{\frac{n}{L}} \, \frac{1}{t^2} \, \dd t
	= \frac{c}{L} \, \int_{\frac{1}{L}}^1 \frac{1}{t^2} \,\dd t
	\le c < \infty \,.
\end{equation*}
This completes the proof
that $|\Delta_L(x)| \le C+c$ uniformly for $L\in\N$ and $|x| \ge \frac{1}{\sqrt{L}}$.

\smallskip

We finally prove \eqref{eq:intR}, for which we may assume that $z \ne 0$ and
$L$ is large enough.
By the second line of \eqref{eq:llt}, uniformly over $|z|^2 \le 2 (1+  t^2) n$ we can write,
for a suitable $c > 0$,
\begin{equation*}
	q_{2n}(z) = g_{n}(z) \, \rme^{O(\frac{(1+t^2)^2}{n})} \, 2 \, \ind_{\Z^2_\even}(z)
	\ge \frac{c \, \rme^{-\frac{t^4}{c}}}{n} \, \ind_{\Z^2_\even}(z) \,,
\end{equation*}
hence, restricting the sum in \eqref{eq:RL2} to $n \ge
\bar{n}(z) :=  \lceil \frac{1}{2} ( \frac{|z|^2}{1+t^2}  + 1 ) \rceil $, we get
\begin{equation*}
	R_L(z) \ge \sum_{n=\bar{n}(z)}^L \frac{c \, \rme^{-\frac{t^4}{c}}}{n}
	\ge \int_{\bar{n}(z)}^{L+1} \frac{c \, \rme^{-\frac{t^4}{c}}}{u} \, \dd u
	= c \, \rme^{-\frac{t^4}{c}} \, \log \frac{L+1}{\bar{n}(z)} \,.
\end{equation*}
For $|z| \le t \sqrt{L}$ we have $\bar{n}(z) \le \frac{1}{2} (L+1)$, equivalently
$\frac{L+1}{\bar{n}(z)} \ge 1 + \frac{L+1}{2 \bar{n}(z)}$, and since $\bar{n}(z) \le \frac{|z|^2+1}{2}$
we finally obtain $\log \frac{L+1}{\bar{n}(z)} \ge \log (1 + \frac{L+1}{1+|z|^2})$, hence
\eqref{eq:intR} holds with $c_t = c \, \rme^{-\frac{t^4}{c}}$.
\end{proof}

\medskip

The key tool for the proof of Proposition~\ref{th:2mb} is the following renewal
representation of the second moment of the partition function developed in \cite{CSZ19a}.

\begin{remark}[Renewal interpretation]
Given $N\in\N$,
we define the integer valued renewal process
\begin{equation*}
	\tau^{(N)}_k = T^{(N)}_1 + \ldots + T^{(N)}_k
\end{equation*}
with $\tau^{(N)}_0 := 0$ and i.i.d.\ increments $(T^{(N)}_i)_{i\in\N}$ with distribution (recall \eqref{eq:rw} and \eqref{eq:RL})
\begin{equation} \label{eq:T1}
	\P\big(T^{(N)}_i = n\big) = \frac{1}{R_N} \, q_{2n}(0), \qquad 1\leq i\leq N.
\end{equation}
For $\beta = \beta_N$ in the quasi-critical regime \eqref{eq:quasi-crit}, we can then write \eqref{eq:U} as follows: for every $n\in\N_0$,
\begin{equation}\label{eq:U-renewal}
	U_{\beta_N}(n) = \sum_{k \ge 0} \pig( 1 - \tfrac{|\theta_N|}{\log N}\pig)^k
	\, \P\big(\tau_k^{(N)} = n \big)
	= \frac{\log N}{|\theta_N|} \, \P\big(\tau_{\cK_N}^{(N)} = n \big) \,,
\end{equation}
where $\cK_N$ is an independent Geometric random variable with mean $\frac{\log N}{|\theta_N|}$.
\end{remark}

\begin{proof}[Proof of Proposition~\ref{th:2mb}]
Note that (recall \eqref{eq:T1} and \eqref{eq:RL})
\begin{equation*}
\P\big(\tau_k^{(N)} \le L \big) \le \P\big(T_i^{(N)} \le L \ \forall i=1,\ldots, L \big)
= \P\big(T_1^{(N)} \le L \big)^k = \Big(\frac{R_L}{R_N}\Big)^k.
\end{equation*}
Then it follows by \eqref{eq:U} and \eqref{eq:barU} that
\begin{equation*}
	\bbE\big[ Z_L^{\beta_N}(0)^2 \big] = \overline{U}_{\beta_N}(L)
	= \sum_{k \ge 0} \pig( 1 - \tfrac{|\theta_N|}{\log N}\pig)^k \, \P\big(\tau_k^{(N)} \le L \big)
	\le \sum_{k \ge 0} \pig( 1 - \tfrac{|\theta_N|}{\log N}\pig)^k\,
	\pig(\tfrac{R_L}{R_N} \pig)^k \,,
\end{equation*}
which yields the RHS of \eqref{eq:point-to-plane-2ndmom} as an upper bound.

To get a lower bound, note that we can write
$\P\big(\tau_k^{(N)} \le L \big) = (\frac{R_L}{R_N})^k \,\P\big(\tau_k^{(L)} \le L \big)$
because the law of $T^{(N)}_i$ conditionally on $T^{(N)}_i \le L$ is just the law
of $T^{(L)}_i$, see \eqref{eq:T1}. By Markov's inequality, we have
$\P\big(\tau_k^{(L)} \le L \big) \ge 1 - \frac{1}{L}\E[\tau_k^{(L)}]$, where
\begin{equation*}
\E[\tau_k^{(L)}] = k \, \E[\tau_1^{(L)}] = \frac{k}{R_L} \sum_{n=1}^L n \, q_{2n}(0)
\le c \, \frac{k}{R_L} \, L \quad \mbox{for some} \quad c<\infty.
\end{equation*}
Using $\sum_{k=0}^\infty k \, x^k = \frac{x}{(1-x)^2}$, we obtain
\begin{equation*}
\begin{split}
	\bbE\big[ Z_L^{\beta_N}(0)^2 \big]
	& \ge \sum_{k \ge 0} \pig( 1 - \tfrac{|\theta_N|}{\log N}\pig)^k\,
	\pig(\tfrac{R_L}{R_N} \pig)^k \, \pig(1 - c\,\tfrac{k}{R_L}\pig) \\
	& = \frac{1}{1 - \frac{R_L}{R_N}(1-\frac{|\theta_N|}{\log N})}
	\bigg( 1 -  \frac{c}{R_L} \frac{\frac{R_L}{R_N}(1
	-\frac{|\theta_N|}{\log N})}{1-\frac{R_L}{R_N}(1-\frac{|\theta_N|}{\log N})}\bigg) \\
	& \ge \frac{1}{1 - \frac{R_L}{R_N}(1-\frac{|\theta_N|}{\log N})}
	\bigg( 1 -  \frac{c'}{|\theta_N|} \bigg)\,,
\end{split}
\end{equation*}
where the last inequality is obtained using $(1-\frac{|\theta_N|}{\log N}) \le 1$ in the numerator,
$\frac{R_L}{R_N} \le 1$ in the denominator, and $R_N\sim \frac{\log N}{\pi}$ by \eqref{eq:RL}.
This completes the proof of \eqref{eq:point-to-plane-2ndmom}.
\end{proof}

\smallskip

\begin{proof}[Proof of Theorem~\ref{th:vargen}]
We are going to exploit the upper and lower bounds in \eqref{eq:boundsvar}.

We recall from \eqref{eq:theta} that $\log \frac{1}{\delta_N^2} = |\theta_N|$,
hence by \eqref{eq:RL},
\begin{equation*}
	\frac{R_{L_N}}{R_N} = \frac{\log N - \ell \, \log \frac{1}{\delta_N^2} + O(1)}{\log N + O(1)}
	= 1 - \ell  \, \frac{|\theta_N| + O(1)}{\log N}  \,.
\end{equation*}
Applying \eqref{eq:point-to-plane-2ndmom} then gives
\begin{equation} \label{eq:barUB}
	\bbE\big[ Z_{L_N}^{\beta_N}(0)^2 \big]
	\sim \frac{1}{1-\frac{R_{L_N}}{R_N} (1-\frac{|\theta_N|}{\log N})}
	\sim \frac{1}{1+\ell} \, \frac{\log N}{|\theta_N|} \,,
\end{equation}
and the same holds for
$\bbE\big[ Z_{\frac{1}{2}L_N}^{\beta_N}(0)^2 \big]$
since $\frac{1}{2}L_N = N \, (\delta_N^2)^{\ell+o(1)}$, just as~$L_N$.

The difference $R_{L} (\varphi, \varphi) - R_{\frac{1}{2}L} (\varphi, \varphi)$
is uniformly bounded by a constant, see \eqref{eq:doubling}.
Then assumption \eqref{eq:phi-cond0} on $\varphi_N$ implies, since $|\theta_N| \to \infty$,
\begin{equation} \label{eq:Ras}
	R_{\frac{1}{2} L_N} (\varphi_N, \varphi_N)
	= R_{L_N} (\varphi_N, \varphi_N) + O(1)
	= \frac{1}{\pi} \, (w-\ell) \, |\theta_N| + o(|\theta_N|) \,.
\end{equation}
Since $\sigma_{\beta_N}^2 \sim \frac{1}{R_N} \sim \frac{\pi}{\log N}$, see \eqref{eq:quasi-crit}
and \eqref{eq:RL}, the bounds in \eqref{eq:boundsvar} yield \eqref{eq:2nd-mom}.
\end{proof}

\begin{remark}[Sub-critical regime] \label{rem:subcrapp}
The proof of Theorem~\ref{th:vargen} also applies to the sub-critical regime \eqref{eq:sub-critical} if we take $\theta_N \sim -(1-\hat\beta^2) \log N$ with $\hat\beta^2 \in (0,1)$. In this case $\frac{|\theta_N|}{\log N} \to (1-\hat\beta^2) > 0$ and we must take into account second order terms in \eqref{eq:barUB}, namely
\begin{equation} \label{eq:barUBsub}
	\bbE\big[ Z_{L_N}^{\beta_N}(0)^2 \big]
	\sim \frac{1}{1-(1-\ell\,(1-\hat\beta^2))\,(1-(1-\hat\beta^2))}
	\sim \frac{1}{(1+\ell\,\hat\beta^2)\,(1-\hat\beta^2)} \,.
\end{equation}
Since $\sigma_{\beta_N}^2 \sim \frac{\hat\beta^2}{R_N} \sim \frac{\pi \, \hat\beta^2}{\log N}$, the bounds in \eqref{eq:boundsvar} yield \eqref{eq:2nd-momsub}.
\end{remark}

\smallskip

\begin{proof}[Proof of Proposition~\ref{th:initial}]
Recalling \eqref{eq:Rfg}, we rewrite condition \eqref{eq:phi-cond0} as
\begin{equation}\label{eq:Ras2}
	\sum_{x, y \in \Z^2_\even} \varphi_N(x) \, \varphi_N(y) \,
	R_{L_N}(x-y) = \frac{1}{\pi} \, \log \frac{L_N}{W_N} + o(|\theta_N|) \,.
\end{equation}
To obtain our goal  \eqref{eq:phi-cond}, we simply replace $R_{L_N}(x-y)$ in this sum by
$\frac{1}{\pi} \log (1 + \frac{L_N}{1 + |x-y|^2})$ because their difference
is uniformly bounded, see \eqref{eq:asR}, and hence their contributions to the sum
differ by $O(1) = o(|\theta_N|)$, which is
negligible for the RHS of \eqref{eq:Ras2}.
\end{proof}

\smallskip

\begin{proof}[Proof of Proposition~\ref{th:initial2}]
We need to show that condition \eqref{eq:conditions-phi} implies \eqref{eq:phi-cond}.

We assume for simplicity of notation that $z_N = 0$.
We then rewrite \eqref{eq:conditions-phi} as follows:
\begin{equation} \label{eq:conditions-phi-bis}
	\sum_{|x| \le \sqrt{W_N^+}}
	\varphi_N(x) = 1 - o(1)\,, \qquad
	\sup_{x\in\Z^2} \varphi_N(x) \le \frac{1}{W_N^-}\,,
	\qquad
	\text{with } \ W_N^\pm := W_N \, \rme^{\pm t_N} \,,
\end{equation}
where we recall that $0 \le t_N = o(|\theta_N|)$.
Henceforth

If $\varphi_N$ satisfies \eqref{eq:conditions-phi-bis}, then we can get a lower bound on
the sum in \eqref{eq:phi-cond}  by restricting to
the ranges $|x|, |y| \le \sqrt{W_N^+}$,
which have probability $1 - o(1)$ by the first relation in \eqref{eq:conditions-phi-bis}.
For such values of $x,y$ we have $|x-y| \le 2 \, \sqrt{W_N^+}$,
so the logarithm in the LHS of \eqref{eq:phi-cond} is bounded from below by
\begin{equation} \label{eq:ab}
	\log\Big(1 + \tfrac{L_N}{1+4 \, W_N^+} \Big)
	\ge \log \tfrac{L_N}{5 \, W_N^+}
	= \log \tfrac{L_N}{W_N} - t_N - \log 5
	= \log \tfrac{L_N}{W_N} - o(|\theta_N|) \,,
\end{equation}
where the first inequality holds for large~$N$
because $W_N^+ \ge W_N \to \infty$, see \eqref{eq:W}.
This yields the RHS of \eqref{eq:phi-cond}.

To get an upper bound, we fix $\xi_N \to \infty$ with $\xi_N = o(|\theta_N|)$ and we
define the scale $V_N := W_N^- \, \rme^{- \xi_N} = W_N \, \rme^{- t_N - \xi_N}$
which is smaller than $W_N^-$.
For $|x-y| \ge \sqrt{V_N}$, we can bound from above the logarithm in \eqref{eq:phi-cond}
by $\log(1 + \tfrac{L_N}{1+V_N} ) \le \log (1+ \tfrac{L_N}{V_N})$. In case $\tfrac{L_N}{V_N} \le 1$
this is at most $\log 2 = o(|\theta_N|)$, while in case $\tfrac{L_N}{V_N} > 1$ we obtain the upper bound
\begin{equation*}
	\log \Big(2 \, \tfrac{L_N}{V_N}\Big)
	= \log \tfrac{L_N}{W_N} + t_N + \xi_N + \log 2
	= \log \tfrac{L_N}{W_N} + o( |\theta_N| ) \,,
\end{equation*}
which agrees with the RHS of \eqref{eq:phi-cond}.
We are left with showing that the range $|x-y| < \sqrt{V_N}$
gives a negligible contribution of order $o(|\theta_N|)$ to the sum in \eqref{eq:phi-cond}.

For fixed~$y$, we apply the second relation in \eqref{eq:conditions-phi-bis} to
estimate the sum over $x$ in \eqref{eq:phi-cond}:
\begin{equation*}
\begin{split}
	\sum_{x \in B(y, \sqrt{V_N})} \varphi_N(x) \,
	\log \Bigl( 1 + \tfrac{L_N}{1+|x-y|^2} \Bigr)
	&\le \frac{1}{W_N^-} \, \sum_{x \in B(y, \sqrt{V_N})}
	\log \Bigl( 1 + \tfrac{L_N}{1+|x-y|^2} \Bigr) \\
	& \le C \, \frac{V_N}{W_N^-} \,
	\int_{|z| \le 1} \log \Bigl( 1 + \tfrac{L_N}{V_N} \, \tfrac{1}{|z|^2} \Bigr) \, \dd z
\end{split}
\end{equation*}
for some $C < \infty$, by Riemann sum approximation.
Since $\frac{V_N}{W_N^-} = \rme^{-\xi_N} = o(1)$ by definition
of $V_N$, it remains to show that the integral is $O(|\theta_N|)$.
If $\frac{L_N}{V_N} \le 1$ then the integral is at most
\begin{equation*}
	\int_{|z| \le 1} \log \Bigl( 1 + \tfrac{1}{|z|^2} \Bigr) \, \dd z
	= O(1) = o(|\theta_N|) \,,
\end{equation*}
while if $\frac{L_N}{V_N} > 1$ then,
recalling that $V_N = W_N \, \rme^{-o(|\theta_N|)}$, we can bound the integral by
\begin{equation*}
	\int_{|z| \le 1} \log \Bigl( \tfrac{L_N}{V_N} \, \tfrac{2}{|z|^2} \Bigr) \, \dd z
	= \log \tfrac{L_N}{V_N} + O(1)
	= \log \tfrac{L_N}{W_N} + o(|\theta_N|)
	= O(|\theta_N|) \,,
\end{equation*}
where the last equality holds by \eqref{eq:L} and \eqref{eq:W}.
The proof is completed.
\end{proof}

\normalcolor

\section{Auxiliary proofs for high moments bounds}
\label{sec:addpr}

In this appendix, we collect the proofs of some results from Section~\ref{sec:moments}.

\subsection{Proof of Lemma~\ref{th:locunif}}
\label{sec:locunif}
We can rewrite $\bbD[\varphi]$, see \eqref{eq:bbD}, as follows:
\begin{equation} \label{eq:bbD2}
	\bbD[f] = \frac{1}{2} \sum_{z, w\in\Z^2} |z-w|^2 \, f(z) \, f(w) \,.
\end{equation}
To prove \eqref{eq:rphibound0}, we first estimate
\begin{equation*}
	p_> := \sum_{|z - z_0| > s \sqrt{L/2}} \varphi(z)
	\le \rme^{- \sqrt{2} \,\hat{t}\, s} \sum_{z\in\Z^2} \varphi(z) \, \rme^{2\,\hat{t}\,\frac{|z-z_0|}{\sqrt{L}}}
	\le \sfc_1 \, \rme^{- \sqrt{2} \,\hat{t}\, s} \le \frac{1}{4}
	\qquad \text{for } s := \tfrac{\log (4 \sfc_1)}{\sqrt{2}\,\hat{t}\,}  \,,
\end{equation*}
therefore $p_\le := 1 - p_> = \sum_{|z - z_0| \le s \sqrt{L/2}} \varphi(z) \ge \frac{3}{4}$.
We now restrict the sums in the definition \eqref{eq:Rfg}
of  $R_L(\varphi,\varphi)$ to $|z - z_0| \le s \sqrt{L/2}$, $|w - z_0| \le s \sqrt{L/2}$:
defining the probability mass function
$\tilde \varphi(z) := p_\le^{-1} \, \varphi(z) \, \ind_{|z - z_0| \le s \sqrt{L/2}}$
and recalling  \eqref{eq:intR},
we can write
\begin{equation*}
	R_L(\varphi,\varphi)
	\ge R_{L/2}(\varphi,\varphi)
	\ge p_\le^2 \, R_{L/2}(\tilde\varphi, \tilde\varphi)
	\ge p_\le^2 \, c_s \log\pig(1 + \tfrac{L/2}{1+2 \bbD[\tilde\varphi]}\pig) \,,
\end{equation*}
where we applied \eqref{eq:bbD2} and Jensen's inequality,
since $x \mapsto \log(1 + \frac{L}{x})$ is convex for $x>0$.
Since $\bbD[\tilde\varphi] \le p_\le^{-2} \, \bbD[\varphi] \le 2 \, \bbD[\varphi]$,
the proof of \eqref{eq:rphibound0} is complete.

In order to prove that \eqref{eq:locunif1} implies \eqref{eq:locunif},
we first observe that
\begin{equation} \label{eq:phil2}
	\| \varphi\|_{\ell^2}^2
	= \sum_{z\in\Z^2} \varphi(z)^2
	\le \|\varphi\|_{\ell^\infty} \sum_{z\in\Z^2} \varphi(z)
	= \|\varphi\|_{\ell^\infty} \,,
\end{equation}
since $\varphi$ is a probability mass function. We next apply \eqref{eq:rphibound0}, that we rewrite for convenience:
\begin{equation}\label{eq:rphibound}
	R_L(\varphi,\varphi) \ge \mathfrak{c} \, \log\pig(1 + \tfrac{L/2}{1+4 \, \bbD[\varphi]}\pig) \,.
\end{equation}
We now assume that \eqref{eq:locunif1} holds and we distinguish two cases:
\begin{itemize}
\item if $\bbD[\varphi] \le 1$ then $R_L(\varphi,\varphi) \ge \mathfrak{c}  \log \frac{L}{10}$ by
\eqref{eq:rphibound},
while $\log( L\, \| \varphi\|_{\ell^2}^2)
\le \log( L)$ by \eqref{eq:phil2} and $\|\varphi\|_{\ell^\infty} \le 1$,
hence \eqref{eq:locunif} is satisfied;

\item if $\bbD[\varphi] > 1$, then
$R_L(\varphi,\varphi) \ge \mathfrak{c}  \log ( \frac{1}{10} \, \frac{L}{\bbD[\varphi]})$ by
\eqref{eq:rphibound}
and $\log( L\, \| \varphi \|_{\ell^2}^2)
\le \log(\sfc_1 \sfc_2 \frac{L}{\bbD[\varphi]})$ by \eqref{eq:phil2} and \eqref{eq:locunif1},
so \eqref{eq:locunif} holds again.
\end{itemize}
The proof is completed.\qed

\smallskip

\subsection{Proof of Lemma~\ref{th:boundsphi}}
\label{sec:boundsphi}

Recalling \eqref{eq:bbD2}, by the (squared) triangle inequality $|z-w|^2 \le 2(|z-z_0|^2 + |w-z_0|^2)$
and $x^2 \le \rme^x$ for $x \ge 0$, we can bound
\begin{equation*}
	\bbD[\varphi] \le \sum_{z\in\Z^2} |z-z_0|^2 \, \varphi(z)
	\le \frac{L}{2\,\hat{t}\,} \sum_{z\in\Z^2} \rme^{\frac{2\,\hat{t}\,}{L}|z-z_0|^2} \, \varphi(z)
	\le \frac{\sfc_1}{2\,\hat{t}\,} \, L \,.
\end{equation*}
It then suffices to apply \eqref{eq:rphibound0} to prove \eqref{eq:Rub}
with $\eta = \mathfrak{c} \, \log\pig(1 + \tfrac{\hat{t}\,}{2(\,\hat{t}\, + 2 \sfc_1)}\pig)$.
\qed

\smallskip

\subsection{Proof of Lemma~\ref{th:sigmatheta}}
\label{sec:sigmatheta}

We fix $\beta \ge 0$, $L \in \N$
and  a probability mass function $\varphi$ satisfying \eqref{eq:concsqrtL+} and \eqref{eq:varub0+}.
By the first inequality in \eqref{eq:boundsvar}, in view of \eqref{eq:Rub} and \eqref{eq:varub0},
we obtain
\begin{equation} \label{eq:c4c2}
	\sigma_\beta^2 \,\, \bbE\big[Z_{L/2}^{\beta}(0)^2\big] \le \frac{\sfc_3}{\eta} \,.
\end{equation}
We are going to obtain a lower bound on $\bbE\big[Z_{L/2}^{\beta}(0)^2\big]$ which will yield
our goal \eqref{eq:sigmatheta}.

If $\sigma_{\beta}^2 R_L < 1$ then
\eqref{eq:sigmatheta} holds with $\bar\theta = 0$.
We then assume
$\sigma_{\beta}^2 R_L \ge 1$, which lets us write
\begin{equation} \label{eq:newpara}
	\sigma_\beta^2 \, R_L
	= \frac{1}{1-\frac{\theta}{R_L}}
	\qquad \text{for a suitable} \quad 0 \le \theta = \theta(\beta,L) < R_L \,.
\end{equation}
Our goal \eqref{eq:sigmatheta} is to show that $\theta \le \bar\theta$
for some $\bar\theta = \bar\theta(\,\hat{t}\,, \sfc_1, \sfc_3) \in [0,\infty)$.

Let $\tau_k^{(L)}$ be the random walk with
$\P(\tau_1^{(L)} = n) = \frac{1}{R_L} \, q_{2n}(0) \, \ind_{\{1 \le n \le L\}}$.
We can write
\begin{equation*}
\begin{split}
	\bbE\big[ Z_{L/2}^{\beta}(0)^2\big]
	&= 1 + \sum_{k=1}^\infty (\sigma_{\beta}^2)^k
	\sum_{0 < n_1 < \ldots < n_k \le L/2} \prod_{i=1}^k q_{2(n_i - n_{i-1})}(0) \\
	&= 1 + \sum_{k=1}^\infty (\sigma_{\beta}^2 R_L)^k
	\, \P(\tau_k^{(L)} \le L/2)
	\ge 1 + \sum_{k=1}^\infty (\sigma_{\beta}^2 R_L)^k
	\, \bigg(1 - \frac{\E[\tau_k^{(L)}]}{L/2}\bigg) \,.
\end{split}
\end{equation*}
Note that $\E[\tau_k^{(L)}] = k \, \E[\tau_1^{(L)}]
= \frac{k}{R_L} \sum_{n=1}^L n \, q_{2n}(0)
\le \frac{k}{R_L} \, \mathfrak{a}_+ \, L$ by \eqref{eq:barN}.
Restricting to $k \le \lceil \frac{1}{4 \mathfrak{a}_+} R_L \rceil$
we then have $\E[\tau_k^{(L)}] \le L/4$.
Since $\sigma_\beta^2 R_L \ge \rme^{\frac{\theta}{R_L}}$ by \eqref{eq:newpara}
and $\frac{1}{1-x} \ge \rme^x$ for $0 \le x \le 1$, we get
\begin{equation*}
	\bbE\big[ Z_{L/2}^{\beta}(0)^2\big] \ge
	\frac{1}{2} \, \sum_{k=1}^{K}
	(\sigma_{\beta}^2 R_L)^k
	\ge \frac{1}{2} \, \sum_{k=1}^{K} \rme^{\frac{\theta}{R_M} k}
	\qquad \text{with} \qquad
	K := \lceil \tfrac{1}{4 \mathfrak{a}_+} R_L \rceil \,.
\end{equation*}
Applying Jensen we then obtain
\begin{equation*}
	\bbE\big[ Z_{L/2}^{\beta}(0)^2\big]
	\ge \frac{K}{2}   \,
	\rme^{\frac{\theta}{R_L} \, \frac{1}{K}
	\sum_{k=1}^{K} k}
	\ge \frac{K}{2}   \,
	\rme^{\frac{\theta}{R_L} \frac{K}{2}}
	\ge \frac{R_L}{8\mathfrak{a}_+}   \,
	\rme^{\frac{\theta}{8\mathfrak{a}_+}}
	\ge \frac{1}{\sigma_\beta^2} \, \frac{1}{8\mathfrak{a}_+}   \,
	\rme^{\frac{\theta}{8\mathfrak{a}_+}} \,,
\end{equation*}
where in the last inequality we used $\sigma_\beta^2 \, R_L \ge 1$.
Recalling \eqref{eq:c4c2}, we finally obtain
\begin{equation*}
	\rme^{\frac{\theta}{8\mathfrak{a}_+}}
	\le 8\mathfrak{a}_+ \, \frac{\sfc_3}{\eta} \,,
	\qquad \text{hence} \qquad
	\theta \le
	\bar\theta := 8\mathfrak{a}_+ \, \log^+ \frac{8\mathfrak{a}_+\,\sfc_3}{\eta}\,,
\end{equation*}
which completes the proof of \eqref{eq:sigmatheta}.\qed

\smallskip

\subsection{Proof of Lemma~\ref{th:2lb}}
\label{sec:2lb}
Recalling \eqref{eq:Rlambda},
we denote by $\tau^{(L,\hat\lambda)}_k$ the random walk with step distribution
$\P(\tau^{(L,\hat\lambda)}_1 = n) = \frac{1}{R^{(\hat\lambda)}_L} \rme^{-\frac{\hat\lambda}{L} n}
\, q_{2n}(0) \, \ind_{\{1,\ldots, L\}}(n)$.
Then we can write
\begin{equation*}
\begin{split}
	\bbE\big[ Z_{L/2}^{\beta}(0)^2\big]
	&= 1 + \sum_{k=1}^\infty (\sigma_{\beta}^2)^k
	\sum_{0 < n_1 < \ldots < n_k \le L/2} \prod_{i=1}^k q_{2(n_i - n_{i-1})}(0) \\
	&\ge 1 + \sum_{k=1}^\infty (\sigma_{\beta}^2)^k
	\sum_{0 < n_1 < \ldots < n_k \le L/2} \prod_{i=1}^k \rme^{-\frac{\hat\lambda}{L} (n_i - n_{i-1})} \,
	q_{2(n_i - n_{i-1})}(0) \\
	&= 1 + \sum_{k=1}^\infty (\sigma_{\beta}^2 R_L^{(\hat\lambda)})^k
	\, \P(\tau_k^{(L,\hat\lambda)} \le L/2)
	\ge 1 + \sum_{k=1}^\infty (\sigma_{\beta}^2 R_L^{(\hat\lambda)})^k
	\, \bigg(1 - \frac{\E[\tau_k^{(L,\hat\lambda)}]}{L/2}\bigg)
\end{split}
\end{equation*}
and note that $\E[\tau_k^{(L,\hat\lambda)}] = k \, \E[\tau_1^{(L,\hat\lambda)}]
= \frac{k}{R_L^{(\hat\lambda)}} \sum_{n=1}^L n \, \rme^{-\frac{\hat\lambda}{L} n} \, q_{2n}(0)
\le \frac{k}{R_L^{(\hat\lambda)}} \, \mathfrak{a}_+ \, L$ by \eqref{eq:barN}.
Note that assumption \eqref{eq:cond-for-LB} ensures that $\sigma_{\beta}^2 R_L^{(\hat\lambda)} < 1$.
By $\sum_{k=1}^\infty k \, x^k = \frac{x}{(1-x)^2}$ we then obtain
\begin{equation*}
	\bbE\big[ Z_{L/2}^{\beta}(0)^2\big] \ge
	\frac{1}{1-\sigma_\beta^2 R_L^{(\hat\lambda)}}
	- \frac{2\mathfrak{a}_+}{R_L^{(\hat\lambda)}}
	\, \frac{\sigma_\beta^2 R_L^{(\hat\lambda)}}{(1-\sigma_\beta^2 R_L^{(\hat\lambda)})^2}
	= \frac{1}{1-\sigma_\beta^2 R_L^{(\hat\lambda)}} \bigg(1 -
	\frac{2\mathfrak{a}_+ \, \sigma_\beta^2}{1-\sigma_\beta^2 R_L^{(\hat\lambda)}} \bigg) \,.
\end{equation*}
Note that assumption \eqref{eq:cond-for-LB}
is equivalent to
$\frac{2\mathfrak{a}_+ \, \sigma_\beta^2}{1-\sigma_\beta^2 R_L^{(\hat\lambda)}} \le \frac{1}{2}$,
which proves \eqref{eq:LBvar}.\qed

\smallskip

\subsection{Proof of Lemma~\ref{th:choice}}
\label{sec:choice}

Since $1-\rme^{- x} \ge \frac{1}{2}$ for $x \ge 1$,
recalling \eqref{eq:barN} and bounding $\sum_{n=a}^b \frac{1}{n} \ge \log \frac{b}{a}$,
for $\hat\lambda \ge 1$ we can write
\begin{equation*}
\begin{split}
	R_L^{(\hat\lambda)}
	&:= R_L - \sum_{n=1}^L (1-\rme^{-\hat\lambda \frac{n}{L}} ) \, q_{2n}(0)
	\le R_L - \frac{1}{2} \sum_{n=\lceil \hat\lambda^{-1} L \rceil}^L q_{2n}(0)
	\le R_L - \frac{\mathfrak{a}_-}{2}\log \frac{L}{\lceil \hat\lambda^{-1} L \rceil} \,.
\end{split}
\end{equation*}
We finally note that
\begin{equation*}
	\frac{L}{\lceil \hat\lambda^{-1} L \rceil}
	\ge
	\frac{L}{\hat\lambda^{-1}L+1}
	= \hat\lambda \, \frac{L}{L + \hat\lambda}
	\ge \frac{\hat\lambda}{2}
	\qquad \text{for } L \ge \hat\lambda  \,,
\end{equation*}
which completes the proof of \eqref{eq:choice}.
\qed

\smallskip

\subsection{Proof of Theorem~\ref{th:moment-bound-CCR}}
\label{sec:mb}

For the final relation \eqref{eq:geosu}, we simply note that
\begin{equation*}
	\forall 0 \le x < 1\,, \ h \ge 2 \colon \qquad
	\sum_{r=1}^\infty x^{\max\{r,\frac{h}{2}\}} \le
	\bigg( \frac{h}{2} + \frac{1}{1-x}\bigg) \, x^{\frac{h}{2}}
	\le \rme^h \, \frac{x^{\frac{h}{2}}}{1-x} \,.
\end{equation*}

It remains to show that \eqref{eq:genbo} holds. This bound is already proved in \cite{CCR23}, though it is not stated in this form. For this reason, in the next lines we state the needed results from \cite[Section~4]{CCR23} and we put them together to deduce \eqref{eq:genbo}. \emph{The purpose is to provide a roadmap for an interested reader to check that \eqref{eq:genbo} is a direct consequence of the results in \cite{CCR23}}. We refrain from introducing the notation involved, see \cite{CCR23} for details.

\smallskip

We start from \cite[Theorems~4.8 and~4.11]{CCR23}: by equations (4.18)-(4.19) and (4.24)-(4.25) with $\lambda = \hat\lambda / L$, the following inequality holds:
\begin{equation}\label{eq:moment-bound}
\begin{split}
	\big| \bbE \big[ \big( Z_{L}^{\beta}(\varphi,\psi)- \bbE[Z_{L}^{\beta}(\varphi,\psi)] \big)^h \big] \big|
	&\le \Big( \max_{I \ne *} \big\Vert \widehat \sfq_{L}^{|\varphi|,I} \tfrac{1}{\cW}
	\big\Vert_{\ell^p} \Big)
	 \Big( \max_{J \ne *} \big\Vert \cW \,
	\widebar{\sfq}_{L}^{|\psi|,J} \big\Vert_{\ell^q} \Big) \, \rme^{\hat\lambda}
	\sum_{r=1}^\infty \Xi^\bulk(r)
\end{split}
\end{equation}
where
\begin{equation}\label{eq:Xibulk}
	\Xi^\bulk(r) \,:= \!\!\!
	\sum_{\substack{I_1, \ldots, I_r \vdash\{1, \ldots, h\} \\
	\text{with full support} \\
	\text{and } I_i \ne I_{i-1} , \,\, I_i \ne * \ \forall i}}
	\!\!\! \bigg\{ \prod_{i=1}^r \big| \bbE[\xi_{\beta}^{I_i}] \big|  \bigg\} \,
	\big( \|\widehat \sfQ_{L}\|_{\ell^q\to\ell^q}^{\cW} \big)^{r-1}
	\, \big( \| |\widehat\sfU|_{L,\hat\lambda,\beta}\|_{\ell^q \to \ell^q}^{\cW} \big)^{r}  \,.
\end{equation}
The terms in these expressions are defined in \cite[Section~4]{CCR23}.
In a nutshell, $\cW$ is a weight function\footnote{We choose $\cW = \cW_t$
with $t = \hat t/\sqrt{L}$ as in \cite[Remark~4.12]{CCR23}.}
and $\widehat \sfq_{L}^{|\varphi|,I}$,
$\widebar \sfq_{L}^{|\varphi|,I}$ denote suitable averages of the boundary conditions $\varphi$,
$\psi$ with respect to the random walk kernel, while $\widehat\sfQ$ and $\widehat\sfU$
denote linear operators acting on functions on $(\Z^2)^h$.
\emph{We will next state estimates on each term
in \eqref{eq:moment-bound}-\eqref{eq:Xibulk}, taken verbatim from \cite{CCR23}, which
combined together will lead to \eqref{eq:genbo}.}

\smallskip

We denote by $\scrC_{\hat t}$, $\overline{\scrC_{\hat t}}$,
$\widehat{\scrC}_{\hat t}$, $\widecheck{\scrC}_{\hat t}$ suitable constants.
Let us first bound the terms in \eqref{eq:moment-bound}:
\begin{itemize}
\item by \cite[Proposition~4.19]{CCR23}, equation (4.44) with $r=1$ gives
\begin{equation}\label{eq:bo1}
	\Big(\max_{I \ne *} \big\Vert \widehat \sfq_{L}^{|\varphi|,I} \tfrac{1}{\cW} \big\Vert_{\ell^p}\Big)
	\le 4\, \scrC_{\hat t}^h \, q	\, \pigl\| \varphi(\cdot) \, \rme^{\frac{\hat t}{\sqrt{L}}|\cdot|} \pigr\|_{\ell^1}
	\, \pigl\| \varphi(\cdot) \, \rme^{\frac{\hat t}{\sqrt{L}}|\cdot|} \pigr\|_{\ell^p}^{h-1} \,;
\end{equation}

\item by \cite[Proposition~4.21]{CCR23}, equation (4.49) with $w_t(\cdot) = \rme^{-\frac{\hat{t}}{\sqrt{L}}|\cdot|}$ gives
\begin{equation}\label{eq:bo2}
	\Big(\max_{J \ne *} \big\Vert \cW \,
	\widebar{\sfq}_{L}^{|\psi|,J} \big\Vert_{\ell^q} \Big)
	\le \overline{\scrC_{\hat t}}^h \, p
	\, \pigl\|  \psi(\cdot) \, \rme^{-\frac{\hat t}{\sqrt{L}}|\cdot|} \pigr\|_{\ell^\infty}
	\, \pigl\|  \psi(\cdot) \, \rme^{-\frac{\hat t}{\sqrt{L}}|\cdot|} \pigr\|_{\ell^q}^{h-1}  \,.
\end{equation}
\end{itemize}
Plugging these bounds into \eqref{eq:moment-bound}, we obtain the second and third line in \eqref{eq:genbo}.

\smallskip

We next attach the factors $p$ and $q$ from \eqref{eq:bo1} and \eqref{eq:bo2} to $\Xi^\bulk(r)$. To complete the proof of \eqref{eq:genbo}, recalling \eqref{eq:Gammapar}, it suffices to show that for a suitable constant $\mathtt{K}_{h}^{(\hat t)} < \infty$
\begin{equation} \label{eq:goto}
	(p\, q) \, \Xi^\bulk(r) \le
	\bigl( p\, q \, \Gamma \bigr)^{\max\{r, \frac{h}{2}\}}
	\qquad \text{with} \quad
	\Gamma = \mathtt{K}_{h}^{(\hat t)} \,
	\frac{\sigma_\beta^2}{1-\sigma_{\beta}^2 \, R_{L}^{(\hat\lambda)}} \,.
\end{equation}
We bound the terms in \eqref{eq:Xibulk} as follows:\footnote{Propositions~4.13 and~4.24 in \cite{CCR23} require that $\beta \le \beta_0(h)$ for a suitable $\beta_0(h) > 0$ depending on the disorder distribution (to ensure that $\bbE[(\rme^{\beta \omega - \lambda(\beta)})^p] \le \bbE[(\rme^{\beta \omega - \lambda(\beta)})^2] = \sigma_\beta^2$ for all $3 \le p \le h$). This is why the same requirement appears in Theorem~\ref{th:moment-bound-CCR}.}
\begin{itemize}
\item by \cite[Proposition~4.23]{CCR23}, equation (4.45) gives
\begin{equation*}
	\|\widehat \sfQ_{L}\|_{\ell^q\to\ell^q}^{\cW}
	\le h! \, \widehat{\scrC}_{\hat t}^{\,h} \, p \, q \,;
\end{equation*}

\item by \cite[Proposition~4.24]{CCR23}, equation (4.58) gives
\begin{equation*}
	\| |\widehat\sfU|_{L,\hat\lambda,\beta}\|_{\ell^q \to \ell^q}^{\cW}
	\le 1 + \widecheck{\scrC}_{\hat t}^{\,h} \,
	\frac{\sigma_{\beta}^2 R_{L}^{(\hat\lambda)}}{1-\sigma_{\beta}^2  R_{L}^{(\hat\lambda)}}
	\le \frac{\widecheck{\scrC}_{\hat t}^{\,h}}{1-\sigma_{\beta}^2  R_{L}^{(\hat\lambda)}};
\end{equation*}

\item by \cite[Proposition~4.13]{CCR23}, there is $C(h)<\infty$ such that
\begin{gather*}
	\forall I_1, \ldots, I_r \vdash \{1,\ldots, h\} \ \emph{with full support} :
	\qquad
	\prod_{i=1}^r \big| \bbE[\xi_{\beta}^{I_i}] \big|
	\le C(h)^r \,
	\big( \sigma_{\beta}^2 \, \big)^{\max\{r,\frac{h}{2}\}} \,.
\end{gather*}
\end{itemize}
Overall, applying \eqref{eq:Xibulk} we can bound
\begin{equation*}
	(p \, q) \, \Xi^\bulk(r)
	\le (p \, q)^r \, \big( h! \, \widehat{\scrC}_{\hat t}^{\,h} \big)^{r-1} \,
	\bigg( \frac{\widecheck{\scrC}_{\hat t}^{\,h}}{1-\sigma_{\beta}^2  R_{L}^{(\hat\lambda)}} \bigg)^r
	\, C(h)^r \,
	\big( \sigma_{\beta}^2 \, \big)^{\max\{r,\frac{h}{2}\}} \,.
\end{equation*}
We increase the RHS replacing $r$ by $\max\{r,\frac{h}{2}\}$, which shows that \eqref{eq:goto} holds with
\begin{equation}\label{eq:K}
	\mathtt{K}_{h}^{(\hat t)}
	:= h! \, \widehat{\scrC}_{\hat t}^{\,h} \, \widecheck{\scrC}_{\hat t}^{\,h}\, C(h) \,.
\end{equation}
This completes the proof.\qed

\smallskip

\section*{Acknowledgements}
F.C.~is supported by INdAM/GNAMPA. R.S.~is supported by NUS grant A-8001448-00-00 and NSFC grant 12271475.

\bibliography{CSZ}
\bibliographystyle{alpha}

\end{document}